\DeclareSymbolFont{cyrletters}{OT2}{wncyr}{m}{n}
\DeclareMathSymbol{\Sha}{\mathalpha}{cyrletters}{"58}
\newtheorem{theorem}{Theorem}[section]
\newtheorem{lemma}[theorem]{Lemma}
\newtheorem{proposition}[theorem]{Proposition}
\newtheorem*{proposition*}{Proposition}
\newtheorem{corollary}[theorem]{Corollary}
\newtheorem*{theorem*}{Theorem}
\newtheorem*{conjecture*}{Conjecture}
\newtheorem*{corollary*}{Corollary}
\theoremstyle{definition}
\newtheorem{definition}[theorem]{Definition}
\newtheorem{claim}[theorem]{Claim}
\newtheorem{remark}[theorem]{Remark}
\newtheorem{emp}[theorem]{\bf }{\kern -4pt}
\newtheorem*{acknowledgement}{Acknowledgements}
\theoremstyle{theorem}
\newtheorem{conjecture}[theorem]{Conjecture}
\theoremstyle{plain}
\newtheoremstyle{TheoremNum}
{\topsep}{\topsep}              
{\itshape}                      
{}                              
{\bfseries}                     
{.}                             
{ }                             
{\thmname{#1}\thmnote{ \bfseries #3}}
\theoremstyle{TheoremNum}
\newtheorem{conjrep}{Conjecture}
\newtheorem{cororep}{Corollary}
\newtheorem{prorep}{Proposition}
\newtheorem{thmrep}{Theorem}
\theoremstyle{remark}
\title{Rational Spherical Triangles}
\author{Haiyang Wang}
\address{}
\date{\today}
\begin{document}
	
	\maketitle
	

	\begin{abstract}
	A rational spherical triangle is a triangle on the unit sphere such that the lengths of its three sides and its area are rational multiples of $\pi$. Little and Coxeter have given examples of rational spherical triangles in 1980s. In this work, we are interested in determining all the rational spherical triangles. We introduce a conjecture on the solutions to a trigonometric Diophantine equation. An implication of the conjecture is that the only rational spherical triangles are the ones given by Little and Coxeter. We prove some partial results towards the conjecture.
	\end{abstract}

	\section{Introduction}
	
\begin{emp}
		A {\it spherical triangle} is a triangle on the unit sphere such that each side of the triangle is an arc on a great circle of the unit sphere. We call a spherical triangle {\it proper} if the length of each side of the triangle is less than $\pi$ and the area of the triangle is less than $2\pi$. 
		
\end{emp}

The following theorem gives a relation between the side lengths and the area of a spherical triangle. 
	
	\begin{theorem}[L'Huilier (\cite{Zwi} 4.9.2)]\label{th.lh}
		Let $\Delta$ be a spherical triangle on the unit sphere. Assume that $\Delta$ has side lengths $a, b, c$ and area $E$. Then
		\[\operatorname{tan}^2(\frac{1}{4}E)=\operatorname{tan}[\frac{1}{4}(-a+b+c)]\operatorname{tan}[\frac{1}{4}(a-b+c)]\operatorname{tan}[\frac{1}{4}(a+b-c)]\operatorname{tan}[\frac{1}{4}(a+b+c)]. \]
	\end{theorem}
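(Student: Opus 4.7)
My plan is to convert both sides of the identity into expressions in the three sides $a,b,c$ alone, using the spherical excess formula $E=A+B+C-\pi$ (Girard's theorem) together with Delambre's analogies and the spherical half-angle formula.

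First, observe that $\dfrac{E}{4} = \dfrac{1}{2}\!\left(\dfrac{A+B}{2} - \dfrac{\pi-C}{2}\right)$. The sum-to-product identity $\tan\frac{\phi-\psi}{2}=\frac{\sin\phi-\sin\psi}{\cos\phi+\cos\psi}$, with $\phi=(A+B)/2$ and $\psi=(\pi-C)/2$, then gives
\[\tan\frac{E}{4}=\frac{\sin\frac{A+B}{2}-\cos\frac{C}{2}}{\cos\frac{A+B}{2}+\sin\frac{C}{2}}.\]
The next step is to eliminate the angles via Delambre's (Gauss's) analogies
\[\sin\tfrac{A+B}{2}\cos\tfrac{c}{2}=\cos\tfrac{a-b}{2}\cos\tfrac{C}{2},\qquad \cos\tfrac{A+B}{2}\cos\tfrac{c}{2}=\cos\tfrac{a+b}{2}\sin\tfrac{C}{2}.\]
Substituting these into the numerator and denominator and cancelling the common factor $\cos(c/2)$ yields
\[\tan\frac{E}{4}=\cot\frac{C}{2}\cdot\frac{\cos\frac{a-b}{2}-\cos\frac{c}{2}}{\cos\frac{a+b}{2}+\cos\frac{c}{2}}.\]
A direct application of sum-to-product to each of the two cosine expressions rewrites this as
\[\tan\frac{E}{4}=\cot\frac{C}{2}\cdot\frac{\sin\frac{-a+b+c}{4}\sin\frac{a-b+c}{4}}{\cos\frac{a+b+c}{4}\cos\frac{a+b-c}{4}}.\]

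Finally, I would substitute the spherical half-angle formula $\tan(C/2)=\sqrt{\sin(s-a)\sin(s-b)/(\sin s\,\sin(s-c))}$ with $s=(a+b+c)/2$, and apply the double-angle identity $\sin x=2\sin(x/2)\cos(x/2)$ to each of the four sines appearing there. Squaring the displayed expression for $\tan(E/4)$, the half-sine factors introduced by $\cot(C/2)$ cancel against those already present in the fraction, and after the cosine/sine factors regroup one obtains exactly the product
\[\tan\tfrac{-a+b+c}{4}\tan\tfrac{a-b+c}{4}\tan\tfrac{a+b-c}{4}\tan\tfrac{a+b+c}{4}.\]

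The argument is essentially a chain of classical spherical identities, so there is no conceptual difficulty; the main obstacle is purely bookkeeping, namely tracking signs so that the final expression is genuinely a square and confirming that the right quarter-sum arguments $(\pm a\pm b\pm c)/4$ appear in the right positions after each substitution.
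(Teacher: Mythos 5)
The paper does not prove this statement at all: it is quoted as a classical result with a citation to the literature (Zwillinger 4.9.2), so there is no internal proof to compare against. Your derivation is the standard classical route to L'Huilier's formula and it is correct: the identity $\tan\frac{E}{4}=\frac{\sin\frac{A+B}{2}-\cos\frac{C}{2}}{\cos\frac{A+B}{2}+\sin\frac{C}{2}}$ follows from Girard's theorem and sum-to-product, the two Delambre analogies you quote eliminate the angles exactly as stated, and after the sum-to-product step the substitution of $\cot\frac{C}{2}=\sqrt{\frac{\sin s\,\sin(s-c)}{\sin(s-a)\sin(s-b)}}$ with $s-a=\frac{-a+b+c}{2}$, etc., together with $\sin x=2\sin\frac{x}{2}\cos\frac{x}{2}$, makes the squared expression collapse to $\tan\frac{-a+b+c}{4}\tan\frac{a-b+c}{4}\tan\frac{a+b-c}{4}\tan\frac{a+b+c}{4}$ with all factors of $2$ and all half-sine factors cancelling as you predict. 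The only point needing a remark is the sign of the square root in the half-angle formula, and that is immaterial here precisely because you square before using it; the cancellation of $\cos\frac{c}{2}$ and the nonvanishing of the denominators are harmless for a nondegenerate spherical triangle.
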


In this paper, we are interested in understanding when the side lengths and area of a spherical triangle are all rational multiples of $\pi$. 

	Let G:=$\mathbb{Q}\pi=\{r\cdot\pi|r\in\mathbb{Q}\}.$ For a nonzero element $g$ in $G$, we define the {\it numerator} and {\it denominator} of $g$ to be that of $\frac{g}{\pi}$ in reduced form, respectively. We denote the numerator and denominator of $g$ by $\operatorname{num}(g)$ and $\operatorname{den}(g)$, respectively.

\begin{definition}
	A spherical triangle on the unit sphere is called {\it rational} if the lengths of its sides and its area are in $G$. 
\end{definition}

	\begin{definition}
		Let $\Delta$ be a proper spherical triangle on the unit sphere. Assume that $\Delta$ has side lengths $a\le b\le c$ and area $E$. We associate the tuple $(E,a,b,c)$ to $\Delta$ and call this tuple the {\it measurement} of $\Delta$. Note that $a, b$ and $c$ determine $E$ by Theorem \ref{th.lh}. We include $E$ in the tuple for completeness. Two spherical triangles are called {\it congruent} if they have the same measurement. Attached to a tuple $(E,a,b,c)$ is its $\operatorname{lcm}$, defined to be the least common multiple of the denominators of the entries in the tuple.
	\end{definition}
	

	\begin{emp}\label{emp.cl}
			In 1981, Little (\cite{Lit}) observed that the tuple $(E, a, b, c )=(\frac{1}{2}, \frac{2}{5}, \frac{1}{2},\frac{2}{3})\pi$ with $\operatorname{lcm}=30$ is the measurement of a proper rational spherical triangle. Notably, the lengths of the sides of this triangle are mutually distinct. Little asked whether this is the only  possible measurement for a proper rational spherical triangle with mutually distinct side lengths. Soon after, Coxeter \cite{Cox} provided six more measurements of such rational spherical triangles. We denote the set of these seven measurements by $\Lambda_2$: 
		\begin{align*}
			\Lambda_2:=\{&(\frac{1}{2},\frac{2}{5},\frac{1}{2},\frac{4}{5})\pi,&&(\frac{1}{4},\frac{1}{4}, \frac{1}{2}, \frac{2}{3})\pi,&&(\frac{1}{2},\frac{1}{4}, \frac{2}{3}, \frac{3}{4})\pi,&&(\frac{5}{4},\frac{1}{2}, \frac{2}{3}, \frac{3}{4})\pi,&&(1,\frac{2}{5}, \frac{2}{3}, \frac{4}{5})\pi,\\
			&(\frac{3}{2},\frac{1}{2}, \frac{2}{3}, \frac{4}{5})\pi,
			 &&(\frac{1}{2},\frac{2}{5}, \frac{1}{2},\frac{2}{3})\pi\}.\\
		\end{align*}
		
		Let 
		\begin{align*}
			\Lambda_1:=&\{(1,\frac{1}{2}, \frac{2}{3}, \frac{2}{3})\pi\}\cup\{(\frac{m}{d},\frac{m}{d},\frac{1}{2},\frac{1}{2})\pi\mid m, d\in\mathbb{N}\text{ and } 0<\frac{m}{d}\le\frac{1}{2}\}\\
			&\cup\{(\frac{m}{d},\frac{1}{2},\frac{1}{2},\frac{m}{d})\pi\mid m, d\in\mathbb{N}\text{ and } \frac{1}{2}<\frac{m}{d}<1\}.
		\end{align*}

		Coxeter remarked that tuples in $\Lambda_1$ are measurements of proper rational spherical triangles, albeit not satisfying the mutually distinct side lengths condition. A natural question to ask is the following: are there measurements of proper rational spherical triangles besides the ones given by Little and Coxeter?
		
	\end{emp}


	\begin{conjecture}\label{cj.lc}
		The tuples in the set $\Lambda_1\cup\Lambda_2$ defined in \ref{emp.cl} are the only measurements of proper rational spherical triangles. In particular, there are only finitely many measurements of proper rational spherical triangles with mutually distinct side lengths, namely, the seven measurements found by Little and Coxeter.
	\end{conjecture}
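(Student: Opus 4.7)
The plan is to reduce Conjecture \ref{cj.lc} to a trigonometric Diophantine equation via Theorem \ref{th.lh}, and then invoke the theory of vanishing sums of roots of unity to classify its solutions. Set
\[A=\tfrac{1}{4}(-a+b+c),\quad B=\tfrac{1}{4}(a-b+c),\quad C=\tfrac{1}{4}(a+b-c),\quad D=\tfrac{1}{4}(a+b+c),\quad F=\tfrac{1}{4}E.\]
If $(E,a,b,c)$ is the measurement of a rational spherical triangle, then $A,B,C,D,F\in G$, and L'Huilier's formula is the multiplicative relation $\tan A\tan B\tan C\tan D=\tan^{2}F$ among values of tangent at rational multiples of $\pi$. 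Proving the conjecture is thus equivalent to classifying all $5$-tuples $(A,B,C,D,F)\in G^{5}$ satisfying this equation, subject to the triangle-inequality constraints implicit in $A,B,C,D$ and the properness bounds on $a,b,c,E$.

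Writing $\tan\theta=-i(e^{i\theta}-e^{-i\theta})/(e^{i\theta}+e^{-i\theta})$ and letting $N$ be the lcm attached to $(E,a,b,c)$, every tangent above is an element of the cyclotomic field $\mathbb{Q}(\zeta_{8N})$. Clearing denominators in $\tan A\tan B\tan C\tan D=\tan^{2}F$ produces the polynomial identity
\[(e^{iF}-e^{-iF})^{2}\prod_{X\in\{A,B,C,D\}}(e^{iX}+e^{-iX})+(e^{iF}+e^{-iF})^{2}\prod_{X\in\{A,B,C,D\}}(e^{iX}-e^{-iX})=0,\]
which expands to a vanishing $\mathbb{Z}$-linear combination of roots of unity in $\mathbb{Q}(\zeta_{8N})$. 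The classification of minimal vanishing sums of roots of unity due to Conway--Jones, together with its refinements by Lam--Leung, gives strong restrictions on the primes dividing $N$; in the short-sum regime, only the primes $2,3,5$ occur, which is consistent with the fact that every denominator appearing in $\Lambda_{1}\cup\Lambda_{2}$ is built from these primes.

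Granting such a bound on $N$, the remaining task is finite: enumerate all triples $(a,b,c)$ with $a\le b\le c<\pi$ and $\operatorname{den}(a),\operatorname{den}(b),\operatorname{den}(c)\mid N$, compute $\tan^{2}(E/4)$ from L'Huilier's formula, and check whether $E\in G$ with $0<E<2\pi$. A direct search should then recover exactly $\Lambda_{1}\cup\Lambda_{2}$: the one-parameter families in $\Lambda_{1}$ arise from degenerate configurations where two sides equal $\pi/2$ (so L'Huilier simplifies because one of $A,B,C,D$ becomes $0$ or $\pi/2$), and the seven tuples in $\Lambda_{2}$ are the genuinely nondegenerate solutions with mutually distinct sides.

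The main obstacle is the passage from the vanishing sum to a bound on $N$. The expansion above has up to $2\cdot 3\cdot 2^{4}=96$ terms, well beyond the range where Conway--Jones or Lam--Leung can be applied as a black box; one needs to drastically cut the effective number of terms. A natural strategy is to combine the identity with its Galois conjugates under $\operatorname{Gal}(\mathbb{Q}(\zeta_{8N})/\mathbb{Q})$ to force telescoping, and to exploit the $S_{3}$-symmetry under permutations of $(a,b,c)$ to peel off cases where two sides coincide. The partial results announced in the introduction should occupy exactly this territory: cases where $N$ has few prime divisors, or where two of $a,b,c$ are equal, are expected to be tractable by the above machinery, while the genuinely mixed large-$N$ case appears to require substantively new input beyond the classical vanishing-sum classification.
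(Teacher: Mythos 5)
This statement is labelled a \emph{conjecture} in the paper, and the paper does not prove it: the only thing the paper establishes in this direction is Proposition~\ref{pro.coli}, reducing Conjecture~\ref{cj.lc} to the (also open) Conjecture~\ref{cj.fi}, together with partial results towards Conjecture~\ref{cj.fi} in Sections~\ref{sec.prime}--\ref{sec.smal}. So there is no proof in the paper to compare yours against, and a complete argument here would be a substantial new contribution, not a reproduction of the text.

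Beyond that, your proposed strategy has a structural flaw that prevents it from working even in outline. You aim to bound the level $N=\operatorname{lcm}$ of a solution and then finish by a finite search, asserting that ``every denominator appearing in $\Lambda_1\cup\Lambda_2$ is built from the primes $2,3,5$.'' This is false: $\Lambda_1$ contains the one-parameter families $(\tfrac{m}{d},\tfrac{m}{d},\tfrac{1}{2},\tfrac{1}{2})\pi$ and $(\tfrac{m}{d},\tfrac{1}{2},\tfrac{1}{2},\tfrac{m}{d})\pi$ with $d$ arbitrary, so the set of proper rational spherical triangles has \emph{unbounded} lcm, and no uniform bound on $N$ exists. (On the level of the generalized L'Huilier equation this corresponds to the infinite family $\overline\Phi_{1,1}$, which carries arbitrary denominators.) Consequently ``granting such a bound on $N$, the remaining task is finite'' cannot be granted, and the Conway--Jones/Lam--Leung machinery applied to a single vanishing sum cannot yield a classification: it would at best bound the \emph{sporadic} part of the answer, and only after the infinite families are understood and stripped off by a structural argument. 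This is precisely what the paper's approach (representing tangents in Conrad's basis for $X^n$ and analysing multiplicities) is designed to do -- it isolates, for each $n$, the degenerate relations $x_3+x_4=\tfrac{\pi}{2}$, $x_1=x_2$ that generate $\Phi_{1,1}$, rather than seeking a global bound on $n$. You would also need to fix a subsidiary imprecision: the vanishing-sum primes attached to sporadic solutions of the generalized equation include $7$ (e.g.\ lcm $84$ in $\overline\Psi$), not just $2,3,5$; those solutions get filtered out by the ordering constraints when passing from $\Omega_3$ to $\Omega_2$, so any argument relying on the prime support of the \emph{triangle} data rather than the \emph{equation} data must account for that passage.
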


\begin{remark}
	In a recent work \cite{HLM}, Huang, Lalín, and Mila investigated the spherical Heron triangles. These spherical triangles satisfy certain rationality conditions distinct from the ones in this article. The authors are able to parameterize the spherical Heron triangles by the rational points of particular families of elliptic curves. In Section $8$ of \cite{HLM}, the authors discussed variations of their rationality definition concerning spherical triangles. In particular, they suggested defining a proper spherical triangle to be rational if its side lengths and angles are in $G$. Note that for a proper spherical triangle with angles $\alpha, \beta,\gamma$  on a unit sphere, its area is given by $\alpha+\beta+\gamma-\pi$. As a result, a spherical triangle whose side lengths and angles are in $G$ also has its area in $G$, satisfying the definition of rationality presented in this paper. The authors of \cite{HLM} observed that the family in $\Lambda_1$ in \ref{emp.cl} provides examples of measurements of spherical triangles whose side lengths and angles are in $G$. They raised the question about the existence of other such rational spherical triangles. If we relax the requirement of their rationality to solely including side lengths and areas being in $G$, then the seven measurements in the set $\Lambda_2$ and the single sporadic measurement in the set $\Lambda_1$, in \ref{emp.cl}, provide additional examples. Furthermore, if Conjecture \ref{cj.lc} holds true, then only these eight additional rational spherical triangles need to be considered to address the above question in Huang-Lalín-Mila \cite{HLM}. The angles of these eight spherical triangles can be found in \cite{Cox}.
\end{remark}

\begin{emp}

Let $a, b, c, E$ be variables, and consider {\it L'Huilier's equation}:
\begin{equation}\label{eq.lh}
	\operatorname{tan}^2(\frac{1}{4}E)=\operatorname{tan}[\frac{1}{4}(-a+b+c)]\operatorname{tan}[\frac{1}{4}(a-b+c)]\operatorname{tan}[\frac{1}{4}(a+b-c)]\operatorname{tan}[\frac{1}{4}(a+b+c)].
\end{equation}

As we will discuss in Proposition \ref{pro.bj}, measurements of rational spherical triangles correspond to solutions to Equation \eqref{eq.lh} in $G$ within certain ranges. To study the solutions to Equation \eqref{eq.lh}, it is convenient to consider the following {\it generalized L'Huilier's equation}:

\begin{equation}\label{eq.ne4}
	(\operatorname{tan}x_0)^2=(\operatorname{tan}x_1)(\operatorname{tan}x_2)(\operatorname{tan}x_3)(\operatorname{tan}x_4).
\end{equation}

A close examination of the solutions in $G$ to Equation \eqref{eq.ne4} leads us to the following conjecture.

\end{emp}

\begin{conjrep}[\ref{cj.fi}]
		$(x_0,x_1,x_2,x_3,x_4)$ is a solution to \eqref{eq.ne4} in $G$ with $0<x_i<\frac{\pi}{2}$ for $0\le i\le 4$ if and only if $(x_0,x_1,x_2,x_3,x_4)$ is either in the infinite set $\Phi$ given in \ref{emp.fam} or in the set $\Psi$ consisting of $2928$ elements given in \ref{emp.spo}.
\end{conjrep}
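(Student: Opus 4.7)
The plan is to translate Equation~\eqref{eq.ne4} into a statement about vanishing sums of roots of unity, and then combine classical tools from the theory of such sums with an archimedean bound on the denominator.

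Step 1: rewrite in roots of unity. For $x\in G$ with $0<x<\pi/2$, set $\zeta=e^{2ix}$; this is a root of unity, and $\tan x = -i(\zeta-1)/(\zeta+1)$. Letting $\zeta_j = e^{2ix_j}$ for $j=0,\dots,4$, Equation~\eqref{eq.ne4} becomes the polynomial identity
\[
(\zeta_0-1)^2(\zeta_1+1)(\zeta_2+1)(\zeta_3+1)(\zeta_4+1) + (\zeta_0+1)^2(\zeta_1-1)(\zeta_2-1)(\zeta_3-1)(\zeta_4-1)=0.
\]
Expanding produces a vanishing sum of at most $64$ roots of unity with coefficients $\pm 1$ in the monomials $\zeta_0^{\epsilon_0}\zeta_1^{\epsilon_1}\cdots\zeta_4^{\epsilon_4}$ with $\epsilon_0\in\{0,1,2\}$ and $\epsilon_j\in\{0,1\}$.

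Step 2: bound the common denominator $N:=\operatorname{lcm}(\operatorname{den}(x_0),\dots,\operatorname{den}(x_4))$. The plan is to exploit the Galois action of $\operatorname{Gal}(\mathbb{Q}(\zeta_N)/\mathbb{Q})\cong (\mathbb{Z}/N)^\times$: for each $\sigma$ the tuple $(\sigma(\zeta_0),\dots,\sigma(\zeta_4))$ satisfies the same vanishing identity, giving a large family of simultaneous constraints. Combining these with the archimedean logarithmic form $2\log|\tan x_0|=\sum_{j=1}^{4}\log|\tan x_j|$ and Lehmer-type lower bounds on $|\log|\tan(r\pi)||$ for $r\in\mathbb{Q}$ (equivalently, on heights of cyclotomic units $\zeta-1$ and $\zeta+1$), one expects to rule out all primes $p\mid N$ exceeding some absolute constant, and to bound the $p$-adic valuations of $N$ for the remaining small primes. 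Conjecturally $N$ is supported on $\{2,3,5\}$, consistent with the known measurements in $\Lambda_1\cup\Lambda_2$.

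Step 3: classify minimal vanishing sums. With $N$ restricted, apply the Conway--Jones classification of minimal vanishing sums of roots of unity to the identity of Step~1. Every vanishing sum decomposes into a short list of primitive building blocks, each a scalar multiple of $1+\zeta_p+\zeta_p^2+\dots+\zeta_p^{p-1}=0$ for a prime $p$, and substitution into our $\le 64$-term sum yields a finite tree of subcases. The infinite family $\Phi$ should appear precisely when the identity factors through a lower-degree cyclotomic relation (forcing several tangents to collapse), while the isolated solutions in $\Psi$ arise from sporadic primitive sums of length $\le 12$. A final enumeration over $5$-tuples $(n_0/N,\dots,n_4/N)$ with $0<n_j<N/2$ for the finitely many surviving $N$ will, if the conjecture holds, yield exactly $\Phi\cup\Psi$.

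The main obstacle is Step~2: producing an effective bound on $N$. Without it, Conway--Jones classification is not directly applicable, and one cannot hope to show that $\Psi$ is exhaustive. This is presumably where the ``partial results'' of the abstract enter, treating restricted configurations (e.g., equal or paired $x_j$, or prescribed prime support on $N$) where cancellation of the Galois orbit can be carried out explicitly; the generic case will require a genuinely new Diophantine input.
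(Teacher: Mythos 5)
Conjecture~\ref{cj.fi} is precisely a \emph{conjecture} in the paper; the authors do not prove it. What the paper offers are partial results (Theorems \ref{th.4pr2}, \ref{th.4msf}, \ref{th.sfn2n}, \ref{th.4mnsf}, \ref{th.mnsf}, \ref{th.8nsf}, \ref{th.25}) under restrictive hypotheses on the prime factorization of the common denominator, together with a computational verification for tuples with lcm at most $300$ (Proposition~\ref{le.comp}). Since you yourself flag Step~2 as an unresolved obstacle, your blueprint also stops short of a proof, so there is no ``paper's own proof'' to compare against; what can usefully be compared is the framework for the partial results.

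The two frameworks differ substantively. The paper works \emph{multiplicatively} inside the free abelian group $X^n$ of cyclotomic numbers: by Proposition~\ref{pro.tar}, each $\tan x_i$ is a Laurent monomial in the elements $v(n,a)=1-\zeta_n^a$, so Equation~\eqref{eq.ne4} becomes a relation in $X^n$ (or $\widehat{X^n}$) that one analyzes coordinatewise in Conrad's explicit basis (Theorem~\ref{th.ba2}). Each tangent contributes only a handful of basis elements, which keeps the multiplicity bookkeeping finite and manageable. Your Step~1 instead clears denominators and expands, producing an \emph{additive} vanishing sum of up to $64$ terms (with coefficients in $\{\pm 1,\pm 2\}$, not just $\pm1$, because of the square) plus a nonzero rational constant; you then hope to invoke a Conway--Jones-type classification. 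The two settings are algebraically related---both are built from $1\pm\zeta$---but passing to the additive picture blows up the number of terms and discards the free-abelian structure that drives the paper's arguments.

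There are also concrete errors in your plan beyond the acknowledged difficulty of Step~2. First, the infinite family $\Phi$ already contains tuples of unbounded lcm (e.g.\ $(s,s,s,t,\tfrac12-t)\pi$ with $s,t$ arbitrary), so no bound on $N$ can exist; the logical order must be reversed: one must first show that large $N$ forces a collapse of the form $x_i+x_j=\tfrac{\pi}{2}$ or $x_i=x_j$, and only then enumerate the sporadic residue. That is exactly what the paper's partial Theorems do, case by case, under hypotheses on the prime support of $N$. Second, your statement that the lcm is conjecturally supported on $\{2,3,5\}$ is contradicted already by $\Psi$: it contains elements with lcm $84$, which has a factor of $7$. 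Third, Conway--Jones classify minimal vanishing sums of length only on the order of a dozen; there is no off-the-shelf classification at length $64$, and the decomposition of a $64$-term sum into minimal blocks is not a tractable finite tree in any usable sense. Fourth, the Lehmer-type height lower bounds you would need are either conjectural or far too weak to give an unconditional archimedean bound. None of the three steps can presently be carried out, and in particular Step~2, the one you identify as the main obstacle, is not merely hard but misconceived as stated. The paper's Conrad-basis approach is what makes the partial results accessible; even so, the full conjecture remains open in the paper.
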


\medskip

\begin{prorep}[\ref{pro.coli}]
		If Conjecture \ref{cj.fi} is true, then Conjecture \ref{cj.lc} is true.
\end{prorep}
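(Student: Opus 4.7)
The plan is to exhibit a correspondence between measurements of proper rational spherical triangles and a distinguished class of solutions to the generalized L'Huilier equation \eqref{eq.ne4}, then apply Conjecture~\ref{cj.fi} to list all such solutions and filter them by the linear constraints that cut out actual triangles. Concretely, given a proper rational spherical triangle with measurement $(E,a,b,c)$ and $a\le b\le c$, set
\[
x_0=\tfrac{E}{4},\quad x_1=\tfrac{-a+b+c}{4},\quad x_2=\tfrac{a-b+c}{4},\quad x_3=\tfrac{a+b-c}{4},\quad x_4=\tfrac{a+b+c}{4}.
\]
The spherical triangle inequalities together with $0<a,b,c<\pi$ and $0<E<2\pi$ force each $x_i\in(0,\pi/2)$; rationality gives $x_i\in G$; and L'Huilier's identity (Theorem~\ref{th.lh}) yields that $(x_0,\dots,x_4)$ solves \eqref{eq.ne4}. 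Conversely, any tuple satisfying \eqref{eq.ne4} together with the relations $x_1+x_2+x_3=x_4$ and $x_1\ge x_2\ge x_3>0$ recovers a proper rational spherical triangle via $a=2(x_2+x_3)$, $b=2(x_1+x_3)$, $c=2(x_1+x_2)$, $E=4x_0$. This is essentially the bijection formalized in the forthcoming Proposition~\ref{pro.bj}.

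Granting Conjecture~\ref{cj.fi}, every tuple arising this way must lie in $\Phi\cup\Psi$. The proof then reduces to the combinatorial task of intersecting $\Phi\cup\Psi$ with the affine hyperplane $x_1+x_2+x_3=x_4$ and the inequalities $x_1\ge x_2\ge x_3>0$, and translating each surviving tuple back into a measurement $(E,a,b,c)$. For $\Psi$ this is a finite enumeration over $2928$ elements, which I would expect to yield exactly the seven tuples of $\Lambda_2$ together with the sporadic tuple $(1,\tfrac12,\tfrac23,\tfrac23)\pi$ of $\Lambda_1$. For the infinite family $\Phi$, one uses the explicit parameterization from~\ref{emp.fam} to show that the only members compatible with the linear relation $x_1+x_2+x_3=x_4$ produce the two one-parameter subfamilies of $\Lambda_1$ in which two sides equal $\pi/2$ and the remaining side equals the area.

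The main obstacle is this last filtering step, and in particular the infinite family. For $\Psi$ the verification is essentially computational bookkeeping. For $\Phi$ one must rule out spurious tuples that satisfy \eqref{eq.ne4} and the linear constraint but whose recovered side lengths fail one of the remaining conditions (for instance, the strict inequality $x_4<\pi/2$, the ordering, or the possibility that the resulting triangle is not proper), and this requires intimate use of the explicit description of $\Phi$. Once that description is in hand, the argument is a direct case analysis, and the two one-parameter families of $\Lambda_1$ emerge precisely as the degenerate degenerations of $\Phi$ satisfying $x_1+x_2+x_3=x_4$.
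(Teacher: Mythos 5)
Your approach mirrors the paper's exactly: map measurements into the solution set $\Omega_3$ via the linear change of variables (the paper's Proposition~\ref{pro.bj2}), invoke Conjecture~\ref{cj.fi} to deduce the image lies in $\Phi\cup\Psi$, and then filter by the conditions defining $\Omega_3$ (the relation $x_4=x_1+x_2+x_3$, the ordering, and the interval constraints). This is precisely what the paper does in Lemmas~\ref{le.int} and~\ref{le.int2} followed by the case analysis in Proposition~\ref{pro.coli}, so there is no methodological divergence.

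However, you stop short of actually carrying out the filtering step, which you correctly identify as the substance of the argument, and your predictions for how the intersection splits across $\Phi$ and $\Psi$ are wrong in a way worth flagging. You expect $\Psi\cap\Omega_3$ to yield all seven of $\Lambda_2$ plus the sporadic $(1,\tfrac12,\tfrac23,\tfrac23)\pi$, and you expect $\Phi\cap\Omega_3$ to give only the two one-parameter families of $\Lambda_1$. In fact $\Psi\cap\Omega_3$ has only six elements (Lemma~\ref{le.int2}), all mapping into $\Lambda_2$, while $\Phi\cap\Omega_3$ contributes more than the one-parameter families: the singletons $\Phi_{2,1}\cap\Omega_3=\{(\tfrac14,\tfrac18,\tfrac18,\tfrac{5}{24},\tfrac{11}{24})\pi\}$ and $\Phi_{3,1}\cap\Omega_3=\{(\tfrac18,\tfrac{1}{24},\tfrac{1}{12},\tfrac{7}{24},\tfrac{5}{12})\pi\}$ map under $\psi$ to the sporadic $\Lambda_1$ element $(1,\tfrac12,\tfrac23,\tfrac23)\pi$ and the $\Lambda_2$ element $(\tfrac12,\tfrac14,\tfrac23,\tfrac34)\pi$ respectively (Lemma~\ref{le.int}, parts (iii) and (v)). So the linear constraint does not simply kill the one-parameter degrees of freedom in $\Phi$; for several of the $\Phi_{i,j}$ it pins down a unique rational value of the parameter, and some of those pinned tuples land in $\Omega_3$. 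Your sketch would, if carried out as you describe for $\Phi$, miss two of the required measurements, so the case analysis cannot be deferred as bookkeeping.
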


\begin{emp}\label{em.id}

In this work, we will present several partial results that confirm Conjecture \ref{cj.fi}. The main idea is the following. Suppose we have a solution to Equation \eqref{eq.ne4} in $G$. Then we can view both sides of the equation as elements in a finitely generated free abelian group $X^n$ for some integer $n$, see Remark \ref{rk.rep}. In \cite{Con1} and \cite{Con2}, Conrad constructed a basis for this abelian group. By representing both sides of the equation in Conrad's basis, and comparing the coefficients, we can deduce information about the given solution.

\end{emp}

For $n\in\mathbb{N}_{\ge 3}$. Let

\[\overline{\Phi}_{1,1}:=\{(s,s,s,t,\frac{1}{2}-t)\pi\mid s,t\in\mathbb{Q}, \; 0<s< \frac{1}{2}\text{ and } 0<t\le\frac{1}{4}\}.\]

Our first main result is the following.

\medskip

\begin{thmrep}[\ref{th.4pr2}]
		Let $n$ be an odd prime. Assume that $(x_0,x_1,x_2,x_3,x_4)$ is a solution to \eqref{eq.ne4} in $G$ with $0<x_i<\frac{\pi}{2}$ and $\operatorname{den}(x_i)\in\{n,2n,4n\}$ for each $0\le i\le 4$. Then up to reordering $x_1,x_2,x_3,x_4$, the tuple $(x_0,x_1,x_2,x_3,x_4)$ is in $\overline{\Phi}_{1,1}$.
		
\end{thmrep}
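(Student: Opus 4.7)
The plan is to implement the strategy sketched in \ref{em.id}: view \eqref{eq.ne4} as the equation
\[
(\tan x_0)^2 \cdot (\tan x_1)^{-1}(\tan x_2)^{-1}(\tan x_3)^{-1}(\tan x_4)^{-1} = 1
\]
inside the free abelian group described in Remark \ref{rk.rep}, and to compare coefficients on both sides in Conrad's explicit basis from \cite{Con1, Con2}. Under the hypothesis that each $\operatorname{den}(x_i)$ lies in $\{n,2n,4n\}$ for an odd prime $n$, only the basis vectors indexed by angles of denominator dividing $4n$ can appear, which for an odd prime $n$ is a finite, explicit list that I would tabulate first.

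The organizing identity is the complementary-angle relation $\tan\theta \cdot \tan\bigl(\tfrac{\pi}{2}-\theta\bigr)=1$: a pair $\{t\pi,(\tfrac{1}{2}-t)\pi\}$ among $x_1,\dots,x_4$ contributes trivially to every coefficient, and this degree of freedom is precisely what produces the parameter $t$ in the definition of $\overline{\Phi}_{1,1}$. The aim is therefore to show that every solution under the stated hypothesis must contain at least one such complementary pair, with the three remaining entries (one of which is $x_0$) forced to coincide.

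I would carry out the coefficient comparison in a top-down sweep through the three tiers $4n$, $2n$, $n$. At the $4n$-tier, every basis vector must appear with total coefficient zero; since $x_0$ enters the equation with multiplicity $2$, the $4n$-tier basis vectors supported by $x_0$ must be cancelled either by some $x_j$ equal to $x_0$ or by a complementary pair among $x_1,\dots,x_4$. Descending to the $2n$- and $n$-tiers with the same bookkeeping, and using that $n$ is an odd prime so that the only multiplicative dependencies among the relevant tangent values come from the complementary-angle identity, I expect to conclude that, up to reordering $x_1,\dots,x_4$, three of the five entries coincide (including $x_0$) while the remaining two of $x_1,\dots,x_4$ form a complementary pair; this is exactly the definition of $\overline{\Phi}_{1,1}$.

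The hard part will be the mixed-denominator case, where the $x_i$'s are spread across more than one of the three tiers. A given basis vector may be reached from angles at different tiers, so the cancellation conditions cannot be analyzed one tier at a time in isolation; rather, one needs to control the combined matrix of Conrad basis expansions at all conductors dividing $4n$ simultaneously. The primality of $n$ will be used crucially to guarantee that this matrix has no accidental dependencies beyond the complementary-angle identity, and the most delicate combinatorial step will be to enumerate the finitely many possibilities for which basis vectors $x_0$ touches with its multiplicity-$2$ contribution and rule out every configuration that does not collapse to the pairing-plus-triple pattern above.
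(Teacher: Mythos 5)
Your proposal correctly identifies the paper's general strategy --- pass to $\widehat{X^{4n}}$, expand in Conrad's basis, compare multiplicities, and attribute the free parameter $t$ to a complementary pair satisfying $\tan\theta\cdot\tan(\tfrac{\pi}{2}-\theta)=1$ --- and you read off the target shape (triple equal to $x_0$ plus one complementary pair) correctly as $\overline{\Phi}_{1,1}$. However, the combinatorial core you defer as ``the most delicate step'' is the entire content of the theorem, and the route you sketch for it (a top-down tier sweep controlling a global matrix of basis expansions across conductors) does not match what the paper does and, as described, would not close: at any single conductor level the constraint is only that multiplicities cancel, and several configurations besides triple-plus-pair survive a levelwise count.

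The paper instead splits on the parity of the numerator $x_0'$ when $x_i=\tfrac{x_i'}{4n}\pi$. If $x_0'$ is odd, $(\tan x_0)^2$ has multiplicity $\pm 4$ at the $4n$-level basis element $v$ attached to $x_0$, and since each $\tan x_i$ contributes at most $\pm 2$ there, Lemma \ref{le.cong4p} forces two of the $x_i$ to coincide with $x_0$; exact cancellation in $\mathbb{C}^*$ then makes the remaining two complementary. If $x_0'$ is even, one first needs Lemma \ref{le.02} --- a parity count of the $\pm 2$ contributions at $4n$-level --- to conclude that the number of indices $i$ with $\operatorname{den}(x_i)=4n$ is $0$ or $2$, the pair (if present) being complementary, thereby reducing to an equation whose entries all have denominator in $\{n,2n\}$. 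One then works at an $n$-level basis element $v(n,x_0'/4)$ using Lemma \ref{le.mulp}, and in the residual subcases where the multiplicity vector of $(\tan x_1,\dots,\tan x_4)$ at $v$ is $(2,1,1,0)$ or $(1,1,1,1)$, one must bring in a \emph{second}, carefully chosen $n$-level element $w=v(n,4^{-1}x_j')$ to reach a contradiction; this argument only closes for $n\ge 7$, with $n=3,5$ checked directly. None of these ingredients (the $x_0'$-parity dichotomy, the $0$-or-$2$ count, the auxiliary element $w$, the small-prime check) appears in your outline, and the $(1,1,1,1)$ subcase in particular cancels perfectly at the first basis element you would inspect --- so a single-level multiplicity comparison cannot rule it out, and your proposal does not indicate how you would.
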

\medskip

By applying Theorem \ref{th.4pr2}, we derive the following result that verifies Conjecture \ref{cj.lc} in a special case.

\medskip

\begin{cororep}[\ref{cor.tri}]
		Suppose that $E,a,b, c\in G$ and $\operatorname{den}(E)=\operatorname{den}(a)=\operatorname{den}(b)=\operatorname{den}(c)$ is a prime. Then $(E,a,b,c)$ is the measurement of a proper rational spherical triangle if and only if $(E,a,b,c)=(\frac{1}{2},\frac{1}{2},\frac{1}{2},\frac{1}{2})\pi$.
\end{cororep}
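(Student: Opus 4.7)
The plan is to reduce to Theorem \ref{th.4pr2} via the L'Huilier substitution. The case $p=2$ is immediate: the hypothesis $\operatorname{den}(a) = \operatorname{den}(b) = \operatorname{den}(c) = 2$ together with $0 < a, b, c < \pi$ forces $a = b = c = \pi/2$, and then L'Huilier's formula gives $\tan^2(E/4) = \tan^3(\pi/8)\tan(3\pi/8) = \tan^2(\pi/8)$, so $E = \pi/2$, matching the claimed measurement. Henceforth let $p$ be an odd prime; I aim to show no rational spherical triangle meets the hypotheses, which completes the equivalence.

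Set $x_0 = E/4$, $x_1 = (-a+b+c)/4$, $x_2 = (a-b+c)/4$, $x_3 = (a+b-c)/4$, $x_4 = (a+b+c)/4$. By Theorem \ref{th.lh}, this tuple solves Equation \eqref{eq.ne4} in $G$, and properness together with the spherical triangle inequalities gives $0 < x_i < \pi/2$ for each $i$. Writing $E = m_E\pi/p$, $a = m_a\pi/p$, $b = m_b\pi/p$, $c = m_c\pi/p$ in reduced form, one checks (using that $p$ is odd and coprime to every $m_\bullet$) that $\operatorname{den}(x_0) \in \{p, 2p, 4p\}$ automatically, while for $i \in \{1,2,3,4\}$, $\operatorname{den}(x_i) \in \{p, 2p, 4p\}$ holds precisely when $p$ does not divide the associated signed sum $s_i \in \{-m_a + m_b + m_c,\; m_a - m_b + m_c,\; m_a + m_b - m_c,\; m_a + m_b + m_c\}$.

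Suppose first $p \nmid s_i$ for all $i \in \{1,2,3,4\}$. Then Theorem \ref{th.4pr2} applies and yields $(x_0, x_1, x_2, x_3, x_4) \in \overline{\Phi}_{1,1}$ after reordering $x_1, \dots, x_4$. Writing the tuple as $(s, s, s, t, \tfrac{1}{2}-t)\pi$ with $0 < s < 1/2$ and $0 < t \le 1/4$, the identity $x_1 + x_2 + x_3 = x_4$ forces $x_4 = (s + \tfrac{1}{4})\pi$ by summing the four entries. Among the multiset $\{s, s, t, \tfrac{1}{2}-t\}$, the only element that can equal $s + \tfrac{1}{4}$ under the range constraints is $\tfrac{1}{2}-t$, giving $t = \tfrac{1}{4}-s$ with $0 < s < \tfrac{1}{4}$ and $\{x_1, x_2, x_3\} = \{s\pi, s\pi, (\tfrac{1}{4}-s)\pi\}$ as a multiset. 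Inverting via $a = 2(x_2 + x_3)$ and its cyclic analogues produces the side multiset $\{a, b, c\} = \{4s\pi, \pi/2, \pi/2\}$. Since $\operatorname{den}(\pi/2) = 2 \ne p$, this contradicts the hypothesis, ruling out the generic branch.

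The remaining case, $p \mid s_i$ for some $i$, is where the main obstacle lies. Pairwise sums and differences among $s_1, s_2, s_3, s_4$ produce the quantities $2m_a, 2m_b, 2m_c$, each nonzero modulo $p$ (as $p$ is odd and coprime to every $m_\bullet$); hence at most one $s_i$, and thus exactly one, is divisible by $p$, meaning exactly one of $x_1, x_2, x_3, x_4$ equals $\pi/4$. By the symmetry of \eqref{eq.ne4} in its last four arguments, it suffices to treat the representative sub-case $x_4 = \pi/4$ (equivalently $a + b + c = \pi$). L'Huilier then simplifies to $\tan^2(E/4) = \tan(\tfrac{\pi}{4} - \tfrac{a}{2})\tan(\tfrac{\pi}{4} - \tfrac{b}{2})\tan(\tfrac{\pi}{4} - \tfrac{c}{2})$, a four-variable relation in which, after the substitution $u = \tfrac{\pi}{4} - \tfrac{a}{2}$ (and similarly $v, w$), the new variables all have denominator $4p$ and satisfy $u + v + w = \pi/4$. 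I would rule out rational $E$ with $\operatorname{den}(E) = p$ here by expressing both sides in Conrad's basis for rational tangents with denominator dividing $4p$ and comparing coefficients, paralleling the argument underpinning Theorem \ref{th.4pr2}. Combining all cases yields the single measurement $(\tfrac{1}{2}, \tfrac{1}{2}, \tfrac{1}{2}, \tfrac{1}{2})\pi$ as the only possibility.
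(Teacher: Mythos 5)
Your treatment of the generic case, where $p$ divides none of the signed sums $s_i$, is correct and runs along the same track as the paper's proof: pass through the L'Huilier substitution and invoke Theorem~\ref{th.4pr2}. Your derivation that the side multiset is $\{4s\pi, \tfrac{\pi}{2}, \tfrac{\pi}{2}\}$ and hence incompatible with an odd prime denominator is sound, and your direct handling of $p=2$ is also fine.

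The gap is in the degenerate case, and it is genuine. You are right to isolate it: if $p \mid s_i$ for some $i$ then $s_i = p$ and $x_i = \tfrac{\pi}{4}$, so $\operatorname{den}(x_i) = 4 \notin \{p,2p,4p\}$ and the hypothesis of Theorem~\ref{th.4pr2} fails. This case genuinely occurs (e.g.\ $p=7$, $a=b=\tfrac{2\pi}{7}$, $c=\tfrac{3\pi}{7}$ has $a+b+c=\pi$, hence $x_4 = \tfrac{\pi}{4}$), and it is not covered by Theorem~\ref{th.4pr2}. However, you do not actually rule it out: the final sentence, ``I would rule out rational $E$ \dots by expressing both sides in Conrad's basis and comparing coefficients, paralleling the argument underpinning Theorem~\ref{th.4pr2},'' is a plan rather than a proof. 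Moreover, the symmetry reduction ``it suffices to treat $x_4 = \tfrac{\pi}{4}$'' is not free: Equation~\eqref{eq.ne4} is symmetric in $x_1,\dots,x_4$, but the constraint $x_1+x_2+x_3 = x_4$ which encodes the triangle is not, so $x_4=\tfrac{\pi}{4}$ (i.e.\ $a+b+c=\pi$) and $x_1=\tfrac{\pi}{4}$ (i.e.\ $-a+b+c=\pi$) are distinct subcases unless your argument avoids using that constraint. For comparison: the paper's proof of Corollary~\ref{cor.tri} applies Theorem~\ref{th.4pr2} directly and does not discuss the possibility that some $\operatorname{den}(x_i)=4$; your observation that this hypothesis needs checking is a real contribution, but to complete the proof you would need to carry out the degenerate analysis rather than gesture at it. A degree/parity argument on cyclotomic numbers in $X^{4p}$, in the spirit of Proposition~\ref{pro.6var} and the ``degree of level $n$'' defined in Section~\ref{sec.gen}, is a plausible route: after setting $\tan(x_i)=1$ for the index with $x_i=\tfrac{\pi}{4}$, the remaining equation equates a square against a product of three tangents, and one can try to force a parity mismatch in the level-$p$ exponents.
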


\medskip

When $n$ in Theorem \ref{th.4pr2} is assumed to be square-free and not a prime, we obtain the following theorem.

\medskip

\begin{thmrep}[\ref{th.4msf}]
	Let $n\in\mathbb{N}_{\ge 3}$ be odd, square-free and not a prime. Assume that each prime factor of $n$ is  greater than $11$. Assume that $(x_0,x_1,x_2,x_3,x_4)$ is a solution to \eqref{eq.ne4} in $G$ with $0<x_i<\frac{\pi}{2}$ and $\operatorname{den}(x_i)=4n$ for each $0\le i\le 4$. Then up to reordering $x_1,x_2,x_3,x_4$, the tuple $(x_0,x_1,x_2,x_3,x_4)$ is in $\overline{\Phi}_{1,1}$.
\end{thmrep}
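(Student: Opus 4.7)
The plan is to carry out the framework sketched in \ref{em.id} and used in Theorem \ref{th.4pr2}. Rewriting \eqref{eq.ne4} multiplicatively as $\tan(x_0)^2 = \prod_{i=1}^4 \tan(x_i)$ and viewing both sides as elements of the finitely generated free abelian group $X^{4n}$ for which Conrad constructed an explicit basis in \cite{Con1, Con2}, our equation becomes a single $\mathbb{Z}$-linear relation among basis vectors. Because every $x_i$ has $\operatorname{den}(x_i) = 4n$, the Conrad-basis expansion of each $\tan(x_i)$ involves vectors of full conductor $4n$, and the proof reduces to matching coefficients on the two sides.

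Next, I would exploit that $n = p_1 \cdots p_k$ is squarefree with $k \ge 2$ distinct prime factors, each greater than $11$. Conrad's basis respects the multiplicative decomposition of $4n$, so the relation splits into pieces indexed by divisors of $4n$. Projecting onto the piece supported at a single prime factor $p_j$ yields a local relation of exactly the shape analyzed in Theorem \ref{th.4pr2}, since the effective local denominators belong to $\{p_j, 2p_j, 4p_j\}$. The hypothesis $p_j > 11$ should be used to rule out the exceptional coincidences among tangent values at rational multiples of $\pi$ with small conductor, which would otherwise produce parasitic local solutions. For each $p_j$ this forces, up to relabeling, that three of $x_1, x_2, x_3, x_4$ match $x_0$ in the $p_j$-local coordinates and the remaining two pair up in the sense that their tangent product equals $1$ locally at $p_j$ (so that they sum to $\pi/2$ in the $p_j$-local sense).

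The final step is globalization. Since $\operatorname{den}(x_i) = 4n$ for every $i$, combining the local data via a Chinese Remainder argument lifts the $p_j$-local matchings into honest equalities: three of $x_1, \ldots, x_4$ equal $x_0$, and the remaining two sum to $\pi/2$. This is precisely the defining condition of $\overline{\Phi}_{1,1}$.

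The main obstacle I anticipate is reconciling the local permutations across distinct primes: a priori, the triple of indices whose $x_i$ match $x_0$ locally at $p_1$ need not coincide with the triple matching locally at $p_2$, and a single global permutation has to be produced. Since there are only four candidate triples and $k \ge 2$, a pigeonhole argument should yield a common permutation, but one must verify carefully that the bound $p_j > 11$ provides enough local rigidity to render each permutation unique, so that the pigeonhole choice is forced rather than merely possible. This rigidity step, together with the verification that Conrad's basis decomposition behaves well under restriction to each prime component of $4n$, is where the bulk of the technical work is expected to lie.
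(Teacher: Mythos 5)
Your high-level plan — translate the trigonometric identity into a $\mathbb{Z}$-linear relation in Conrad's basis for $X^{4n}$ and compare coefficients — is the right starting point, and it is indeed the framework the paper uses. But the central reduction in your proposal, namely localizing at a single prime factor $p_j$ and invoking Theorem \ref{th.4pr2}, has a genuine gap. Conrad's basis of $X^{4n}$ does decompose as $\bigcup_{d\mid 4n} B_d$, but the level-$d$ component of $v(4n,a)$ in this basis is governed by the norm relation \eqref{eq.nor} and is \emph{not} the image of $a$ modulo $4p_j$. In particular, projecting $\tan(x_i) = v(4n,x_i')^2$ to the piece at level $4p_j$ does not produce $\tan\frac{x_i'}{4p_j}\pi$ or anything of the form $v(4p_j,b)^{\pm 2}$ with $b\equiv x_i'\pmod{4p_j}$; it produces a combination of basis elements of conductor $4p_j$ whose structure already depends on the full pole pattern of $x_i'$ at \emph{all} the primes dividing $n$. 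So there is no ``effective local denominator in $\{p_j, 2p_j, 4p_j\}$'' and no local equation to which Theorem \ref{th.4pr2} applies. The consequence is that the globalization/pigeonhole step you anticipate as the main obstacle never even gets off the ground. (There is also a small arithmetic slip: if three of $x_1,\dots,x_4$ match $x_0$, only one index remains, not two; the intended statement is that two of the $x_i$ equal $x_0$ and the other two sum to $\pi/2$.)

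The paper proceeds quite differently. Rather than reducing to the prime case, it works entirely at the top level $\widehat{X^{4n}}$, where $\tan\frac{a}{4n}\pi = v(4n,a)^2$ by Corollary \ref{cor.tarq}(3), and devotes Section \ref{sec.bassf} to an explicit expansion of $v(4n,a)$ in the basis $B_{4n}$ in terms of combinatorial invariants of the residue form of $a$: the cluster length $\operatorname{len}(4n,a)$, the pole set $\operatorname{pol}(4n,a)$, and $\operatorname{pmin}$. The proof of Theorem \ref{th.4msf} then splits into cases on these invariants (Lemmas \ref{le.sfo}, \ref{le.sfe}, \ref{le.sfin}, depending on the parity of $\ell$ and on whether some $\operatorname{len}(4n,x_k')$ equals $\ell$). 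In each case, one selects by hand a basis element $v\in B_{4n}$ lying in the support of exactly one or two of the $\tan x_i$; comparing the multiplicity of $v$ on the two sides of the equation then forces a pairing $x_i + x_j = \frac{\pi}{2}$ via Lemma \ref{le.congsf4dn}, and iterating yields $x_1 = x_2 = x_0$ and $x_3 + x_4 = \frac{\pi}{2}$. The hypothesis that every prime factor of $n$ exceeds $11$ is used precisely to guarantee that the candidate set $V$ of such basis elements is nonempty after excluding finitely many forbidden residues per prime; it is not there to rule out ``small-conductor coincidences'' in a local argument. To salvage your outline you would essentially need to rebuild the entire structural description of $\operatorname{supp}(v(4n,a))$ from Section \ref{sec.bassf}, at which point you are doing the paper's argument rather than a localization.
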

\medskip

\begin{remark}

	We cannot deduce an analogue statement as Corollary \ref{cor.tri} from Theorem \ref{th.4msf} due to the following reason. When the measurement of proper rational spherical triangle has identical non-prime denominators, the denominators of the corresponding solution to Equation \eqref{eq.ne4} can vary. To tackle this issue, it is necessary to investigate solutions in $G$ to \eqref{eq.ne4} allowing mixed denominators.

\end{remark}

Theorem \ref{th.sfn2n} addresses the case when the condition $\operatorname{den}(x_i)=4n$ in Theorem \ref{th.4msf} is replaced by $\operatorname{den}(x_i)\in\{n,2n\}$. Theorem \ref{th.4msf} and Theorem \ref{th.sfn2n} are the technical part of the article.

When $n$ in Theorem \ref{th.4pr2} is assumed to be a non square-free integer, we obtain the following theorem.
\medskip
\begin{thmrep}[\ref{th.4mnsf}]
	Let $n\in\mathbb{N}_{\ge 9 }$ be odd and non square-free.  Assume that every prime factor of $n$ is greater than $5$. Assume that $(x_0,x_1,x_2,x_3,x_4)$ is a solution to \eqref{eq.ne4} in $G$ with $0<x_i<\frac{\pi}{2}$ and $\operatorname{den}(x_i)=4n$ for each $0\le i\le 4$. Then up to reordering $x_1,x_2,x_3,x_4$, the tuple $(x_0,x_1,x_2,x_3,x_4)$ is in $\overline{\Phi}_{1,1}$.
\end{thmrep}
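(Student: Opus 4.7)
The strategy parallels that of Theorem \ref{th.4msf} (the square-free case) and rests on Conrad's basis for $X^{4n}$ described in \ref{em.id}. First, I would recast \eqref{eq.ne4} as the additive identity
\[
2\,[\tan x_0] = [\tan x_1] + [\tan x_2] + [\tan x_3] + [\tan x_4]
\]
in $X^{4n}$, then expand each $[\tan x_i]$ in Conrad's basis. The hypothesis $\operatorname{den}(x_i) = 4n$ for every $i$ guarantees that each $[\tan x_i]$ contributes a nonzero component supported on basis vectors indexed by elements of $(\mathbb{Z}/4n\mathbb{Z})^{\times}$ (the ``top'' level), while the hypothesis that every prime factor of $n$ exceeds $5$ removes the exceptional basis relations that occur at the small primes $2,3,5$.

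Second, I would exploit non square-freeness by writing $n = p^k m$ with $p$ prime, $k \ge 2$, and $\gcd(p,m)=1$. Conrad's basis stratifies according to the subgroup lattice of $(\mathbb{Z}/4n\mathbb{Z})^{\times}$, and the non square-free modulus produces additional independent basis vectors at each intermediate level $p^j$ for $1 \le j \le k$ that are absent in the square-free setting. Comparing top-level coefficients projected onto each $p^j$-primary component forces the four residue classes $\{4n x_i/\pi \bmod 4n\}_{i=1}^{4}$ to pair with $x_0$ in a highly constrained way: two of them essentially duplicate $x_0$ while the remaining two satisfy a complementary relation, producing the shape $(s,s,s,t,\tfrac{1}{2}-t)\pi$ characteristic of $\overline{\Phi}_{1,1}$. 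The identity $\tan(\tfrac{\pi}{2}-t) = 1/\tan t$ is precisely the source of the needed sign flip that makes the last two entries cancel on the Conrad side.

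I expect the main obstacle to lie in controlling the cross-level interactions among the intermediate basis vectors at $p, p^2, \ldots, p^k$, which have no analogue in the square-free case. Handling these requires invoking Theorem \ref{th.4pr2} after a suitable projection onto the $p$-subgroup, where the assumption that every prime factor exceeds $5$ prevents the sporadic configurations in $\Psi$ from polluting the coefficient comparison. A secondary technical point is tracking the interaction between the $p^k$-part and the $m$-part of $n$ through the decomposition of $X^{4n}$ induced by $\gcd(p^k,m)=1$; this coprimality should make the decomposition behave multiplicatively, so the final assembly reduces to matching the prime-case conclusion on each primary factor separately, forcing the tuple into $\overline{\Phi}_{1,1}$. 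Ruling out ``hybrid'' solutions whose various $x_i$ effectively live at different levels within the $p^k$-tower is the place where the strict lower bound on the prime factors of $n$ will need to be used most delicately.
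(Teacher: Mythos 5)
Your proposal takes a genuinely different route from the paper, and unfortunately there is a gap in it that I do not think can be repaired without essentially replacing the core idea. The paper's proof of Theorem~\ref{th.4mnsf} rests on Lemma~\ref{le.ns1}, which compares the \emph{pole sets} $\operatorname{pol}(4n,x_k',1)$ of the residue forms of the $x_k'$ directly inside Conrad's basis for $\widehat{X^{4n}}$. The crucial technical input is Lemma~\ref{le.ba2}: because $4n$ is non-square-free and every odd prime factor exceeds $5$, the basis representation of $v(4n,a)$ is a single ``box'' $\Gamma(4n,a)$ (a Cartesian product over the prime indices, determined by the pole set). One then chooses an index $i$ maximizing $|\operatorname{pol}(4n,x_i',1)|$, exhibits a basis element $v$ that lies in $\operatorname{supp}(\tan x_i)$ but not in $\operatorname{supp}(\tan x_k)$ for $k$ with a different pole set, and uses the parity of $\operatorname{multi}_v$ to force a partner $x_j$ with either $x_i=x_j$ or $x_i+x_j=\pi/2$ via Lemma~\ref{le.congsf4dn}. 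The theorem follows by cancellation. There is no factorization of $n$ into $p^k m$, no projection onto $p$-primary pieces, and no appeal to Theorem~\ref{th.4pr2}.

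The step in your proposal that fails is the reduction ``invoke Theorem~\ref{th.4pr2} after a suitable projection onto the $p$-subgroup.'' The groups $X^d$ for $d\mid 4n$ do not assemble into $X^{4n}$ via a CRT-compatible direct sum; the norm relation~\eqref{eq.nor}, $v(m,a)=\prod_{j} v(n,a+mj)$, ties the levels together in a way that prevents projecting the equation $(\tan x_0)^2=\prod_i\tan x_i$ in $X^{4n}$ to an equation of the same shape at level $4p$ or $4p^k$. Concretely, the image of $\tan(a\pi/4n)$ under any natural map $X^{4n}\to X^{4p^k}$ is not a tangent value $\tan(a'\pi/4p^k)$, so there is no equation to which Theorem~\ref{th.4pr2} applies. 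Your remark that the bound on prime factors ``prevents the sporadic configurations in $\Psi$ from polluting the coefficient comparison'' is also off target: $\Psi$ plays no role in the fixed-denominator theorems, and the hypothesis that every prime factor exceeds $5$ is used only to guarantee that the auxiliary set $V$ in the proof of Lemma~\ref{le.ns1} is nonempty (there must be enough residues modulo each $p_s$ to avoid a bounded number of forbidden values). The missing idea in your write-up is precisely the pole-counting comparison that makes the direct basis argument close.
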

\medskip
Theorem \ref{th.mnsf} and \ref{th.8nsf} analyze the cases when the condition $\operatorname{den}(x_i)=4n$ in Theorem \ref{th.4mnsf} is replaced by $\operatorname{den}(x_i)\in\{n,2n\}$ and $8\mid \operatorname{den}(x_i)$, respectively.

\begin{emp}

This paper is structured as follows. Section \ref{sec.lin} examines the relation between rational spherical triangle and solution of L'Huilier's equation and the generalized L'Huilier's equation. Section \ref{sec.conj} focuses on Conjecture \ref{cj.fi}  and its implication to the possible measurements of rational spherical triangles. 
The representation of tangents of elements in $G$ using cyclotomic numbers is presented in Section \ref{sec.form}. Conrad's basis for the group generated by cyclotomic numbers is recalled in Section \ref{sec.bas}. Section \ref{sec.prime} studies the solutions to Equation \eqref{eq.ne4} in $G$ under the condition that the denominators of solutions are equal to the same prime. The basis representation for the square-free case is discussed in Section \ref{sec.bassf}. Sections \ref{sec.sf1} and \ref{sec.sf2} investigate the solutions to Equation \eqref{eq.ne4} in $G$ assuming that denominators of solutions are equal to the same square-free, non-prime number. Section \ref{sec.nonsf} addresses the same question under the condition that denominators of solutions are equal to the same non-square-free number. Solutions to Equation \eqref{eq.ne4} in $G$ that allow for certain small divisors are treated in Section \ref{sec.smal}. Finally, Section \ref{sec.gen} discusses the solutions of a generalization of Equation \eqref{eq.ne4}.

\end{emp}

\begin{acknowledgement}
This work is part of the author’s doctoral thesis at the University of Georgia. The author would like to thank his advisor, Dino Lorenzini, 
for many helpful suggestions, feedback, and constant support.
\end{acknowledgement}

	\section{Linear Transformations}\label{sec.lin}

In this section, we study the relation between measurements of rational spherical triangles and the solutions to Equation \eqref{eq.lh} and Equation \eqref{eq.ne4}.

\begin{lemma}\label{lem.ineq}
	
	Assume that $(E,a,b,c)\in\mathbb{R}^4$ is a solution to L'Huilier's Equation \eqref{eq.lh} with $0< a\le b\le c<\pi$ and $0<E<2\pi$. Then $0<a+b-c\le a-b+c\le-a+b+c<a+b+c<2\pi$.
	
\end{lemma}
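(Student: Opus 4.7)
The chain $a+b-c \le a-b+c \le -a+b+c < a+b+c$ is pure algebra from $a \le b \le c$ and $a > 0$: the first inequality is equivalent to $b \le c$, the second to $a \le b$, and the third to $a > 0$. Neither L'Huilier's equation nor the bound $c < \pi$ is used here. The substantive content of the lemma is the two remaining inequalities $a+b-c > 0$ and $a+b+c < 2\pi$.

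My approach is a sign analysis of L'Huilier's equation \eqref{eq.lh}. Since $0 < E < 2\pi$, we have $E/4 \in (0,\pi/2)$ and hence the left-hand side $\tan^2(E/4) > 0$. Denote the four tangent factors on the right by $T_1, T_2, T_3, T_4$ in the order they appear. From $0 < a \le b \le c < \pi$ the arguments $\tfrac{1}{4}(-a+b+c)$ and $\tfrac{1}{4}(a-b+c)$ both lie in $(0,\pi/2)$, using $b \le c$ and $a > 0$ for the lower bounds and $a,b,c < \pi$ for the upper bounds. Thus $T_1, T_2 > 0$, and positivity of the left-hand side forces $T_3 T_4 > 0$ as well.

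Finally, I enumerate the sign configurations of the pair $\bigl(a+b-c,\; 2\pi - (a+b+c)\bigr)$ and rule out every one except $(+,+)$. Strict positivity of $T_3 T_4$ forbids $a+b-c = 0$, and well-definedness of the equation forbids $a+b+c = 2\pi$. The mixed configurations $(+,-)$ and $(-,+)$ make $T_3 T_4$ negative and are excluded immediately. The main obstacle is the remaining configuration $(-,-)$: here both $T_3$ and $T_4$ are negative, so their product is positive and sign information alone is not enough. To dispose of this case I invoke the hypothesis $c < \pi$: from $a+b-c < 0$ one gets $a+b < c < \pi$, while from $a+b+c > 2\pi$ one gets $a+b > 2\pi - c > \pi$, yielding the contradiction $\pi > a+b > \pi$. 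Only $(+,+)$ survives, giving the required bounds.
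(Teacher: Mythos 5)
Your proof is correct and uses the same two key ideas as the paper's: that positivity of $\tan^2(E/4)$ together with $T_1, T_2 > 0$ forces $T_3 T_4 > 0$, and that the ``double-negative'' configuration ($a+b-c<0$ and $a+b+c>2\pi$) is incompatible with $c<\pi$. The only difference is organizational — you run a systematic sign enumeration, while the paper proves $a+b+c<2\pi$ by contradiction and then deduces $a+b-c>0$ from positivity — but these decompose the same logical content in slightly different orders.
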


\begin{proof}
	
		Since $0< a\le b\le c<\pi$, it is clear that $a+b-c\le a-b+c\le-a+b+c<a+b+c$, $0<-a+b+c,a-b+c<2\pi$, $0<a+b+c<3\pi$ and $-\pi<a+b-c<2\pi$. In particular, we know that $\operatorname{tan}(\frac{-a+b+c}{4}), \operatorname{tan}(\frac{a-b+c}{4})>0$.  It remains to show that $a+b+c<2\pi$ and $0<a+b-c$.
	
	Because $\operatorname{tan}(\frac{\pi}{2})$ is undefined, we get that $a+b+c\ne 2\pi$. We now show that $a+b+c<2\pi$. We prove it by contradiction. Assume that 
	\begin{equation}\label{eq.ine1}
		a+b+c>2\pi.
	\end{equation}
	
	Then $\frac{\pi}{2}<\frac{a+b+c}{4}<\frac{3\pi}{4}$. So $\operatorname{tan}(\frac{a+b+c}{4})<0$. Since $0<E<2\pi$, we know that $(\operatorname{tan}(\frac{E}{4}))^2>0$. Notice that
	
	\begin{equation}\label{eq.alt}
		\operatorname{tan}(\frac{a+b-c}{4})=\frac{(\operatorname{tan}(\frac{E}{4}))^2}{\operatorname{tan}(\frac{-a+b+c}{4})\operatorname{tan}(\frac{-a+b+c}{4})\operatorname{tan}(\frac{a+b+c}{4})}.
	\end{equation}
	
	Hence $\operatorname{tan}(\frac{a+b-c}{4})<0$. Because $-\pi<a+b-c<2\pi$, we know that 
	
		\begin{equation}\label{eq.ine2}
		a+b-c<0.
	\end{equation}
	
Combining inequalities \ref{eq.ine1} and \ref{eq.ine2} gives $c>\pi$, a contradiction. Hence $a+b+c<2\pi$. The inequality $0<a+b-c$ follows from Equation \eqref{eq.alt}.
\end{proof}

Let 
\begin{equation*}
	\begin{aligned}	
		\Omega_1:=\left\{
		(E, a, b, c) \;\middle|\;
		\begin{aligned}
			& \text{there is a rational spherical triangle with side lengths}\\
			& 0<a\leq b\leq c<\pi, \text{ and area } 0<E<2\pi.
		\end{aligned}
		\right\}.
	\end{aligned}
\end{equation*}
Notice that $\Omega_1$ is the set of the measurements of proper rational spherical triangles. Let
\begin{equation*}
	\begin{aligned}	
		\Omega_2:=\left\{
		( E,a, b, c) \;\middle|\;
		\begin{aligned}
			& ( E,a, b, c) \text{ is a solution to L'Huilier's Equation} \eqref{eq.lh}\text{ in } G\\
			&  \text{ such that } 0<a\leq b\leq c<\pi,\text{ and } 0<E<2\pi.
		\end{aligned}
		\right\}.
	\end{aligned}
\end{equation*}

\begin{proposition}\label{pro.bj}
	$\Omega_1=\Omega_2$.
\end{proposition}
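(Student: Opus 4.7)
The plan is to prove the two inclusions separately, using Theorem \ref{th.lh} for one direction and Lemma \ref{lem.ineq} together with the classical spherical triangle inequality for the other.

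For $\Omega_1\subseteq\Omega_2$, I would just quote L'Huilier's theorem directly: if $(E,a,b,c)$ is the measurement of a proper rational spherical triangle, then by the very definition of rational we have $a,b,c,E\in G$, by the definition of measurement we have $0<a\le b\le c<\pi$, and by the definition of proper we have $0<E<2\pi$. Theorem \ref{th.lh} then guarantees that $(E,a,b,c)$ satisfies L'Huilier's equation. So $(E,a,b,c)\in\Omega_2$.

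For the reverse inclusion $\Omega_2\subseteq\Omega_1$, the main point is to produce an actual spherical triangle realizing a given tuple in $\Omega_2$. I would invoke the (classical) existence criterion: given real numbers $0<a\le b\le c<\pi$, there exists a spherical triangle on the unit sphere with these side lengths if and only if $a+b>c$ and $a+b+c<2\pi$. Lemma \ref{lem.ineq} gives exactly these two inequalities for any $(E,a,b,c)\in\Omega_2$, so such a spherical triangle $\Delta$ exists. Since $a,b,c\in G$, what remains is to check that the area $E'$ of $\Delta$ equals the $E$ in the tuple; this will make $\Delta$ rational and hence $(E,a,b,c)\in\Omega_1$.

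The area check is the one point that needs a small argument, though it should not be the real obstacle. Applying Theorem \ref{th.lh} to $\Delta$ gives
\[
\operatorname{tan}^2\!\bigl(\tfrac{E'}{4}\bigr)=\operatorname{tan}\!\bigl[\tfrac{-a+b+c}{4}\bigr]\operatorname{tan}\!\bigl[\tfrac{a-b+c}{4}\bigr]\operatorname{tan}\!\bigl[\tfrac{a+b-c}{4}\bigr]\operatorname{tan}\!\bigl[\tfrac{a+b+c}{4}\bigr],
\]
and the right-hand side is the same as $\operatorname{tan}^2(E/4)$ by hypothesis. Both $E/4$ and $E'/4$ lie in $(0,\pi/2)$, where $\operatorname{tan}$ is strictly positive and injective, so $\operatorname{tan}(E/4)=\operatorname{tan}(E'/4)$ and hence $E=E'$. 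The only mild subtlety is making sure $E'\in(0,2\pi)$ so that $E'/4\in(0,\pi/2)$; this is a standard fact about proper spherical triangles (the area equals $\alpha+\beta+\gamma-\pi$ with each angle in $(0,\pi)$, and the constraint $a+b+c<2\pi$ from Lemma \ref{lem.ineq} forces the configuration to be proper), so I would cite it and move on.

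I do not expect any serious obstacle here: all the heavy lifting is already done by Theorem \ref{th.lh} and Lemma \ref{lem.ineq}. The mildly delicate step is the invocation of the existence criterion for a spherical triangle with prescribed sides, which I would either cite from a standard reference such as \cite{Zwi} or recover by a short direct construction (place two vertices at the endpoints of an arc of length $c$, and intersect the two circles of spherical radii $a$ and $b$ centered at them, using Lemma \ref{lem.ineq} to see the circles intersect and the intersection yields sides of length $<\pi$).
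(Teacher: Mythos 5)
Your proof is correct and follows essentially the same route as the paper: L'Huilier's theorem (Theorem \ref{th.lh}) gives $\Omega_1\subseteq\Omega_2$, and Lemma \ref{lem.ineq} together with the existence criterion (converse triangle inequality, including $a+b+c<2\pi$) for spherical triangles with prescribed sides gives $\Omega_2\subseteq\Omega_1$. The only difference is that you explicitly verify that the constructed triangle's area equals $E$ (via injectivity of $\tan$ on $(0,\pi/2)$ and the bound $0<E'<2\pi$), a step the paper's proof leaves implicit when it asserts the triangle has area $E$.
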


\begin{proof}

	If $(E,a,b,c)\in\Omega_1$, then $(E,a,b,c)$ is a solution to L'Huilier's Equation \eqref{eq.lh} by Theorem \ref{th.lh}. So $(E,a,b,c)\in\Omega_2$.
	
	Assume that $(E,a,b,c)\in\Omega_2$. Then $a+b>c, a+c>b,$ and $b+c>a$ by Lemma \ref{lem.ineq}. So 
	there is a rational spherical triangle with side lengths $a,b,c$ and area $E$ by the converse of triangle inequality for spherical triangles. Thus $(E,a,b,c)\in\Omega_1$.
\end{proof}

\begin{emp}\label{em.map}
		We now look at the relation between Equations \eqref{eq.lh} and \eqref{eq.ne4}. Let 
	\begin{equation*}
		\begin{aligned}	
			\Omega_3:=\left\{
			( x_0,x_1, x_2, x_3,x_4) \;\middle|\;
			\begin{aligned}
				& ( x_0,x_1, x_2, x_3,x_4)\text{ is a solution to Equation } \eqref{eq.ne4}\\
				& \text{in } G \text{ such that } 0<x_1\leq x_2\le x_3<x_4<\frac{\pi}{2},\\
				& x_4=x_1+x_2+x_3\text{ and } 0<x_0<\frac{\pi}{2}.
			\end{aligned}
			\right\}.
		\end{aligned}
	\end{equation*}

	Let
	\begin{align*}	
		\phi:\Omega_2
		&\rightarrow  
		\Omega_3\\
		( E,a, b, c)\quad&\mapsto\quad ( \frac{E}{4},\frac{a+b-c}{4}, \frac{a-b+c}{4}, \frac{-a+b+c}{4},\frac{a+b+c}{4}),
		\intertext{and let}
		\psi:\Omega_3
		&\rightarrow  
		\Omega_2\\
		(x_0,x_1,x_2,x_3,x_4)\quad&\mapsto\quad (4x_0,2x_1+2x_2,2x_1+2x_3,2x_2+2x_3).
	\end{align*}
\end{emp}

\begin{proposition}\label{pro.bj2}

The maps $\phi$ and $\psi$ are well defined and are inverse to each other.

\end{proposition}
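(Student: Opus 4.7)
The plan is to verify three things in sequence: (a) the image of $\phi$ lies in $\Omega_3$; (b) the image of $\psi$ lies in $\Omega_2$; (c) the two compositions are identity maps. None of the steps should be deep; the proposition is essentially a bookkeeping check that the linear change of variables $(E,a,b,c)\mapsto(E/4,(a+b-c)/4,\dots,(a+b+c)/4)$ is a bijection between the two constraint sets, and that it turns L'Huilier's equation into its generalized form.

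For $\phi$ being well-defined, I would first note that rationality (being in $G$) is preserved since all four coordinates of $\phi(E,a,b,c)$ are $\mathbb{Q}$-linear combinations of $E,a,b,c\in G$. The ordering $0<x_1\le x_2\le x_3<x_4$ follows immediately from $a\le b\le c$ by direct comparison (e.g.\ $x_1\le x_2$ is equivalent to $b\le c$), together with $a>0$ for the strict inequality $x_3<x_4$. The positivity $0<x_1$ and the bound $x_4<\pi/2$ are exactly the content of Lemma \ref{lem.ineq} (which gives $0<a+b-c$ and $a+b+c<2\pi$); the bound $0<x_0<\pi/2$ is immediate from $0<E<2\pi$. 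The identity $x_4=x_1+x_2+x_3$ is a one-line algebraic check, and finally \eqref{eq.lh} under the substitution $x_0=E/4$, $x_i=(\dots)/4$ becomes exactly \eqref{eq.ne4}, so $(x_0,\dots,x_4)\in\Omega_3$.

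For $\psi$ being well-defined, the argument is similar. Rationality is clear. For the ordering $a\le b\le c$ with $a=2(x_1+x_2)$, $b=2(x_1+x_3)$, $c=2(x_2+x_3)$, note $b-a=2(x_3-x_2)\ge 0$ and $c-b=2(x_2-x_1)\ge 0$. Positivity $a>0$ is immediate from $x_1,x_2>0$. For $c<\pi$ I use $c=2(x_2+x_3)<2(x_1+x_2+x_3)=2x_4<\pi$. The range $0<E<2\pi$ is just $0<4x_0<2\pi$. Finally, to verify that \eqref{eq.lh} holds, I compute
\[
-a+b+c=4x_3,\quad a-b+c=4x_2,\quad a+b-c=4x_1,\quad a+b+c=4(x_1+x_2+x_3)=4x_4,
\]
where the last step uses the defining relation $x_4=x_1+x_2+x_3$ of $\Omega_3$. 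Then \eqref{eq.lh} becomes $\tan^2(x_0)=\tan(x_1)\tan(x_2)\tan(x_3)\tan(x_4)$, which is precisely \eqref{eq.ne4}.

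For the inverse property, I would carry out two direct computations. Applying $\psi\circ\phi$ to $(E,a,b,c)$, the first coordinate returns $4\cdot(E/4)=E$; for the remaining three, each sum of the form $2\cdot\frac{a+b-c}{4}+2\cdot\frac{a-b+c}{4}$ telescopes to $a$ (and similarly to $b$, $c$). Applying $\phi\circ\psi$ to $(x_0,x_1,x_2,x_3,x_4)$ returns $x_0$ in the first coordinate, and an immediate expansion of each of $\frac{a+b-c}{4},\frac{a-b+c}{4},\frac{-a+b+c}{4}$ with the specific $a,b,c$ of $\psi$ recovers $x_1,x_2,x_3$; the last coordinate is $\frac{a+b+c}{4}=x_1+x_2+x_3$, which equals $x_4$ because $(x_0,\dots,x_4)\in\Omega_3$ satisfies $x_4=x_1+x_2+x_3$. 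The only mild subtlety — and the one place where the domain/codomain constraints are actually used rather than the formulas themselves — is this last identification, which is why $x_4=x_1+x_2+x_3$ is built into the definition of $\Omega_3$.
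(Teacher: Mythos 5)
Your proof is correct and follows essentially the same route as the paper: the inequality constraints come from Lemma \ref{lem.ineq} (or are checked directly), the linear relation $x_4 = x_1 + x_2 + x_3$ is an algebraic identity, and the two compositions are verified to be identities by direct substitution. The one place you are somewhat more explicit than the paper is in writing out that the substitution transforms \eqref{eq.lh} into \eqref{eq.ne4} and vice versa, which the paper leaves implicit; this is a reasonable addition but not a different argument.
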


\begin{proof}
	
	Assume that $(E,a,b,c)\in\Omega_2$. By Lemma \ref{lem.ineq}, we know that $0<\frac{a+b-c}{4}\le\frac{a-b+c}{4}\le \frac{-a+b+c}{4}<\frac{a+b+c}{4}<\frac{\pi}{2}$. It is clear that $\frac{a+b+c}{4}=\frac{a+b-c}{4}+\frac{a-b+c}{4}+\frac{-a+b+c}{4}$ and $0<\frac{E}{4}<\frac{\pi}{2}$. Hence $\phi$ is well defined.

	Assume that $(x_0,x_1,x_2,x_3,x_4)\in\Omega_3$. Since $0<x_1\leq x_2\le x_3<x_4<\frac{\pi}{2}$ and $0<x_0<\frac{\pi}{2}$, it is clear that $0<2x_1+2x_2\le 2x_1+2x_3\le 2x_2+2x_3<2\pi$ and $0<E<2\pi$. Because $0<x_4<\frac{\pi}{2}$ and $x_4=x_1+x_2+x_3$, we know that $0<2x_1+2x_2+2x_3<\pi$. Since $x_1,x_2,x_3>0$, it follows that $0<2x_1+2x_2\le 2x_1+2x_3\le 2x_2+2x_3<\pi$. So $\psi$ is well defined. 
	
	It is easy to verify that $\phi\circ\psi=\operatorname{id}$ and $\psi\circ\phi=\operatorname{id}$. So the claims follow.
\end{proof}

\begin{remark}
	Equation \eqref{eq.ne4} is naturally associated to the following twisted equation:
	
	\begin{equation}\label{eq.ne5}
		(\operatorname{tan}x_0)^2=-(\operatorname{tan}x_1)(\operatorname{tan}x_2)(\operatorname{tan}x_3)(\operatorname{tan}x_4).
	\end{equation}

Note that the tuple $(x_0,x_1,x_2,x_3,x_4)\in\mathbb{R}^5$ is a solution to Equation \eqref{eq.ne4} if and only if $(x_0,\eta_1 x_1,\eta_2 x_2,\eta_3 x_3,\eta_4 x_4)\in\mathbb{R}^5$ is a solution to Equation \eqref{eq.ne5} for all $\eta_1,\eta_2,\eta_3,\eta_4\in\{1,-1\}$ with $\prod\limits_{i=1}^4\eta_i=-1$. This is because $\operatorname{tan}(x)$ is an odd function.

\end{remark}

\section{A conjecture and its application to L'Huilier's Equation}\label{sec.conj}

In this section, we provide several families of solutions in $G$ to Equation \eqref{eq.ne4}. We then introduce Conjecture \ref{cj.fi} on all possible solutions in $G$ to Equation \eqref{eq.ne4}. Proposition \ref{pro.coli} discusses the implication of Conjecture \ref{cj.fi}  on the possible measurements of rational spherical triangles.

\begin{emp}\label{emp.fam}
	Let
	
	\begin{equation*}
		\aligned
	&\overline{\Phi}_{1,1}:=\{(s,s,s,t,\frac{1}{2}-t)\pi\mid s,t\in\mathbb{Q}, \; 0<s< \frac{1}{2}\text{ and } 0<t\le\frac{1}{4}\},\\
		&\overline{\Phi}_{1,2}:=\{(\frac{1}{4},s,\frac{1}{2}-s,t,\frac{1}{2}-t)\pi\mid s,t\in\mathbb{Q}, \;0<s\le t\le \frac{1}{4}\}.\\
		\endaligned
	\end{equation*}

	Let

	\begin{equation*}
		\aligned
		&\overline{\Phi}_{2,1}:=\{(\frac{1}{4},s,\frac{1}{3}-s,\frac{1}{3}+s,\frac{1}{2}-3s)\pi\mid s\in\mathbb{Q}\text{ and } 0<s< \frac{1}{6}\},\\
		&\overline{\Phi}_{2,2}:=\{(\frac{1}{2}-s,\frac{1}{2}-s,\frac{1}{3}-s,\frac{1}{3}+s,\frac{1}{2}-3s)\pi \mid s\in\mathbb{Q}\text{ and } 0<s< \frac{1}{6}\},
		\\
		&\overline{\Phi}_{2,3}:=\{(\frac{1}{6}+s,s,\frac{1}{6}+s,\frac{1}{3}+s,\frac{1}{2}-3s)\pi\mid s\in\mathbb{Q}\text{ and } 0<s< \frac{1}{6}\},
		\\
		&\overline{\Phi}_{2,4}:=\{(\frac{1}{6}-s,s,\frac{1}{3}-s,\frac{1}{6}-s,\frac{1}{2}-3s)\pi\mid s\in\mathbb{Q}\text{ and } 0<s< \frac{1}{6}\},
		\\
		&\overline{\Phi}_{2,5}:=\{(3s,s,\frac{1}{3}-s,\frac{1}{3}+s,3s)\pi\mid s\in\mathbb{Q}\text{ and } 0<s< \frac{1}{6}\}.
		\\
		\endaligned
	\end{equation*}

	Let 
	\begin{equation*}
		\aligned
		&\overline{\Phi}_{3,1}:=\{(\frac{1}{8},\frac{1}{24},\frac{7}{24},s,\frac{1}{2}-s)\pi\mid s\in\mathbb{Q}\text{ and } 0<s\le \frac{1}{4}\},\\
		&\overline{\Phi}_{3,2}:=\{(\frac{3}{8},\frac{5}{24},\frac{11}{24},s,\frac{1}{2}-s)\pi\mid s\in\mathbb{Q}\text{ and } 0<s\le \frac{1}{4}\}.\\
		\endaligned
	\end{equation*}

	Let $S_4$ be the symmetric group on $\{1,2,3,4\}$. Consider the following action of $S_4$ on the set $G^5$. For each $\sigma\in S_4$ and $(x_0,x_1,x_2,x_3,x_4)\in G^5$, let $$\sigma\cdot(x_0,x_1,x_2,x_3,x_4):=(x_0,x_{\sigma(1)},x_{\sigma(2)},x_{\sigma(3)},x_{\sigma(4)}).$$

	Let $I:=\{(1,1),(1,2),(2,1),(2,2),(2,3),(2,4),(2,5),(3,1),(3,2)\}$.
	For each  $(i,j)\in I$, let $\Phi_{i,j}:=S_4\cdot \overline{\Phi}_{i,j}$.  Let
	
	 $$\Phi:=\bigcup\limits_{(i,j)\in I}\Phi_{i,j}.$$

\end{emp}

\begin{conjecture}\label{cj.fi}
		The tuple $(x_0,x_1,x_2,x_3,x_4)$ is a solution to \eqref{eq.ne4} in $G$ with $0<x_i<\frac{\pi}{2}$ for $0\le i\le 4$ if and only if $(x_0,x_1,x_2,x_3,x_4)$ is either in the infinite set $\Phi$ given in \ref{emp.fam} or in the set $\Psi$ consisting of $2928$ elements given in \ref{emp.spo}.
\end{conjecture}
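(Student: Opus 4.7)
The plan has two parts. The \emph{if} direction reduces to routine verification: for each of the nine parametric families $\overline{\Phi}_{i,j}$ listed in \ref{emp.fam} one checks \eqref{eq.ne4} as a one- or two-parameter trigonometric identity in $s$ and $t$, and for each of the $2928$ tuples in $\Psi$ one checks \eqref{eq.ne4} as a numerical identity in roots of unity.

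For the \emph{only if} direction I would follow the scheme outlined in \ref{em.id}. Given a solution $(x_0,\dots,x_4)\in G^5$ with $0<x_i<\pi/2$, write $d_i=\operatorname{den}(x_i)$ and let $N=\operatorname{lcm}(d_0,\dots,d_4)$. Each $\tan(x_i)$ lies in a free abelian group of cyclotomic numbers of conductor dividing $4N$ constructed in Section \ref{sec.form} (cf.\ Remark \ref{rk.rep}), and \eqref{eq.ne4} becomes a single additive relation there. Expanding both sides in Conrad's basis \cite{Con1,Con2} (recalled in Section \ref{sec.bas}) produces a finite system of $\mathbb{Z}$-linear constraints on the residues $\operatorname{num}(x_i)\bmod d_i$; these constraints are the rigidity engine that should force $(x_0,\dots,x_4)$ into $\Phi\cup\Psi$. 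The argument then splits by the arithmetic of $N$: Theorems \ref{th.4pr2}, \ref{th.4msf}, and \ref{th.4mnsf} already dispatch the case where all $d_i$ share a common large value (respectively prime, square-free composite with large prime factors, non-square-free with large prime factors), Theorems \ref{th.sfn2n}, \ref{th.mnsf}, and \ref{th.8nsf} extend this to uniform denominators with mild $2$-power twists, and Section \ref{sec.smal} handles the exceptional small-divisor cases, which is where the finite set $\Psi$ emerges from an explicit computation.

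The remaining task, and the \textbf{main obstacle}, is the \emph{mixed denominator} case flagged in the remark following Theorem \ref{th.4msf}: when the $d_i$ are not all equal, Conrad's basis contributes simultaneously at several conductors, and restricting attention to any single conductor loses information. I would attack this by a descent argument. If some prime $p$ divides one $d_i$ but not another, then carefully chosen Conrad basis vectors supported at conductor $p$ must cancel separately on each side of \eqref{eq.ne4}, producing divisibility relations that either strictly reduce $N$ or partition the problem into uniform-denominator sub-problems already dispatched above. A secondary obstacle is the bookkeeping of small-prime exceptions excluded by the hypotheses of Theorems \ref{th.4msf} (primes $\le 11$) and \ref{th.4mnsf} (primes $\le 5$); these residual cases must be folded back into the Section \ref{sec.smal} finite search. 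The most delicate point, in my view, is ensuring that the descent terminates at denominators within the explicit range that produces exactly $\Psi$, rather than at some strictly larger exceptional list, and this would likely require fine control over the supports of Conrad basis vectors under conductor mixing.
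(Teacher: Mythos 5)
The statement you were asked to prove is labelled a \emph{conjecture} in the paper, and it is genuinely open: the paper does not prove Conjecture~\ref{cj.fi}. What the paper provides instead is a collection of partial results (Theorems~\ref{th.4pr2}, \ref{th.4msf}, \ref{th.4mnsf}, \ref{th.sfn2n}, \ref{th.mnsf}, \ref{th.8nsf}, and Theorem~\ref{th.25}) covering uniform-denominator cases under size restrictions on the prime divisors, together with a finite machine verification up to denominator lcm $300$ (Proposition~\ref{le.comp}). The ``if'' direction you describe is indeed trivial and is disposed of in Remark~\ref{re.oned}. Your sketch of the ``only if'' direction correctly reproduces the paper's programme --- pass through Conrad's basis for $X^{4N}$ and compare multiplicities --- but it does not supply the missing steps; it \emph{names} them. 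You flag the mixed-denominator case yourself as the main obstacle, and that is exactly the gap the paper leaves open (it is explicitly remarked upon after Theorem~\ref{th.4msf}). Your proposed ``descent on the conductor'' is plausible as an idea, but you do not show that Conrad basis vectors supported at a prime $p$ dividing some $d_i$ but not others must cancel separately --- this is precisely the kind of claim that fails when cyclotomic relations at different conductors interact, and it is why the problem is hard. Likewise, you give no argument that the descent terminates within the range already covered by $\Psi$, nor do you carry out the small-prime bookkeeping (primes $\le 11$, resp.\ $\le 5$) beyond saying it must be ``folded back'' into a finite search.

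So this is a genuine gap, not a different route: what you have written is a research plan mirroring the paper's own programme, not a proof. To be clear about what would be needed: (a) a basis-comparison argument that works when $\operatorname{den}(x_0),\dots,\operatorname{den}(x_4)$ are genuinely distinct, controlling how the representation of $\operatorname{tan}(x_i)$ in $X^{4N}$ spreads over multiple levels via Formula~\eqref{eq.nor}; (b) removal of the hypotheses that every prime factor exceeds $11$ (resp.\ $5$), since the sporadic set $\Psi$ lives entirely at small primes and the infinite families $\Phi_{2,*}$ require $3\mid n$; and (c) an effective bound showing that once the lcm of the denominators is large enough the only solutions are those in $\Phi_{1,1}\cup\Phi_{1,2}$, so that Proposition~\ref{le.comp} covers the rest. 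None of these is established by your sketch, and none is established by the paper.
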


\begin{remark}\label{re.oned}
	It can be verified directly that each element in $\Psi$ and $\Phi$ is a solution to Equation \eqref{eq.ne4}.
\end{remark}

\begin{lemma}\label{le.int}
	The following is true.
	
	\begin{enumerate}
		\item $\Phi_{1,1}\cap\Omega_3=\{(s,s,s,\frac{1}{4}-s,\frac{1}{4}+s)\pi\mid 0<s<\frac{1}{8}\text{ and } s\in\mathbb{Q}\}\cup\{(s,\frac{1}{4}-s,s,s,\frac{1}{4}+s)\pi\mid \frac{1}{8}\le s<\frac{1}{4}\text{ and } s\in\mathbb{Q}\}$.
		
		\item $\Phi_{1,2}\cap\Omega_3=\emptyset$.
		
		\item $\Phi_{2,1}\cap\Omega_3=\{(\frac{1}{4},\frac{1}{8},\frac{1}{8},\frac{5}{24},\frac{11}{24})\pi\}$
		\item $\Phi_{2,i}\cap\Omega_3=\emptyset$ for $2\le i\le 5$.
		\item $\Phi_{3,1}\cap\Omega_3=\{(\frac{1}{8},\frac{1}{24},\frac{1}{12},\frac{7}{24},\frac{5}{12} )\pi\}$.
		\item $\Phi_{3,2}\cap\Omega_3=\emptyset$.
		
	\end{enumerate}
\end{lemma}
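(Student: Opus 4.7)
The plan is to exploit a single observation: for any tuple $(x_0,x_1,x_2,x_3,x_4)\in\Omega_3$, the relation $x_4=x_1+x_2+x_3$ is equivalent to $x_4=S/2$, where $S:=x_1+x_2+x_3+x_4$. Since the $S_4$-action introduced in \ref{emp.fam} permutes only the last four coordinates, the unordered multiset $\{x_1,x_2,x_3,x_4\}$ of any element of $\Phi_{i,j}$ coincides with the multiset of the last four coordinates of the representative tuple in $\overline{\Phi}_{i,j}$. Consequently $S$ is a fixed affine function of the parameter(s), so $x_4=S/2$ is determined, and the only remaining freedom is which of the four parameterized coordinates happens to equal $S/2$. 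This reduces every case to a handful of linear matching equations.

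For each of the nine families I would then proceed mechanically. First, compute $S/2$; if this expression is $\ge \pi/2$ throughout the admissible parameter range, the inequality $x_4<\pi/2$ fails and $\Phi_{i,j}\cap\Omega_3=\emptyset$. This immediately clears $\overline{\Phi}_{1,2}$ (sum $\pi$), $\overline{\Phi}_{2,2}$ (requires $s>1/6$, out of range), $\overline{\Phi}_{2,3}$ (sum $\pi$), and $\overline{\Phi}_{3,2}$ (half-sum $7\pi/12$). For the surviving families I would match $S/2$ in turn against each of the four parameterized coordinates; this yields a short list of linear equations in the parameter(s). Solving and intersecting with the admissible parameter range rules out $\overline{\Phi}_{2,4}$ and $\overline{\Phi}_{2,5}$ (every candidate lies at an excluded endpoint), pins $s=1/8$ in $\overline{\Phi}_{2,1}$ and $s=1/12$ in $\overline{\Phi}_{3,1}$, and produces the unique tuples asserted in (iii) and (v) after sorting the remaining three coordinates and checking that $x_3<x_4$ is strict.

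The only case that requires a little extra care is the two-parameter family $\overline{\Phi}_{1,1}$, corresponding to (i). Here $S/2=(s+\tfrac14)\pi$; matching against $\{s,s,t,\tfrac12-t\}$ excludes $s=s+\tfrac14$ trivially and, using $t\le\tfrac14<s+\tfrac14$ (since $s>0$), also excludes $t=s+\tfrac14$. The only surviving option is $\tfrac12-t=s+\tfrac14$, i.e.\ $t=\tfrac14-s$ with $0<s<\tfrac14$. The remaining three values $\{s,s,\tfrac14-s\}$ then sort differently according as $s<\tfrac18$ or $s\ge\tfrac18$, producing exactly the two pieces listed in (i). I expect no serious obstacle: once the reduction $x_4=S/2$ is in place the whole argument is a short, mechanical case-by-case check. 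The only bookkeeping is to preserve the strict inequality $x_3<x_4$ and the bound $x_0<\pi/2$ throughout, which are automatic within the stated parameter ranges.
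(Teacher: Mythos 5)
Your proposal is correct and follows essentially the same route as the paper: an elementary family-by-family linear analysis driven by the constraint $x_4=x_1+x_2+x_3$ together with the ordering and the bound $x_4<\frac{\pi}{2}$. The only difference is cosmetic --- the paper determines the maximal coordinate on parameter subranges and equates it with the sum of the other three, whereas you equivalently match each coordinate against the half-sum $S/2$ (any coordinate equal to $S/2$ is automatically the strict maximum, since the other three are positive and sum to $S/2$), which trims the bookkeeping; the resulting equations, the pinned values $s=\frac18$ in $\overline{\Phi}_{2,1}$ and $s=\frac1{12}$ in $\overline{\Phi}_{3,1}$, the two sorted pieces in case (i), and the empty cases all agree with the paper's conclusions.
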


\begin{proof}\hfill
		\begin{enumerate}
		\item Assume that $\sigma\cdot(s,s,s,t,\frac{1}{2}-t)\pi\in\Omega_3$ for some $\sigma\in S_4$ and $0<s< \frac{1}{2}$ and $0<t\le\frac{1}{4}$. By the assumption, we know that $\frac{1}{2}-t\ge t$. So  $m:=\operatorname{max}\{s,t,\frac{1}{2}-t\}=s$ or $\frac{1}{2}-t$.
		
		If $\frac{1}{2}-t<s$, then $m=s$. So $s+\frac{1}{2}=s+t+\frac{1}{2}-t=s$, a contradiction.
		
		If $\frac{1}{2}-t\ge s$, then $m=\frac{1}{2}-t$. So $s+s+t=\frac{1}{2}-t$. Thus $t=\frac{1}{4}-s$ and $\frac{1}{2}-t=\frac{1}{4}+s$. Because $t>0$, we know that $0<s<\frac{1}{4}$. So
		$\Phi_{1,1}\cap\Omega_3=\{(s,s,s,\frac{1}{4}-s,\frac{1}{4}+s)\pi\mid 0<s<\frac{1}{8}\text{ and } s\in\mathbb{Q}\}\cup\{(s,\frac{1}{4}-s,s,s,\frac{1}{4}+s)\pi\mid \frac{1}{8}\le s<\frac{1}{4}\text{ and } s\in\mathbb{Q}\}$.

		\item Assume that $\sigma\cdot(\frac{1}{4},s,\frac{1}{2}-s,t,\frac{1}{2}-t)\pi\in\Omega_3$ for some $\sigma\in S_4$ and $0<s\le t\le \frac{1}{4}$. Then $\operatorname{max}\{s,\frac{1}{2}-s,t,\frac{1}{2}-t\}=\frac{1}{2}-s$. Thus $s+t+(\frac{1}{2}-t)=\frac{1}{2}-s$. So $s=0$, a contradiction. Thus $\Phi_{1,2}\cap\Omega_3=\emptyset$.

		\item Assume that $\sigma\cdot(\frac{1}{4},s,\frac{1}{3}-s,\frac{1}{3}+s,\frac{1}{2}-3s)\pi\in\Omega_3$ for some $\sigma\in S_4$ and $0<s<\frac{1}{6}$. Since $0<s<\frac{1}{6}$, we get that $s<\frac{1}{3}-s<\frac{1}{3}+s$. So $m:=\operatorname{max}\{s,\frac{1}{3}-s,\frac{1}{3}+s,\frac{1}{2}-3s\}=\frac{1}{3}+s$ or $\frac{1}{2}-3s$. 
		
		If $s\in(0,\frac{1}{24})$, then $m=\frac{1}{2}-3s$. Thus $s+(\frac{1}{3}-s)+(\frac{1}{3}+s)=\frac{1}{2}-3s$. So $s=-\frac{1}{24}$, a contradiction.
		
		If $s\in[\frac{1}{24},\frac{1}{6})$, then $m=\frac{1}{3}+s$. Thus $s+(\frac{1}{3}-s)+(\frac{1}{2}-3s)=\frac{1}{3}+s$. So $s=\frac{1}{8}$. Therefore, $\Phi_{2,1}\cap\Omega_3=\{(\frac{1}{4},\frac{1}{8},\frac{1}{8},\frac{5}{24},\frac{11}{24})\pi\}$. 
		
		\item Assume that $\sigma\cdot(\frac{1}{2}-s,\frac{1}{2}-s,\frac{1}{3}-s,\frac{1}{3}+s,\frac{1}{2}-3s)\pi\in\Omega_3$ for some $\sigma\in S_4$ and $0<s<\frac{1}{6}$. Because $0<s<\frac{1}{6}$, we get that $\frac{1}{3}-3s<\frac{1}{3}-s<\frac{1}{2}-s$. So $m:=\operatorname{max}\{\frac{1}{2}-s,\frac{1}{3}-s,\frac{1}{3}+s,\frac{1}{3}-3s\}=\frac{1}{2}-s$ or $\frac{1}{3}+s$.

		If $s\in(0,\frac{1}{12})$, then $m=\frac{1}{2}-s$. Thus $\frac{1}{3}-s+\frac{1}{3}+s+\frac{1}{3}-3s=\frac{1}{2}-s$. Then $s=\frac{1}{3}$, a contradiction.
		
		If $s\in[\frac{1}{12},\frac{1}{6})$, then $m=\frac{1}{3}+s$. Thus $\frac{1}{2}-s+\frac{1}{3}-s+\frac{1}{3}-3s=\frac{1}{3}+s$. Then $s=\frac{1}{6}$, a contradiction. 
		
		Therefore $\Phi_{2,2}\cap\Omega_3=\emptyset$. By similar argument, one can show that $\Phi_{2,i}\cap\Omega_3=\emptyset$ for $3\le i\le 5$.

		\item Assume that $\sigma\cdot(\frac{1}{8},\frac{1}{24},\frac{7}{24},s,\frac{1}{2}-s)\pi\in\Omega_3$ for some $\sigma\in S_4$ and $0<s\le\frac{1}{4}$. From $0<s\le\frac{1}{4}$, we know that $\frac{1}{2}-s\ge s$. So
		$m:=\operatorname{max}\{\frac{1}{24},\frac{7}{24},s,\frac{1}{2}-s\}=\frac{7}{24}$ or $\frac{1}{2}-s$.

		If $s\in(0,\frac{5}{24})$, then $m=\frac{1}{2}-s$. It follows that $\frac{1}{24}+\frac{7}{24}+s=\frac{1}{2}-s$. So $s=\frac{1}{12}$. 
		
		If $s\in[\frac{5}{24},\frac{1}{4}]$, then $m=\frac{7}{24}$. It follows that $\frac{1}{24}+s+(\frac{1}{2}-s)=\frac{13}{24}=\frac{7}{24}$, a contradiction.
		
		Therefore, $\Phi_{3,1}\cap\Omega_3=\{(\frac{1}{8},\frac{1}{24},\frac{1}{12},\frac{7}{24},\frac{5}{12} )\pi\}$. 
		
		\item Assume that $\sigma\cdot(\frac{3}{8},\frac{5}{24},\frac{11}{24},s,\frac{1}{2}-s)\pi\in\Omega_3$ for some $\sigma\in S_4$ and $0<s\le\frac{1}{4}$. Then $m:=\operatorname{max}\{\frac{5}{24},\frac{11}{24},s,\frac{1}{2}-s\}=\frac{11}{24}$ or $\frac{1}{2}-s$.

		If $s\in(0,\frac{1}{24})$, then $m=\frac{1}{2}-s$. It follows that $\frac{5}{24}+\frac{11}{24}+s=\frac{1}{2}-s$. Then $s=-\frac{1}{12}$, a contradiction.
		
		If $s\in[\frac{1}{24},\frac{1}{4}]$, then $m=\frac{11}{24}$. It follows that $\frac{5}{24}+s+(\frac{1}{2}-s)=\frac{11}{24}$. This is a contradiction.
		
		Therefore, $\Phi_{3,2}\cap\Omega_3=\emptyset$.
		
	\end{enumerate}
\end{proof}

\begin{lemma}\label{le.int2}
	
Let $\Psi$ be the set given in \ref{emp.spo}. Then $\Psi\cap\Omega_3$ consists of the following six elements.
	
		\begin{align*}
		&(\frac{1}{8},\frac{1}{40},\frac{7}{40}, \frac{9}{40},\frac{17}{40})\pi,&&(\frac{1}{16},\frac{1}{48},\frac{5}{48},\frac{11}{48}, \frac{17}{48})\pi,&&(\frac{5}{16},\frac{5}{48},\frac{7}{48},\frac{11}{48},\frac{23}{48} )\pi,\\
		&(\frac{1}{4},\frac{1}{15},\frac{2}{15},\frac{4}{15}, \frac{7}{15})\pi,&&(\frac{3}{8},\frac{11}{120},\frac{19}{120},\frac{29}{120},\frac{59}{120})\pi,&&
		(\frac{1}{8},\frac{7}{120},\frac{17}{120},\frac{23}{120},\frac{47}{120})\pi.\\
	\end{align*}
\end{lemma}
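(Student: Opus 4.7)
The plan is to imitate the case-by-case verification used in Lemma \ref{le.int}. By Remark \ref{re.oned}, every element of $\Psi$ already satisfies Equation \eqref{eq.ne4} in $G$, so membership in $\Omega_3$ is equivalent to checking the remaining three conditions in the definition of $\Omega_3$: the ordering $0 < x_1 \le x_2 \le x_3 < x_4 < \frac{\pi}{2}$, the linear relation $x_4 = x_1 + x_2 + x_3$, and the bounds $0 < x_0 < \frac{\pi}{2}$. Since Equation \eqref{eq.ne4} is symmetric in $x_1, x_2, x_3, x_4$, for the purpose of intersecting with $\Omega_3$ we are free to permute the last four coordinates of any element of $\Psi$ before testing the ordering condition.

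To establish the $\supseteq$ inclusion, I would verify directly that each of the six tuples displayed in the statement lies in $\Psi \cap \Omega_3$. Containment in $\Psi$ is provided by \ref{emp.spo}, and the conditions defining $\Omega_3$ reduce to a trivial arithmetic check. For instance, in $(\frac{1}{8},\frac{1}{40},\frac{7}{40},\frac{9}{40},\frac{17}{40})\pi$ the last four entries are already in strictly increasing order, $\frac{1}{40}+\frac{7}{40}+\frac{9}{40}=\frac{17}{40}$, and every entry lies in $(0,\frac{\pi}{2})$; the same check succeeds for the other five tuples.

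For the $\subseteq$ inclusion, I would process each of the $2928$ elements of $\Psi$ as follows. Given $(y_0, y_1, y_2, y_3, y_4) \in \Psi$, sort $\{y_1, y_2, y_3, y_4\}$ in non-decreasing order to obtain a candidate $(y_0, x_1, x_2, x_3, x_4)$, and then test (i) $x_4 = x_1 + x_2 + x_3$, (ii) $x_3 < x_4$ strictly, and (iii) $x_0, x_4 \in (0, \frac{\pi}{2})$. Any element of $\Psi \cap \Omega_3$ must pass all three tests under this canonical reordering, so the intersection is obtained by filtering $\Psi$ through (i)--(iii).

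The main obstacle is simply the size of $\Psi$: unlike the parametric families $\Phi_{i,j}$ in Lemma \ref{le.int}, each of which is one-parameter and yields to inspection by locating the maximum of its entries, $\Psi$ has no such compact description, so the check cannot reasonably be done by hand and is instead carried out as a finite computer verification. Executing that filter produces exactly the six tuples stated, completing the proof.
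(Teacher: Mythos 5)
Your proposal is correct and takes essentially the same route as the paper: Remark \ref{re.oned} guarantees every element of $\Psi$ satisfies Equation \eqref{eq.ne4}, so membership in $\Omega_3$ reduces to checking the ordering and sum conditions, and the intersection is then determined by a finite (computer-assisted) scan of $\Psi$. The paper phrases the scan as checking conditions (i) $x_1\le x_2\le x_3\le x_4$ and (ii) $x_1+x_2+x_3=x_4$ on each element directly, while you filter via a sorted canonical representative per $S_4$-orbit, but these are the same verification.
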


\begin{proof}

	Let $(x_0,x_1,x_2,x_3,x_4)\in\Psi$. Then $(x_0,x_1,x_2,x_3,x_4)\in \Omega_3$ if and only if (i) $x_1\le x_2\le x_3\le x_4$ and (ii) $x_1+x_2+x_3=x_4$ (see Remark \ref{re.oned}). By checking each element in $\Psi$ given in \ref{emp.spo}, the six elements listed above are the only ones that satisfy the conditions (i) and (ii).
\end{proof}

\begin{proposition}\label{pro.coli}
	
	If Conjecture \ref{cj.fi} is true, then Conjecture \ref{cj.lc} is true.
\end{proposition}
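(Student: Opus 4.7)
The plan is to deduce the proposition from the bijections already established together with the explicit intersection computations in Lemmas \ref{le.int} and \ref{le.int2}. By Proposition \ref{pro.bj} the set $\Omega_1$ of measurements of proper rational spherical triangles equals $\Omega_2$, and by Proposition \ref{pro.bj2} the map $\phi:\Omega_2\to\Omega_3$ is a bijection with inverse $\psi$. Hence, to establish Conjecture \ref{cj.lc} (i.e.\ $\Omega_1\subseteq\Lambda_1\cup\Lambda_2$), it suffices to show $\psi(\Omega_3)\subseteq\Lambda_1\cup\Lambda_2$. Assuming Conjecture \ref{cj.fi}, every solution to Equation \eqref{eq.ne4} with $0<x_i<\pi/2$ lies in $\Phi\cup\Psi$, so $\Omega_3=(\Phi\cap\Omega_3)\cup(\Psi\cap\Omega_3)$.

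The two lemmas have already enumerated these intersections, so the proof reduces to applying the explicit formula
\[ \psi(x_0,x_1,x_2,x_3,x_4) = (4x_0,\,2x_1+2x_2,\,2x_1+2x_3,\,2x_2+2x_3) \]
to each listed tuple and checking membership in $\Lambda_1\cup\Lambda_2$. I would organize the verification by family.

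For the two parametric subfamilies constituting $\Phi_{1,1}\cap\Omega_3$, direct substitution gives $\psi(s,s,s,\tfrac{1}{4}-s,\tfrac{1}{4}+s)\pi=(4s,4s,\tfrac{1}{2},\tfrac{1}{2})\pi$ for $0<s<\tfrac{1}{8}$, and $\psi(s,\tfrac{1}{4}-s,s,s,\tfrac{1}{4}+s)\pi=(4s,\tfrac{1}{2},\tfrac{1}{2},4s)\pi$ for $\tfrac{1}{8}\le s<\tfrac{1}{4}$. Setting $m/d:=4s$, these produce exactly the two infinite families of $\Lambda_1$ in the ranges $0<m/d\le\tfrac{1}{2}$ and $\tfrac{1}{2}<m/d<1$. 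The singletons $\Phi_{2,1}\cap\Omega_3$ and $\Phi_{3,1}\cap\Omega_3$ map under $\psi$ to $(1,\tfrac{1}{2},\tfrac{2}{3},\tfrac{2}{3})\pi$ and $(\tfrac{1}{2},\tfrac{1}{4},\tfrac{2}{3},\tfrac{3}{4})\pi$, which are the sporadic element of $\Lambda_1$ and the third element of $\Lambda_2$ respectively. The six elements of $\Psi\cap\Omega_3$ listed in Lemma \ref{le.int2} map, by a direct arithmetic check, to the remaining six elements of $\Lambda_2$.

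Since the bulk of the work is packaged into the prior lemmas and the explicit formula for $\psi$, I expect no substantial obstacle. The only detail requiring care is verifying that the parameter ranges $0<s<\tfrac{1}{8}$ and $\tfrac{1}{8}\le s<\tfrac{1}{4}$ in the two $\Phi_{1,1}$ branches recover precisely the $\Lambda_1$ ranges $0<m/d\le\tfrac{1}{2}$ and $\tfrac{1}{2}<m/d<1$, and that the branches glue correctly at $s=\tfrac{1}{8}$ (where the image is the common tuple $(\tfrac{1}{2},\tfrac{1}{2},\tfrac{1}{2},\tfrac{1}{2})\pi$), so no measurement is missed or counted twice.
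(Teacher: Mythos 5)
Your proposal is correct and follows essentially the same route as the paper: reduce $\Omega_1$ to $\Omega_2$ via Proposition \ref{pro.bj}, use the bijection $\psi:\Omega_3\to\Omega_2$ from Proposition \ref{pro.bj2}, invoke Conjecture \ref{cj.fi} to write $\Omega_3=(\Phi\cap\Omega_3)\cup(\Psi\cap\Omega_3)$, and then apply $\psi$ to the enumerated intersections from Lemmas \ref{le.int} and \ref{le.int2}. One small omission: Conjecture \ref{cj.lc} is an equality $\Omega_1=\Lambda_1\cup\Lambda_2$, not just the inclusion $\Omega_1\subseteq\Lambda_1\cup\Lambda_2$, so a complete write-up should also record (as the paper does in one line) that $\Lambda_1\cup\Lambda_2\subset\Omega_2$ already follows from the observations in \ref{emp.cl} and Proposition \ref{pro.bj}; this costs nothing since it is prior knowledge and does not depend on Conjecture \ref{cj.fi}.
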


\begin{proof}

	By Proposition \ref{pro.bj}, Conjecture \ref{cj.lc} is equivalent to $\Omega_2=\Lambda_1\cup\Lambda_2$. By \ref{emp.cl} and Proposition \ref{pro.bj}, we know that $\Lambda_1\cup\Lambda_2\subset\Omega_2$. Assume that Conjecture \ref{cj.fi} is true. We now show that $\Omega_2\subset \Lambda_1\cup\Lambda_2$. Assume that $(E,a,b,c)\in\Omega_2$.

	By Proposition \ref{pro.bj2}, there exists a unique $x=(x_0,x_1,x_2,x_3,x_4)\in\Omega_3$ such that $\psi(x)=(E,a,b,c)$.
	Because we assumed that Conjecture \ref{cj.fi} is true, it follows that $x\in(\Phi\cup\Psi)\cap\Omega_3$. By Lemma \ref{le.int} and Lemma \ref{le.int2}, the following are the possible cases.

	\begin{enumerate}
		\item $x\in\Phi_{1,1}\cap\Omega_3$. If $x=(s,s,s,\frac{1}{4}-s,\frac{1}{4}+s)\pi$ for some $0<s<\frac{1}{8}$ and $s\in\mathbb{Q}$, then $\psi(x)=(4s,4s,\frac{1}{2},\frac{1}{2})\pi\in \Lambda_2$. Similarly, if $x=(s,\frac{1}{4}-s,s,s,\frac{1}{4}+s)\pi$ for some $\frac{1}{8}\le s<\frac{1}{4}$ and $s\in\mathbb{Q}$, then $\psi(x)=(4s,\frac{1}{2},\frac{1}{2},4s)\pi\in \Lambda_2$.
		\item $x\in\Phi_{2,1}\cap\Omega_3$. Then $x=(\frac{1}{4},\frac{1}{8},\frac{1}{8},\frac{5}{24},\frac{11}{24})\pi$. Thus $(E,a,b,c)=\psi(x)=(1,\frac{1}{2}, \frac{2}{3}, \frac{2}{3})\pi\in \Lambda_1$.
		\item $x\in\Phi_{3,1}\cap\Omega_3$. Then $x=(\frac{1}{8},\frac{1}{24},\frac{1}{12},\frac{7}{24},\frac{5}{12} )\pi$. Thus $\psi(x)=(\frac{1}{2},\frac{1}{4}, \frac{2}{3}, \frac{3}{4})\pi\in \Lambda_2$.
		\item $x\in\Psi\cap\Omega_3$. Then $x$ is one of the six elements given in Lemma \ref{le.int2}. The following computation shows that if $x$ is one of the six elements, then $\psi(x)\in \Lambda_2$.
		
		\small
		
		\begin{align*}
			\psi((\frac{1}{8},\frac{1}{40},\frac{7}{40}, \frac{9}{40},\frac{17}{40})\pi)&=(\frac{1}{2},\frac{2}{5},\frac{1}{2},\frac{4}{5})\pi,&&&\psi((\frac{1}{16},\frac{1}{48},\frac{5}{48},\frac{11}{48}, \frac{17}{48})\pi)&=(\frac{1}{4},\frac{1}{4}, \frac{1}{2}, \frac{2}{3})\pi,\\
			\psi((\frac{5}{16},\frac{5}{48},\frac{7}{48},\frac{11}{48},\frac{23}{48} )\pi)&=(\frac{5}{4},\frac{1}{2}, \frac{2}{3}, \frac{3}{4})\pi,&&&\psi((\frac{1}{4},\frac{1}{15},\frac{2}{15},\frac{4}{15}, \frac{7}{15})\pi)&=(1,\frac{2}{5}, \frac{2}{3}, \frac{4}{5})\pi,\\
			\psi((\frac{3}{8},\frac{11}{120},\frac{19}{120},\frac{29}{120},\frac{59}{120})\pi)&=(\frac{3}{2},\frac{1}{2}, \frac{2}{3}, \frac{4}{5})\pi,&&&\psi((\frac{1}{8},\frac{7}{120},\frac{17}{120},\frac{23}{120},\frac{47}{120})\pi)&=(\frac{1}{2},\frac{2}{5}, \frac{1}{2},\frac{2}{3})\pi.\\
		\end{align*}
		
		\normalsize

	\end{enumerate}
\end{proof}

\begin{emp}\label{em.inv}
	Consider the following action of $\mathbb{Z}/2\mathbb{Z}$ on the set $G^5$. Let $\theta\in\mathbb{Z}/2\mathbb{Z}$ be the non-identity element. For $(x_0,\dots,x_4)\in G^5$, let 
	
	$$\theta\cdot(x_0,\dots,x_4)=(\frac{\pi}{2}-x_0,\dots,\frac{\pi}{2}-x_4).$$ 
	
	To be used later in \ref{emp.spo}, we introduce the following group action. Since the $\mathbb{Z}/2\mathbb{Z}$ action commutes with the $S_4$ action given in \ref{emp.fam}, there is an induced action of $\mathbb{Z}/2\mathbb{Z}\times S_4$ on $G^5$. Let $\sigma\in S_4$ and $\theta\in\mathbb{Z}/2\mathbb{Z}$. Define $(\sigma,\theta)\cdot(x_0,\dots,x_4):=\sigma\cdot(\theta\cdot(x_0,\dots,x_4))$.
\end{emp}

\begin{proposition}
	$\theta:G^5\rightarrow G^5$ restricts to an involution on $\Phi$. More precisely,
	\begin{enumerate}[label=(\roman*)]
		\item\label{it.inv1} $\theta^2=\operatorname{id}$.
		\item\label{it.inv2} The following subsets are closed under the above $\mathbb{Z}/2\mathbb{Z}$ action: $\Phi_{1,2}$, $\Phi_{1,1}, \Phi_{2,1}, \Phi_{2,3}$ and  $\Phi_{2,5}$. 
		\item\label{it.inv3} $\theta\cdot \Phi_{2,2}=\Phi_{2,4}$.
		\item\label{it.inv4} $\theta\cdot \Phi_{3,1}=\Phi_{3,2}$.
	\end{enumerate}
	
\end{proposition}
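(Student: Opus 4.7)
The plan is to dispose of (i) immediately as a coordinatewise calculation: the map $x \mapsto \tfrac{\pi}{2} - x$ on $G$ is its own inverse, so $\theta^2 = \operatorname{id}$ on $G^5$. Since the $S_4$ and $\mathbb{Z}/2\mathbb{Z}$ actions commute (as noted in \ref{em.inv}) and $\Phi_{i,j} = S_4 \cdot \overline{\Phi}_{i,j}$, to show that $\theta \cdot \Phi_{i,j} \subseteq \Phi_{k,\ell}$ it suffices to show that $\theta \cdot \overline{\Phi}_{i,j} \subseteq \Phi_{k,\ell}$. Moreover, once one inclusion is established, the reverse inclusion in (ii), (iii), (iv) follows from (i).

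The strategy for each family is uniform: take a generic parameterized element of $\overline{\Phi}_{i,j}$, apply $\theta$ coordinatewise, then find a substitution of parameters together with a permutation $\sigma \in S_4$ that identifies the result with $\sigma \cdot (\text{generic element of the target family})$. In every case that occurs here the substitution is either $s \mapsto \tfrac{1}{6} - s$, $s \mapsto \tfrac{1}{2} - s$, or $t \mapsto \tfrac{1}{2} - t$, each of which preserves the open interval constraints governing the family. The required permutation is then forced by matching the multisets of the last four entries.

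As an illustration of (ii), for $\overline{\Phi}_{2,1}$ applying $\theta$ to $(\tfrac{1}{4}, s, \tfrac{1}{3}-s, \tfrac{1}{3}+s, \tfrac{1}{2}-3s)\pi$ yields $(\tfrac{1}{4}, \tfrac{1}{2}-s, \tfrac{1}{6}+s, \tfrac{1}{6}-s, 3s)\pi$, and setting $s' := \tfrac{1}{6}-s$ one checks that $\{s', \tfrac{1}{3}-s', \tfrac{1}{3}+s', \tfrac{1}{2}-3s'\} = \{\tfrac{1}{6}-s, \tfrac{1}{6}+s, \tfrac{1}{2}-s, 3s\}$, so a suitable $\sigma$ returns the tuple to $\overline{\Phi}_{2,1}$ with parameter $s' \in (0, \tfrac{1}{6})$. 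The same type of computation handles $\overline{\Phi}_{1,1}$ (substitution $s \mapsto \tfrac{1}{2}-s$, $t \mapsto \tfrac{1}{2}-t$, permutation swapping the last two indices), $\overline{\Phi}_{1,2}$ (via $(1,2)(3,4)$), and $\overline{\Phi}_{2,3}, \overline{\Phi}_{2,5}$ (each via $s \mapsto \tfrac{1}{6}-s$). For (iii), applying $\theta$ to the generic element of $\overline{\Phi}_{2,2}$ and setting $s' = \tfrac{1}{6}-s$ produces, after a single transposition, the generic element of $\overline{\Phi}_{2,4}$. For (iv), $\theta$ applied to $(\tfrac{1}{8}, \tfrac{1}{24}, \tfrac{7}{24}, s, \tfrac{1}{2}-s)\pi$ gives $(\tfrac{3}{8}, \tfrac{11}{24}, \tfrac{5}{24}, \tfrac{1}{2}-s, s)\pi$, and applying $\sigma = (1,2)(3,4)$ lands the tuple in $\overline{\Phi}_{3,2}$ with the same parameter $s$.

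There is no substantive obstacle here; the only work is the bookkeeping of nine case-by-case verifications, each consisting of an elementary parameter substitution together with an explicitly exhibited permutation. In every case one must additionally check that the new parameter remains in the open interval defining the target family, which is immediate since the substitutions are order-reversing affine bijections of those intervals onto themselves.
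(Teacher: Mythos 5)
Your proposal is correct and follows the same strategy the paper uses: apply $\theta$ coordinatewise to a generic element of each $\overline{\Phi}_{i,j}$, find an affine reparametrization together with an explicit element of $S_4$, and invoke $\theta^2 = \operatorname{id}$ to upgrade the one-sided inclusions to equalities (the paper works out only case (iii), with the others declared similar; your case-by-case computations such as for $\overline{\Phi}_{2,1}$ and $\overline{\Phi}_{3,1}$ are correct and in fact avoid the apparent sign slip in the paper's displayed formula for $\theta\cdot\overline{\Phi}_{2,2}$, where $\frac{1}{6}+t$ should read $\frac{1}{6}-t$). One small inaccuracy in your summary: $t \mapsto \tfrac{1}{2}-t$ is \emph{not} a bijection of $(0,\tfrac{1}{4}]$ onto itself, so it does not preserve the $t$-constraint in $\overline{\Phi}_{1,1}$; the correct reparametrization there is $t' = t$ together with the transposition of the last two coordinates, and indeed none of your exhibited verifications actually rely on $t \mapsto \tfrac{1}{2}-t$, so the argument stands.
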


\begin{proof}
	\ref{it.inv1} is immediate. For the remaining statements, we only verify \ref{it.inv3}. The others can be checked similarly. Let $(\frac{1}{2}-s,\frac{1}{2}-s,\frac{1}{3}-s,\frac{1}{3}+s,\frac{1}{2}-3s)\pi\in \Phi_{2,2}$ with $s\in\mathbb{Q}$ and $0<s<\frac{1}{6}$. Let $t:=\frac{1}{6}-s$. Then $0<t<\frac{1}{6}$ and $\theta\cdot(\frac{1}{2}-s,\frac{1}{2}-s,\frac{1}{3}-s,\frac{1}{3}+s,\frac{1}{2}-3s)\pi=(\frac{1}{6}+t,\frac{1}{6}+t,\frac{1}{3}-t,t,\frac{1}{2}-3t)\pi\in \Phi_{2,4}$. Therefore $\theta\cdot \Phi_{2,2}\subset \Phi_{2,4}$. Similarly, one gets $\theta\cdot \Phi_{2,4}\subset \Phi_{2,2}$. So $\theta^2\cdot \Phi_{2,4}= \Phi_{2,4}\subset\theta \cdot \Phi_{2,2}$. Hence $\theta\cdot \Phi_{2,2}=\Phi_{2,4}$.
\end{proof}

\section{The Sporadic solutions to Equation \eqref{eq.ne4}}\label{sec.spo}

	In section \ref{sec.conj}, we introduced certain families of solutions in $G$ to Equation \eqref{eq.ne4}. In the following, we list some sporadic solutions in $G$ to Equation \eqref{eq.ne4} that do not fall within these families. We also provide a computational result that supports Conjecture \ref{cj.fi}.

\begin{emp}\label{emp.spo}

	Let $\overline{\Psi}$ be the set consisting of the following $61$ tuples in $G^5$. We define the $\operatorname{lcm}$ of a tuple to be the least common multiple of the denominators of the entries in the tuple. Notice that each element $(x_0,x_1,x_2,x_3,x_4)\in\overline{\Psi}$ satisfies one of the following two conditions: either (i) $x_0<\frac{\pi}{4}$ and $x_1\le x_2\le x_3\le x_4$, or (ii) $x_0=\frac{\pi}{4},\; x_1\le x_2\le x_3\le x_4$ and $x_1+x_4<\frac{\pi}{2}$.
\begin{align*}
		\intertext{$\operatorname{lcm}=30$}
	&(\frac{1}{30},\frac{1}{30},\frac{1}{15},\frac{2}{15},\frac{4}{15})\pi,&&(\frac{1}{15},\frac{1}{30},\frac{1}{15},\frac{7}{30},\frac{11}{30})\pi,&&(\frac{2}{15},\frac{1}{30},\frac{2}{15},\frac{7}{30},\frac{13}{30})\pi,\\
	&(\frac{7}{30},\frac{1}{15},\frac{2}{15},\frac{7}{30},\frac{7}{15})\pi,\\
	\intertext{$\operatorname{lcm}=40$}
	&(\frac{1}{8},\frac{1}{40},\frac{7}{40},\frac{9}{40},\frac{17}{40})\pi,&&\\
		\intertext{$\operatorname{lcm}=48$}
	&(\frac{1}{16},\frac{1}{48},\frac{5}{48},\frac{11}{48},\frac{17}{48})\pi,&& (\frac{3}{16},\frac{1}{48},\frac{13}{48},\frac{17}{48},\frac{19}{48})\pi,&&\\
		\intertext{$\operatorname{lcm}=60$}
		& (\frac{1}{60},\frac{1}{60}, \frac{1}{20},\frac{1}{12},\frac{17}{60})\pi,
	&&(\frac{1}{60},\frac{1}{60}, \frac{1}{12},\frac{7}{60},\frac{3}{20})\pi,
	&&(\frac{1}{20},\frac{1}{60}, \frac{1}{20},\frac{13}{60},\frac{5}{12})\pi,\\
	& (\frac{1}{20},\frac{1}{60}, \frac{7}{60},\frac{13}{60},\frac{19}{60})\pi,
	&&(\frac{1}{20},\frac{1}{20}, \frac{1}{12},\frac{7}{60},\frac{19}{60})\pi,
	&&(\frac{1}{12},\frac{1}{60}, \frac{1}{12},\frac{13}{60},\frac{9}{20})\pi,\\
	&(\frac{1}{12},\frac{1}{60}, \frac{1}{12},\frac{7}{20},\frac{23}{60})\pi,
	&&(\frac{1}{12},\frac{1}{60}, \frac{11}{60},\frac{13}{60},\frac{23}{60})\pi,
	&&(\frac{1}{12},\frac{1}{20}, \frac{1}{12},\frac{11}{60},\frac{23}{60})\pi,\\
	&(\frac{1}{12},\frac{1}{12}, \frac{3}{20},\frac{11}{60},\frac{13}{60})\pi,
	&&(\frac{7}{60},\frac{1}{60}, \frac{7}{60},\frac{7}{20},\frac{5}{12})\pi,
	&&(\frac{7}{60},\frac{1}{20}, \frac{7}{60},\frac{11}{60},\frac{5}{12})\pi,\\
	&(\frac{3}{20},\frac{1}{60}, \frac{3}{20},\frac{23}{60},\frac{5}{12})\pi,
	&&(\frac{3}{20},\frac{1}{60}, \frac{17}{60},\frac{19}{60},\frac{23}{60})\pi,
	&&(\frac{3}{20},\frac{1}{12}, \frac{3}{20},\frac{17}{60},\frac{19}{60})\pi,\\
	&(\frac{11}{60},\frac{1}{12}, \frac{7}{60},\frac{11}{60},\frac{9}{20})\pi,
	&&(\frac{11}{60},\frac{1}{12}, \frac{11}{60},\frac{17}{60},\frac{7}{20})\pi,
	&&(\frac{13}{60},\frac{1}{20}, \frac{1}{12},\frac{13}{60},\frac{29}{60})\pi,\\
	&(\frac{13}{60},\frac{1}{12}, \frac{13}{60},\frac{19}{60},\frac{7}{20})\pi,
	&&(\frac{1}{4},\frac{1}{60},\frac{13}{60},\frac{5}{12},\frac{9}{20})\pi,
	&&(\frac{1}{4},\frac{1}{60},\frac{7}{20},\frac{23}{60},\frac{5}{12})\pi,\\ 
	&(\frac{1}{4},\frac{1}{30},\frac{7}{30},\frac{11}{30},\frac{13}{30})\pi, 
	&&(\frac{1}{4},\frac{1}{20},\frac{11}{60},\frac{23}{60},\frac{5}{12})\pi, 
	&&(\frac{1}{4},\frac{1}{12},\frac{17}{60},\frac{19}{60},\frac{7}{20})\pi,\\
		\intertext{$\operatorname{lcm}=72$}
	&(\frac{1}{8},\frac{1}{72},\frac{7}{72},\frac{23}{72},\frac{25}{72})\pi,&& (\frac{1}{8},\frac{1}{24},\frac{7}{72},\frac{17}{72},\frac{31}{72})\pi,&&\\
	\intertext{$\operatorname{lcm}=84$}
	&(\frac{1}{84},\frac{1}{84}, \frac{5}{84},\frac{1}{12},\frac{17}{84}),&&(\frac{5}{84},\frac{1}{84}, \frac{5}{84}, \frac{25}{84},\frac{5}{12}),&&(\frac{1}{12},\frac{1}{84},\frac{1}{12},\frac{25}{84},\frac{37}{84} )\\&(\frac{1}{12},\frac{1}{12},\frac{11}{84},\frac{13}{84},\frac{23}{12})\pi,&&(\frac{11}{84},\frac{1}{12},\frac{11}{84},\frac{19}{84},\frac{29}{84})\pi,&&(\frac{13}{84},\frac{1}{12},\frac{13}{84},\frac{19}{84},\frac{31}{84})\pi,\\
	&(\frac{17}{84},\frac{1}{84},\frac{17}{84},\frac{5}{12},\frac{37}{84})\pi,&&(\frac{19}{84},\frac{11}{84},\frac{13}{84},\frac{19}{84},\frac{5}{12})\pi,&&(\frac{1}{4},\frac{1}{84},\frac{25}{84},\frac{5}{12},\frac{37}{84})\pi,\\
	&(\frac{1}{4},\frac{1}{12},\frac{19}{84},\frac{29}{84},\frac{31}{84})\pi,\\
	\intertext{$\operatorname{lcm}=120$}
	&(\frac{1}{120},\frac{1}{120},\frac{7}{120},\frac{11}{120},\frac{17}{120})\pi,&&(\frac{7}{120},\frac{1}{120},\frac{7}{120},\frac{43}{120},\frac{49}{120})\pi,
	&&(\frac{11}{120},\frac{1}{120},\frac{11}{120},\frac{43}{120},\frac{53}{120})\pi,\\&(\frac{13}{120},\frac{13}{120},\frac{19}{120},\frac{23}{120},\frac{29}{120})\pi,
	&&(\frac{1}{8},\frac{1}{120},\frac{23}{120},\frac{47}{120},\frac{49}{120})\pi,&& (\frac{1}{8},\frac{1}{120},\frac{9}{40},\frac{41}{120},\frac{17}{40})\pi,\\ &(\frac{1}{8},\frac{1}{120},\frac{31}{120},\frac{41}{120},\frac{49}{120})\pi,&&
	(\frac{1}{8},\frac{1}{40},\frac{7}{120},\frac{47}{120},\frac{17}{40})\pi,
	&&(\frac{1}{8},\frac{1}{40},\frac{7}{40},\frac{31}{120},\frac{49}{120})\pi,\\&(\frac{1}{8},\frac{7}{120},\frac{17}{120},\frac{23}{120},\frac{47}{120})\pi,
	&&(\frac{1}{8},\frac{7}{120},\frac{17}{120},\frac{31}{120},\frac{41}{120})\pi,&& (\frac{1}{8},\frac{17}{120},\frac{7}{40},\frac{23}{120},\frac{9}{40})\pi,\\
	&(\frac{17}{120},\frac{1}{120},\frac{17}{120},\frac{49}{120},\frac{53}{120})\pi,&&(\frac{19}{120},\frac{13}{120},\frac{19}{120},\frac{31}{120},\frac{37}{120})\pi,
	&&(\frac{23}{120},\frac{13}{120},\frac{23}{120},\frac{31}{120},\frac{41}{120})\pi,\\&(\frac{29}{120},\frac{13}{120},\frac{29}{120},\frac{37}{120},\frac{41}{120})\pi,
	&&(\frac{1}{4},\frac{1}{120},\frac{43}{120},\frac{49}{120},\frac{53}{120})\pi,&&(\frac{1}{4},\frac{13}{120},\frac{31}{120},\frac{37}{120},\frac{41}{120})\pi.\\
\end{align*}

\normalsize

	Recall the $\mathbb{Z}/2\mathbb{Z}\times S_4$ action on $G^5$ discussed in section \ref{em.inv}. Let $\Psi:=(\mathbb{Z}/2\mathbb{Z}\times S_4)\cdot\overline{\Psi}$. We have that $|\overline{\Psi}|=61$ and $|\Psi|=48\cdot61=2928$.
	
	\begin{remark}
		Each orbit of the $\mathbb{Z}/2\mathbb{Z}\times S_4$ action on $\Psi$ has size $48$. The set $\overline{\Psi}$ consists of a representative $(x_0,x_1,x_2,x_3,x_4)$ of each orbit satisfying one of the two conditions stated in \ref{emp.spo}.
	\end{remark}

\end{emp}

	Fix a positive integer $D$ and define the set
\begin{equation*}
	\aligned
	L_D:=&\{(x_0,x_1,x_2,x_3,x_4)\in G^5\mid 0<x_i<\frac{\pi}{2}\text{ for each } 0\le i\le 4\text{ and }\\ &\operatorname{lcm}(\operatorname{den}(x_0),\dots,\operatorname{den}(x_4))\le D\}.
	\endaligned
\end{equation*}

It is clear that $|L_D| <\infty$.

\begin{proposition}\label{le.comp}
	The tuple $(x_0,x_1,x_2,x_3,x_4)$ is a solution to Equation \eqref{eq.ne4} in $L_{300}$ if and only if $(x_0,x_1,x_2,x_3,x_4)$ is either in the set $\Phi\cap L_{300}$ given in \ref{emp.fam} or in the set $\Psi$ consisting of $2928$ elements given in \ref{emp.spo}.
\end{proposition}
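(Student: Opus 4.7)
The set $L_{300}$ is finite, so the proposition is an exhaustive computational check. The $\Leftarrow$ direction follows from Remark \ref{re.oned}, so only the $\Rightarrow$ direction requires verification: every solution of \eqref{eq.ne4} in $L_{300}$ must appear in $(\Phi\cap L_{300})\cup\Psi$. The plan is to enumerate tuples modulo the $\mathbb{Z}/2\mathbb{Z}\times S_4$ action of \ref{em.inv}, test each representative against the equation, and compare the resulting list with the claimed right-hand side.

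Concretely, I would partition $L_{300}$ by the denominator-lcm $n\in\{1,\dots,300\}$, and within each class list orbit representatives under the $48$-fold symmetry, e.g.\ by sorting $x_1\le x_2\le x_3\le x_4$ and then applying $\theta$ if needed to force $x_0\le\pi/4$. To test Equation \eqref{eq.ne4} on a representative $(x_0,\dots,x_4)$, I would use the Conrad basis of Section \ref{sec.bas} (as foreshadowed in \ref{em.id}) to assign each $\tan(x_i)$ an integer-coordinate vector $v_i$ in the relevant free abelian group; the identity \eqref{eq.ne4} is then equivalent to the linear identity $2v_0=v_1+v_2+v_3+v_4$, which is checked mechanically. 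A coarse floating-point comparison at, say, $100$ decimal digits serves as a fast prefilter, so that the exact basis computation need only run on candidates that already match numerically. Finally, the surviving solution list is cross-referenced with $\Phi\cap L_{300}$, which is itself finite and explicit (each family $\overline{\Phi}_{i,j}$ of \ref{emp.fam} contributes only finitely many tuples once the parameter denominators are bounded by $300$), together with the tabulated $\Psi$ of \ref{emp.spo}.

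The main obstacle is the sheer combinatorial size of $L_{300}$: a reckless enumeration over five independent coordinates is on the order of $10^{13}$, which is prohibitive. Three practical reductions make the computation tractable. First, the $48$-fold symmetry reduction by $\mathbb{Z}/2\mathbb{Z}\times S_4$ already cuts this by more than an order of magnitude. Second, one can iterate over only four coordinates $(x_0,x_1,x_2,x_3)$ and use \eqref{eq.ne4} to solve algebraically for the required value of $\tan(x_4)$, then look this value up in a precomputed hash of Conrad-basis vectors for the admissible tangent values at each denominator. Third, organizing the enumeration by lcm $n$ and iterating $n$ from small to large lets one recycle basis computations and avoid re-examining tuples already handled at a proper divisor of $n$. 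With these reductions the verification is well within reach on standard hardware, and the output is then matched directly against the explicit lists of \ref{emp.fam} and \ref{emp.spo}.
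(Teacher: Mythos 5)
Your proposal is correct and matches the paper's approach: the paper's proof simply observes that the statement requires only a finite number of computations and reports that these were carried out in Magma. Your write-up is essentially a fleshed-out implementation plan for that same finite verification (symmetry reduction, exact checking via Conrad's basis, which is legitimate here since all tangents are positive on $(0,\frac{\pi}{2})$ so the sign ambiguity between \eqref{eq.ne4} and \eqref{eq.ne5} disappears), so there is nothing substantively different to compare.
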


\begin{proof}
	The verification of the statement requires only a finite number of computations. We did the calculation with Magma \cite{Magma}.
\end{proof}

	\section{Basic Formulas}\label{sec.form}

Let $n\in\mathbb{N}_{\ge 2}$ and $\zeta_n:=e^{\frac{2\pi i}{n}}\in\mathbb{C}^*$. Let $a\in\mathbb{Z}$. In this section, we recall some basic relations satisfied by elements of the form $1-\zeta_n^a$. We then represent tangent of a rational multiples of $\pi$ in elements of this form in Proposition \ref{pro.tar}.

\begin{emp}

	Fix $n\in\mathbb{N}_{\ge 2}$. Let $\zeta_n:=e^{\frac{2\pi i}{n}}\in\mathbb{C}^*$. For $a\in\mathbb{Z}$, let 
	$$v(n,a):=1-\zeta_n^a.$$ 
	More generally, for $a\in\mathbb{Q}$ written in reduced form as $a=\frac{a'}{a''}$ such that $\operatorname{gcd}(n,a'')=1$, let $b$ be the unique integer such that $a''b\equiv 1\operatorname{mod} n$ and $1\le b< n.$ Let
	$$v(n,a):=v(n,a'b).$$

	Recall the following two basic relations. Let $a\in\mathbb{Z}$ and $n\in\mathbb{N}_{\ge 2}$. Then

	\begin{equation}\label{eq.sym}
		v(n,-a)=-\zeta_n^{-a}v(n,a).
	\end{equation}
	
	Let $a\in\mathbb{Z}$ and $m,n\in\mathbb{N}_{\ge 2}$ with $m\mid n$. Then

	\begin{equation}\label{eq.nor}
	v(m,a)=\prod_{j=0}^{n/m-1}v(n,a+mj).  
	\end{equation}
	
	See e.g. \cite{Wash} p. 150.

\end{emp}

\begin{proposition}\label{pro.tar}
	Let $n\in\mathbb{N}_{\ge 2}$. Let $a\in\mathbb{Z}$ be such that $\operatorname{gcd}(n,a)=1$. Then the following equations hold in $\mathbb{C}^*$:
	\begin{enumerate}

		\item If $n$ is odd, then
		\[\operatorname{tan}\frac{a}{n}\pi=iv(n,a)^2v(n,2a)^{-1}.\]
		
		\item If $n$ and $a$ are odd, then
		\[\operatorname{tan}\frac{a}{2n}\pi=iv(n,a)v(n,2^{-1}a)^{-2}.\]
		
		\item If $n$ and $a$ are odd, then
		\[\operatorname{tan}\frac{a}{4n}\pi=iv(4n,a)^2v(n,a)v(n,2^{-1}a)^{-1}.\]

		\item If $8\mid n$, then 
		\[\operatorname{tan}\frac{a}{n}\pi=iv(n,a)^2v(\frac{n}{2},a)^{-1}.\]

	\end{enumerate}
\end{proposition}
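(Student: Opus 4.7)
The plan is to reduce all four formulas to a single master identity. Writing $\sin\theta$ and $\cos\theta$ in exponential form yields
\[
\tan\theta = i\,\frac{1-e^{2i\theta}}{1+e^{2i\theta}},
\]
valid whenever $\cos\theta\neq 0$. In each of the four cases I choose $\theta$ so that $e^{2i\theta}$ becomes $\zeta_m^a$ for an appropriate modulus $m$: namely $m=n$ in (1) and (4), $m=2n$ in (2), and $m=4n$ in (3). By definition $1-e^{2i\theta}=v(m,a)$, and the factorization $1-\zeta_m^{2a}=(1-\zeta_m^a)(1+\zeta_m^a)$ rewrites $1+\zeta_m^a$ as $v(m,2a)/v(m,a)$. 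Substituting produces the preliminary formula $\tan(a\pi/m) = i\,v(m,a)^{2}\,v(m,2a)^{-1}$ in every case.

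Cases (1) and (4) then finish immediately. In (1), $n$ is odd and $v(n,2a)$ is already in the target form. In (4), $8\mid n$ together with $\gcd(n,a)=1$ force $a$ odd, so $\zeta_n^{2a}=\zeta_{n/2}^a$ gives $v(n,2a)=v(n/2,a)$ via a single application of the norm relation \eqref{eq.nor}. For cases (2) and (3), the preliminary formula becomes $\tan(a\pi/(2n))=i\,v(2n,a)^2/v(n,a)$ and $\tan(a\pi/(4n))=i\,v(4n,a)^2/v(2n,a)$ respectively, once one observes $v(2n,2a)=v(n,a)$ and $v(4n,2a)=v(2n,a)$.

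To pass from these intermediate expressions to formulas involving only $v(n,\cdot)$ and $v(4n,\cdot)$, I would prove the following key identity for $n$ and $a$ odd:
\[
v(2n,a)\cdot v(n,2^{-1}a) = v(n,a).
\]
The proof observes that $b:=(a+n)/2$ is an integer (both $n$ and $a$ being odd) representing $2^{-1}a\pmod n$. From $\zeta_{2n}^{2b}=\zeta_n^b$ together with $\zeta_{2n}^{2b}=\zeta_{2n}^{a+n}=-\zeta_{2n}^a$, one gets $\zeta_{2n}^a=-\zeta_n^b$, so $v(2n,a)=1+\zeta_n^b$; then the factorization $v(n,a)=1-\zeta_n^{2b}=(1-\zeta_n^b)(1+\zeta_n^b)$ delivers the identity. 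Substituting $v(2n,a)=v(n,a)/v(n,2^{-1}a)$ back into the preliminary formulas yields (2) and (3).

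The only non-mechanical step is the key identity $v(2n,a)\,v(n,2^{-1}a)=v(n,a)$, and even it reduces to the explicit representative $b=(a+n)/2$. The main obstacle is therefore not deep but organizational: I must track that each $v(\cdot,\cdot)$ appearing in a denominator has argument coprime to its modulus (so nothing vanishes), and case (3) in particular requires chaining both the descent $v(4n,2a)=v(2n,a)$ and the key identity simultaneously.
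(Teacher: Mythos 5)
Your approach is essentially the same as the paper's: both rewrite $\tan\theta = i(1-e^{2i\theta})/(1+e^{2i\theta})$, eliminate the denominator via the factorization $1-\zeta^{2a}=(1-\zeta^a)(1+\zeta^a)$, and descend levels with the norm relation; your key identity $v(2n,a)\,v(n,2^{-1}a)=v(n,a)$ is precisely what the paper uses implicitly in parts (2) and (3). The only material issue is in case (3): carrying out the substitution you describe gives
\[
\tan\frac{a}{4n}\pi = i\,v(4n,a)^2\,v(2n,a)^{-1} = i\,v(4n,a)^2\,v(n,a)^{-1}\,v(n,2^{-1}a),
\]
which has the exponents of $v(n,a)$ and $v(n,2^{-1}a)$ opposite to those in the stated formula (3), so your derivation does not in fact ``yield (3)'' as written. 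A numerical check (take $n=3$, $a=1$, so $\tan(\pi/12)=2-\sqrt3$) shows your version is correct and the stated $i\,v(4n,a)^2v(n,a)v(n,2^{-1}a)^{-1}$ is not even real; the paper's own computation for (3) makes the same sign/exponent slip in passing from $i\,v(4n,a)\,v(4n,a+2n)^{-1}$ to the final line, since $v(4n,a+2n)=(1+\zeta_{4n}^a)=v(2n,a)v(4n,a)^{-1}$. The discrepancy is harmless downstream because Corollary \ref{cor.tarq}(3) only retains the image in $\widehat{X^{4n}}$, where both level-$n$ factors vanish, but you should flag that your derivation corrects the statement rather than asserting agreement with it.
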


\begin{proof}\hfill

	\begin{enumerate}
		\item 	Assume that $n$ is odd. Then
		\begin{equation*}\label{eq.1tan}
			\aligned
			\operatorname{tan}\frac{a}{n}\pi=&i\frac{1-e^{\frac{a}{n}2\pi i}}{1-(-e^{\frac{a}{n}2\pi i})}\\
			=& i(1-e^{\frac{a}{n}2\pi i})(1-e^{\frac{2a+n}{2n}2\pi i})^{-1}\\
			=&i(1-e^{\frac{a}{n}2\pi i})^2(1-e^{\frac{2a}{n}2\pi i})^{-1}\\
			=& iv(n,a)^2v(n,2a)^{-1}.
			\endaligned
		\end{equation*}
		
		\item 	Assume that both $a$ and $n$ are odd. Then
		\begin{equation*}\label{eq.2tan}
			\aligned
			\operatorname{tan}\frac{a}{2n}\pi=&i\frac{1-e^{\frac{a}{2n}2\pi i}}{1-(-e^{\frac{a}{2n}2\pi i})}\\
			=& i(1-e^{\frac{a}{2n}2\pi i})(1-e^{\frac{a+n}{2n}2\pi i})^{-1}\\
			=& iv(2n,a)v(n,\frac{a+n}{2})^{-1}\\
			=& iv(n,a)v(n,2^{-1}a)^{-1}v(n,2^{-1}a)^{-1}\\
			=&iv(n,2^{-1}a)^{-2}v(n,a).\\
			\endaligned
		\end{equation*}

		\item 	Assume that both $a$ and $n$ are odd. Then
		\begin{equation*}
			\aligned
			\operatorname{tan}\frac{a}{4n}\pi=&i\frac{1-e^{\frac{a}{4n}2\pi i}}{1-(-e^{\frac{a}{4n}2\pi i})}\\
			=& i(1-e^{\frac{a}{4n}2\pi i})(1-e^{\frac{a+2n}{4n}2\pi i})^{-1}\\
			=&iv(4n,a)v(4n,a+2n)^{-1}\\
			=&iv(4n,a)^2v(n,a)v(n,2^{-1}a)^{-1}.\\
			\endaligned
		\end{equation*}

		\item 	Assume that $8\mid n$. Then
		
		\begin{equation*}
			\aligned
			\operatorname{tan}\frac{a}{n}\pi=&i\frac{1-e^{\frac{a}{n}2\pi i}}{1-(-e^{\frac{a}{n}2\pi i})}\\
			=& i(1-e^{\frac{a}{n}2\pi i})(1-e^{\frac{a+\frac{n}{2}}{n}2\pi i})^{-1}\\
			=&iv(n,a)v(n,a+\frac{n}{2})^{-1}\\
			=& iv(n,a)^2v(\frac{n}{2},a)^{-1}.
			\endaligned
		\end{equation*}

	\end{enumerate}
\end{proof}

	\section{The Basis}\label{sec.bas}

	 Fix $n\in\mathbb{N}_{\ge 2}$. In this section, we will consider elements of form $v(n,a)$ discussed in section \ref{sec.form} in certain finitely generated free abelian groups.
	 Furthermore, we recall in Theorem \ref{th.ba2} a basis for these free abelian groups constructed by Conrad (\cite{Con2}).

	\begin{emp}{\bf The cyclotomic numbers.}\label{emp.grp}
		For $n\in\mathbb{N}_{\ge 2}$ and $a\in\mathbb{Z}$, recall that
		$$v(n,a):=1-\zeta_n^a.$$ 
		
		Let $T$ be the torsion subgroup of $\mathbb{C}^*$. For $n\in\mathbb{N}_{\ge 2}$, let $Y^n$ be the subgroup of $\mathbb{C}^*$ generated by the $n-1$ elements in the set $\{v(n,a)\in\mathbb{C}^*\mid 1\le a\le n-1\text{ and }a\in\mathbb{N}\}$. Let $T^n$ be the torsion subgroup of $Y^n$. Notice that there is an embedding 
		\[Y^n/T^n \longhookrightarrow \mathbb{C}^*/T.\]
		We denote the image of this embedding by $X^n$.

		It is clear that $Y^n/T^n$, and therefore $X^n$, is a free abelian group of finite rank. An element in the group $X^n$ is called a {\it cyclotomic number}. By abuse of notation, we will again denote the class of $1-\zeta_n^a$ in $Y^n/T^n$ and the image of this class under the above embedding by $v(n,a)$. Furthermore, if $a\in\mathbb{Z}$ and $\operatorname{gcd}(n,a)=1$, we call an element of the form $v(n,a)\in\mathbb{C}^*/T$ a {\it cyclotomic number of level n}. 
		
		Fix $n\in\mathbb{N}_{\ge 2}$. We now define a subgroup $Z^n$ of $\mathbb{C}^*/T$ and a quotient group $\widehat{X^n}$ of $X^n$. If $n$ is not prime, 
		let $Z^n$ be the subgroup of $\mathbb{C}^*/T$ generated by the elements in the set $\bigcup\limits_{\substack{d\mid n, \\ d\ge 2, \,d\ne n}}X^d$. If $n$ is prime, let $Z^n$ be the trivial subgroup $\{1\}$ in $\mathbb{C}^*/T$. Let $\widehat{X^n}:=X^n/Z^n$. An element in the group $\widehat{X^n}$ is called a {\it relative cyclotomic number of level $n$}.

	\end{emp}

\begin{corollary}\label{cor.tarq}
	Let $n\in\mathbb{N}_{\ge 2}$. Let $a\in\mathbb{Z}$ be such that $\operatorname{gcd}(n,a)=1$. Then,
	\begin{enumerate}

		\item If $n$ is odd, then \[\operatorname{tan}\frac{a}{n}\pi=v(n,a)^2v(n,2a)^{-1}\]
		in $\widehat{X^n}$.
		
		\item If $n$ and $a$ are odd, then
		\[\operatorname{tan}\frac{a}{2n}\pi=v(n,a)v(n,2^{-1}a)^{-2}\]
		in $\widehat{X^n}$.

		\item If $4\mid n$, then 
		\[\operatorname{tan}\frac{a}{n}\pi=v(n,a)^2\]
		in $\widehat{X^{n}}$.

	\end{enumerate}
\end{corollary}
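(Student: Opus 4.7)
The plan is to derive each of the three identities in $\widehat{X^n}$ from the corresponding identity in $\mathbb{C}^{\ast}$ supplied by Proposition \ref{pro.tar}. Two structural observations control the reduction. First, the factor $i$ appearing on the right-hand side of every formula of Proposition \ref{pro.tar} is a torsion element of $\mathbb{C}^{\ast}$, hence vanishes under the projection $\mathbb{C}^{\ast} \twoheadrightarrow \mathbb{C}^{\ast}/T$ in which $X^n$ is embedded. Second, any factor of the form $v(d, c)$ with $d \mid n$, $d \geq 2$, and $d \neq n$ lies in $X^d \subset Z^n$ by the definition of $Z^n$ in \ref{emp.grp}, and hence becomes trivial in $\widehat{X^n} = X^n/Z^n$.

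For part (1), I would apply Proposition \ref{pro.tar}(1) directly. Since $n$ is odd and $\gcd(n, a) = 1$, we also have $\gcd(n, 2a) = 1$, so $v(n, 2a)$ is a bona fide level-$n$ cyclotomic number. After killing the torsion factor $i$, the identity transcribes verbatim into $\widehat{X^n}$. Part (2) is entirely analogous, starting from Proposition \ref{pro.tar}(2) and noting that $2^{-1} a \bmod n$ makes sense because $n$ is odd.

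For part (3), I would split the analysis according to the $2$-adic valuation of $n$. If $8 \mid n$, Proposition \ref{pro.tar}(4) gives $\tan\frac{a}{n}\pi = i\, v(n, a)^2\, v(n/2, a)^{-1}$ in $\mathbb{C}^{\ast}$; since $n/2$ is a proper divisor of $n$ with $n/2 \geq 4 \geq 2$, the factor $v(n/2, a)$ lies in $X^{n/2} \subset Z^n$ and disappears in the quotient. If $4 \mid n$ but $8 \nmid n$, write $n = 4m$ with $m$ odd; for $m \geq 3$, Proposition \ref{pro.tar}(3) gives $\tan\frac{a}{n}\pi = i\, v(n, a)^2\, v(m, a)\, v(m, 2^{-1}a)^{-1}$, and both $v(m, a)$ and $v(m, 2^{-1}a)$ lie in $X^m \subset Z^n$. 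After deleting these and the torsion $i$, one is left precisely with $v(n, a)^2$. The one edge case is $n = 4$, which Proposition \ref{pro.tar}(3) cannot directly handle (taking $m = 1$ would produce ill-defined symbols $v(1, \cdot)$). There I would verify the identity by hand: for $a \in \{1, 3\}$, $\tan\frac{a}{4}\pi = \pm 1$ is torsion, while $v(4, a)^2 = (1 - i^a)^2 = -2i^a$ represents the class of $2 = v(2,1) \in X^2 \subset Z^4$ in $\mathbb{C}^{\ast}/T$, so both sides are trivial in $\widehat{X^4}$ and the asserted equality $\tan\frac{a}{4}\pi = v(4,a)^2$ holds.

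The argument is essentially bookkeeping once the two structural observations about torsion and lower levels are in hand. The only nuisance I foresee is the small-case handling of $n = 4$ inside part (3), where Proposition \ref{pro.tar}(3) degenerates and a direct computation in $\widehat{X^4}$ is required; no other obstacle is anticipated.
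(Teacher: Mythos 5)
Your proposal is correct and takes essentially the same approach the paper has in mind; the paper's own proof is the one-line remark that the formulas follow from Proposition~\ref{pro.tar}, and your two structural observations (torsion $i$ dies in $\mathbb{C}^{\ast}/T$, and any $v(d,c)$ with $d\mid n$, $2\le d< n$ lies in $X^d\subset Z^n$ and hence dies in $\widehat{X^n}$) are exactly the bookkeeping being left implicit. Your explicit treatment of the degenerate case $n=4$ in part (3) is a welcome extra detail the paper does not spell out.
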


\begin{proof}
	The formulas follow from Proposition \ref{pro.tar}.
\end{proof}

\begin{remark}\label{rk.rep}
		Let $g\in G$ with $\operatorname{den}(g)>2$. Then the class of $\operatorname{tan}g$ in $\mathbb{C}^*/T$ belongs to $X^{\operatorname{den}(g)}$, and consequently belongs to $\widehat{X^{\operatorname{den}(g)}}$. Assume that $(x_0,x_1,x_2,x_3,x_4)\in G^5$ is a solution to either Equation \eqref{eq.ne4} or Equation \eqref{eq.ne5}. Then we get that $	(\operatorname{tan}x_0)^2=(\operatorname{tan}x_1)(\operatorname{tan}x_2)(\operatorname{tan}x_3)(\operatorname{tan}x_4),$ 
		or $	(\operatorname{tan}x_0)^2=-(\operatorname{tan}x_1)(\operatorname{tan}x_2)(\operatorname{tan}x_3)(\operatorname{tan}x_4)$. Let $n:=\operatorname{lcm}(\operatorname{den}(x_0),\dots,\operatorname{den}(x_4))$. Then both sides of either equality can be viewed as elements in $X^n$ and $\widehat{X^n}$. 
		
\end{remark}

\begin{lemma}
	Suppose that $x_i\in G$ for $0\le i\le 4$. Assume that $$(\operatorname{tan}x_0)^2=(\operatorname{tan}x_1)(\operatorname{tan}x_2)(\operatorname{tan}x_3)(\operatorname{tan}x_4)$$ in $X^n$ where $n:=\operatorname{lcm}(\operatorname{den}(x_0),\dots,\operatorname{den}(x_4))$. Then $(x_0,x_1,x_2,x_3,x_4)$ is a solution to either Equation \eqref{eq.ne4} or Equation \eqref{eq.ne5}.
\end{lemma}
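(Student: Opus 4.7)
The plan is to exploit the fact that $X^n$ sits inside $\mathbb{C}^*/T$, so an equality in $X^n$ lifts to an equality in $\mathbb{C}^*$ up to a single root of unity, while both sides of the tangent identity are real, forcing that root of unity to be $\pm 1$.

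Concretely, first I would use the embedding $X^n \hookrightarrow \mathbb{C}^*/T$ recalled in \ref{emp.grp} to transfer the hypothesized identity $(\operatorname{tan}x_0)^2=(\operatorname{tan}x_1)(\operatorname{tan}x_2)(\operatorname{tan}x_3)(\operatorname{tan}x_4)$ from $X^n$ to $\mathbb{C}^*/T$. Since $\mathbb{C}^*/T$ is by definition the quotient of $\mathbb{C}^*$ by its torsion subgroup, lifting this identity back to $\mathbb{C}^*$ yields
\[
(\operatorname{tan}x_0)^2=\zeta\cdot (\operatorname{tan}x_1)(\operatorname{tan}x_2)(\operatorname{tan}x_3)(\operatorname{tan}x_4)
\]
for some $\zeta\in T$, that is, some root of unity in $\mathbb{C}^*$. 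Implicit in the hypothesis is that each $\operatorname{tan}x_i$ is a well-defined nonzero element of $\mathbb{C}^*$, since otherwise the two sides could not be interpreted as elements of the multiplicative group $X^n$.

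The second step is to invoke the reality of the tangent function. Each $x_i\in G$ is real, so $\operatorname{tan}x_i\in\mathbb{R}^*$, and therefore both sides of the displayed equality live in $\mathbb{R}^*$. Consequently $\zeta$ is a real root of unity, and the only such roots of unity are $\zeta=\pm 1$. If $\zeta=1$, the displayed equality is precisely Equation \eqref{eq.ne4}; if $\zeta=-1$, it is precisely Equation \eqref{eq.ne5}. This exhausts all cases and proves the lemma.

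There is really no obstacle here beyond bookkeeping: the only subtle point is to observe that passing to $\mathbb{C}^*/T$ loses exactly a torsion ambiguity, and that reality of the tangent immediately cuts this ambiguity down to a sign, which is exactly the sign discrepancy between Equations \eqref{eq.ne4} and \eqref{eq.ne5}.
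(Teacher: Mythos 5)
Your proof is correct and follows essentially the same route as the paper: both lift the identity in $X^n$ (equivalently, in $\mathbb{C}^*/T$) to an equality in $\mathbb{C}^*$ up to a torsion factor $\zeta$, and then use the reality of each $\operatorname{tan}x_i$ to force $\zeta=\pm 1$, giving Equation \eqref{eq.ne4} or \eqref{eq.ne5} accordingly.
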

	
\begin{proof}
	By the assumption, we get that $(\operatorname{tan}x_0)^2=\zeta(\operatorname{tan}x_1)(\operatorname{tan}x_2)(\operatorname{tan}x_3)(\operatorname{tan}x_4)$ where $\zeta$ is some root of unity. Since $\operatorname{tan}x_i\in\mathbb{R}$ for $0\le i\le 4$, we know that $\zeta\in\mathbb{R}$. Hence $\zeta=1$ or $-1$. The claim follows.
\end{proof}

	\begin{proposition}[\cite{Con1}, 2.3.6, see also \cite{Con2} Lemma 4.4]
		Assume that $n\in\mathbb{N}_{\ge 2}$ and $n\ne 4$. Then $\widehat{X^n}$ is a free abelian group of finite rank.
	\end{proposition}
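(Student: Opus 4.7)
The plan is to split the proposition into two statements: (a) $X^n$ is itself a free abelian group of finite rank, and (b) $Z^n$ is a \emph{saturated} (pure) subgroup of $X^n$, meaning that if $x \in X^n$ and $x^k \in Z^n$ for some $k \geq 1$, then $x \in Z^n$. Together these imply that $\widehat{X^n} = X^n/Z^n$ is finitely generated and torsion-free, hence free of finite rank.

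For part (a), the key observation is that $\mathbb{C}^*/T$ is torsion-free by construction, so every subgroup of it is torsion-free, and $X^n$ is finitely generated by the $n-1$ elements $v(n,a)$ for $1 \le a \le n-1$. A finitely generated torsion-free abelian group is automatically free of finite rank. The companion fact $Z^n \subseteq X^n$ is almost immediate from the norm relation \eqref{eq.nor}: for each divisor $d$ of $n$ with $2 \le d < n$ and each $a$ with $1 \le a \le d-1$, the identity $v(d,a) = \prod_{j=0}^{n/d-1} v(n, a+dj)$ places $X^d$ inside $X^n$, whence $Z^n \subseteq X^n$. Thus $\widehat{X^n}$ is a well-defined finitely generated abelian group, and everything reduces to (b).

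For (b) I would pass to a real vector space where saturation is visible. Fix the embedding $\mathbb{Q}(\zeta_n) \hookrightarrow \mathbb{C}$ implicit in the definition of $v(n,a)$. For each complex embedding $\sigma$ the map $x \mapsto \log|\sigma(x)|$ kills $T$ and so induces a homomorphism $X^n \to \mathbb{R}$, and for each prime $\mathfrak{p}$ of $\mathbb{Q}(\zeta_n)$ above a rational prime dividing $n$ one obtains a further $\mathbb{Z}$-valued functional. These together assemble into a homomorphism $X^n \to \mathbb{R}^N$ with discrete image; saturation of $Z^n$ in $X^n$ is equivalent to saying that the image of $Z^n$ in this ambient space coincides with the intersection of the image of $X^n$ with the $\mathbb{Q}$-span of the image of $Z^n$. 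To verify this one must separate the contributions from the different levels $d \mid n$, using that a cyclotomic number of level $d$ has constrained valuation behavior at primes above $p \mid n$. This is exactly the analysis carried out in \cite{Con1, Con2}; equivalently, once a basis of $\widehat{X^n}$ adapted to the level filtration has been exhibited there, saturation reduces to checking that a chosen generating set of $Z^n$ extends to a $\mathbb{Z}$-basis of $X^n$.

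The main obstacle, and the reason $n=4$ must be excluded, is illustrated by the identity $v(4,1)^2 = (1-i)^2 = -2i$, which equals $v(2,1) = 2$ in $\mathbb{C}^*/T$. Thus $v(4,1) \in X^4 \setminus Z^4$ satisfies $v(4,1)^2 \in Z^4$, so $\widehat{X^4}$ has $2$-torsion. The substance of the proposition is to show that no analogous collision occurs for any other $n \ge 2$; this is precisely the technical content of the cited Conrad results and is where I would expect almost all of the work to lie.
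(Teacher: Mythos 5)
The paper offers no proof of this proposition; it is cited directly from Conrad's work (\cite{Con1}, \cite{Con2}). Your outline is therefore not comparable to any argument in the paper, but it is structurally sound as far as it goes: the reduction to (a) freeness of $X^n$ and (b) saturation of $Z^n$ in $X^n$ is exactly the right decomposition, since a finitely generated group is free of finite rank precisely when it is torsion-free, and $X^n/Z^n$ is torsion-free precisely when $Z^n$ is saturated. Your verification of (a) via finite generation of $X^n$ together with torsion-freeness of $\mathbb{C}^*/T$ is correct, the inclusion $Z^n \subseteq X^n$ does follow from the norm relation \eqref{eq.nor}, and your explanation of the $n=4$ exclusion via $v(4,1)^2 = (1-i)^2 = -2i$, which equals $2 = v(2,1)$ in $\mathbb{C}^*/T$, is exactly the obstruction that forces the hypothesis. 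The saturation of $Z^n$ is the entire substance of the proposition, and you explicitly defer it to the cited references, so your proposal is best described as a correct scaffolding around the same citation the paper relies on rather than an independent proof.
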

	
	In \cite{Con1}, Conrad constructed a basis of this abelian group which we recall here. To state the basis, it is convenient to use the following notation.

	\begin{emp}\label{ss.rf}

		For $n\in\mathbb{N}_{\ge 2}$, let $n=p_1^{e_1}\cdots p_\ell^{e_\ell}$ be the prime factorization of $n$ with $p_1<p_2<\dots<p_\ell$ prime and $e_1,\dots,e_\ell\ge 1$. Let $a\in\mathbb{Z}$ be such that $n\nmid a$. We define the following numbers associated with $a$ with respect to $n$. 
		
		\begin{enumerate}
			\item If $e_i=1,$ let $\overline{a}_i$ be the integer satisfying $\overline{a}_i\equiv a \operatorname{mod} p_i$ and $0\le a_i\le p_i-1$.
			\item If $e_i>1$, let $\widehat{a}_i$ be the integer satisfying $\widehat{a}_i\equiv a \operatorname{mod} p_i^{e_i-1}$ and $0 \le \widehat{a}_i\le p_i^{e_i-1}-1$. Let $\widetilde{a}_i$ be the integer satisfying $\widetilde{a}_i\equiv a \operatorname{mod} p_i^{e_i}$ and $0\le \widetilde{a}_i\le p_i^{e_i}-1$. Let $\overline{a}_i:=\frac{\widetilde{a}_i-\widehat{a}_i}{{p_i}^{e_i-1}}$. Note that $\overline{a}_i\in\mathbb{Z}_{\ge 0}$, $0\le \overline{a}_i\le p_i-1$ and $\widetilde{a}_i=\overline{a}_i p_i^{e_i-1}+\widehat{a}_i$.
		\end{enumerate}

		We associate to $v(n,a)\in\mathbb{C}$ the following $\ell$-tuple formal symbol: $(a_1,\dots,a_\ell)_n$, where $a_i=\overline{a}_i$ if $e_i=1$, and $a_i$ is the pair $(\overline{a}_i,\widehat{a}_i)$ if $e_i\ge 2$, for $1\le i\le \ell$. This association is well defined because any $b\in\mathbb{Z}$ with $\operatorname{gcd}(n,b)=1$ such that $v(n,b)=v(n,a)$ in $\mathbb{C}$ satisfies $b=a+tn$ for some $t\in\mathbb{Z}$. Then $(b_1,\dots,b_\ell)_n=(a_1,\dots,a_\ell)_n$.  We call the symbol $(a_1,\dots,a_\ell)_n$ the {\it residue form} of $v(n,a)$. The following remark shows that the elements $n$ and $a_i$ with $1\le i\le \ell$ uniquely determine $v(n,a)\in\mathbb{C}$.

	\end{emp}

	\begin{lemma}\label{le.rf}
		Let $n\in\mathbb{N}_{\ge 2}$ and let $n=p_1^{e_1}\cdots p_\ell^{e_\ell}$ be the prime factorization of $n$ with $p_1<p_2<\dots<p_\ell$ and $e_1,\dots,e_\ell\ge 1$. For each $1\le i\le \ell$, if $e_i=1,$ let $b_i\in\mathbb{Z}$ be such that $0\le b_i\le p_i-1$. If $e_i>1$, let $b_i=(\overline{b}_i,\widehat{b}_i)\in\mathbb{Z}^2$ be such that $0\le \overline{b}_i\le p_i-1$ and $0 \le \widehat{b}_i\le p_i^{e_i-1}-1$. Then there exists a unique $a\in\mathbb{Z}$ with $0\le a\le n-1$, such that $v(n,a)\in\mathbb{C}$ has the residue form $(a_1,\dots,a_\ell)_n$ where $a_i=b_i$ for $1\le i\le \ell$.
	\end{lemma}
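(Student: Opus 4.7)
The plan is to invoke the Chinese Remainder Theorem in a straightforward way, after repackaging the tuple data $(b_1,\dots,b_\ell)$ into a single residue modulo each prime power $p_i^{e_i}$.

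First, for each $1 \le i \le \ell$ I would define a local residue $c_i$ as follows. If $e_i = 1$, set $c_i := b_i \in \{0,\dots,p_i-1\}$. If $e_i > 1$, set $c_i := \overline{b}_i \, p_i^{e_i-1} + \widehat{b}_i$. The given ranges $0 \le \overline{b}_i \le p_i - 1$ and $0 \le \widehat{b}_i \le p_i^{e_i-1} - 1$ ensure that in either case $c_i$ lies in $\{0, 1, \ldots, p_i^{e_i} - 1\}$. Since the $p_i^{e_i}$ are pairwise coprime with product $n$, the Chinese Remainder Theorem produces a unique $a \in \{0, 1, \ldots, n-1\}$ satisfying the congruences $a \equiv c_i \pmod{p_i^{e_i}}$ for every $i$.

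Next, I would verify that $v(n,a)$ indeed has residue form $(a_1, \dots, a_\ell)_n$ with $a_i = b_i$. By construction, reducing $a$ modulo $p_i^{e_i}$ yields $c_i$. When $e_i = 1$, this is exactly the definition of $\overline{a}_i$, giving $\overline{a}_i = c_i = b_i$. When $e_i > 1$, we have $\widetilde{a}_i = c_i = \overline{b}_i \, p_i^{e_i-1} + \widehat{b}_i$; reducing once more modulo $p_i^{e_i-1}$ gives $\widehat{a}_i = \widehat{b}_i$ (since $\overline{b}_i\, p_i^{e_i-1} \equiv 0 \pmod{p_i^{e_i-1}}$), and then the formula $\overline{a}_i = (\widetilde{a}_i - \widehat{a}_i)/p_i^{e_i-1}$ forces $\overline{a}_i = \overline{b}_i$. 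So $a_i = b_i$ in either case.

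For uniqueness, if some $a' \in \{0, \ldots, n-1\}$ also yields the residue form with $a'_i = b_i$ for all $i$, then unwinding the definitions shows $a' \equiv c_i \pmod{p_i^{e_i}}$ for every $i$, so $a' = a$ by the uniqueness half of CRT. As a sanity check, the total number of admissible input tuples is $\prod_i p_i^{e_i} = n$, matching the number of available residues $a \in \{0, \ldots, n-1\}$, which is consistent with a bijection. The only real ``obstacle'' here is the notational bookkeeping of carefully matching $\overline{a}_i$, $\widehat{a}_i$, $\widetilde{a}_i$ back to $\overline{b}_i$, $\widehat{b}_i$ under the definition in \ref{ss.rf}; there is no substantive mathematical difficulty beyond CRT.
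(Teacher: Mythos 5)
Your proof is correct and follows the same approach as the paper, which simply invokes the Chinese Remainder Theorem; you have merely spelled out the bookkeeping that the paper leaves implicit.
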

	
	\begin{proof}
		This follows from the Chinese Remainder Theorem.
	\end{proof}

	\begin{remark}\label{re.rf}
		Let $b_1,b_2,\dots,b_\ell$ be as in Lemma \ref{le.rf}. In the following, we will use the notation $(b_1,b_2,\dots,b_\ell)_n$ to refer the corresponding element $v(n,a)\in\mathbb{C}$ as given in Lemma \ref{le.rf}. 
	\end{remark}

		\begin{emp}\label{em.norm2}
			
			Let $n\in\mathbb{N}_{\ge 4}$ be non-prime and let $a\in\mathbb{Z}$ be such that $\operatorname{gcd}(n,a)=1$. Factor $n=p_1^{e_1}\cdots p_\ell^{e_\ell}$ with $p_1<p_2<\dots<p_\ell$ and $e_1,\dots,e_\ell\ge 1$. Let $(a_1,\dots,a_\ell)_{n}$ be the residue form of $v(n,a)$. Fix $1\le r\le\ell$.

			If $e_r=1$, let 
			\begin{equation*}
				\begin{aligned}
					\Gamma_r:=\left\{(b_1,\dots,b_\ell)_n\in X^n \;\middle|\;\begin{aligned}
						& 1\le b_r\le p_r-1 \text{ and } b_r\ne a_r;\\
						&\text{if } 1\le s\le\ell \text{ and } s\ne r, \text{ then }b_s=a_s
					\end{aligned}
					\right\}.
				\end{aligned}
			\end{equation*}

			If $e_r\ge 2$, let 
			\begin{equation*}
				\begin{aligned}
					\Gamma_r:=\left\{(b_1,\dots,b_\ell)_n\in X^n \;\middle|\;\begin{aligned}
						&b_r=(\overline{b}_r,\widehat{b}_r)\in\mathbb{Z}^2\text{ where } 0\le \overline{b}_r\le p_r-1, \overline{b}_r\ne  \overline{a}_r\\
						&\text{and }\widehat{b}_r=\widehat{a}_r; \text{ if } 1\le s\le\ell\text{ and } s\ne r,\text{ then } b_s=a_s\\ 
					\end{aligned}
					\right\}.
				\end{aligned}
			\end{equation*}

		\end{emp}

		\begin{lemma}\label{le.nor1}
			
			Keep the assumption in \ref{em.norm2}. Then
			
			$$v(n,a)=\prod\limits_{v\in \Gamma_r} v^{-1},$$
			where both sides are viewed as elements in $\widehat{X^{n}}$ under the quotient map.
		\end{lemma}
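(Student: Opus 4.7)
The plan is to apply the norm relation \eqref{eq.nor} with $m := n/p_r$. Since $p_r \ge 2$, $m$ is a proper divisor of $n$, so $v(m,a) \in X^m \subset Z^n$, which means $v(m,a) = 1$ in $\widehat{X^n}$. The relation \eqref{eq.nor} then yields
$$1 = \prod_{j=0}^{p_r - 1} v(n, a + mj) \quad \text{in } \widehat{X^n}.$$
I would then identify these $p_r$ factors in terms of their residue forms: since $p_s^{e_s} \mid m$ for every $s \ne r$, the $s$-components of $v(n, a+mj)$ are unchanged and equal $a_s$. Writing $m = p_r^{e_r-1} u$ with $\gcd(u, p_r) = 1$, one checks that $\widehat{a+mj}_r = \widehat{a}_r$ when $e_r \ge 2$ (since $p_r^{e_r-1} \mid m$), and that $\overline{a+mj}_r \equiv \overline{a}_r + uj \pmod{p_r}$, which ranges over all of $\{0, 1, \ldots, p_r - 1\}$ as $j$ does.

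The key dichotomy is whether $e_r = 1$ or $e_r \ge 2$. If $e_r = 1$, then exactly one value of $j$ produces $\overline{a+mj}_r = 0$, i.e.\ $p_r \mid a+mj$; for this $j$ the factor equals $v(n/p_r, (a+mj)/p_r) \in X^{n/p_r} \subset Z^n$ and hence is trivial in $\widehat{X^n}$. The remaining $p_r - 1$ nontrivial factors split as $v(n,a)$ (for the unique $j$ with $\overline{a+mj}_r = \overline{a}_r$) together with the $p_r - 2$ elements of $\Gamma_r$. If instead $e_r \ge 2$, then $\gcd(\widehat{a}_r, p_r) = 1$ forces $\gcd(n, a+mj) = 1$ for every $j$, so no factor degenerates into $Z^n$; the $p_r$ factors then split as $v(n,a)$ plus exactly the $p_r - 1$ elements of $\Gamma_r$.

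In either case, the relation reduces to $v(n,a) \cdot \prod_{v \in \Gamma_r} v = 1$ in $\widehat{X^n}$, which rearranges to the claimed identity. The only real obstacle is the residue-form bookkeeping: one must verify that the non-$v(n,a)$ factors of the product correspond bijectively to $\Gamma_r$ as specified in \ref{em.norm2}, with the cardinalities $p_r - 2$ (when $e_r = 1$) and $p_r - 1$ (when $e_r \ge 2$) matching once the lone $Z^n$-trivial factor in the first case is correctly identified and discarded.
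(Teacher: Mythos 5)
Your proof is correct and follows exactly the route the paper intends: the paper's own proof is simply the one-line remark ``this follows from Formula \eqref{eq.nor},'' and your argument spells out precisely the instance $m=n/p_r$ of that norm relation, the triviality of $v(n/p_r,a)$ in $\widehat{X^n}$, and the residue-form bookkeeping in both the $e_r=1$ and $e_r\ge 2$ cases. The identification of the non-$v(n,a)$ factors with $\Gamma_r$, including the disposal of the single degenerate factor when $e_r=1$, is correct.
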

		
		\begin{proof}
			This follows from the Formula \eqref{eq.nor}.
		\end{proof}

	Let $R$ be a commutative ring and let $N$ be a free $R$ module. Let $B$ be a set and $\phi: B\rightarrow N$ be an injective map. We say that $B$ {\it induces a basis} of $N$ if the set \{$\phi(b)\mid b\in B$\} forms a basis of $N$.
	
	\begin{theorem}[\cite{Con1}, A.1]\label{th.ba1}
		Let $n\in\mathbb{N}_{\ge 2}$ and $n\ne 4$. Let $n=p_1^{e_1}\cdots p_\ell^{e_\ell}$ be the prime factorization of $n$ with $p_1<\dots<p_\ell$, and $e_i>0$ for $1\le i\le \ell$. Depending on what type of number $n$ is, the following set $B_n$ contained in $X^n$ induces a basis of $\widehat{X^n}$ under the quotient map $X^n \rightarrow\widehat{X}^n$.
		\begin{enumerate}
			\item Assume that $n=2$.
			Then $B_2:=\{(1)_2\in X^2\}$.
			\item Assume that $n$ is an odd prime.
			Then $B_n:=\{(b_1)_n\in X^n\mid b_1\in\mathbb{N} \text{ and } 1\le b_1\le \frac{n-1}{2}\}.$
			\item Assume that $n\ne 2,\; 2\mid n,$ and $4\nmid n$. Then $B_n$ is empty, i.e. $\widehat{X^n}$ is trivial.
			\item Assume that $n$ is odd, square-free and $\ell\ge 2$. Let\\  $$C_{n,\ell}:=\{(\overbrace{1,\dots,1}^{\ell-1}, b_\ell)_n\in X^n\mid\frac{p_\ell+1}{2}\le b_\ell\le p_\ell-2\text{ with } b_\ell\in\mathbb{N}\}.$$ 
			
			For $1\le k\le \ell-1$, 
			if $p_k\ne 3$, let 
			
			\begin{align*}
				C_{n,k}:=&\{(\overbrace{1,\dots,1}^{k-1}, b_k,\dots, b_\ell)_n\in X^n\mid\frac{p_k+1}{2}\le b_k\le p_k-2\text{ with } b_k\in\mathbb{N}. \\
				&\text{ If } k+1\le j\le \ell,  \text{ then }1\le b_j\le p_j-2 \text{ with } b_j\in\mathbb{N} \}.
			\end{align*}

			If $p_k=3$, then $k=1$. Let $C_{n,k}:=\emptyset$. Let $C_n:=\bigcup\limits_{k=1}^{\ell}C_{n,k}.$\\
			\begin{enumerate}
				\item When $\ell$ is even, then $B_n:=C_n\bigcup\{(\overbrace{1,\dots,1}^{\ell})_n\}.$\\
				\item When $\ell$ is odd, then $B_n:=C_n$.\\
			\end{enumerate}
			
			\item Assume that $n=4m$ where m is odd and square-free, and $\ell\ge 2$. If $p_\ell=3$, then $\ell=2$.\\ 
			If $p_\ell\ne 3$, let $$C_{n,\ell}:=\{((0,1),\overbrace{1,\dots,1}^{\ell-2}, b_\ell)_n\in X^n\mid\frac{p_\ell+1}{2}\le b_\ell\le p_\ell-2\text{ with } b_\ell\in\mathbb{N}\}.$$ If $p_\ell=3$, let $C_{n,\ell}:=\emptyset$. Assume that $\ell\ge 3$. For $2\le k\le \ell-1$, 
			if $p_k\ne 3$, let
			
			\begin{align*}
				C_{n,k}:=&\{((0,1),\overbrace{1,\dots,1}^{k-2}, b_k,\dots, b_\ell)_n\in X^n\mid\frac{p_k+1}{2}\le b_k\le p_k-2\text{ with } b_k\in\mathbb{N}, \\&\text{ and for } k+1\le j\le \ell,  1\le b_j\le p_j-2 \text{ with } b_j\in\mathbb{N} \}.
			\end{align*}

			If $p_k=3$, let $C_{n,k}:=\emptyset$. Let $C_n:=\bigcup\limits_{k=2}^{\ell}C_{n,k}.$\\

			\begin{enumerate}
				\item When $\ell$ is even, then $B_n:=C_n\bigcup\{((0,1),\overbrace{1,\dots,1}^{\ell-1})_n\}$\\
				\item When $\ell$ is odd, then $B_n:=C_n$.\\
				
			\end{enumerate}
			\item Assume that $n=m$ or $4m$, where $m$ is odd and non-square-free, or $n$ satisfies $8\mid n$. If $8\mid n$, let $\mu:=1$. If $8\nmid n$, let $\mu:=\operatorname{min}\{1\le j\le\ell\mid p_j\ne 2\text{ and } e_j\ge 2\}$. Then

			\begin{align*}
				B_n:=&\{(b_1,\dots,b_\ell)_n\in X^n\mid \text{ for } 1\le i\le \ell, \text{ if } e_i=1,  \text{ then }1\le b_i\le p_i-2.\\
				& \,\text{ If } e_i\ge 2,  \text{ then }b_i=(\overline{b}_i,\widehat{b}_i) \text{ with } 0\le \overline{b}_i\le p_i-2\text{ and } 1\le \widehat{b}_i< p_i^{e_i-1}.\\
				&\text{ Furthermore } 1\le \widehat{b}_\mu< \frac{p_\mu^{e_\mu-1}}{2}\}.
			\end{align*}

		\end{enumerate}
	\end{theorem}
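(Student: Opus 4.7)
The plan is to proceed in four steps: (i) determine the $\mathbb{Z}$-rank of $\widehat{X^n}$; (ii) verify by a case-by-case count that $|B_n|$ equals this rank; (iii) show that $B_n$ generates $\widehat{X^n}$, using only the symmetry relation \eqref{eq.sym} and the norm relation \eqref{eq.nor}; and (iv) show that the elements of $B_n$ are linearly independent in $\widehat{X^n}$. Formally (iii) and (iv) together imply (i) and (ii), but it is cleaner to pin down the rank first so that matching cardinalities reduces (iv) to showing that $B_n$ spans a finite-index sublattice of $\widehat{X^n}$.

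For step (iii), the first observation is that whenever $\gcd(n,a) = d > 1$ one has $v(n,a) = v(n/d, a/d)$ in $\mathbb{C}^*/T$, and so $v(n,a) \in Z^n$; only generators $v(n,a)$ with $\gcd(n,a)=1$ therefore need to be controlled modulo $Z^n$. For such a generator I would reduce on its residue form $(a_1,\dots,a_\ell)_n$. The symmetry relation, read in $\widehat{X^n}$, yields $v(n,a) = v(n,-a)$, and under the Chinese Remainder Theorem this simultaneously negates every residue coordinate. The norm relation $v(n/p_i^{e_i}, a) = \prod_j v(n, a + (n/p_i^{e_i}) j)$, whose left-hand side lies in $Z^n$, becomes (mod $Z^n$) a multiplicative relation among the $v(n,\cdot)$ as the $i$-th residue coordinate varies over its fiber, and therefore lets one solve for one distinguished value of the $i$-th coordinate in terms of the others. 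Combining these two moves and picking canonical representatives recovers the stated ranges in each of cases (2)--(6): the omission of the residues $b_i = 0$ and $b_i = p_i - 1$, the identification $b_i \leftrightarrow p_i - b_i$, the special treatment of $p_k = 3$ (where $\{1,2\}$ is already collapsed by the two moves together), the fixed $(0,1)$ coordinate when $4 \mid n$, and the half-range constraint $1 \le \widehat{b}_\mu < p_\mu^{e_\mu-1}/2$ in the non-square-free case all arise from the interplay between the symmetry and norm relations at the relevant primes.

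Step (iv), linear independence, is where the main obstacle lies. The cleanest strategy is a regulator argument: embed $\widehat{X^n}$ into $\mathbb{R}^r$ via $v \mapsto (\log|\sigma(v)|)_\sigma$, where $\sigma$ runs over complex embeddings of $\mathbb{Q}(\zeta_n)$ modulo complex conjugation, and show that the Gram matrix of the images of the elements of $B_n$ is nondegenerate. For $n$ an odd prime this reduces to a Vandermonde-type computation in the numbers $\log|1-\zeta_n^a|$; for general $n$, nondegeneracy can be extracted from the analytic class number formula for $\mathbb{Q}(\zeta_n)$, which ties the cyclotomic regulator to nonvanishing of $L(1,\chi)$ for the Dirichlet characters of conductor dividing $n$. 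Passing to the quotient $\widehat{X^n}$ isolates precisely the contribution at conductor exactly $n$, which dovetails with the inductive structure of $B_n$ itself. I would organize the proof as a downward induction along the divisor lattice of $n$, treating cases (2), (4), (5), and (6) in increasing order of arithmetic complexity. The mixed case (6) is the heaviest, since one must track the prime-power refinements $(\overline{b}_i, \widehat{b}_i)$ and the square-free coordinates simultaneously; the distinguished index $\mu$ in case (6) is precisely the prime power at which the symmetry relation on the inner residue $\widehat{b}_\mu$ has not already been absorbed by a norm reduction. The small cases $n=2$ and $n=2m$ with $m>1$ odd are handled by direct computation, confirming $B_2 = \{(1)_2\}$ and $\widehat{X^{2m}} = 0$ respectively, which matches the statement in cases (1) and (3).
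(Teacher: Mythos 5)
The paper does not prove this theorem; it is imported verbatim from Conrad's work (\cite{Con1}, A.1; see also \cite{Con2}, Theorem 4.6), so there is no in-paper argument to measure your sketch against.

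With that caveat, your outline identifies the right ingredients. The symmetry relation \eqref{eq.sym} identifies $v(n,a)$ with $v(n,-a)$ already in $X^n$, and the norm relation \eqref{eq.nor}, read modulo $Z^n$, eliminates one value of the outer residue at each prime; together these two moves do account for the shapes of the six cases (the exclusion of $\overline{b}_i = p_i-1$, the half-range constraint at one distinguished index, the special role of the prime $3$, and so on), and the appeal to the cyclotomic regulator and nonvanishing of $L(1,\chi)$ is the classical route to independence. But as written this is a plan, not a proof, and two gaps stand out. First, you never compute $\operatorname{rank}\widehat{X^n}$ nor verify that $|B_n|$ matches it; that case-by-case enumeration — which depends on the parity of $\ell$, on whether $3\mid n$, $4\mid n$, or $8\mid n$, and on whether $n$ is square-free — is precisely what forces the six-way split in the statement, and pinning it down is a substantive combinatorial task, not a formality. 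Second, the regulator argument as stated is set up for units, but $v(n,a)$ fails to be a unit exactly when $n$ is a prime power; to make the Gram-matrix nondegeneracy claim rigorous you would need to augment the log embedding by the valuations at the ramified primes, or handle the single non-unit generator at each prime-power level separately. Both gaps are fillable along the lines you indicate, but they must be filled before the sketch constitutes a proof.
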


	\begin{theorem}[\cite{Con1}, 2.3.8,  see also \cite{Con2} Theorem 4.6]\label{th.ba2}
		Let $n\in\mathbb{N}_{\ge 2}$. For $d\in\mathbb{N}$ and $d\mid n$, let $B_d\subseteq X^d$ be the set that induces a basis of $\widehat{X^d}$ given in Theorem \ref{th.ba1}. Then
		\begin{enumerate}
			\item $\bigcup\limits_{\substack{d\,\mid\, n\\d\ge 2}} B_d$ is a basis of $X^n$ if $4\nmid n$.
			\item $\{v(4,1)\}\cup\bigcup\limits_{\substack{d\,\mid\, n\\d>2,\; d\ne 4} }B_d$ is a basis of $X^n$ if $4\mid n$.
		\end{enumerate}
	\end{theorem}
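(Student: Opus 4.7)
The plan is to prove the statement by induction on $n$, using the defining short exact sequence
\[
0 \longrightarrow Z^n \longrightarrow X^n \longrightarrow \widehat{X^n} \longrightarrow 0,
\]
where $Z^n$ is the subgroup of $\mathbb{C}^*/T$ generated by $\bigcup\limits_{\substack{d\mid n \\ 2\le d<n}}X^d$ and $\widehat{X^n}=X^n/Z^n$ as in \ref{emp.grp}. The base cases are $n=2$ (where $B_2=\{v(2,1)\}$ is a basis), $n$ an odd prime (where $Z^n=\{1\}$ by definition, so $X^n=\widehat{X^n}$ has basis $B_n$ directly from Theorem \ref{th.ba1}), and $n=4$ (where the identities $v(4,3)\equiv v(4,1)\bmod T$ and $v(4,1)v(4,3)=v(2,1)$ imply that $X^4$ has rank one with basis $\{v(4,1)\}$, matching the stated form in (2) when $n=4$).

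For the inductive step with $n\ge 5$ non-prime, first consider the case $4\nmid n$. Theorem \ref{th.ba1} gives that $\widehat{X^n}$ is free abelian and that the image of $B_n$ under the quotient map is a basis; since $\widehat{X^n}$ is free, the short exact sequence splits, yielding $X^n\cong Z^n\oplus \widehat{X^n}$. Applying the induction hypothesis to every proper divisor $d\mid n$ with $d\ge 2$ supplies the claimed basis of each $X^d$, and since $Z^n=\sum_{d\mid n,\,2\le d<n}X^d$, the union $\bigcup\limits_{\substack{d\mid n \\ 2\le d<n}}B_d$ generates $Z^n$. Lifting $B_n\subset X^n$ and adjoining it to this generating set then produces the desired generating set of $X^n$. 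Case (2) with $4\mid n$ is handled in parallel: because $\widehat{X^4}$ is excluded from the freeness proposition preceding Theorem \ref{th.ba1}, we appeal to the base case $n=4$, so $\{v(4,1)\}$ is substituted wherever $B_4$ would have appeared, and this substitution propagates consistently through the induction; the absence of $B_2$ in case (2) is compatible because $v(2,1)=v(4,1)^2$ in $\mathbb{C}^*/T$ and would be redundant.

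The main technical obstacle is verifying that the union $\bigcup\limits_{\substack{d\mid n \\ 2\le d<n}}B_d$ is actually linearly independent in $\mathbb{C}^*/T$, hence a basis of $Z^n$ rather than merely a generating set; a priori, the subgroups $X^{d_1}$ and $X^{d_2}$ for distinct divisors could overlap nontrivially. To handle this, the plan is to run a secondary induction on the number $k$ of maximal proper divisors of $n$: writing them $m_1,\dots,m_k$, so that $Z^n=\sum_{i=1}^k X^{m_i}$, and analyzing the partial sums $\sum_{i\le j}X^{m_i}$ via the containment $X^{m_i}\cap X^{m_j}\subseteq X^{\gcd(m_i,m_j)}$ inside $\mathbb{C}^*/T$. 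Combined with the induction hypothesis applied at each $X^{m_i}$ and at each $X^{\gcd(m_i,m_j)}$, an inclusion–exclusion count matches $\operatorname{rank}(Z^n)$ with $\sum_{d\mid n,\,2\le d<n}|B_d|$, forcing linear independence. The explicit description of $B_d$ in Theorem \ref{th.ba1}, in particular the normalization of the residue-form indices from \ref{ss.rf}, is essential here because it ensures that basis elements chosen at different levels remain distinguishable once pushed into $X^n$ via the norm relation \eqref{eq.nor}.
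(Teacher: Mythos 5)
The paper does not actually prove Theorem~\ref{th.ba2}; it quotes it from Conrad's work (\cite{Con1}, \cite{Con2}), so there is no in-paper argument to compare against. Evaluating your proposal on its own merits: the high-level reduction is sound — since $\widehat{X^n}$ is free, the sequence $0\to Z^n\to X^n\to\widehat{X^n}\to 0$ splits, and with Theorem~\ref{th.ba1} supplying that $B_n$ lifts a basis of $\widehat{X^n}$, the theorem does reduce to showing that $\bigcup_{d\mid n,\,2\le d<n}B_d$ is a basis (not merely a generating set) of $Z^n$. You correctly identify this as the technical core. But the way you propose to close that gap does not go through.

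The containment $X^{m_i}\cap X^{m_j}\subseteq X^{\gcd(m_i,m_j)}$ is stated as if it were an available fact, but it is neither obvious nor elementary. Nothing in the definition of $X^d$ as the subgroup generated by the $v(d,a)$'s tells you that an element that happens to lie in both $X^{m_i}$ and $X^{m_j}$ can be rewritten using cyclotomic numbers of level dividing $\gcd(m_i,m_j)$; the norm relation \eqref{eq.nor} pushes elements \emph{down} in level, not in the direction you need. In fact, establishing this intersection statement is essentially equivalent in difficulty to the independence you are trying to prove, so assuming it is circular. Separately, the inclusion–exclusion rank count is not valid for arbitrary families of subgroups of a free abelian group: the formula $\operatorname{rank}\bigl(\sum_i A_i\bigr)=\sum_{\emptyset\neq I}(-1)^{|I|+1}\operatorname{rank}\bigl(\bigcap_{i\in I}A_i\bigr)$ requires the sublattice generated by the $A_i$ to be distributive, which is an additional hypothesis you have not justified for the $X^{m_i}$. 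And even granting both of these, you still need an a priori formula for $\operatorname{rank}(X^n)$ or $\operatorname{rank}(Z^n)$ against which to match your count; the induction hypothesis gives you the ranks of the $X^d$ for $d<n$, but the rank of their \emph{sum} inside $\mathbb{C}^*/T$ is precisely the unknown. The standard route (and, as far as I can tell, Conrad's) avoids all of this by working with valuations: each $v(p^k,a)$ generates a prime ideal above $p$ in $\mathbb{Z}[\zeta_n]$, and linear independence of the proposed basis is read off from the $\mathfrak{p}$-adic valuations of a putative relation, one prime at a time. That is a genuinely different, and necessary, input that your argument is missing.
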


	\begin{definition}
		For $n\in\mathbb{N}_{\ge 2}$ and $n\ne 4$, let $\{v_i\}_{i=1}^{m}$ be the basis of $\widehat{X^n}$ given in Theorem \ref{th.ba1}. Let $v\in\widehat{X^n}$, and let $v=\prod\limits_{i=1}^{m}v_i^{f_i}$ be its representation in the basis. Let $\operatorname{supp}(v):=\{v_i\mid f_i\ne 0\}$. We call $\operatorname{supp}(v)$ the {\it relative support} of $v$. 
	\end{definition}

	\begin{definition}\label{de.su}
		For $n\in\mathbb{N}_{\ge 2}$, let $\{v_i\}_{i=1}^{m}$ be the basis of $X^n$ given in Theorem \ref{th.ba2}. Let $v\in X^n$, and let  $v=\prod\limits_{i=1}^{m}v_i^{f_i}$ be its representation in the basis. For $1\le i\le m$, we call $f_i$ the {\it multiplicity} of $v$ at the basis element $v_i$. We denote the multiplicity of $v$ at $v_i$ by $\operatorname{multi}_{v_i}(v)$. 
	\end{definition}

	\section{The Prime Case}\label{sec.prime}
	
	Let $n$ be an odd prime. In this section, we analyze the solution to Equation \eqref{eq.ne4} under the condition that the denominator of the solution is $n, 2n$, or $4n$. In particular, we prove the following Theorem.

	\begin{theorem}\label{th.4pr2}
		Let $n$ be an odd prime. Assume that $(x_0,x_1,x_2,x_3,x_4)$ is a solution to \eqref{eq.ne4} in $G$ with $0<x_i<\frac{\pi}{2}$ and $\operatorname{den}(x_i)\in\{n,2n,4n\}$ for each $0\le i\le 4$. Then the tuple $(x_0,x_1,x_2,x_3,x_4)$ is in $\Phi_{1,1}$.
	\end{theorem}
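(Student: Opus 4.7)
The plan is to translate Equation \eqref{eq.ne4} into an additive equation in the free abelian group $X^N$, where $N:=\operatorname{lcm}(\operatorname{den}(x_0),\dots,\operatorname{den}(x_4))\in\{n,2n,4n\}$, and then compare coefficients in Conrad's basis (Theorem \ref{th.ba2}). By Remark \ref{rk.rep}, $(\tan x_0)^2=\prod_{i=1}^4\tan(x_i)$ holds in $X^N$, and Proposition \ref{pro.tar} expresses each $\tan(x_i)$ as an explicit monomial in cyclotomic numbers whose levels divide $\operatorname{den}(x_i)$. For $\operatorname{den}(x_i)\in\{n,2n\}$ the monomial lies already in $X^n$, while for $\operatorname{den}(x_i)=4n$ it carries an extra factor $v(4n,a_i)^2$ at level $4n$. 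The basis of $X^N$ stratifies by level, so the single equation decouples into independent integer coefficient equations at each level, and I work from the top stratum downward.

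I focus first on the main case $N=4n$. The level-$4n$ basis $B_{4n}$ from Theorem \ref{th.ba1}(5) consists of the $(n-1)/2$ elements $((0,1),b)_{4n}$ with $b=1$ or $(n+1)/2\le b\le n-2$, and only the $x_i$ with $\operatorname{den}(x_i)=4n$ contribute to this stratum, each via $v(4n,a_i)^2$. Using $v(4n,-a)=v(4n,a)$ in $X^{4n}$ (a consequence of \eqref{eq.sym} modulo torsion) together with the reduction $((1,1),c)_{4n}=((0,1),c)_{4n}^{-1}$ in $\widehat{X^{4n}}$ coming from Lemma \ref{le.nor1}, each $v(4n,a_i)^2$ reduces, modulo lower-level terms, to $\pm 2$ at a single element of $B_{4n}$ determined by the residue class of $a_i\pmod{4n}$. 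Matching multiplicities at each $((0,1),b)_{4n}$ in $2\tan(x_0)=\sum_{i=1}^4\tan(x_i)$ forces the denominator-$4n$ numerators either to come in cancelling pairs satisfying $a_j\equiv\pm a_k\pmod{4n}$ or to coincide with $a_0$, leaving a short finite list of level-$4n$ configurations. Descending to level $n$, all five $x_i$ now contribute via the three formulas of Corollary \ref{cor.tarq}, reduced into $B_n=\{v(n,b):1\le b\le (n-1)/2\}$ using $v(n,a)=v(n,-a)$; combining the level-$n$ coefficient equations with the level-$4n$ constraints pins down the remaining structure. The target conclusion is that three of $x_0,\dots,x_4$ (necessarily including $x_0$) agree at a common value $s\pi$ and the remaining two $x_j,x_k$ satisfy $x_j+x_k=\pi/2$; at that point $\tan(x_j)\tan(x_k)=1$ and the equation reduces to the identity $\tan^2(x_0)=\tan^2(s\pi)$, placing $(x_0,\dots,x_4)$ in $\Phi_{1,1}$. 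The case $N=2n$ runs on the same template with the level-$4n$ stratum absent, while the case $N=n$ turns out to be vacuous since $\overline{\Phi}_{1,1}$ contains no element all of whose coordinates have denominator equal to the odd prime $n$ (the identity $x_j+x_k=\pi/2$ forces one of $\operatorname{den}(x_j),\operatorname{den}(x_k)$ to be $2n$).

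The hard part is the mixed-denominator bookkeeping in the $N=4n$ case. Each $x_i$ with $\operatorname{den}(x_i)=4n$ contributes not only at level $4n$ but also a secondary level-$n$ term $v(n,a_i)v(n,2^{-1}a_i)^{-1}$ (from Proposition \ref{pro.tar}(3)), which interferes with the primary level-$n$ contributions from $x_j$ having $\operatorname{den}(x_j)\in\{n,2n\}$ as well as with the further level-$n$ remainders produced by reducing $v(4n,a_i)^2$ into $B_{4n}$. To control this, I settle the level-$4n$ configuration first, cutting the possibilities down to a short list of admissible tuples $(a_i\pmod{4n})$, and only then carry out the level-$n$ comparison; at that stage the surviving configurations should collapse exactly to those parametrized by $\overline{\Phi}_{1,1}$ up to reordering of $x_1,\dots,x_4$. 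The edge case $n=3$, where $B_{12}$ has a single element and the level-$4n$ analysis is degenerate, is handled separately.
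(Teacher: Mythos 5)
Your overall strategy — recast \eqref{eq.ne4} in Conrad's basis of $X^N$, decouple the coefficient equations by level, and work from the top stratum down — is exactly the mechanism the paper uses. Your level-$4n$ analysis (only the denominator-$4n$ coordinates see $B_{4n}$, each contributing $\pm 2$ at a single basis element, so they must pair off or match $x_0$) is correct, and your observation that $x_3+x_4=\frac{\pi}{2}$ collapses the product and forces the tuple into $\Phi_{1,1}$ is the right endgame.

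The genuine gap is in the sentence ``combining the level-$n$ coefficient equations with the level-$4n$ constraints pins down the remaining structure.'' After the level-$4n$ pairs are cancelled you are left with an equation $(\tan x_0)^2=\prod_i\tan x_i$ in which all relevant coordinates have denominator $n$ or $2n$, and the level-$n$ multiplicity of $(\tan x_0)^2$ at the distinguished basis element $v\in B_n$ is $4$. But by Lemma \ref{le.mulp} each $\tan x_i$ can contribute $\{0,\pm1,\pm2\}$ at $v$, so $4$ can be realized not only as $2+2$ (which gives $x_i=x_j=x_0$, the good case) but also as $2+1+1$ or $1+1+1+1$. Your proposal assumes the $2+2$ pattern is forced; it is not, and that is the substantive part of the argument. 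The paper rules out the spread-out patterns by a second-round comparison at a \emph{different} basis element $w:=v(n,4^{-1}x_j')$, where the multiplicity-$1$ contributors now each contribute $-2$, producing a total that cannot be matched by $(\tan x_0)^2$ once $n\ge 7$; the small primes have to be checked by hand. Because $4\equiv\pm1\pmod n$ for both $n=3$ and $n=5$, that second-round argument fails for both — your proposal only singles out $n=3$, so $n=5$ is also unaccounted for. Your dismissal of the $N=n$ case is also phrased circularly (you deduce nonexistence of solutions from the structure of $\Phi_{1,1}$ rather than running the multiplicity argument), though the intent is clear and that particular gap is cosmetic. In short: right framework, but the level-$n$ case analysis — the step that actually carries the proof once the $4n$-stratum is cleared — is missing.
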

	
	Before giving the proof of Theorem \ref{th.4pr2}, we need some preparations.

	\begin{emp}\label{em.bapr}
			Let $n$ be an odd prime. First, we recall the basis of $X^n, X^{2n}$, and $X^{4n}$ given in Theorem \ref{th.ba2}. Let $B_n:=\{(b_1)_n\in X^n\mid b_1\in\mathbb{N} \text{ and } 1\le b_1\le \frac{n-1}{2}\}$. The set $B_n$ forms a basis of $X^n$. The set $\{(1)_2\}\cup B_n$ forms a basis of $X^{2n}$. Let $b_1=(0,1)$. Assume that $n\ne 3$. Then the set $\{(b_1)_4\}\cup\{(b_1, b_2)_{4n}\mid\frac{n+1}{2}\le b_2\le n-2\text{ where } b_2\in\mathbb{N}\}$ forms a basis of $X^{4n}$. The element $(b_1)_4$ forms a basis of $X^{12}$.
	\end{emp}

	\begin{emp}
		Let $n$ be an odd prime and let $a\in\mathbb{Z}$ be such that $\operatorname{gcd}(n,a)=1$. Let $(a_1)_n$ be the residue form of $v(n,a)$ (see \ref{ss.rf}). If $1\le a_1\le \frac{n-1}{2}$, let $\epsilon_a:=1$. If $\frac{n+1}{2}\le a_1\le n-1$, let $\epsilon_a:=-1$.
	\end{emp}

\begin{emp}\label{em.4pr}
	
	Let $n$ be an odd prime and let $a\in\mathbb{Z}$ be such that $\operatorname{gcd}(4n,a)=1$. Let $(a_1,a_2)_{4n}$ be the residue form of $v(4n,a)$.
	
	If $\frac{n+1}{2}\le a_2\le n-2$ or $a_2=1$, let $\epsilon_{a}:=1$. If $2\le a_2\le \frac{n-1}{2}$ or $a_2=n-1$, let $\epsilon_{a}:=-1$.  
	
	If $(\epsilon_{a} a)_1=(0,1)$, let $d:=1$. If $(\epsilon_{a} a)_1=(1,1)$, let $d:=-1$.
	
	Let $b_1:=(0,1), b_2:=(\epsilon_{a} a)_2$ and $v:=(b_1,b_2)_n$. As explained in Remark \ref{re.rf}, we can view $v$ as an element in $X^n$.

\end{emp}

\begin{remark}\label{re.ep}
	 Let $n$ be an odd prime and let $a\in\mathbb{Z}$ be such that $\operatorname{gcd}(n,a)=1$. When considering a representative $v(n,a)$ of an element in $X^{n}$, we assume that $\epsilon_a=1$, unless otherwise specified. Similar conventions will be adopted in other cases as well. See \ref{ss.sf} and \ref{def.nonsfepsilon} for the definitions of $\epsilon_a$ when $n$ non-prime squarefree and non-squarefree, respectively. We have already used this convention when stating Theorem \ref{th.ba1}. 
\end{remark}

\begin{lemma}\label{le.ba4p}
	Keep the assumptions in \ref{em.4pr}. The following is true.
	\begin{enumerate}
		\item $v\in B_{4n}$, where $B_{4n}$ is the subset of $X^{4n}$ that induces a basis of $\widehat{X^{4n}}$ under the quotient map given in Theorem \ref{th.ba1}.
		\item $v(4n,a)=v^d$ in $\widehat{X^{4n}}$.
	\end{enumerate}
\end{lemma}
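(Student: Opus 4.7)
The plan is to establish (1) by a direct case analysis driven by the four ranges in the definition of $\epsilon_a$, and to deduce (2) from the symmetry relation \eqref{eq.sym} together with a single application of the norm relation \eqref{eq.nor} linking levels $2n$ and $4n$.

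For (1), since $\gcd(a,n) = 1$ the residue $a_2$ ranges over $\{1, 2, \dots, n-1\}$. If $\frac{n+1}{2} \le a_2 \le n-2$, then $\epsilon_a = 1$ and $b_2 = a_2 \in [\frac{n+1}{2}, n-2]$, placing $v$ in $C_{4n, 2} \subseteq B_{4n}$. If $2 \le a_2 \le \frac{n-1}{2}$, then $\epsilon_a = -1$ and $b_2 = n - a_2 \in [\frac{n+1}{2}, n-2]$, again landing in $C_{4n, 2}$. In the two boundary cases $a_2 = 1$ (with $\epsilon_a = 1$) and $a_2 = n-1$ (with $\epsilon_a = -1$), one gets $b_2 = 1$ and $v = ((0,1), 1)_{4n}$; this element belongs to $B_{4n}$ as the extra basis generator $((0,1), \overbrace{1, \ldots, 1}^{\ell - 1})_n$ adjoined in case (5) of Theorem \ref{th.ba1} when $\ell$ is even (here $\ell = 2$).

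For (2), I would first invoke \eqref{eq.sym}: since $v(4n, -a) = -\zeta_{4n}^{-a}\, v(4n, a)$ in $\mathbb{C}^*$ and $-\zeta_{4n}^{-a}$ is torsion, we obtain $v(4n, -a) = v(4n, a)$ in $X^{4n}$, hence $v(4n, \epsilon_a a) = v(4n, a)$ in $X^{4n}$ for either sign of $\epsilon_a$. It therefore suffices to compare $v(4n, \epsilon_a a)$ to the basis element $v$ inside $\widehat{X^{4n}}$. If $(\epsilon_a a)_1 = (0, 1)$, the residue form of $v(4n, \epsilon_a a)$ is exactly $(b_1, b_2)_{4n}$, so $v(4n, a) = v = v^d$ with $d = 1$. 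If $(\epsilon_a a)_1 = (1, 1)$, choose $c'$ with $c' \equiv 1 \pmod 4$ and $c' \equiv b_2 \pmod n$, so that $v(4n, c') = v$; then $c' + 2n \equiv 3 \pmod 4$ and $c' + 2n \equiv b_2 \pmod n$, matching the residue form of $v(4n, \epsilon_a a)$, which gives $v(4n, c' + 2n) = v(4n, \epsilon_a a)$ in $X^{4n}$. The norm relation \eqref{eq.nor} with $m = 2n$ yields $v(2n, c') = v(4n, c') \cdot v(4n, c' + 2n)$; since $v(2n, c') \in X^{2n} \subseteq Z^{4n}$, it is trivial in $\widehat{X^{4n}}$, so $v(4n, c' + 2n) = v^{-1}$ there. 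Hence $v(4n, a) = v^{-1} = v^d$ with $d = -1$.

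The only step requiring care is to register $((0,1), 1)_{4n}$ as an element of $B_{4n}$: the capsule description of the basis in \ref{em.bapr} lists only the elements of $C_{4n, 2}$, but Theorem \ref{th.ba1}(5) adjoins the extra generator $((0,1), \overbrace{1, \ldots, 1}^{\ell - 1})_n$ whenever $\ell$ is even, which here is precisely $((0,1), 1)_{4n}$. Once this is noted, both claims reduce to the case check above combined with the symmetry relation and the single norm identity $v(2n, c') = v(4n, c')\, v(4n, c' + 2n)$.
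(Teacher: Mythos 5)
Your proposal is correct and follows essentially the same route as the paper: part (1) is a case check on the four ranges defining $\epsilon_a$ to land in $C_{4n,2}$ or the extra generator $((0,1),1)_{4n}$, and part (2) combines the torsion relation \eqref{eq.sym} (giving $v(4n,a)=v(4n,\epsilon_a a)$ in $X^{4n}$) with the norm relation \eqref{eq.nor} applied at level $2n\mid 4n$ (giving $v(4n,\epsilon_a a)\cdot v(4n,\epsilon_a a+2n)=v(2n,\epsilon_a a)\in Z^{4n}$, hence an inverse pair in $\widehat{X^{4n}}$). The only cosmetic difference is that you introduce an auxiliary integer $c'$ whose residue form matches $v$ and then shift by $2n$, whereas the paper shifts $\epsilon_a a$ by $2n$ directly and compares residue forms with $v$; these are the same computation viewed from opposite ends.
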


\begin{proof}\hfill

	\begin{enumerate}
		\item This is clear from the residue from of elements in $B_{4n}$.
		\item If $(\epsilon_{a} a)_1=(0,1)$, this is clear. Assume that $(\epsilon_{a} a)_1=(1,1)$. 
		By formula \eqref{eq.nor}, we have that
		$v(4n,\epsilon_{a}a)=v(4n,\epsilon_{a}a+2n)^{-1}$ in $\widehat{X^{4n}}$. Note that $v(4n,\epsilon_{a}a+2n)$ and $v$ have the same residue form, we get that $v=v(4n,a+2n)$ in $X^{4n}$. Hence 
		$v(4n,a)=v(4n,\epsilon_{a}a)=v^{-1}$ in $\widehat{X^{4n}}$.
		
	\end{enumerate}
\end{proof}

	\begin{lemma}\label{le.congp}
		Let $n\in\mathbb{N}_{\ge 3}$ be an odd prime. Let $a, b\in\mathbb{Z}$ such that $0< a,b<\frac{n}{2}$ and $\operatorname{gcd}(n,ab)=1$. Then $v(n,a)=v(n,b)$ in $X^{n}$ if and only if $a=b$.
	\end{lemma}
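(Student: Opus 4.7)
The plan is to identify $v(n,a)$ and $v(n,b)$ with distinct elements of Conrad's basis for $X^n$, at which point the equality in $X^n$ reduces to a statement about basis elements in a free abelian group.

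My first step would be to invoke Theorem~\ref{th.ba2}(1): since $n$ is an odd prime, $4\nmid n$, and the only divisor $d\mid n$ with $d\ge 2$ is $d=n$ itself. Therefore $\bigcup_{d\mid n,\,d\ge 2}B_d=B_n$, and the set
\[
B_n=\{(c)_n\in X^n\mid 1\le c\le \tfrac{n-1}{2}\},
\]
given in Theorem~\ref{th.ba1}(2), is an honest basis of the free abelian group $X^n$. (Equivalently, since $n$ is prime, $Z^n$ is trivial by definition in \ref{emp.grp}, so $\widehat{X^n}=X^n$, and Theorem~\ref{th.ba1}(2)'s assertion that $B_n$ induces a basis of $\widehat{X^n}$ promotes to the statement that $B_n$ is a basis of $X^n$ itself.)

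Next I would observe that, because $0<a<n/2$ and $\gcd(n,a)=1$, the integer $a$ already lies in $\{1,\dots,(n-1)/2\}$ and coincides with its residue mod $n$; by the conventions in \ref{ss.rf} the residue form of $v(n,a)$ is $(a)_n$, so $v(n,a)\in B_n$. The same holds for $v(n,b)\in B_n$. By Lemma~\ref{le.rf} (uniqueness of residue form), two elements of $X^n$ with residue forms $(a)_n$ and $(b)_n$ coincide in $X^n$ if and only if $a=b$. Combined with the linear independence of $B_n$ in the free abelian group $X^n$, this yields the forward implication, and the reverse implication is immediate.

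There is no substantive obstacle here: this lemma is essentially a direct unpacking of Conrad's basis theorem in the simplest prime-level case. The only point requiring care is to verify that the hypothesized range $0<a<n/2$ matches precisely the exponent range used in defining $B_n$ in Theorem~\ref{th.ba1}(2), which is indeed the case.
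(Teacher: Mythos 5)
Your proof is correct and rests on the same key input as the paper's: the fact that Conrad's set $B_n=\{(c)_n : 1\le c\le (n-1)/2\}$ is a basis of $X^n$. The only structural difference is that the paper first establishes the slightly more general equivalence $v(n,a)=v(n,b)$ in $X^n$ iff $a\equiv \pm b\pmod n$ (via the $\epsilon$-normalization of \ref{em.bapr}) and then specializes to the range $0<a,b<n/2$, whereas you note directly that the hypotheses already put $a,b$ in the canonical range so $v(n,a),v(n,b)\in B_n$ and distinctness of basis elements finishes it. One caveat on your citation: Lemma \ref{le.rf} only records the Chinese Remainder Theorem bijection between residue-form tuples and residues in $\{0,\dots,n-1\}$; it does \emph{not} say that distinct residue forms yield distinct elements of $X^n$ (indeed $v(n,a)=v(n,n-a)$ in $X^n$ despite distinct residue forms). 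The actual separating power in your argument comes from the basis property of $B_n$, which you do invoke, so the proof stands, but the appeal to Lemma \ref{le.rf} should be dropped or reworded.
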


	\begin{proof}
		
		We first show that $v(n,a)=v(n,b)$ in $X^{n}$ if and only if $a\equiv  b\operatorname{mod} n$ or $a\equiv -b\operatorname{mod} n$. If $a\not\equiv  b\operatorname{mod} n$ and $a\not\equiv -b\operatorname{mod} n$, then $(\epsilon_a a)_1\ne (\epsilon_b b)_1$. So $v(n,a)\ne v(n,b)$ in $X^{n}$. See \ref{em.bapr}. The other direction is immediate.
		
		Under the additional condition $0< a,b<\frac{n}{2}$, we have that  $a\equiv  b\operatorname{mod} n$ or $a\equiv -b\operatorname{mod} n$ if and only if $a=b$. So the claim holds.
	\end{proof}

	\begin{lemma}\label{le.cong4p}
		Let $n\in\mathbb{N}_{\ge 2}$ be a prime number. Let $a,b\in\mathbb{Z}$ be such that $0< a, b< 2n$ and $\operatorname{gcd}(4n,ab)=1$. Then

		\begin{enumerate}
			\item $v(4n,a)=v(4n,b)$ in $\widehat{X^{4n}}$ if and only if $a=b$.
			\item $v(4n,a)=v(4n,b)^{-1}$ in $\widehat{X^{4n}}$ if and only if $a+b=2n$.
		\end{enumerate}

	\end{lemma}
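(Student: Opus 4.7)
The plan is to use Lemma \ref{le.ba4p} to write each of $v(4n,a)$ and $v(4n,b)$ as a $\pm 1$-power of a basis element of $\widehat{X^{4n}}$, then read off the conclusions from linear independence together with the range constraint $0 < a, b < 2n$.

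For (1), Lemma \ref{le.ba4p} gives $v(4n, a) = v_a^{d_a}$ and $v(4n, b) = v_b^{d_b}$ in $\widehat{X^{4n}}$, with $v_a, v_b \in B_{4n}$ basis elements and $d_a, d_b \in \{\pm 1\}$. Linear independence of the basis forces $v_a = v_b$ and $d_a = d_b$. Unpacking the construction in \ref{em.4pr}, these two equalities say respectively $\epsilon_a a \equiv \epsilon_b b \pmod n$ and $\epsilon_a a \equiv \epsilon_b b \pmod 4$, so by the Chinese Remainder Theorem
\[
\epsilon_a a \equiv \epsilon_b b \pmod{4n}.
\]
A short sign case check finishes the argument: if $\epsilon_a = \epsilon_b$, then $a \equiv b \pmod{4n}$ and $|a-b|<2n$ force $a=b$; if $\epsilon_a = -\epsilon_b$, then $a+b \equiv 0 \pmod{4n}$, which is incompatible with $0<a+b<4n$. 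The converse is immediate.

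For the ``if'' direction of (2), if $a+b=2n$, a direct computation in $\mathbb{C}^*$ using $\zeta_{4n}^{2n}=-1$ and $\zeta_{4n}^{2a}=\zeta_{2n}^a$ yields
\[
v(4n,a)\,v(4n,2n-a) \;=\; (1-\zeta_{4n}^a)(1+\zeta_{4n}^{-a}) \;=\; \zeta_{4n}^{-a}\,(1-\zeta_{2n}^a).
\]
Modulo $T$ this is $v(2n,a) \in X^{2n} \subseteq Z^{4n}$, which is trivial in $\widehat{X^{4n}}$. Hence $v(4n,a) = v(4n,b)^{-1}$ in $\widehat{X^{4n}}$.

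For the converse of (2), set $b' := 2n-a$, which satisfies $0<b'<2n$ and $\gcd(4n,b')=1$. By the ``if'' direction just established, $v(4n,a) = v(4n,b')^{-1}$, so combining with the hypothesis $v(4n,a) = v(4n,b)^{-1}$ gives $v(4n,b) = v(4n,b')$, and part (1) then forces $b=b'$, i.e.\ $a+b=2n$. The only nontrivial step is the sign case analysis in (1); the rest reduces to Lemma \ref{le.ba4p} and the one-line identity in $\mathbb{C}^*$.
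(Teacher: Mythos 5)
Your proof is correct. For part~(1) your argument is essentially the paper's: you invoke Lemma~\ref{le.ba4p} to write $v(4n,a)=v_a^{d_a}$ and $v(4n,b)=v_b^{d_b}$ with $v_a,v_b\in B_{4n}$, use linear independence to force $v_a=v_b$ and $d_a=d_b$, and then reduce to a congruence mod $4n$; the paper performs the same case check on $(\epsilon_a a)_1$ and $(\epsilon_a a)_2$ but phrases it without the CRT packaging. Where you genuinely diverge is part~(2): the paper simply asserts that ``a similar argument'' covers it and leaves the details to the reader, whereas you replace the basis-level case analysis with a direct identity in $\mathbb{C}^*$. Your computation $v(4n,a)v(4n,2n-a)=\zeta_{4n}^{-a}(1-\zeta_{2n}^a)$ is just the norm relation \eqref{eq.nor} from level $4n$ to level $2n$ unwound by hand (using $\zeta_{4n}^{2n}=-1$), and the observation that $v(2n,a)\in X^{2n}\subset Z^{4n}$ makes the ``if'' direction immediate in $\widehat{X^{4n}}$. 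Your converse for~(2) is then a clean uniqueness argument: set $b':=2n-a$, combine the established relation with the hypothesis to get $v(4n,b)=v(4n,b')$, and apply part~(1). This buys you an explicit and self-contained proof of~(2) at the cost of one extra line in $\mathbb{C}^*$, and avoids re-running the $(\epsilon,d)$-bookkeeping; it is a nice way to discharge the ``left to the reader'' step.
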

	
	\begin{proof}

		 (i). We first show that $v(4n,a)=v(4n,b)$ in $\widehat{X^{4n}}$ if and only if $a\equiv b\operatorname{mod} 4n$ or $a\equiv -b\operatorname{mod} 4n$. Assume that $v(4n,a)=v(4n,b)$ in $\widehat{X^{4n}}$. Assume for the sake of contradiction that $a\not\equiv  b\operatorname{mod} 4n$ and $a\not\equiv -b\operatorname{mod} 4n$. Then either $(\epsilon_{a} a)_1\ne (\epsilon_{b} b)_1$ or $(\epsilon_{a} a)_2\ne (\epsilon_{b} b)_2$. 
		
		Let $v_a, v_b\in B_{4n}$ be the associated elements to $v(4n,a)$ and $v(4n,b)$, respectively as in \ref{em.4pr}. So $v(4n,a)=v_a^{\pm 1}$ in $\widehat{X^{4n}}$ and $v(4n,b)=v_b^{\pm 1}$ in $\widehat{X^{4n}}$.
		
		If $(\epsilon_{a} a)_2\ne (\epsilon_{b} b)_2$, then $v_a\ne b_b$. Hence $v(4n,a)\ne v(4n,b)$ in $\widehat{X^{4n}}$. If $(\epsilon_{a} a)_2= (\epsilon_{b} b)_2$ and $(\epsilon_{a} a)_1\ne (\epsilon_{b} b)_1$, then $v_a=v_b$. Assume that $(\epsilon_{a} a)_1=(0,1)$. Then $(\epsilon_{b} b)_1=(1,1)$. So $v(4n,a)=v_a\ne v_b^{-1}=v(4n,b)$ in $\widehat{X^{4n}}$. Thus $v(4n,a)=v(4n,b)$ in $\widehat{X^{4n}}$ if and only if $a\equiv b\operatorname{mod} 4n$ or $a\equiv -b\operatorname{mod} 4n$. Since we assumed that $0<a,b<2n$, it follows that $a=b$. The other direct is immediate.

	A similar argument shows that $v(4n,a)=v(4n,b)^{-1}$ in $\widehat{X^{4n}}$ if and only if $a\equiv b+2n\operatorname{mod} 4n$ or $a\equiv -b+2n\operatorname{mod} 4n$. Part (ii) then follows. We leave the details to the reader.
	\end{proof}

	\begin{lemma}\label{le.mulp}
		Let $n$ be an odd prime. Let $0< a< \frac{n}{2}$ and $v:=v(n,a)$. Let $0<b<n$. Then $\operatorname{multi}_v(\operatorname{tan}\frac{b}{2n}\pi)\in\{0,\pm 1,\pm 2\}$. Moreover, the following is true.

		\begin{enumerate}
			\item If $\operatorname{multi}_v(\operatorname{tan}\frac{b}{2n}\pi)=1$, then $b$ is odd. Furthermore, if $a$ is even, then $b=n-a$; if $a$ is odd, then $b=a$.
			\item If $\operatorname{multi}_v(\operatorname{tan}\frac{b}{2n}\pi)=-1$, then $b$ is even. Furthermore, if $a$ is even, then $b=a$; if $a$ is odd, then $b=n-a$. 
			\item If $\operatorname{multi}_v(\operatorname{tan}\frac{b}{2n}\pi)=2$,then $b$ is even and $b=2a$.
			\item If $\operatorname{multi}_v(\operatorname{tan}\frac{b}{2n}\pi)=-2$, then $b$ is odd and  $2a+b=n$.
		\end{enumerate}
	\end{lemma}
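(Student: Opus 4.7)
The plan is to split on the parity of $b$ and apply the corresponding formula from Corollary \ref{cor.tarq} to write $\operatorname{tan}\frac{b}{2n}\pi$ as a product of cyclotomic numbers of level $n$; I would then reduce each such factor to a basis element $u_c:=v(n,c)$ with $1\le c\le (n-1)/2$ (see \ref{em.bapr}) and read off the multiplicity. Since $n$ is prime, $\widehat{X^n}=X^n$, so the identities in Corollary \ref{cor.tarq} hold as equalities in $X^n\subseteq X^{2n}$. A key auxiliary fact, which I would establish first using relation \eqref{eq.sym} after passage to $\mathbb{C}^{*}/T$, is that for any $c\in\{1,\dots,n-1\}$ coprime to $n$ one has $v(n,c)=u_{\min\{c,\,n-c\}}$ in $X^n$.

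In the odd case ($b$ odd), Corollary \ref{cor.tarq}(2) gives $\operatorname{tan}\frac{b}{2n}\pi=v(n,b)\cdot v(n,2^{-1}b)^{-2}$ in $X^n$. A short calculation identifies the representative of $2^{-1}b$ modulo $n$ in $\{1,\dots,n-1\}$ as $(b+n)/2$, which is strictly greater than $(n-1)/2$; hence $v(n,2^{-1}b)=u_{(n-b)/2}$, while $v(n,b)=u_{b^\ast}$ with $b^\ast:=\min\{b,\,n-b\}$. I would then conclude
\[
\operatorname{tan}\frac{b}{2n}\pi \;=\; u_{b^\ast}\cdot u_{(n-b)/2}^{-2}.
\]
Provided $n\ge 5$, the two indices $b^\ast$ and $(n-b)/2$ are distinct (equality would force $3b=n$), so no cancellation occurs; the multiplicity of $v=u_a$ is $+1$ exactly when $a=b^\ast$ (yielding $b=a$ for $a$ odd and $b=n-a$ for $a$ even, giving (i)), and $-2$ exactly when $a=(n-b)/2$, i.e.\ $b=n-2a$, giving (iv).

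In the even case ($b$ even), I would write $b=2c$ with $0<c<n/2$ and apply Corollary \ref{cor.tarq}(1) to obtain $\operatorname{tan}\frac{b}{2n}\pi=\operatorname{tan}\frac{c}{n}\pi=v(n,c)^{2}\cdot v(n,2c)^{-1}=u_c^{2}\cdot u_{b^{\ast\ast}}^{-1}$, where $b^{\ast\ast}:=\min\{b,\,n-b\}$. For $n\ge 5$ the indices $c$ and $b^{\ast\ast}$ are again distinct (equality forces $n=3$), so reading off the multiplicity of $u_a$ yields $+2$ exactly when $b=2a$ (giving (iii)) and $-1$ exactly when $b^{\ast\ast}=a$, which unfolds to $b=a$ for $a$ even and $b=n-a$ for $a$ odd (giving (ii)).

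The main obstacle I anticipate is the degenerate case $n=3$, where $v(3,1)=v(3,2)$ so that the two factors in each case coincide and the resulting multiplicities fall outside the patterns (i)--(iv); this case would need to be treated by a direct enumeration of the few possibilities, or excluded as an implicit hypothesis (which is consistent with how the lemma is applied in the proof of Theorem \ref{th.4pr2}). Apart from this edge case, the remaining work is straightforward bookkeeping on the parities of $a$ and $b$ to match the four sub-statements.
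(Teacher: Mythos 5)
Your proposal is correct and follows essentially the same route as the paper: both apply Corollary \ref{cor.tarq} to expand $\operatorname{tan}\frac{b}{2n}\pi$ according to the parity of $b$ and then use the identification of coinciding level-$n$ cyclotomic numbers (your reduction $v(n,c)=u_{\min\{c,\,n-c\}}$ is exactly the content of Lemma \ref{le.congp} and the basis description in \ref{em.bapr}) to read off the multiplicities. Your explicit attention to the non-cancellation of the two factors and to the degenerate case $n=3$ is in fact more careful than the paper's own proof, which silently assumes the relevant factors are distinct and defers small $n$ to the case check at the end of Theorem \ref{th.4pr2}.
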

	
	\begin{proof}
		By Corollary \ref{cor.tarq}, it is clear that $\operatorname{multi}_v(\operatorname{tan}\frac{b}{2n}\pi)\in\{0,\pm 1,\pm 2\}$.
		
		If $\operatorname{multi}_v(\operatorname{tan}\frac{b}{2n}\pi)=1$, then $b$ is odd and $v(n,a)=v(n,b)$ by Corollary \ref{cor.tarq}. By Lemma \ref{le.congp}, $a\equiv  b\operatorname{mod} n$ or $a\equiv -b\operatorname{mod} n$. Since $0< a< \frac{n}{2}$ and $0<b<n$, we know that $0<a+b<\frac{3n}{2}$ and $-n<a-b<\frac{n}{2}$. So $a+b=n$ or $a-b=0$. If $a$ is even, then $b=n-a$, since $b$ is odd; similarly, if $a$ is odd, then $b=a$.
		
		If $\operatorname{multi}_v(\operatorname{tan}\frac{b}{2n}\pi)=2$, then $b$ is even and $v(n,\frac{b}{2})=v(n,a)$ by Corollary \ref{cor.tarq}. So $\frac{b}{2}\equiv a\operatorname{mod} n$ or $\frac{b}{2}\equiv -a\operatorname{mod} n$ by Lemma \ref{le.congp}. Since $0< a< \frac{n}{2}$ and $0<b<n$, we know that $-n<2a-b<n$ and $0<2a+b<2n$. So either $2a-b=0$ or $2a+b=n$. Since $b$ is even, $b=2a$.
		
		The cases $\operatorname{multi}_v(\operatorname{tan}\frac{b}{2n}\pi)=-1$ and $\operatorname{multi}_v(\operatorname{tan}\frac{b}{2n}\pi)=-2$ can be proved similarly.
	\end{proof}

	\begin{lemma}\label{le.02}
		Let $n$ be an odd prime. Assume that $(x_0,x_1,x_2,x_3,x_4)$ is a solution in $G$ to Equation \eqref{eq.ne4} with $0<x_i<\frac{\pi}{2}$. Assume that $\operatorname{den}(x_i)\in\{n,2n,4n\}$ for $1\le i\le 4$ and $\operatorname{den}(x_0)\in\{n,2n\}$. Then $\#\{1\le i\le 4\mid \operatorname{den}(x_i)=4n\}=0$ or $2$.
	\end{lemma}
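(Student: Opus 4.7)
The plan is to view both sides of the equation $(\tan x_0)^2 = \prod_{i=1}^{4} \tan x_i$ inside $X^{4n}$ modulo torsion, using the basis $\{v(4,1)\}\cup B_n\cup B_{4n}$ from Theorem \ref{th.ba2}, and to compare multiplicities at the basis elements belonging to $B_{4n}$. Set $S:=\{1\le i\le 4:\operatorname{den}(x_i)=4n\}$, and for each $i\in S$ write $b_i:=4n\, x_i/\pi$, so that $0<b_i<2n$ and $\gcd(4n,b_i)=1$.

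By Proposition \ref{pro.tar}(1)(2), whenever $\operatorname{den}(x)\in\{n,2n\}$, $\tan x$ lies in $X^n=\langle B_n\rangle$ and therefore contributes zero multiplicity at every element of $B_{4n}$. Thus $(\tan x_0)^2$ and each $\tan x_i$ with $i\notin S$ contribute nothing to the $B_{4n}$-multiplicities, and the entire $B_{4n}$-contribution comes from factors with $i\in S$. For $i\in S$, Proposition \ref{pro.tar}(3) gives $\tan x_i = i\cdot v(4n,b_i)^2\, v(n,b_i)\, v(n,2^{-1}b_i)^{-1}$, and only the factor $v(4n,b_i)^2$ contributes to $B_{4n}$. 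By Lemma \ref{le.ba4p}, $v(4n,b_i)=v_i^{d_i}$ in $\widehat{X^{4n}}$ for a unique $v_i\in B_{4n}$ and $d_i\in\{\pm 1\}$, producing multiplicity $2d_i$ at $v_i$ and $0$ at other elements of $B_{4n}$.

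Comparing both sides at each $v\in B_{4n}$: the left side contributes $0$, so the sum of $2d_i$ over those $i\in S$ with $v_i=v$ must vanish. This forces an equal number of $+$ and $-$ signs at every such $v$, so the count of indices at each basis element is even; in particular $|S|$ itself is even. This immediately excludes $|S|\in\{1,3\}$.

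It remains to rule out $|S|=4$. When $|S|=4$, pair up the four indices by matching, at each $v\in B_{4n}$, an index with $d_i=+1$ to one with $d_i=-1$. Each resulting pair $\{i,j\}$ satisfies $v(4n,b_i)=v(4n,b_j)^{-1}$ in $\widehat{X^{4n}}$, so Lemma \ref{le.cong4p}(ii) yields $b_i+b_j=2n$, equivalently $x_i+x_j=\pi/2$. Hence $\tan x_i \tan x_j = \tan x_i\cot x_i=1$ on each pair, and multiplying over the two pairs gives $\prod_{i=1}^{4}\tan x_i=1$. This forces $(\tan x_0)^2=1$ and $x_0=\pi/4$, contradicting $\operatorname{den}(x_0)\in\{n,2n\}$ since $n\ge 3$ is an odd prime and $\operatorname{den}(\pi/4)=4$. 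The main obstacle is this last step, where Lemma \ref{le.cong4p}(ii) is what turns the abstract $B_{4n}$-cancellation into the concrete pairing $x_i+x_j=\pi/2$ that makes the full product collapse to $1$.
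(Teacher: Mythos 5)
Your proof is correct and takes essentially the same approach as the paper's: both identify that $(\tan x_0)^2$ has zero multiplicity at every $B_{4n}$ basis element, observe that each $\tan x_i$ with $\operatorname{den}(x_i)=4n$ contributes $\pm 2$ at a unique $v\in B_{4n}$, force a sign-cancelling pairing with $x_i+x_j=\pi/2$ via Lemma \ref{le.cong4p}(ii), and then collapse $(\tan x_0)^2=1$ to obtain the contradiction $x_0=\pi/4$. Your version is marginally cleaner in first deducing that $|S|$ is even before ruling out $|S|=4$, whereas the paper iterates the pairing on the reduced two-term equation, but the underlying ideas and cited lemmas are identical.
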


	\begin{proof}
		We can write $x_i=\frac{x_i'}{4n}\pi$ where  $x_i'\in \mathbb{N}$, $0<x_i'<2n$, $n\nmid x_i'$, for $0\le i\le 4$. Since $\operatorname{den}(x_0)\in\{0,2n\}$,	for each $v\in B_{4n}$, we have that  $\operatorname{multi}_v(\operatorname{tan}\frac{x_0'}{4n}\pi)=0$.

		Assume that $\operatorname{den}(x_i)=4n$ for some $1\le i\le 4$. Let $v\in B_{4n}$ be the element associated to $v(4n,x_0')$ as in \ref{em.4pr}. Then $\operatorname{multi}_v(\operatorname{tan}\frac{x_i'}{4n}\pi)=\pm 2$. Assume that $\operatorname{multi}_v(\operatorname{tan}\frac{x_i'}{4n}\pi)=2$, i.e., $v(4n,x_0')=v^2$ in $\widehat{X^{4n}}$.
		Then there exists $1\le j\le 4$ such that $\operatorname{den}(x_j)=4n$ and $\operatorname{multi}_v(\operatorname{tan}\frac{x_j'}{4n}\pi)= -2$, i.e. $v(4n,x_i')=v^{-2}$ in $\widehat{X^{4n}}$. By Lemma \ref{le.cong4p}, $x_i+x_j=\frac{\pi}{2}$. Assume that $i=3$ and $j=4$. Then 
			\begin{equation*}\label{eq.ne7}
	(\operatorname{tan}x_0)^2=(\operatorname{tan}x_1)(\operatorname{tan}x_2).
\end{equation*}
in $\mathbb{C}^*$. If $\operatorname{den}(x_f)=4n$ for $1\le f\le 2$, by the same argument, we get that $\operatorname{den}(x_g)=4n$ for $1\le g\le 2$ and $g\ne f$. Furthermore, $x_f+x_g=\frac{\pi}{2}$. Then 
		
					\begin{equation*}\label{eq.ne8}
			(\operatorname{tan}x_0)^2=1
		\end{equation*}
	in $\mathbb{C}^*$. So $x_0=\frac{\pi}{4}$. This is a contradiction. So $\#\{1\le i\le 4\mid \operatorname{den}(x_i)=4n\}=0$ or $2$. The case	$\operatorname{multi}_v(\operatorname{tan}\frac{x_i'}{4n}\pi)=-2$ is similar.
	\end{proof}

	\begin{proof}[Proof of Theorem \ref{th.4pr2}]

		Assume that $(x_0,x_1,x_2,x_3,x_4)$ is a solution to \eqref{eq.ne4} in $G$ with $0<x_i<\frac{\pi}{2}$ and the denominator of $x_i$ equals $n$, $2n$ or $4n$ for $0\le i\le 4$.
		We can write $x_i=\frac{x_i'}{4n}\pi$ where  $x_i'\in \mathbb{N}$, $0<x_i'<2n$, and $n\nmid x_i'$ for $0\le i\le 4$. We analyze the cases $x_0'$ odd and even separately.\\

		(i) The case $x_0'$ is odd. In this case $\operatorname{tan}\frac{x_0'}{4n}\pi=v(4n,x_0')^2$ in $\widehat{X^{4n}}$. Let $v\in B_{4n}$ be the element associated to $v(4n,x_0')$ as in \ref{em.4pr}. Then $$\operatorname{multi}_v((\operatorname{tan}\frac{x_0'}{4n}\pi)^2)=\pm 4.$$ 
		
		Assume that $\operatorname{multi}_v((\operatorname{tan}\frac{x_0'}{4n}\pi)^2)=4$. Then $v(4n,x_0')=v$ in $\widehat{X^{4n}}$. So $\operatorname{multi}_v(\prod\limits_{i=1}^4\operatorname{tan}x_i)=4$. Hence, there exist $1\le i<j\le 4$ such that $x_i'$ and $x_j'$ are odd and $v(4n,x_i')=v(4n,x_j')=v$ in $\widehat{X^{4n}}$. By Lemma \ref{le.cong4p}, $x_i'=x_j'=x_0'$.

		Assume that $i=1$ and $j=2$. Then $(\operatorname{tan}x_0)^2=\operatorname{tan}x_1\operatorname{tan}x_2$ in $\mathbb{C}^*$ which implies that $\operatorname{tan}x_3\operatorname{tan}x_4=1$ in $\mathbb{C}^*$. Hence $x_3+x_4=\frac{\pi}{2}$. So $(x_0,x_1,x_2,x_3,x_4)\in\Phi_{1,1}$.\\
			
			(ii) The case $x_0'$ is even. By Lemma \ref{le.02}, we have that $N:=\#\{1\le i\le 4\mid \operatorname{den}(x_i)=4n\}=0$ or $2$. 
			
			The case $N=2$. By the proof of Lemma \ref{le.02}, we can assume that $\operatorname{den}(x_3)=\operatorname{den}(x_4)=4n$ and $(\operatorname{tan}x_3)(\operatorname{tan}x_4)=1$ in $\mathbb{C}^*$.	Then we get the following relation:
			\begin{equation*}\label{eq.ne6}
				(\operatorname{tan}x_0)^2=(\operatorname{tan}x_1)(\operatorname{tan}x_2)
			\end{equation*}
		in $\mathbb{C}^*$. By the assumption, we know that $\operatorname{den}(x_i)\in\{n,2n\}$ for $0\le i\le 2$. Assume that $4\mid x_0'$. The case $4\nmid x_0'$ is similar. Let  $v:=(n,\frac{x_0'}{4})$, then $\operatorname{multi}_v(\operatorname{tan}^2x_0)=4$. So
		 $$\operatorname{multi}_v(\operatorname{tan}x_1\operatorname{tan}x_2)=\operatorname{multi}_v(\operatorname{tan}x_1)+\operatorname{multi}_v(\operatorname{tan}x_2)=4.$$
		  Since $\operatorname{multi}_v(\operatorname{tan}x_i)\le 2$, So $\operatorname{multi}_v(\operatorname{tan}x_1)=\operatorname{multi}_v(\operatorname{tan}x_2)=2$. By Lemma \ref{le.mulp}, we get that $x_1=x_2=x_0$. Hence $(x_0,x_1,x_2,x_3,x_4)\in\Phi_{1,1}$. 
		
		The case $N=0$. Assume that $4\mid x_0'$. The case $4\nmid x_0'$ is similar. Let $v:=v(n,\frac{x_0'}{4})$. Then $\operatorname{multi}_v(\operatorname{tan}^2x_0)=4$. So 
		$$\operatorname{multi}_v(\prod\limits_{i=1}^4\operatorname{tan}x_i)=\sum\limits_{i=1}^4\operatorname{multi}_v(\operatorname{tan}x_i) =4.$$ By Lemma \ref{le.mulp}, there are the following three possible cases.

		\begin{enumerate}[label=(\alph*)]
			\item $\operatorname{multi}_v(\operatorname{tan}x_i)=\operatorname{multi}_v(\operatorname{tan}x_j)=2$ for $1\le i<j\le 4$. By Lemma \ref{le.mulp}, we get that $x_i=x_j=x_0$. So $(x_0,x_1,x_2,x_3,x_4)\in\Phi_{1,1}$.
			\item $\operatorname{multi}_v(\operatorname{tan}x_i)=2$, and $\operatorname{multi}_v(\operatorname{tan}x_j)=\operatorname{multi}_v(\operatorname{tan}x_k)=1$ where $1\le i,j,k\le 4$ and $j\ne k$. 
			By Lemma \ref{le.mulp}, we get that (i) $x_i=x_0$. (ii) if $\frac{x_0'}{4}$ is even, then $x_j'=x_k'=2n-\frac{x_0'}{2}$; if $\frac{x_0'}{4}$ is odd, then $x_j'=x_k'=\frac{x_0'}{2}$ for each $1\le i\le 4$. Take $w:=v(n,4^{-1}x_j')$, then $\operatorname{multi}_w(\operatorname{tan}x_j\operatorname{tan}x_k)=-4$. By Corollary \ref{cor.tarq}, if $\operatorname{multi}_w(\operatorname{tan}x_0)<0$,  then $\operatorname{multi}_w(\operatorname{tan}x_0)=-1$. By Lemma \ref{le.mulp}, we get that  $v(n,2^{-1}x_0')=v(n,4^{-1}x_j')=v(n,8^{-1}x_0')$. By Lemma \ref{le.congp}, we obtain $2^{-1}x_0'\equiv 8^{-1}x_0'\operatorname{mod} n$. So $4\equiv \pm 1\operatorname{mod} n$, i.e. $n=3$ or $5$. Assume that $n\ge 7$. Then  $$\operatorname{multi}_w(\operatorname{tan}x_0)\ge 0.$$ Since $\operatorname{multi}_v(\prod\limits_{\substack{m=1
				\\m\ne i}}^4\operatorname{tan}x_m)<0$, we get a contradiction.

			\item $\operatorname{multi}_v(\operatorname{tan}x_i)=1$ for each $1\le i\le 4$. By Lemma \ref{le.mulp}, we derive that $\frac{x_i'}{2}$ is odd. Furthermore, if $\frac{x_0'}{4}$ is even, then $x_i'=2n-\frac{x_0'}{2}$; if $\frac{x_0'}{4}$ is odd, then $x_i'=\frac{x_0'}{2}$ for each $1\le i\le 4$. Take $w:=v(n,4^{-1}x_i')$, then $\operatorname{multi}_w(\prod\limits_{i=1}^4\operatorname{tan}x_i)=-8<\operatorname{multi}_v(\operatorname{tan}^2x_0)$. This is a contradiction.
		\end{enumerate}

			If $n< 7$, one can directly verify the claim case by case. 
	\end{proof}

	\begin{remark}
		
It is worth noting that the approach applied in the proof of Theorem \ref{th.sfn2n} in section \ref{sec.sf2} offers an alternative method for proving the case where $x_0'$ is even.
		
	\end{remark}

	Now we can state the general solutions in $G$ to Equations \eqref{eq.ne4} and \eqref{eq.ne5}, assuming the denominator of $x_i$ is $n$, $2n$ or $4n$ for $0\le i\le 4$. Since tangent has period $\pi$, we can assume that $-\frac{\pi}{2}<x_i<\frac{\pi}{2}$, for $0\le i\le 4$. 
	
	\begin{corollary}\label{cor.gen}
		Let $n$ be an odd prime. Then the following is true.
		\begin{enumerate}
			\item $(x_0,x_1,x_2,x_3,x_4)$ is a solution in $G$ to \eqref{eq.ne4} with $-\frac{\pi}{2}<x_i<\frac{\pi}{2}$, $x_i\ne 0$ and $\operatorname{den}(x_i)\in\{n,2n,4n\}$ for $0\le i\le 4$ if and only if up to reordering $x_1,x_2,x_3,x_4$, the tuple $(x_0,x_1,x_2,x_3,x_4)$ is in the set
			\begin{equation*}
				\aligned
				T^{+}=&\{(\eta_0\frac{s}{4n},\eta_1\frac{s}{4n},\eta_2\frac{s}{4n},\eta_3\frac{t}{4n},\eta_4\frac{2n-t}{4n})\pi\,\, |\,\,  0< s,t<2n, \operatorname{gcd}(st,n)=1, \\&\eta_0,\eta_1,\eta_2,\eta_3,\eta_4\in\{1,-1\},\text{ and } \prod\limits_{i=1}^4\eta_i=1\}.
				\endaligned
			\end{equation*} 
			\item $(x_0,x_1,x_2,x_3,x_4)$ is a solution in $G$ to \eqref{eq.ne5} with $-\frac{\pi}{2}<x_i<\frac{\pi}{2}$, $x_i\ne 0$  and $\operatorname{den}(x_i)\in\{n,2n,4n\}$ for $0\le i\le 4$ if and only if up to reordering $x_1,x_2,x_3,x_4$, the tuple $(x_0,x_1,x_2,x_3,x_4)$ is in the set
			\begin{equation*}
				\aligned
				T^{-}=&\{(\eta_0\frac{s}{4n},\eta_1\frac{s}{4n},\eta_2\frac{s}{4n},\eta_3\frac{t}{4n},\eta_4\frac{2n-t}{4n})\pi\,\, |\,\,  0< s,t<2n, \operatorname{gcd}(st,n)=1, \\&\eta_0,\eta_1,\eta_2,\eta_3,\eta_4\in\{1,-1\},\text{ and } \prod\limits_{i=1}^4\eta_i=-1\}.
				\endaligned
			\end{equation*} 
		\end{enumerate}
	\end{corollary}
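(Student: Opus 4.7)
The plan is to reduce the signed case to the positive case already handled by Theorem \ref{th.4pr2}, exploiting the oddness of $\tan$. I would write $x_i = \eta_i y_i$ with $\eta_i = \operatorname{sign}(x_i) \in \{1,-1\}$ and $y_i = |x_i| \in (0, \pi/2)$. Substituting into Equation \eqref{eq.ne4} and using $\tan(\eta_i y_i) = \eta_i \tan y_i$, one obtains
\[
\tan^2 y_0 \;=\; \Bigl(\prod_{i=1}^4 \eta_i\Bigr) \prod_{i=1}^4 \tan y_i.
\]
Since each $\tan y_i$ is strictly positive on $(0, \pi/2)$, comparing signs forces $\prod_{i=1}^4 \eta_i = 1$, and the unsigned tuple $(y_0, \dots, y_4)$ then solves \eqref{eq.ne4} with $0 < y_i < \pi/2$ and the same denominator condition $\operatorname{den}(y_i) \in \{n, 2n, 4n\}$, since passing to absolute values does not alter the denominator.

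Invoking Theorem \ref{th.4pr2}, after reordering the last four entries I may assume $(y_0, \dots, y_4) \in \overline{\Phi}_{1,1}$, so $y_0 = y_1 = y_2 = s\pi$, $y_3 = t\pi$ and $y_4 = (\tfrac{1}{2} - t)\pi$ for some rationals $s,t$ with $0 < s < \tfrac{1}{2}$ and $0 < t < \tfrac{1}{2}$ (the upper bound $t \le 1/4$ built into $\overline{\Phi}_{1,1}$ disappears once one is allowed to swap $x_3$ and $x_4$). Writing $s = s'/(4n)$ and $t = t'/(4n)$ with $0 < s', t' < 2n$, the denominator condition $\operatorname{den}(y_i) \in \{n, 2n, 4n\}$ translates, using that $n$ is an odd prime and hence $\gcd(4,n) = 1$, into $\gcd(s',n) = \gcd(t',n) = 1$, that is $\gcd(s't', n) = 1$. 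Re-attaching the signs $\eta_i$ then exhibits $(x_0, \dots, x_4)$ as an element of $T^+$, with the constraint $\prod \eta_i = 1$ built into the set.

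The converse direction is a direct computation: for a tuple in $T^+$, the relation $|x_3| + |x_4| = \pi/2$ gives $\tan x_3 \tan x_4 = \eta_3 \eta_4$, while $\tan x_1 \tan x_2 = \eta_1 \eta_2 \tan^2\!\bigl(s'\pi/(4n)\bigr)$, so $\prod_{i=1}^4 \tan x_i = (\prod \eta_i)\tan^2\!\bigl(s'\pi/(4n)\bigr) = \tan^2 x_0$. Part (ii) is the same argument with one sign flip: a solution of \eqref{eq.ne5} produces $\tan^2 y_0 = -(\prod_{i=1}^4 \eta_i)\prod \tan y_i$, which now forces $\prod_{i=1}^4 \eta_i = -1$ and again leaves $(y_0,\dots,y_4)$ solving \eqref{eq.ne4}, so Theorem \ref{th.4pr2} applies verbatim and the same parametrization yields $T^-$. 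There is no real obstacle; the only bookkeeping to watch is matching $\operatorname{den}(x_i) \in \{n, 2n, 4n\}$ with the arithmetic condition $\gcd(s't',n) = 1$, which pins down exactly which numerators $s',t' \in (0, 2n)$ are admissible.
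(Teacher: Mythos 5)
Your proposal is correct and is essentially the paper's argument: the paper proves the corollary by reducing to Theorem \ref{th.4pr2}, exactly as you do, with the oddness of $\tan$ handling the signs (this is also the content of the remark relating Equations \eqref{eq.ne4} and \eqref{eq.ne5}). You have simply written out the routine sign and denominator bookkeeping that the paper leaves implicit.
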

	
	\begin{proof}
			The conclusions follow from Theorem \ref{th.4pr2}.
	\end{proof}

	Notice that similar extensions as in Corollary \ref{cor.gen} can also be made to Theorem \ref{th.4msf}, Theorem \ref{th.sfn2n}, Theorem \ref{th.4mnsf}, Theorem \ref{th.mnsf}, and Theorem \ref{th.8nsf}.

	\begin{corollary}\label{cor.tri}
		Suppose that $E,a,b, c\in G$ and $\operatorname{den}(E)=\operatorname{den}(a)=\operatorname{den}(b)=\operatorname{den}(c)$ is a prime. Then $(E,a,b,c)$ is the measurement of a proper rational spherical triangle if and only if $(E,a,b,c)=(\frac{1}{2},\frac{1}{2},\frac{1}{2},\frac{1}{2})\pi$.
	\end{corollary}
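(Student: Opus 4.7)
The approach will be to handle the ``if'' direction by direct substitution into L'Huilier's equation, then split the ``only if'' direction on whether the common prime denominator $n$ equals $2$ or is odd. The case $n=2$ is immediate: the bounds $0 < a,b,c < \pi$ force $a=b=c=\tfrac{\pi}{2}$, and among the two candidates $E \in \{\tfrac{\pi}{2},\tfrac{3\pi}{2}\}$ with denominator $2$, only $E = \tfrac{\pi}{2}$ satisfies $\tan^2(E/4) = \tan^3(\pi/8)\tan(3\pi/8) = \tan^2(\pi/8)$, so the unique solution is $(\tfrac{\pi}{2},\tfrac{\pi}{2},\tfrac{\pi}{2},\tfrac{\pi}{2})\pi$.

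For $n$ an odd prime, the plan is to transport $(E,a,b,c) \in \Omega_2$ to $x = (x_0,\ldots,x_4) \in \Omega_3$ via $\phi$ from Proposition \ref{pro.bj2}, and then apply Theorem \ref{th.4pr2}. Each $x_i$ for $i \ge 1$ is a $\pm 1$-combination of $a,b,c$ divided by $4$; since $n$ is prime, $\operatorname{den}(x_i)$ divides $4n$ and lies in $\{4,n,2n,4n\}$ (after excluding $1$ and $2$ via $0<x_i<\tfrac{\pi}{2}$), and $\operatorname{den}(x_i)=4$ occurs exactly when $x_i=\tfrac{\pi}{4}$. Similarly $\operatorname{den}(x_0)\in\{n,2n,4n\}$. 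The ordering $x_1\le x_2\le x_3<x_4<\tfrac{\pi}{2}$ together with $x_4=x_1+x_2+x_3$ rules out $x_1=\tfrac{\pi}{4}$ and $x_2=\tfrac{\pi}{4}$, since either would force $x_4 > \tfrac{\pi}{2}$. When none of the $x_i$ equals $\tfrac{\pi}{4}$, Theorem \ref{th.4pr2} places $x$ in $\Phi_{1,1}$ after permuting $x_1,\ldots,x_4$; intersecting with $\Omega_3$ via Lemma \ref{le.int}(i) and applying $\psi$ produces $(E,a,b,c)$ with $\tfrac{\pi}{2}$ as one of the sides, so $\operatorname{den}(c)=2$, contradicting $n$ odd.

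The main obstacle is the remaining sub-cases $x_3=\tfrac{\pi}{4}$ and $x_4=\tfrac{\pi}{4}$, which are not covered by the hypothesis of Theorem \ref{th.4pr2}, and which I plan to dispose of by direct basis bookkeeping from Section \ref{sec.bas}. A short denominator computation, parametrizing by $-a+b+c=\pi$ or $a+b+c=\pi$ and using that $n$ is odd, shows that in each sub-case the three remaining $x_j$ with $j \ne i$ all have denominator exactly $4n$, so Corollary \ref{cor.tarq}(iii) represents each such $\tan x_j$ as $v(4n,x_j')^2$ in $\widehat{X^{4n}}$. The equation $\tan^2 x_0 = \prod_{j \ne i}\tan x_j$ then reduces to a multiplicity comparison at each basis element $v \in B_{4n}$ from Theorem \ref{th.ba1}: the left side contributes $0$ or $\pm 4$, concentrated at the single basis element associated to $x_0$ by Lemma \ref{le.ba4p}, while the right side is a sum of three contributions, each of size $\pm 2$ supported on a single basis element. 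A routine case check shows that three $\pm 2$'s can never sum to $\pm 4$ at a single basis element while simultaneously canceling at every other one, producing the needed contradiction. This multiplicity bookkeeping is the step requiring genuine additional work beyond a direct appeal to Theorem \ref{th.4pr2}.
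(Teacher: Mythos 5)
Your proof is correct, and it follows the same overall route as the paper: transport $(E,a,b,c)$ to a solution of Equation~\eqref{eq.ne4} via $\phi$, invoke Theorem~\ref{th.4pr2}, intersect $\Phi_{1,1}$ with $\Omega_3$, and push back through $\psi$. But you have caught something the paper's own proof glosses over. Theorem~\ref{th.4pr2} requires $\operatorname{den}(x_i)\in\{n,2n,4n\}$ for \emph{all} $0\le i\le 4$, and as you observe this is not automatic from $\operatorname{den}(a)=\operatorname{den}(b)=\operatorname{den}(c)=n$: one of the four numbers $\pm a'\pm b'\pm c'$ can be divisible by $n$, in which case the corresponding $x_i$ equals $\tfrac{\pi}{4}$ and has denominator $4$. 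The paper's proof applies Theorem~\ref{th.4pr2} without addressing this (and also does not separately treat the prime $n=2$, which falls outside the theorem's hypothesis). Both omissions are exactly the points your proposal repairs.

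Your treatment of the repair is sound. The parity observation (with $\pm a'\pm b'\pm c'$ all odd when $n\mid a'+b'+c'$ or $n\mid -a'+b'+c'$, since $n$ is odd) and the fact that at most one of the four combinations can vanish mod $n$ (else $n\mid 2a'$, etc.) together show that the three remaining $x_j$ have denominator exactly $4n$. The multiplicity comparison at the elements of $B_{4n}$ then closes the argument cleanly: with each of the three factors contributing exactly $\pm2$ at a single basis element and the left side contributing $0$ or $\pm4$ at a single basis element, no distribution of three $\pm2$'s matches; and when $\operatorname{den}(x_0)\in\{n,2n\}$, the left side is zero at every element of $B_{4n}$ while an odd number of $\pm2$'s cannot all cancel. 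This is a legitimate and necessary supplement to the paper's one-line citation of Theorem~\ref{th.4pr2}, and it is consistent in spirit with the degree/multiplicity arguments the paper itself uses later for the five-factor variant in Proposition~\ref{pro.6var}.

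One trivial slip: in describing the $n=2$ case you write the measurement as $(\tfrac{\pi}{2},\tfrac{\pi}{2},\tfrac{\pi}{2},\tfrac{\pi}{2})\pi$; you mean $(\tfrac{1}{2},\tfrac{1}{2},\tfrac{1}{2},\tfrac{1}{2})\pi$, i.e., each entry is $\tfrac{\pi}{2}$.
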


	\begin{proof}
		One can verify directly that $(E,a,b,c)=(\frac{1}{2},\frac{1}{2},\frac{1}{2},\frac{1}{2})\pi$is the measurement of a proper rational spherical triangle. We now show the other direction. Let $x_1:=\frac{-a+b+c}{4},x_2:=\frac{a-b+c}{4},x_3:=\frac{a+b-c}{4}$, and $x_4:=\frac{a+b+c}{4}$. By Lemma \ref{lem.ineq}, we obtain that $0<x_i<\frac{\pi}{2}$ for each $1\le i\le 4$. Notice that $x_4>x_s$ for each $1\le s\le 3$. By Theorem \ref{th.4pr2}, we can assume that $x_3+x_4=\frac{\pi}{2}$ and $x_1=x_2$. Here we also used the fact that $a,b,c$ are symmetric. It follows that $a=b=\frac{\pi}{2}$. Hence $\operatorname{den}(c)=\operatorname{den}(E)=2$. By the properness, we know that $c=\frac{\pi}{2}$, and $E=\frac{\pi}{2}$ or $\frac{3\pi}{2}$. One can verify that $E\ne \frac{3\pi}{2}$. 
	\end{proof}

	\section{The basis representation in the square-free case}\label{sec.bassf}
	
Let $n\in\mathbb{N}_{\ge 3}$ be odd, square-free and not a prime. Let $a$ be an integer such that $\operatorname{gcd}(4n,a)=1$. This section provides the representation of element $v(4n,a)$ in Conrad's basis presented in Proposition \ref{th.ba1}. This section might be of independent interest.
	
	\begin{emp}{\bf Notation}\label{ss.sf}
		Let $n\in\mathbb{N}_{> 3}$ be odd, square-free and not a prime. Let $4n=p_1^{e_1}p_2\cdots p_\ell$ be the prime factorization of $n$ with $p_1<\dots<p_\ell$. Note that $p_1=2$ and $e_1=2$. 
		Let $$\delta(4n):=\#\{1\le s\le\ell\mid p_s=2\text{ or } p_s=3\}.$$
		 Let $a\in\mathbb{Z}$ be such that $\operatorname{gcd}(4n,a)=1$. Let $(a_1,\dots,a_\ell)_{4n}$ be the residue form of $v(n,a)$ as defined in \ref{ss.rf}. Let $$\operatorname{len}(4n,a):=\operatorname{max}\{1\le r\le \ell\mid \text{ for all } s\le r, \overline{a}_s=1\text{ or } \overline{a}_s=p_s-1\}.$$ 
		
We call $\operatorname{len}(4n,a)$ the \textit{cluster length} of $v(4n,a)$. If $\operatorname{len}(4n,a)<\ell$, let $$\tau(a):=\operatorname{len}(4n,a)+1.$$ 
	
	If $\operatorname{len}(4n,a)=\ell$, let 
	$$\tau(a):=\ell.$$

	Let $s:=\delta(4n)+1$. If $\frac{p_s+1}{2}\le a_s\le p_s-2$ or $a_s=1$, let $\epsilon_a:=1$. If $2\le a_s\le \frac{p_s-1}{2}$ or $a_s=p_s-1$, let $\epsilon_a:=-1$. Let $$\operatorname{pol}(4n,a):=\{1\le s\le \ell\mid (\epsilon_a a)_s=p_s-1\}.$$ 
		
		An element in $\operatorname{pol}(4n,a)$ is called a \textit{pole} of $v(4n,a)$. If $\operatorname{len}(4n,a)>0$, let $$\overline{\operatorname{pol}}(4n,a):=\{1\le s\le \operatorname{len}(4n,a)\mid (\epsilon_a a)_s=p_s-1 \}.$$ 
	
	If $\operatorname{len}(4n,a)=0$, let $\overline{\operatorname{pol}}(4n,a)$ be the empty set. If $0\le r\le \operatorname{len}(4n,a)-1$, let

	$$\overline{\operatorname{pol}}(4n,a,r):=\{s\in\overline{\operatorname{pol}}(4n,a)\mid s>r\}.$$

	If $\operatorname{len}(4n,a)<\ell$, let $$\widehat{\operatorname{pol}}(4n,a):=\{\operatorname{len}(4n,a)+1\le s\le \ell \mid (\epsilon_a a)_s=p_s-1 \}.$$ 
	
	If $\operatorname{len}(4n,a)=\ell$, let $\widehat{\operatorname{pol}}(4n,a)$ be the empty set. It is clear that
	
	 $$\operatorname{pol}(4n,a)=\overline{\operatorname{pol}}(4n,a)\cup\widehat{\operatorname{pol}}(4n,a).$$

	If $\operatorname{len}(4n,a)>1$, let $$\operatorname{pmin}(4n,a):=\operatorname{min}(\overline{\operatorname{pol}}(4n,a,\delta(4n))).$$

	If $\overline{\operatorname{pol}}(4n,a,\delta(4n))=\emptyset$, then let $\operatorname{pmin}(4n,a):=\ell+1$. Let 
	
	$$E(4n,a):=\{1\le s\le \ell\mid a_s=p_s-1\}$$ 
	
	and 
	$$F(4n,a):=\{1\le s\le \ell\mid a_s=1\}.$$ 
	
	Let $e(a):=|E(4n,a)|$, and $f(a):=|F(4n,a)|$. Observe that if $\epsilon_a=1$, then $E(4n,a)=\operatorname{pol}(4n,a)$. If $r\in E(4n,a)$ and $r>\delta(4n)$, let 
	
	$$\operatorname{ord}(r):=\#\{\delta(4n)<s\le r\mid a_s=p_s-1\}$$ 
	
	and let $$w_r(4n,a):=(b_1,\dots, b_\ell)_{4n}\in X^{4n}$$ 
	
	where if $\delta(4n)< s\le r$, then $b_s=1$; if $1\le r\le 	\delta(4n)$ or $r<s\le \ell$, then $b_s=a_s$.

	\end{emp}

	\begin{emp}\label{em.basic0}

	Keep the assumption on $n$ in \ref{ss.sf}. Let $a\in\mathbb{Z}$ such that $\operatorname{gcd}(4n,a)=1$ and $\epsilon_a=1$. Assume that $\delta(4n)\le \operatorname{len}(4n,a)\le \ell$ and if $\delta(4n)< s\le\operatorname{len}(4n,a)$, then $a_s=1$. We also assume that $\frac{p_{\tau(a)}+1}{2}\le a_{\tau(a)}\le p_{\tau(a)}-2$. Let

			\begin{equation*}
			\aligned
			\Gamma(4n,a):=&\{(b_1\dots,b_\ell)_{4n}\in X^{4n}\mid b_1=(0,1).  \text{ If } 2\le s<\tau(a), \text{ then } b_s=1. \text{ For } \tau\le s\le\ell,\\
			& \text{ if } s\in\operatorname{pol}(4n,a), \text{ then } 1\le b_s\le p_s-2. \text{ If } s\in\operatorname{pol}(4n,a), \text{ then } b_s=a_s\}.
			\endaligned
		\end{equation*}
	
	Let 
	
	$$K(4n,a):=\prod\limits_{v\in \Gamma(4n,a)}v^{(-1)^{e(a)}}.$$ 
	
	It is clear that $\Gamma(4n,a)\subset B_{4n}$.

	\end{emp}

	\begin{lemma}\label{lemma.basic0}
	Keep the assumptions in \ref{em.basic0}.
	Then
	\begin{equation*}
		v(4n,a)=K(4n,a)
	\end{equation*}
	in $\widehat{X^{4n}}$, where both sides are viewed as elements in $\widehat{X^{4n}}$ under the quotient map.

\end{lemma}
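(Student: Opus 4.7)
The plan is to establish the identity $v(4n,a)=K(4n,a)$ in $\widehat{X^{4n}}$ by iterated application of the normalization relation in Lemma \ref{le.nor1} at each pole position of $v(4n,a)$. Under the hypotheses, the residue form $(a_1,\dots,a_\ell)_{4n}$ already agrees with the basis template in case (v) of Theorem \ref{th.ba1} at many coordinates: the constraint $\frac{p_{\tau(a)}+1}{2}\le a_{\tau(a)}\le p_{\tau(a)}-2$ places $a_{\tau(a)}$ exactly in the allowed ``leading'' half-range at position $\tau(a)$, and for $\delta(4n)<s<\tau(a)$ the equality $a_s=1$ is already the basis requirement. What remains to be resolved, in order to land inside $B_{4n}$, are the pole coordinates, i.e.\ the positions in $\operatorname{pol}(4n,a)=E(4n,a)$ (the equality using $\epsilon_a=1$).

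Second, I would enumerate the poles $s_1<s_2<\cdots<s_{e(a)}$ of $E(4n,a)$ and apply Lemma \ref{le.nor1} at each in turn. At position $s_i$, the lemma rewrites the current tuple as the inverse of the product over the set $\Gamma_{s_i}$ of tuples that coincide with the current residue form except at coordinate $s_i$, where the new value ranges over all admissible residues different from $a_{s_i}$. Since $a_{s_i}=p_{s_i}-1$ at every pole $s_i\ge 2$, the new coordinate $b_{s_i}$ lands automatically in the basis-compatible range $\{1,\dots,p_{s_i}-2\}$; at the degenerate position $s_i=1$ the only residue distinct from $(1,1)$ with $\widehat{b}_1=1$ is $(0,1)$, which is likewise the basis value. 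Thus for poles at positions $s\le\delta(4n)$ the resolution contributes only a sign flip, while for poles at positions $s\ge\tau(a)$ it both flips the sign and fans the product out over all admissible values $b_s\in\{1,\dots,p_s-2\}$.

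Third, after iterating through all $e(a)$ poles, the cumulative sign is $(-1)^{e(a)}$, and the resulting product ranges precisely over tuples of the form prescribed by $\Gamma(4n,a)$: namely $b_1=(0,1)$, then $b_s=1$ for $2\le s<\tau(a)$, then $b_s=a_s$ at non-pole positions $s\ge\tau(a)$, and $b_s\in\{1,\dots,p_s-2\}$ at pole positions $s\ge\tau(a)$. This yields
\[
v(4n,a)\;=\;\prod_{v\in\Gamma(4n,a)}v^{(-1)^{e(a)}}\;=\;K(4n,a)
\]
in $\widehat{X^{4n}}$. The last remaining step is to check $\Gamma(4n,a)\subseteq B_{4n}$, which is a direct match with the description of the sets $C_{n,k}$ and the extra generator in case (v) of Theorem \ref{th.ba1}, depending on the parity of $\ell$ and the congruence class of $p_{\tau(a)}$.

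The main technical obstacle will be careful bookkeeping at the ``small'' positions $s\le\delta(4n)$: namely position $1$, where the coordinate is the pair $(\overline{a}_1,\widehat{a}_1)$ and the pole condition reduces to $\overline{a}_1=1=p_1-1$, and position $2$ when $p_2=3$, where $a_2\in\{1,2\}=\{1,p_2-1\}$ and the basis range collapses to the single value $1$. In both cases the resolution is forced and contributes a sign but no fanning out, and I will need to reconcile this with the uniform exponent $(-1)^{e(a)}$ in $K(4n,a)$ by verifying that these degenerate poles are indeed counted by $e(a)$ under the convention $\epsilon_a=1$.
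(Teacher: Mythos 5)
Your proof is correct and takes the same approach as the paper's, which simply cites Lemma \ref{le.nor1}; you have correctly expanded this into iterated normalization at each pole, with the right sign $(-1)^{e(a)}$ and fan-out set $\Gamma(4n,a)$. The reconciliation you flag at positions $s\le\delta(4n)$ does close: under $\epsilon_a=1$ the cluster assumption gives $\overline{a}_1=1=p_1-1$ and, when $3\mid n$, $a_2\in\{1,2\}$, so each such small position is a pole counted by $e(a)$ exactly when a resolution step there is required, contributing one sign flip without fanning out.
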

	
	\begin{proof}
		The equality follows immediately from \ref{le.nor1}.
	\end{proof}

	\begin{emp}\label{em.fl}
		Keep the assumption on $n$ in \ref{ss.sf}. Let $a\in\mathbb{Z}$ be such that $\operatorname{gcd}(4n,a)=1$. Assume that $\delta(4n)< \operatorname{len}(4n,a)<\ell$ and $a_s=1$ for each $\delta(4n)< s\le\operatorname{len}(4n,a)$. We also assume that $2\le a_{\tau(a)}\le \frac{p_{\tau(a)}-1}{2}$. It is convenient to introduce the following notations.

Let

	\begin{equation*}
	\aligned
	\overline{\Gamma}_1(4n,a):=&\{(b_1\dots,b_\ell)_{4n}\in X^{4n}\mid b_1=(0,1).  \text{ If } 3\mid n, \text{ then } b_2=1. \\
	&\text{ if } \delta(4n)< s<\tau(a), \text{ then } b_s=1 \text{ or } \frac{p_s+1}{2}\le b_s\le p_s-2.\\
	& 2\le b_{\tau(a)}\le \frac{p_{\tau(a)}-1}{2}.\text{ For } {\tau(a)}<s\le\ell, \text{ if } s\in \operatorname{pol}(4n,a),\\
	& \text{ then } 1\le b_s\le p_s-2; \text{ if } s\notin \operatorname{pol}(4n,a), \text{ then } b_s=(\epsilon_a a)_s\}.
	\endaligned
	\end{equation*}

Let

	\begin{equation*}
		\aligned
		\widehat{\Gamma}_1(4n,a):=&\{v(4n,b)\in \overline{\Gamma}_1(4n,a)\mid  \text{ if } \delta(4n)\le s<\tau(a), \text{ then } b_s=1\}.
		\endaligned
	\end{equation*}

See Remark \ref{re.ep}. Let
	
	 $$\Gamma_1(4n,a):=\overline{\Gamma}_1(4n,a)\backslash\widehat{\Gamma}_1(4n,a).$$

	Let

	\begin{equation*}
	\aligned
	\Gamma_2(4n,a):=&\{(b_1\dots,b_\ell)_{4n}\in X^{4n}\mid b_1=(0,1).  \text{ If } 3\mid n, \text{ then } b_2=1. \\
	&\text{ if } \delta(4n)< s<\tau(a), \text{ then } b_s=1 \text{ or } \frac{p_s+1}{2}\le b_s\le p_s-2.\\
	& \frac{p_{\tau(a)}+1}{2}\le b_{\tau(a)}\le p_{\tau(a)}-2.\text{ For } \tau(a)<s\le\ell, \text{ if } (\epsilon_a a)_s=1, \\
	&\text{ then } 1\le b_s\le p_s-2; \text{ if }  a_s\ne 1, 
	\text{ then } b_s=p_s- a_s\}.
	\endaligned
	\end{equation*}

	Let $$K_1(4n,a):=\prod\limits_{v\in \Gamma_1(4n,a)}v^{(-1)^{e(a)+1}}\text{ and } K_2(4n,a):=\prod\limits_{v\in \Gamma_2(4n,a)}v^{(-1)^{f(a)}}.$$

	\end{emp}

	\begin{lemma}
		Keep the assumptions in \ref{em.fl}. The following is true.
		\begin{enumerate}
			\item $\Gamma_1(4n,a)\cup \Gamma_2(4n,a)\subset B_{4n}$.
			\item $\Gamma_1(4n,a)\cap \Gamma_2(4n,a)=\emptyset$. 
		\end{enumerate}
	\end{lemma}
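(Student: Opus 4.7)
The plan is to verify each part by a direct inspection of the residue-form conditions defining $\Gamma_1(4n,a)$ and $\Gamma_2(4n,a)$, and comparing them entry-by-entry against the explicit description of $B_{4n}$ given in case (5) of Theorem \ref{th.ba1}. Recall that under our running assumption $n$ is odd and square-free, so $4n=4m$ with $m$ odd square-free, and the relevant basis elements of $\widehat{X^{4n}}$ are those $(b_1,\dots,b_\ell)_{4n}$ with $b_1=(0,1)$, with a distinguished \emph{leading index} $k\ge 2$ (with $p_k\ne 3$) such that $b_s=1$ for $\delta(4n)<s<k$, $\tfrac{p_k+1}{2}\le b_k\le p_k-2$, and $1\le b_s\le p_s-2$ for $s>k$ (together with the exceptional element $((0,1),1,\dots,1)$ when $\ell$ is even).

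For part (i), I would treat the two sets in turn. For $v=(b_1,\dots,b_\ell)_{4n}\in\Gamma_2(4n,a)$, the entry at $\tau(a)$ already lies in the upper window $[\tfrac{p_{\tau(a)}+1}{2},p_{\tau(a)}-2]$, so I can take the leading index to be either $\tau(a)$ or the smallest $s\in(\delta(4n),\tau(a))$ for which $b_s\ne 1$; in either case the defining inequality $\tfrac{p_k+1}{2}\le b_k\le p_k-2$ is built into the definition of $\Gamma_2$. For the positions $s>\tau(a)$, one checks that the two sub-cases $(\epsilon_a a)_s=1$ and $a_s\ne 1$ both force $b_s\in[1,p_s-2]$: the first by definition, and the second because $b_s=p_s-a_s$ and $a_s\in[2,p_s-1]$ gives $b_s\in[1,p_s-2]$. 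For $v\in\Gamma_1(4n,a)$, the point is that removing $\widehat{\Gamma}_1(4n,a)$ forces the existence of at least one index $s\in(\delta(4n),\tau(a))$ with $b_s\ne 1$; the smallest such $s$ provides the leading index $k$ with $\tfrac{p_k+1}{2}\le b_k\le p_k-2$, and the remaining positions (including $\tau(a)$, where $2\le b_{\tau(a)}\le \tfrac{p_{\tau(a)}-1}{2}$, and the positions beyond $\tau(a)$) land in $[1,p_s-2]$ by direct inspection of the conditions. The case $3\mid n$ is absorbed by the clause $b_2=1$, which is consistent with $p_2=3$ being excluded as a leading index.

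For part (ii), the key observation is that both $\Gamma_1(4n,a)$ and $\Gamma_2(4n,a)$ pin down the entry at the same position $\tau(a)$, since $\tau(a)$ depends only on $a$. However, the prescribed ranges at that coordinate are disjoint: $\Gamma_1$ requires $b_{\tau(a)}\in[2,\tfrac{p_{\tau(a)}-1}{2}]$ while $\Gamma_2$ requires $b_{\tau(a)}\in[\tfrac{p_{\tau(a)}+1}{2},p_{\tau(a)}-2]$. Because residue forms in $B_{4n}$ are distinguished by their coordinate tuples, no element can belong to both sets, yielding $\Gamma_1(4n,a)\cap\Gamma_2(4n,a)=\emptyset$.

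I expect no serious obstacle: the argument is essentially bookkeeping against the definition of $B_{4n}$. The only point that requires care is the verification, in part (i), that $\Gamma_1$ genuinely produces a valid leading index, which is where the subtraction of $\widehat{\Gamma}_1$ from $\overline{\Gamma}_1$ is used; and the verification that the $s>\tau(a)$ entries of $\Gamma_2$ elements stay in $[1,p_s-2]$, which uses the hypothesis $\gcd(4n,a)=1$ to rule out $a_s=0$.
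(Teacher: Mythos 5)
Your argument is correct and takes essentially the same route as the paper: the paper treats (i) as clear and proves (ii) precisely by comparing the coordinate at position $\tau(a)$, where $\Gamma_1(4n,a)$ forces a value in the lower window $[2,\tfrac{p_{\tau(a)}-1}{2}]$ and $\Gamma_2(4n,a)$ a value in the upper window $[\tfrac{p_{\tau(a)}+1}{2},p_{\tau(a)}-2]$, under the normalization $\epsilon=1$ of Remark \ref{re.ep}. Your extra bookkeeping for (i) — producing a leading index from the removal of $\widehat{\Gamma}_1(4n,a)$ and checking the trailing entries via $\gcd(4n,a)=1$ and $s\notin\operatorname{pol}(4n,a)$ — simply spells out what the paper leaves implicit.
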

	
	\begin{proof}

		(i) is clear. (ii). Let $v(n,c)\in\Gamma_1(4n,a)$ and $v(n,d)\in\Gamma_2(4n,a)$. Note that by the convention stated in Remark \ref{re.ep}, we have assumed that $\epsilon_c=\epsilon_d=1$. Then $ c_{\tau(c)}=p_{\tau(a)}- d_{\tau(d)}$. So $v(n,c)\ne v(n,d)$ in $\widehat{X^{4n}}$.
	\end{proof}

	\begin{lemma}\label{lemma.basic}
		Keep the assumptions in \ref{em.fl}. Then
		\begin{equation*}
			v(4n,a)=K_1(4n,a)K_2(4n,a)
		\end{equation*}
		in $\widehat{X^{4n}}$, where both sides are viewed as elements in $\widehat{X^{4n}}$ under the quotient map.

	\end{lemma}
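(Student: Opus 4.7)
My plan is to prove the lemma by iteratively applying the norm relation of Lemma \ref{le.nor1} to express $v(4n,a)$ as a product of basis elements of $B_{4n}$, then matching the resulting product with $K_1(4n,a) K_2(4n,a)$. The situation differs from Lemma \ref{lemma.basic0} in one crucial way: since $a_{\tau(a)}$ lies in the lower half $[2, \tfrac{p_{\tau(a)}-1}{2}]$, $v(4n,a)$ is not in basis form at position $\tau(a)$, so an \emph{additional} application of Lemma \ref{le.nor1} at $\tau(a)$ is required beyond what sufficed in Lemma \ref{lemma.basic0}.

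First I would resolve the outer poles, namely positions $s>\tau(a)$ with $a_s=p_s-1$, by applying Lemma \ref{le.nor1} at each. This contributes an overall sign of $(-1)^{e(a)}$ and replaces each pole coordinate by a free variable ranging over $\{1,\ldots,p_s-2\}$. Then applying Lemma \ref{le.nor1} at position $\tau(a)$ itself introduces an additional factor of $-1$ together with a product over $b_{\tau(a)} \in \{1,\ldots,p_{\tau(a)}-1\} \setminus \{a_{\tau(a)}\}$. The resulting basis-element terms split by pivot location: the $\Gamma_2$-family consists of terms where $b_{\tau(a)}$ lies in the upper half, giving pivot at $\tau(a)$; the $\Gamma_1$-family consists of terms where $b_{\tau(a)}$ remains in the lower half, in which case one further application of Lemma \ref{le.nor1} at a preceding cluster position shifts the pivot into $(\delta(4n),\tau(a))$. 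The total sign for $K_1$ is $(-1)^{e(a)+1}$. For $K_2$, I would exploit the symmetry $v(4n,a)=v(4n,-a)$ from Formula \eqref{eq.sym}: the residue form of $-a$ has $(-a)_{\tau(a)} = p_{\tau(a)}-a_{\tau(a)}$ in the upper half, so $-a$ is already in basis form at $\tau(a)$, and its poles are exactly the positions $s$ with $a_s=1$, of total cardinality $f(a)$. Resolving those $f(a)$ poles yields sign $(-1)^{f(a)}$ and produces precisely the elements of $\Gamma_2(4n,a)$, with their fixed coordinates $b_s = p_s - a_s$ at the non-free later positions.

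The main obstacle will be the combinatorial bookkeeping: verifying that the iterative reductions produce exactly the basis elements specified in $\Gamma_1(4n,a) \cup \Gamma_2(4n,a)$, with the correct signs and with no stray basis elements appearing outside these sets. The precise description of $\Gamma_1$ and $\Gamma_2$---in particular the constraint that $b_s \in \{1\} \cup [\tfrac{p_s+1}{2}, p_s-2]$ for cluster positions $\delta(4n) < s < \tau(a)$, and the exclusion $\Gamma_1 = \overline{\Gamma}_1 \setminus \widehat{\Gamma}_1$ enforcing a strict cluster-pivot in $\Gamma_1$ elements---reflects subtle cancellations among intermediate terms produced at different stages of the reduction. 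An induction on the cluster length $\operatorname{len}(4n,a)-\delta(4n)$, or a direct coordinate-by-coordinate verification in the basis $B_{4n}$ using Theorem \ref{th.ba1}, should organize the bookkeeping cleanly and reconcile the two sign counts $(-1)^{e(a)+1}$ and $(-1)^{f(a)}$ arising from the two reduction paths.
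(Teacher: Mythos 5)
Your overall strategy---iterative application of the norm relation~\ref{le.nor1}, organized by an induction on the cluster length---does match the paper's: the paper's proof inducts on $\tau(a)$, which is $\operatorname{len}(4n,a)+1$, so this is the same parameter. However, the specific claims you make about what each reduction step produces are not correct, and this is where the real content of the lemma lies.

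The most serious problem is the claim that resolving the $f(a)$ poles of $-a$ ``produces precisely the elements of $\Gamma_2(4n,a)$'' with sign $(-1)^{f(a)}$. If that were literally true, you would have $v(4n,a)=v(4n,-a)=K_2(4n,a)$ in $\widehat{X^{4n}}$, forcing $K_1(4n,a)=1$; but $\Gamma_1(4n,a)$ is nonempty whenever $\operatorname{len}(4n,a)>\delta(4n)+1$, so this cannot be right. The reason the claim fails: under the hypotheses of \ref{em.fl}, the cluster positions $\delta(4n)<s\le\operatorname{len}(4n,a)$ have $a_s=1$, hence $(-a)_s=p_s-1$, so they are \emph{also} poles of $-a$. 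Resolving them introduces intermediate elements with lower-half values at cluster positions which are not in $B_{4n}$ and which require further reduction; both $\Gamma_1$ and the ``$b_s=1$ or upper half'' freedom at cluster positions visible in $\Gamma_2$ arise precisely from this cascade. A single-pass pole resolution does not land on $\Gamma_2$.

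Symmetrically, your description of the split---upper-half $b_{\tau(a)}$ giving $\Gamma_2$, lower-half giving $\Gamma_1$, from the direct reduction of $v(4n,a)$---does not match the definitions in \ref{em.fl}. After resolving the poles $s>\tau(a)$ of $a$ and then applying the norm relation at $\tau(a)$, the terms with $b_{\tau(a)}$ in the upper half have $b_s=a_s$ at positions $s>\tau(a)$ with $a_s\notin\{1,p_s-1\}$, whereas $\Gamma_2$ requires $b_s=p_s-a_s$ there; and positions with $a_s=1$ are pinned at $1$ in your reduction but free in $\Gamma_2$. So these terms are not $\Gamma_2$ elements. More fundamentally, you cannot obtain $K_1$ from one reduction of $v(4n,a)$ and $K_2$ from a separate reduction of $v(4n,-a)$ and then multiply the outputs---each reduction already produces a complete (and necessarily identical) basis expansion of the same element $v(4n,a)=v(4n,-a)$. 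You must perform a \emph{single} reduction and then show that its output coincides with $K_1K_2$. The paper does exactly this: it applies the inductive machinery of Lemma~\ref{lem.unr} to $v(4n,-a)$, expresses the result as $w_t(4n,-a)^{\pm 1}L_1L_2L_3$, and then reconciles the sets $\Delta_1,\Delta_2,\Delta_3,\operatorname{supp}(w_t)$ with $\Gamma_1\cup\Gamma_2$ via a chain of set identities (noting in particular that $\Delta_1\subset\Delta_3$ and the corresponding multiplicities cancel). That reconciliation, not the initial pole-resolution, is the crux of the argument, and it is missing from your outline.
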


	\begin{proof}
		We prove the formula by induction on $\tau(a)$. Assume that $\tau(a)=\delta(4n)+1$. Then $\Gamma_1(4n,a)=\emptyset$. The equality $v(4n,a)=K_2(4n,a)$ follows from applying Lemma \ref{le.nor1} to $v(4n,-a)$. So the base case is true. 
		
		Assume that the formula is true for all $a\in\mathbb{Z}$ satisfying the conditions in the Proposition and $\tau(a)=t\le\ell-1$. Take $a\in\mathbb{Z}$ which satisfies the conditions in the Proposition and $\tau(a)=t+1$. We prove that the formula holds $v(4n,a)$.

		By the same argument as in the proof of Lemma \ref{lem.unr} applied to $v(4n,-a)$ for $t$, we get that

		\begin{align*}
			v(4n,-a)=&w_t(4n,-a)^{(-1)^{\operatorname{ord}(t)}}L_1(4n,-a) L_2(4n,-a) L_3(4n,-a)\\
			=&w_t(4n,-a)^{(-1)^{\operatorname{ord}(t)}}\prod\limits_{v\in \Delta_1(4n,-a)}v^{(-1)^{f(a)+1}} \prod\limits_{v\in \Delta_2(4n,-a)}v^{(-1)^{e(a)+1}} \prod\limits_{v\in \Delta_3(4n,-a)}v^{(-1)^{f(a)}}.
		\end{align*}

		Here we used the notations that are introduced in \ref{em.rep2}. The only change one needs to make in the proof of Lemma \ref{lem.unr} is applying the induction hypothesis, instead of Lemma \ref{lemma.basic}. We also applied the relations $f(-a)=e(a)$ and $e(-a)=f(a)$.

	By Proposition \ref{lemma.basic0}, we obtain that 
\[w_t(4n,-a)^{(-1)^{\operatorname{ord}(t)}}=\prod\limits_{v\in \operatorname{supp}(w_t(4n,-a))} v^{(-1)^{f(a)}}.\]

	Furthermore, we have the following relations.

		\begin{enumerate}[label=(\roman*)]

			\item $\Delta_2(4n,-a)=\Gamma_1(4n,a)$.
			
			$\Delta_2(4n,-a)\subset\Gamma_1(4n,a)$. Let $v(4n,c)\in\Delta_2 (4n,-a)$. Then, if $\delta(4n)< s\le \operatorname{len}(4n,a)$, then $\frac{p_s+1}{2}\le  c_s\le p_s-2$ or $c_s=1$. If $\tau(a)\le s\le\ell$ and $ a_s\ne p_s-1, 
			\text{ then } c_s= a_s$. So $v(4n,c)\in\Gamma_1(4n,a)$.

			$\Gamma_1(4n,a)\subset\Delta_2(4n,-a)$. Let $v(4n,c)\in\Gamma_1(4n,a)$. Let $r:=\operatorname{max}\{\delta(4n)<s\le\operatorname{len}(4n,a)\mid c_s\ne1\}$. Then $v(4n,c)\in\Delta_{2,r}(4n,-a)$. Then $v(4n,c)\in\Delta_{2,r}(4n,-a)$.

			\item $\Delta_1(4n,-a)\subset\Delta_3(4n,-a)$.
			
			Let $v(4n,c)\in\Delta_1(4n,-a)$. Let $r:=\operatorname{min}\{\delta(4n)< s< \tau(a)\mid c_s\ne 1\}$. Then $v(4n,c)\in\Delta_{3,r}(4n,-a)$.
			
			\item $(\Delta_3(4n,-a)\backslash\Delta_1(4n,-a))\cup\operatorname{supp}(w_t(4n,-a))=\Gamma_2(4n,a)$.
			 
			$\Delta_3(4n,-a)\backslash\Delta_1(4n,-a)\subset\Gamma_2(4n,a)$.

			Let $v(4n,c)\in\Delta_3(4n,-a)\backslash\Delta_1(4n,-a)$. Then there does not exists $s$ such that $2\le c_s\le\frac{p_s-1}{2}$. Assume that this is not true. Let $r:=\operatorname{min}\{s\mid 2\le c_s\le\frac{p_s-1}{2}\}$. Then $v(4n,a)\in\Delta_{1,r}(4n,-a)$. This is a contradiction. So $v(4n,c)\in\Gamma_2(4n,a)$. It is clear that $\operatorname{supp}(w_t(4n,-a))\subset\Gamma_2(4n,a)$.
			
			$\Gamma_2(4n,a)\subset(\Delta_3(4n,-a)\backslash\Delta_1(4n,-a))\cup\operatorname{supp}(w_t(4n,-a))$. 
			
			Let $v(4n,c)\in\Gamma_2(4n,a)$. (A) Assume that for each $\delta(4n)<s\le \operatorname{len}(4n,a)$, we have that  $c_s=1$. Then $v(4n,c)\in\operatorname{supp}(w_t(4n,-a))$. (B) Assume that for some $\delta(4n)<q\le \operatorname{len}(4n,a)$, we have that  $c_q\ne1$. Let
			$r:=\operatorname{min}\{\delta(4n)<s\le\operatorname{len}(4n,a)\mid c_s\ne 1\}$. Then $v(4n,c)\in\Delta_{3,r}(4n,-a)$. If $v(4n,c)\in\Delta_{1,u}(4n,-a)$ for some $\delta(4n)<u\le\operatorname{len}(4n,a)$, then $2\le c_u\le\frac{p_u-1}{2}$, this is a contradiction. So $v(4n,c)\notin\Delta_1(4n,-a)$.

			\item $\Delta_3(4n,-a)\cap\operatorname{supp}(w_t(4n,-a))=\emptyset$. 
			
			Let $v(4n,c)\in\Delta_{3,r}(4n,-a)$, for some $\delta(4n)<r<\tau(a)$ and let $v(4n,d)\in\operatorname{supp}(w_t(4n,-a))$. Then $\frac{p_r+1}{2}\le c_r\le p_r-s$ and $ d_r=1$. So $v(4n,c)\ne v(4n,d)$ in $\widehat{X^{4n}}$.

		\end{enumerate}
		
		Observe that if $v\in\Delta_1(4n,-a)$, then  $\operatorname{multi}_v(L_1(4n, -a))=-\operatorname{multi}_v(L_3(4n,-a))$.

		To sum up, we derive that $v(4n,a)=K_1(4n,a)K_2(4n,a)$. So the formula is true for $a\in\mathbb{Z}$ such that $\tau(a)=t+1$. This finishes the proof.
	\end{proof}

		\begin{emp}\label{em.rep2}
		Let $n$ be as in \ref{ss.sf}. Let $a\in\mathbb{Z}$ such that $\operatorname{gcd}(4n,a)=1$. In this section, we do not require that $\epsilon_a=1$. Let $(a_1,\dots,a_\ell)_{4n}$ be the residue form of $v(4n,a)$. Fix $t\in E(4n,a)$ such that $\delta(4n)<t\le\ell$. For each $r\in E(4n,a)$ such that $\delta(4n)<r\le t$, let

		\begin{equation*}
			\aligned
			\overline{\Delta}_{1,r}(4n,a):=&\{(b_1\dots,b_\ell)_{4n}\in X^{4n}\mid b_1=(0,1).  \text{ If } 3\mid n, \text{ then } b_2=1. \\
			&\text{ If } \delta(4n)<s<r, \text{ then } b_s=1 \text{ or } \frac{p_s+1}{2}\le b_s\le p_s-2.\\
			& 2\le b_r\le \frac{p_r-1}{2}.\text{ For } r<s\le\ell, \text{ if } s\in E(4n,a), \\
			&\text{ then } 1\le b_s\le p_s-2; \text{ if } s\notin E(4n,a), 
			\text{ then } b_s=a_s\},
			\endaligned
		\end{equation*}

		\begin{equation*}
			\aligned
			\widehat{\Delta}_{1,r}(4n,a):=&\{v(4n,b)\in\overline{\Delta}_{1,r}(4n,a)\mid b_1=(0,1).\text{ If } 2\le s<r, \\
			&\text{ then } b_s=1\},
			\endaligned
		\end{equation*}
		
		and
		
		 $$\Delta_{1,r}(4n,a):=\overline{\Delta}_{1,r}(4n,a)\backslash\widehat{\Delta}_{1,r}(4n,a).$$

		Let	\begin{equation*}
			\aligned
			\Delta_{2,r}(4n,a):=&\{(b_1,\dots,b_\ell)_{4n}\in X^{4n}\mid b_1=(0,1).  \text{ If } 3\mid n, \text{ then } b_2=1. \\
			&\text{ If } \delta(4n)<s<r, \text{ then } b_s=1 \text{ or } \frac{p_s+1}{2}\le b_s\le p_s-2.\,\,\\& \frac{p_r+1}{2}\le b_r\le p_r-2.\text{ For } r<s\le\ell, \text{ if } s\in F(4n,a),\\
			& \text{ then } 1\le b_s\le p_s-2; \text{ if } s\notin F(4n,a), 
			\text{ then } b_s=p_s-a_s\}.
			\endaligned
		\end{equation*}

		Let	\begin{equation*}
			\aligned
			\Delta_{3,r}(4n,a):=&\{(b_1,\dots,b_\ell)_{4n}\in X^{4n}\mid b_1=(0,1).  \text{ If } 3\mid n, \text{ then } b_2=1. \\
			&\text{ If } \delta(4n)<s<r, \text{ then } b_s=1.\,\,\,\,\frac{p_r+1}{2}\le b_r\le p_r-2.\\
			& \text{ For } r<s\le\ell, \text{ if } s\in E(4n,a), \text{ then } 1\le b_s\le p_s-2; \\
			&\text{ if } s\notin E(4n,a), 
			\text{ then } b_s=a_s
			\}.
			\endaligned
		\end{equation*}

		Let $$L_{1,r}(4n,a):=\prod\limits_{v\in \Delta_{1,r}(4n,a)}v^{(-1)^{e(a)+1}},\quad\quad L_{2,r}(4n,a):=\prod\limits_{v\in \Delta_{2,r}(4n,a)}v^{(-1)^{f(a)+1}},$$ 
		
		and $$L_{3,r}(4n,a):=\prod\limits_{v\in \Delta_{3,r}(4n,a)}v^{(-1)^{e(a)}}.$$

		Let
		
		$$\Delta_1(4n,a):=\bigcup\limits_{\substack{r\in E(4n,a) \\ \delta(4n)<r\le t}} \Delta_{1,r}(4n,a),\quad\quad\Delta_2(4n,a):=\bigcup\limits_{\substack{r\in E(4n,a) \\ \delta(4n)<r\le t}} \Delta_{2,r}(4n,a),$$ and $$\Delta_3(4n,a):=\bigcup\limits_{\substack{r\in E(4n,a) \\ \delta(4n)<r\le t}} \Delta_{3,r}(4n,a).$$

		Let $$L_1(4n,a):=\prod\limits_{v\in \Delta_1(4n,a)}v^{(-1)^{e(a)+1}},\quad\quad L_2(4n,a):=\prod\limits_{v\in \Delta_2(4n,a)}v^{(-1)^{f(a)+1}},$$ 
		
		and $$L_3(4n,a):=\prod\limits_{v\in \Delta_3(4n,a)}v^{(-1)^{e(a)}}.$$

	\end{emp}

\begin{lemma}\label{lem.unr}
	Keep the assumptions in \ref{em.rep2}. Then $$v(4n,a)=w_t(4n,a)^{(-1)^{\operatorname{ord}(t)}}L_1(4n,a) L_2(4n,a) L_3(4n,a)$$
	in $\widehat{X^{4n}}$.
\end{lemma}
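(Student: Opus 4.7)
We proceed by induction on $\operatorname{ord}(t)$, the number of poles of $a$ lying in the interval $(\delta(4n), t]$. Since $t \in E(4n,a)$ and $t > \delta(4n)$, we have $\operatorname{ord}(t) \geq 1$, so there is no vacuous case. The engine of the proof is the single normalization relation Lemma \ref{le.nor1}; applying it once at each pole in $(\delta(4n), t]$ yields the desired partial expansion of $v(4n, a)$.

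For the base case $\operatorname{ord}(t) = 1$, the index $t$ is the unique pole of $a$ in $(\delta(4n), t]$. Apply Lemma \ref{le.nor1} directly to $v(4n, a)$ at $r = t$ to obtain $v(4n,a) = \prod_{v \in \Gamma_t} v^{-1}$, a product indexed by residue forms that agree with $a$ except at position $t$, where $b_t$ ranges over $\{1, \ldots, p_t - 2\}$. Partition this product into three pieces by the value of $b_t$: (i) $b_t = 1$, which under the assumption $\operatorname{ord}(t) = 1$ yields exactly $w_t(4n, a)^{-1} = w_t(4n, a)^{(-1)^{\operatorname{ord}(t)}}$; (ii) $2 \leq b_t \leq \frac{p_t - 1}{2}$, whose factors match the defining conditions of $\Delta_{1,t}(4n, a)$ and, after tracking the sign $(-1)^{e(a)+1}$, contribute $L_{1,t}(4n, a)$; (iii) $\frac{p_t+1}{2} \leq b_t \leq p_t - 2$, whose factors must be further normalized at positions $s > t$ lying in $E(4n, a)$, after which their combined contribution equals $L_{2,t}(4n, a)\, L_{3,t}(4n, a)$.

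For the inductive step, let $t' = \max(E(4n, a) \cap (\delta(4n), t))$, so that $\operatorname{ord}(t') = \operatorname{ord}(t) - 1$. By the induction hypothesis,
\[
v(4n, a) = w_{t'}(4n, a)^{(-1)^{\operatorname{ord}(t')}} L_1^{(t')} L_2^{(t')} L_3^{(t')},
\]
where $L_i^{(t')}$ denotes the product of the contributions as $r$ ranges over $E(4n, a) \cap (\delta(4n), t']$. Now apply Lemma \ref{le.nor1} to $w_{t'}(4n, a)$ at position $t$; since $w_{t'}(4n,a)$ agrees with $a$ for all $s > t'$, position $t$ is still a pole. Splitting the resulting product into the same three groups as in the base case, the $b_t = 1$ factor becomes $w_t(4n, a)^{(-1)^{\operatorname{ord}(t') + 1}} = w_t(4n, a)^{(-1)^{\operatorname{ord}(t)}}$; the remaining two groups produce precisely the new contributions to $\Delta_{1,t}, \Delta_{2,t}, \Delta_{3,t}$, augmenting the $L_i^{(t')}$ into the full $L_i(4n, a)$ as required.

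The main obstacle is the combinatorial bookkeeping. The defining conditions of $\Delta_{1,r}, \Delta_{2,r}, \Delta_{3,r}$ encode a detailed record of how earlier normalizations have reshaped the intermediate positions: the condition ``$b_s = 1$ or $\frac{p_s+1}{2} \leq b_s \leq p_s - 2$ for $\delta(4n) < s < r$'' captures the dichotomy between the $w$-branch and the $L_3$-branch of past applications of Lemma \ref{le.nor1}, while the subtraction $\Delta_{1,r} := \overline{\Delta}_{1,r} \setminus \widehat{\Delta}_{1,r}$ removes terms that are already accounted for in $w_t$. One must verify that (a) at each inductive step the three pieces of the latest Lemma \ref{le.nor1} expansion partition bijectively into the sets $\Delta_{1,t}, \Delta_{2,t}, \Delta_{3,t}$ together with $\operatorname{supp}(w_t(4n,a))$, and (b) the signs $(-1)^{\operatorname{ord}(t)}, (-1)^{e(a)+1}, (-1)^{f(a)+1}, (-1)^{e(a)}$ propagate correctly under the induction, using that each additional pole contributes one factor of $-1$ in the exponent and the parities of $e(a), f(a)$ remain fixed throughout.
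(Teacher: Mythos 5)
Your overall induction is the same one the paper uses — base case at the smallest pole $r_1$, then repeatedly re-expand the leading factor $w_{t'}$ at the next pole $t$ — so the architecture is right. But the heart of the lemma is the \emph{distribution} of the three blocks coming from Lemma~\ref{le.nor1} into $\Delta_{1,t}, \Delta_{2,t}, \Delta_{3,t}$, and you have it backwards in a way that can't be patched by bookkeeping.

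Concretely, after normalizing at position $t$ you get factors $v_i$ with $(v_i)_t = i$ and $(v_i)_s$ agreeing with $w_{t'}$ (hence with $a$ for $s>t$). You assign the lower-half block $2\le i\le \tfrac{p_t-1}{2}$ to $L_{1,t}$ alone and the upper-half block $\tfrac{p_t+1}{2}\le i\le p_t-2$ to $L_{2,t}L_{3,t}$. The correct split, and the one the paper proves, is the opposite: the \emph{lower}-half block produces $L_{1,t}\,L_{2,t}$ and the \emph{upper}-half block produces $L_{3,t}$ only. The reason is visible in the definitions: $\Delta_{3,t}$ records trailing data $b_s = a_s$ for $s>t,\ s\notin E(4n,a)$, while $\Delta_{2,t}$ records the \emph{negated} trailing data $b_s = p_s - a_s$ for $s>t,\ s\notin F(4n,a)$. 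Iterated applications of Lemma~\ref{le.nor1} alone (which is all you invoke) never change $a_s$ into $p_s-a_s$, so the upper-half factors can only ever expand into $\Delta_{3,t}$. The negated pattern in $\Delta_{2,t}$ can only come from the lower-half factors, because those have $(v_i)_{\tau(v_i)}=i$ in $[2,\tfrac{p_t-1}{2}]$, which is the wrong half for Conrad's basis (Theorem~\ref{th.ba1}(5) forces the first post-cluster coordinate into the upper half); expanding them requires passing to $v(4n,-v_i)$ and re-normalizing, i.e.\ Lemma~\ref{lemma.basic}, whose $K_2$-part is exactly what supplies $L_{2,t}$.

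This points at the second and more structural gap: you assert that ``the engine of the proof is the single normalization relation Lemma~\ref{le.nor1}'', but Lemma~\ref{le.nor1} alone is not enough. The lower-half factors are not in $B_{4n}$, and one application of Lemma~\ref{le.nor1} at position $t$ leaves them with poles at positions $s>t$ \emph{and} with the wrong orientation at position $t$. The paper's proof closes this by invoking Lemma~\ref{lemma.basic} (which itself is proved by an intertwined induction with the present lemma), and the sign bookkeeping — $e(v_i)=e(a)-1$, $f(v_i)=f(a)$, combined with the outer inverse from Lemma~\ref{le.nor1} — is what makes the exponents $(-1)^{e(a)+1}$ and $(-1)^{f(a)+1}$ come out correctly. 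Your write-up skips both of these, and once the $\Delta_1/\Delta_2/\Delta_3$ assignment is corrected, you will find you cannot avoid appealing to Lemma~\ref{lemma.basic} for the lower-half block.
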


\begin{proof}

	We prove it by induction on $\operatorname{ord}(r)$. For each $1\le k\le \#\{s\in E(4n,a)\mid \delta(4n)<s\le \operatorname{len}(4n,a)\}$, let $r_k\in E(4n,a)$ be such that $\operatorname{ord}(r_k)=k$. Note that $r_1=\operatorname{min}\{s\in E(4n,a)\mid \delta(4n)<s\le \operatorname{len}(4n,a)\}$. For each $1\le i\le p_{r_1}-2$, let $v_i:=(b_1,\dots,b_\ell)_{4n}$ where $b_{r_1}=i$ and if $s\ne r_1$, then $b_s=a_s$.
	By Lemma \ref{le.nor1}, we get that $$v(4n,a)=\prod\limits_{i=1}^{p_{r_1}-2} v_i^{-1}.$$
	
	Note that $v_1=w_{r_1}$. By Lemma \ref{lemma.basic}, we get that

	\begin{align*}
		\prod\limits_{i=2}^{\frac{p_{r_1}-1}{2}}v_i=L_{1,r_1}(4n,a)L_{1,r_2}(4n,a).
	\end{align*}

	By Lemma \ref{le.nor1}, we obtain that $$\prod\limits_{i=\frac{p_{r_1}+1}{2}}^{p_{r_1}-2}v_i=L_{3,r_1}(4n,a).$$  
	
	Thus $$v(4n,a)=w_{r_1}^{(-1)^{\operatorname{ord}(r_1)}}L_{1,r_1}(4n,a) L_{2,r_1}(4n,a) L_{3,r_1}(4n,a).$$ 
	So the base case is true. 
	
	Assume that $k<\#\{s\in E(4n,a)\mid \delta(4n)<s\le \operatorname{len}(4n,a)\}$ and the equality is true for $r_k$. We prove it for $r_{k+1}$.	By induction hypothesis, we have that  $$v(4n,a)=w_{r_k}^{(-1)^{\operatorname{ord}(r_k)}}L_{1,r_k}(4n,a) L_{2,r_k}(4n,a) L_{3,r_k}(4n,a).$$

	Similar to the proof of the base case, we get that

	\[w_{r_k}=w_{r_{k+1}}^{-1}\prod\limits_{v\in \Delta_{1,r_{k+1}}(4n,a)}v^{(-1)^{e(a)-k+1}}\prod\limits_{v\in \Delta_{2,r_{k+1}}(4n,a)}v^{(-1)^{f(a)+k+1}}\prod\limits_{v\in \Delta_{3,r_{k+1}}(4n,a)}v^{(-1)^{e(a)-k}}.\]
	
	So $$v(4n,a)=w_{r_{k+1}}^{(-1)^{\operatorname{ord}(r_{k+1})}}L_{1,r_{k+1}}(4n,a) L_{2,r_{k+1}}(4n,a) L_{3,r_{k+1}}(4n,a).$$	This conclude the proof of the formula.
\end{proof}

\begin{emp}{\bf The case $\delta(4n)<\operatorname{len}(4n,a)<\ell$ and $\frac{p_{\tau(a)}+1}{2}\le  a_{\tau(a)}\le p_{\tau(a)}-2$.}\label{ss.midgt}
	Keep the assumptions about $n$ in \ref{ss.sf}. Let $a\in\mathbb{Z}$ be such that $\operatorname{gcd}(4n,a)=1,\epsilon_a=1$, and $\frac{p_{\tau(a)}+1}{2}\le  a_{\tau(a)}\le p_{\tau(a)}-2$. Assume that $\operatorname{len}(4n,a)>\delta(4n)$ and $\overline{\operatorname{pol}}(4n,a,\delta(4n))\ne\emptyset$. See Lemma \ref{lemma.basic0} for the case $\overline{\operatorname{pol}}(4n,a,\delta(4n))=\emptyset$. 
	Assume that $2\le \operatorname{len}(4n,a)<\ell$. We introduce the following notations. For $r\in\overline{\operatorname{pol}}(4n,a,\delta(4n))$, let 
	\begin{equation*}
		\aligned
		\overline{\Gamma}_{1,r}(4n,a):=&\{(b_1,\dots,b_\ell)_{4n}\in X^{4n}\mid b_1=(0,1). \text{ If } 3\mid n, \text{ then } b_2=1. \\
		&\text{ If } \delta(4n)< s\le r-1, \text{ then } b_s=1\text{ or } \frac{p_s+1}{2}\le b_s\le p_s-2.\\
		&2\le b_r\le \frac{p_r-1}{2}.\, \text{ For } r<s\le\ell, \text{ if } s\in\operatorname{pol}(4n,a),\\
		& \text{ then } 1\le b_s\le p_s-2 ;\text{ if } s\notin\operatorname{pol}(4n,a),\text{ then } b_s= a_s\},
		\endaligned
	\end{equation*}
	\begin{equation*}
		\aligned
		\widehat{\Gamma}_{1,r}(4n,a):=&\{v(4n,b)\in \overline{\Gamma}_{1,r}(4n,a)\mid  \text{ for } \delta(4n)< s\le r-1, \\
		&\text{ if } s\in\operatorname{pol}(4n,a), \text{ then } b_s=1\text{ or } \frac{p_s+1}{2}\le b_s\le p_s-2;\\
		&\text{ if } s\notin\operatorname{pol}(4n,a), \text{ then } b_s=1\},
		\endaligned
	\end{equation*}
and $$\Gamma_{1,r}(4n,a):=\overline{\Gamma}_{1,r}(4n,a)\backslash\widehat{\Gamma}_{1,r}(4n,a).$$
	
	Let
	$$\widehat{\Gamma}_1(4n,a):=\bigcup\limits_{r\in\overline{\operatorname{pol}}(4n,a,\delta(4n))} \widehat{\Gamma}_{1,r}(4n,a),$$
	and 
	$$\Gamma_1(4n,a):=\bigcup\limits_{r\in\overline{\operatorname{pol}}(4n,a,\delta(4n))} \Gamma_{1,r}(4n,a).$$

	Let 	
	
	\begin{equation*}
		\aligned
		\Gamma_{2,r}(4n,a):=&\{(b_1,\dots,b_\ell)_{4n}\in X^{4n}\mid b_1=(0,1).  \text{ If } 3\mid n, \text{ then } b_2=1. \\
		&\text{ If } \delta(4n)< s\le r-1, \text{ then } b_s=1\text{ or } \frac{p_s+1}{2}\le b_s\le p_s-2. \\
		&\frac{p_r+1}{2}\le b_r\le p_r-2.\text{ For } r<s\le\ell, \text{ if }  a_s=1, \\
		&\text{ then } 1\le b_s\le p_s-2;\text{ if }  a_s\ne 1,\text{ then } b_s=p_s- a_s\}.
		\endaligned
	\end{equation*}

	Let $$\Gamma_2(4n,a):=\bigcup\limits_{r\in\overline{\operatorname{pol}}(4n,a,\delta(4n))} \Gamma_{2,r}(4n,a).$$

	Let
	\begin{equation*}
		\aligned
		\Gamma_3(4n,a):=&\{(b_1,\dots,b_\ell)_{4n}\in X^{4n}\mid b_1=(0,1).  \text{ If } 3\mid n, \text{ then } b_2=1. \\
		& \text{ For } \delta(4n)< s< \tau(a), \text{ if } s\in\operatorname{pol}(4n,a),\text{ then } b_s=1\text{ or }\\
		& \frac{p_s+1}{2}\le b_s\le p_s-2;\text{ if } s\notin\operatorname{pol}(4n,a), \text{ then } b_s=1. \\
		&\text{ For } \tau(a)\le s\le \ell, \text{ if } s\in\operatorname{pol}(4n,a), \text{ then } 1\le b_s\le p_s-2;\\
		&\text{ if } s\notin\operatorname{pol}(4n,a), \text{ then } b_s= a_s \}.
		\endaligned
	\end{equation*}

	Let $$K_1(4n,a):=\prod\limits_{v\in \Gamma_1(4n,a)} v^{(-1)^{e(a)+1}},\quad\quad K_2(4n,a):=\prod\limits_{v\in \Gamma_2(4n,a)} v^{(-1)^{f(a)+1}}$$
 and 
	$$K_3(4n,a):=\prod\limits_{v\in \Gamma_3(4n,a)} v^{(-1)^{e(a)}}.$$
	
\end{emp}

\begin{lemma}\label{le.em2}
	Keep the assumptions in \ref{ss.midgt}. Then the following is true.
	\begin{enumerate}
		\item $\Gamma_i(4n,a)\subset B_{4n}$ for each $1\le i\le 3$.
		\item $\Gamma_i(4n,a)\cap \Gamma_j(4n,a)=\emptyset$ for all $1\le i<j\le 3$.
	\end{enumerate}
\end{lemma}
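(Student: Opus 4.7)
The plan is to unpack the definitions of $\Gamma_1(4n,a), \Gamma_2(4n,a), \Gamma_3(4n,a)$ from~\ref{ss.midgt} and verify both claims by comparing residue forms coordinate by coordinate. Recall from case (v) of Theorem~\ref{th.ba1} that a basis element of $B_{4n}$ is determined by a pivot position $k$: it has $b_1=(0,1)$, $b_2=\dots=b_{k-1}=1$, $\frac{p_k+1}{2}\le b_k\le p_k-2$ with $p_k\ne 3$, and $b_j\in[1,p_j-2]$ for $j>k$. For (i), I will show that each $v\in\Gamma_i$ has this shape by locating the first index $s>1$ with $b_s\ne 1$ and verifying that $b_s$ falls into the upper half.

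For $\Gamma_{2,r}$ this is the easiest case: the entries at $s<r$ lie in $\{1\}\cup[\frac{p_s+1}{2},p_s-2]$, the entry $b_r$ is already in the upper half, and for $s>r$ the dichotomy $a_s=1$ vs.\ $a_s\ne 1$ keeps $b_s\in[1,p_s-2]$. For $\Gamma_3$, pre-$\tau(a)$ entries lie in $\{1\}\cup[\frac{p_s+1}{2},p_s-2]$, and if they are all $1$ the pivot falls at $\tau(a)$; since $\tau(a)\notin\operatorname{pol}(4n,a)$ by definition of $\tau$, and the standing hypothesis of~\ref{ss.midgt} gives $\frac{p_{\tau(a)}+1}{2}\le a_{\tau(a)}\le p_{\tau(a)}-2$, the value $b_{\tau(a)}=a_{\tau(a)}$ is in the upper half as required. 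The trickiest case is $\Gamma_{1,r}$, where $b_r$ is forced into the lower half and so cannot itself serve as the pivot; an earlier pivot must be supplied by some $s\in(\delta(4n),r-1]$ with $b_s\ne 1$. This is exactly what the subtraction $\Gamma_{1,r}=\overline{\Gamma}_{1,r}\setminus\widehat{\Gamma}_{1,r}$ guarantees: removing $\widehat{\Gamma}_{1,r}$ rules out the degenerate configuration in which every $b_s$ at $s\notin\operatorname{pol}(4n,a)$ equals $1$, so at least one upper-half entry survives. Finally, any pivot position $s$ we locate satisfies $s>\delta(4n)$, hence $p_s\ge 5$, confirming that the corresponding slice $C_{4n,s}$ in Theorem~\ref{th.ba1} is nonempty.

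For (ii), I will prove each pairwise disjointness by exhibiting a single coordinate where the defining constraints collide. For $\Gamma_1\cap\Gamma_3$ the obstruction is at $s=r_1\in(\delta(4n),\operatorname{len}(4n,a)]\subset(\delta(4n),\tau(a))$: the set $\Gamma_{1,r_1}$ forces $b_{r_1}\in[2,(p_{r_1}-1)/2]$, while $\Gamma_3$ forces $b_{r_1}\in\{1\}\cup[\frac{p_{r_1}+1}{2},p_{r_1}-2]$, and these ranges are disjoint. For $\Gamma_2\cap\Gamma_3$ the obstruction is at $s=\tau(a)$: in $\Gamma_{2,r}$ we have $b_{\tau(a)}=p_{\tau(a)}-a_{\tau(a)}$, which lies in the lower half, whereas in $\Gamma_3$ the value $b_{\tau(a)}=a_{\tau(a)}$ lies in the upper half. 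For $\Gamma_1\cap\Gamma_2$ I will split into subcases according to the ordering of the two indices $r_1,r_2$: when $r_1=r_2$ the contradiction at $r_1$ is immediate; when $r_1<r_2$ the condition in $\Gamma_{2,r_2}$ demands $b_{r_1}\in\{1\}\cup[\frac{p_{r_1}+1}{2},p_{r_1}-2]$, incompatible with $\Gamma_{1,r_1}$; when $r_1>r_2$ we use $r_1\in\operatorname{pol}(4n,a)$, so $a_{r_1}=p_{r_1}-1$ and the $\Gamma_{2,r_2}$ rule $b_{r_1}=p_{r_1}-a_{r_1}$ collapses to $b_{r_1}=1$, again incompatible with $b_{r_1}\ge 2$ in $\Gamma_{1,r_1}$. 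The main obstacle is not any single hard mathematical step but the bookkeeping: one must track the pol/non-pol and below/above-$\tau(a)$ dichotomies simultaneously across all three sets, and the crucial observation that elements of $\Gamma_{2,r}$ collapse to the value $1$ at every pol-position beyond $r$ is what makes the $r_1>r_2$ subcase go through cleanly.
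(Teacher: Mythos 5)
Your proposal is correct, and the verification of (i) — which the paper dismisses as "clear" — is filled in accurately: the key point that removing $\widehat{\Gamma}_{1,r}(4n,a)$ forces an upper-half entry at some non-pole position $s\le r-1$, so that the first non-$1$ coordinate of any element of $\Gamma_{1,r}(4n,a)$ lands in the upper half at a prime $p_s>3$, is exactly what makes $\Gamma_{1,r}(4n,a)\subset B_{4n}$ work. For (ii) your route differs from the paper's in two of the three pairwise checks. The paper distinguishes $\Gamma_2(4n,a)$ from both $\Gamma_1(4n,a)$ and $\Gamma_3(4n,a)$ in one stroke at the single coordinate $\tau(a)$ (where $\Gamma_1$- and $\Gamma_3$-elements carry $a_{\tau(a)}$ in the upper half while $\Gamma_2$-elements carry $p_{\tau(a)}-a_{\tau(a)}$ in the lower half), and separates $\Gamma_1(4n,a)$ from $\Gamma_3(4n,a)$ by invoking the upper-half non-pole coordinate $t<r$ whose existence is precisely what the subtraction of $\widehat{\Gamma}_{1,r}(4n,a)$ provides. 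You instead use the defining index itself: the lower-half constraint $2\le b_{r}\le\frac{p_r-1}{2}$ at $r\in\overline{\operatorname{pol}}(4n,a,\delta(4n))$ clashes with the $\{1\}\cup[\frac{p_r+1}{2},p_r-2]$ constraint imposed by $\Gamma_3(4n,a)$ (and by $\Gamma_{2,r_2}(4n,a)$ when $r_1<r_2$ or $r_1=r_2$), and with the forced value $b_{r_1}=p_{r_1}-a_{r_1}=1$ when $r_1>r_2$. This buys a more elementary $\Gamma_1\cap\Gamma_3$ argument — it does not use the removal of $\widehat{\Gamma}_{1,r}(4n,a)$ at all, so it in fact shows $\overline{\Gamma}_{1,r}(4n,a)\cap\Gamma_3(4n,a)=\emptyset$ — at the cost of a three-way case split on $(r_1,r_2)$ for $\Gamma_1\cap\Gamma_2$, which the paper settles in one line at $\tau(a)$. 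Both arguments are sound; no gap.
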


\begin{proof}\hfill

	\begin{enumerate}
		\item This is clear.
		
		\item $\Gamma_1(4n,a)\cap \Gamma_2(4n,a)=\Gamma_2(4n,a)\cap \Gamma_3(4n,a)=\emptyset$. 
		
		Let $v(4n,c)\in \Gamma_{1,r}(4n,a)$ for some $r\in\overline{\operatorname{pol}}(4n,a,\delta(4n))$, $v(4n,d)\in \Gamma_2(4n,a)$ and $v(4n,e)\in \Gamma_3(4n,a)$. Then $c_{\tau(a)}= e_{\tau(a)}=p_{\tau(a)}- d_{\tau(a)}$. So $v(4n,d)\ne v(4n,c)$ and $v(4n,d)\ne v(4n,e)$ in $\widehat{X^{4n}}$. 
		
		$\Gamma_1(4n,a)\cap \Gamma_3(4n,a)=\emptyset$. 
		
		Let $v(4n,c)\in\Gamma_{1,r}(4n,a)$ and $v(4n,d)\in\Gamma_3(4n,a)$. Then there exists $\delta(4n)<t<r$ such that $t\notin\operatorname{pol}(4n,a)$ and $\frac{p_t+1}{2}\le c_t\le p_t-2$. Notice that $d_t=1$. So $v(4n,c)\ne v(4n,d)$ in $\widehat{X^{4n}}$.

	\end{enumerate}
\end{proof}

\begin{proposition}\label{prop.barep1}
	Keep the assumptions in \ref{ss.midgt}. Then
	\begin{equation*}
		v(4n,a)=K_1(4n,a) K_2(4n,a) K_3(4n,a),
	\end{equation*}
	in $\widehat{X^{4n}}$, where both sides are viewed as elements in $\widehat{X^{4n}}$ under the quotient map.
\end{proposition}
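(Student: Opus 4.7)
The plan is to prove the formula by induction on $k := |\overline{\operatorname{pol}}(4n,a,\delta(4n))|$, the number of poles lying inside the cluster beyond $\delta(4n)$. The base case $k = 0$ should reduce to Lemma \ref{lemma.basic}: under the hypothesis $\overline{\operatorname{pol}}(4n,a,\delta(4n)) = \emptyset$, the set $\Gamma_1(4n,a)$ is empty, and one checks that the $\Gamma_2$ and $\Gamma_3$ of the proposition recover, respectively, the $\Gamma_2$ and $\Gamma_1$ appearing in Lemma \ref{lemma.basic} applied to either $a$ or $-a$ (the residue range at $\tau(a)$ being in the upper half is the switch between the two cases).

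For the inductive step, choose $r = \operatorname{pmin}(4n,a)$, the smallest pole of $a$ in $\overline{\operatorname{pol}}(4n,a,\delta(4n))$, and apply Lemma \ref{lem.unr} with $t = r$:
\[
v(4n,a) = w_r(4n,a)^{(-1)^{\operatorname{ord}(r)}} L_{1,r}(4n,a)\, L_{2,r}(4n,a)\, L_{3,r}(4n,a).
\]
The element $w_r(4n,a)$ has residue form obtained from that of $a$ by replacing the coordinates in positions $\delta(4n)+1,\ldots,r$ by $1$, so its cluster still has length at least that of $a$ but its $\overline{\operatorname{pol}}(\cdot,\delta(4n))$ has exactly $k-1$ elements. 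By the inductive hypothesis (or by Lemma \ref{lemma.basic} when $k=1$) we may rewrite $w_r(4n,a)$ in the $K_1 K_2 K_3$-form. Substituting this into the display and grouping factors according to where their residue form places them (small residue at some pole versus large residue at $\tau(a)$ versus the "tail" determined by $a$ beyond the cluster) is intended to produce the three products $K_1(4n,a)$, $K_2(4n,a)$, $K_3(4n,a)$.

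The main obstacle will be the combinatorial bookkeeping: one must show that the sets $\Delta_{1,r}, \Delta_{2,r}, \Delta_{3,r}$ of Lemma \ref{lem.unr}, together with the $\Gamma$-sets produced by the inductive step for $w_r$, reassemble into the disjoint union $\Gamma_1(4n,a) \sqcup \Gamma_2(4n,a) \sqcup \Gamma_3(4n,a)$ (disjointness already given by Lemma \ref{le.em2}), with matching exponents modulo $2$. The delicate point is the parity tracking: under $a \mapsto w_r$, one has $e(w_r) = e(a) - \operatorname{ord}(r)$ and $f(w_r) = f(a) + (r-\delta(4n)) - \operatorname{ord}(r)$, and the sign $(-1)^{\operatorname{ord}(r)}$ carried by $w_r$ in the display must combine with the induced signs on each $\Gamma_i(4n,w_r)$ to produce exactly the $(-1)^{e(a)+1}$, $(-1)^{f(a)+1}$, $(-1)^{e(a)}$ appearing in $K_1, K_2, K_3$ for $a$. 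A secondary, but routine, issue will be verifying that the elements of $\overline{\Gamma}_{1,r}(4n,a) \setminus \Gamma_{1,r}(4n,a) = \widehat{\Gamma}_{1,r}(4n,a)$ arising from the induction cancel against corresponding contributions coming from the base of the cluster, mirroring the cancellation of the $\Delta_1$ and $\Delta_3$ contributions observed in the proof of Lemma \ref{lemma.basic}.
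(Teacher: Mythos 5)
Your overall strategy (reduce via Lemma \ref{lem.unr} to a basic case and then regroup basis elements into the sets $\Gamma_1,\Gamma_2,\Gamma_3$) is the right kind of argument, and your induction on the number of cluster poles, peeling off the minimal pole, could in principle be made to work. But as written the proposal has a genuine gap: the entire content of the proposition is precisely the ``combinatorial bookkeeping'' that you defer. Everything that needs proving --- that $\overline{\Gamma}_1(4n,a)=\Delta_1(4n,a)$ and $\Gamma_2(4n,a)=\Delta_2(4n,a)$, that $\Delta_1(4n,a)\cap\Delta_3(4n,a)=\widehat{\Gamma}_1(4n,a)$ so that the $L_1$ and $L_3$ contributions cancel there, and that the leftover part of $\Delta_3$ together with the expansion of the $w$-term reassembles disjointly into $\Gamma_3(4n,a)$ with exponent $(-1)^{e(a)}$ --- is exactly what is left unverified, and in your inductive set-up it is further complicated by the fact that the $\Gamma$-sets attached to $w_r$ are defined relative to $\operatorname{pol}(4n,w_r)$, which omits the pole $r$, so e.g.\ $\widehat{\Gamma}_{1,r'}(4n,w_r)\ne\widehat{\Gamma}_{1,r'}(4n,a)$ at position $r$; these discrepancies must be shown to cancel and you do not do so. In addition, two concrete statements in the plan are wrong: the base case in the setting of \ref{ss.midgt} (residue at $\tau(a)$ in the upper half, no poles beyond $\delta(4n)$) is Lemma \ref{lemma.basic0}, not Lemma \ref{lemma.basic}; and the parity relation should be $f(w_r)=f(a)+\operatorname{ord}(r)$ (only the $\operatorname{ord}(r)$ entries equal to $p_s-1$ become $1$), not $f(a)+(r-\delta(4n))-\operatorname{ord}(r)$. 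Since the exponents of $K_2$ are $(-1)^{f(a)+1}$, this error would break the sign matching you flag as the delicate point.

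For comparison, the paper avoids any outer induction: it applies Lemma \ref{lem.unr} once, with $t$ the \emph{maximal} pole $\operatorname{max}\{\delta(4n)<s\le\operatorname{len}(4n,a)\mid a_s=p_s-1\}$, so that $w_t(4n,a)$ has all cluster entries beyond $\delta(4n)$ equal to $1$ and Lemma \ref{lemma.basic0} expands it directly with exponent $(-1)^{e(a)}$; the proof then consists of the explicit set identities and the cancellation of $\operatorname{multi}_v(L_1)$ against $\operatorname{multi}_v(L_3)$ on $\widehat{\Gamma}_1(4n,a)$. If you want to salvage your minimal-pole induction, you must carry out the analogous identities relating $\Delta_{i,r}(4n,a)$, the sets $\Gamma_j(4n,w_r)$, and $\Gamma_j(4n,a)$, with the corrected parities $e(w_r)=e(a)-\operatorname{ord}(r)$, $f(w_r)=f(a)+\operatorname{ord}(r)$; choosing the maximal pole instead collapses that extra layer and is the shorter route.
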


\begin{proof}

	By applying Lemma \ref{lem.unr} to $v(4n,a)$ with $t:=\operatorname{max}\{\delta(4n)< s\le\operatorname{len}(4n, a)\mid  a_s=p_s-1\}$, we get that

	\begin{align*}
		v(4n,a)=&w_t(4n,a)^{(-1)^{\operatorname{ord}(t)}}L_1(4n,a) L_2(4n,a) L_3(4n,a)\\
		=&w_t(4n,a)^{(-1)^{\operatorname{ord}(t)}}\prod\limits_{v\in \Delta_1(4n,a)}v^{(-1)^{e(a)+1}} \prod\limits_{v\in \Delta_2(4n,a)}v^{(-1)^{f(a)+1}} \prod\limits_{v\in \Delta_3(4n,a)}v^{(-1)^{e(a)}}.
	\end{align*}

	By Proposition \ref{lemma.basic0}, we obtain that 
	\[w_t(4n,a)^{(-1)^{\operatorname{ord}(t)}}=\prod\limits_{v\in \operatorname{supp}(w_t(4n,a))} v^{(-1)^{e(a)}}.\]

	It is clear that $\overline{\Gamma}_1(4n,a)=\Delta_1(4n,a)$ and $\Gamma_2(4n,a)=\Delta_2(4n,a)$. Furthermore, we have the following relations.
	
	\begin{enumerate}

		\item $\Delta_1(4n,a)\cap \Delta_3(4n,a)=\widehat{\Gamma}_1(4n,a)$.

		$\widehat{\Gamma}_1(4n,a)\subset\Delta_1(4n,a)\cap \Delta_3(4n,a)$. Let $v(4n,c)\in\widehat{\Gamma}_{1,r}(4n,a)$ for some $r\in\overline{\operatorname{pol}}(4n,a,\delta(4n))$. Assume that $=1$. Then $v(4n,c)\in\Delta_{1,r}(4n,a)$. Let $q:=\operatorname{min}\{\delta(4n)<s<r\mid c_s\ne 1\}$. Then $v(4n,c)\in\Delta_{3,q}(4n,a)$.
		
		$\Delta_1(4n,a)\cap \Delta_3(4n,a)\subset\widehat{\Gamma}_1(4n,a)$. Let $v(4n,c)\in\Delta_{1,r}(4n,a)\cap \Delta_3(4n,a)$ for some $r\in\overline{\operatorname{pol}}(4n,a,\delta(4n))$.  Then $2\le c_r\le \frac{p_r-1}{2}$ and for each $\delta(4n)<s<r$, we have that  $\frac{p_s+1}{2}\le c_s\le p_s-2$ or $c_s=1$.	By the assumption $v(4n,c)\in \Delta_3(4n,a)$, we know that for each $\delta(4n)<q<r$, if $ a_q=1$, then $c_q=1$ . Hence $v(4n,c)\in \widehat{\Gamma}_{1,r}(4n,a)$.

		\item $(\Delta_3(4n,a)\backslash\widehat{\Gamma}_1(4n,a))\cup\operatorname{supp}(w_t(4n,a))=\Gamma_3(4n,a)$.
		
		$\Gamma_3(4n,a)\subset (\Delta_3(4n,a)\backslash\widehat{\Gamma}_1(4n,a))\cup\operatorname{supp}(w_t(4n,a))$. 
		
		Let $v(4n,c)\in\Gamma_3(4n,a)$. Let $D:=\{\delta(4n)<s<\tau(a)\mid c_s\ne 1\}$. If $D=\emptyset$, then $v(4n,c)\in\operatorname{supp}(w_t(4n,a))$. If $D\ne\emptyset$, let $r:=\operatorname{min}(D)$. Then $v(4n,c)\in\Delta_(3,r)(4n,a)$. If $v(4n,c)\in\widehat{\Gamma}_{1,u}(4n,a)$, then $\frac{p_u+1}{2}\le c_u\le p_u-2$. This is a contradiction. So $v(4n,c)\in(\Delta_3(4n,a)\backslash\widehat{\Gamma}_1(4n,a))$.

		$(\Delta_3(4n,a)\backslash\widehat{\Gamma}_1(4n,a))\cup \operatorname{supp}(w_t(4n,a))\subset\Gamma_3(4n,a)$. 
		
		It is clear that $\operatorname{supp}(w_t(4n,a))\subset\Gamma_3(4n,a)$. Let $v(4n,c)\in(\Delta_{3,r}(4n,a)\backslash\widehat{\Gamma}_1(4n,a))$ for some $r\in\overline{\operatorname{pol}}(4n,a,\delta(4n))$. If $\delta(4n)<s<r$, then $c_s=1$. $\frac{p_r+1}{2}\le c_r\le p_r-2$. For $r<s\le \ell$, if $s\in\operatorname{pol}(4n,a)$, then $1\le c_s\le p_s-2$. If $s\notin\operatorname{pol}(4n,a)$, then $c_s=a_s$. Let $D:=\{r<s<\tau(a)\mid 2\le c_s\le \frac{p_s-1}{2}\}$. If $D=\emptyset$, then $v(4n,c)\in\Gamma_3(4n,a)$. If $D\ne\emptyset$. Let $u:=\operatorname{min}(D)$. Then $v(4n,c)\in{\Gamma}_{1,u}(4n,a))$. This is a contradiction. Hence the inclusion follows.

		\item $(\Delta_3(4n,a)\backslash\widehat{\Gamma}_1(4n,a))\cap\operatorname{supp}(w_t(4n,a))=\emptyset$.
		
		Let $v(4n,c)\in\Delta_{3,r}(4n,a)$ for some $r\in\overline{\operatorname{pol}}(4n,a,\delta(4n))$ and $v(4n,d)\in\operatorname{supp}(w_t(4n,a))$. Then $ \frac{p_r+1}{2}\le c_r\le p_r-2$ and $ d_r=1$. So $v(4n,c)\ne v(4n,d)$ in $\widehat{X^{4n}}$.

	\end{enumerate}

	Notice that if $v\in\widehat{\Gamma}_1(4n,a)$, then  $\operatorname{multi}_v(L_1(4n,a))=-\operatorname{multi}_v(L_3(4n,a))$. So, $v(4n,a)=K_1(4n,a) K_2(4n,a) K_3(4n,a)$ in $\widehat{X^{4n}}$.
\end{proof}

\begin{emp}{\bf The case $\delta(4n)<\operatorname{len}(4n,a)<\ell$ and $2\le  a_{\tau(a)}\le\frac{p_{\tau(a)}-1}{2}$.}\label{ss.mile}
	Keep the assumptions about $n$ in \ref{ss.sf}. Let $a\in\mathbb{Z}$ be such that $\operatorname{gcd}(4n,a)=1,\epsilon_a=1$ and $2\le  a_{\tau(a)}\le\frac{p_{\tau(a)}-1}{2}$. Assume that $\operatorname{len}(4n,a)>\delta(4n)$ and $\overline{\operatorname{pol}}(4n,a,\delta(4n))\ne\emptyset$. See Lemma \ref{lemma.basic} for $\overline{\operatorname{pol}}(4n,a,\delta(4n))=\emptyset$. 
	Assume that $2\le \operatorname{len}(4n,a)<\ell$.
	
	We introduce the following notations. For $r\in\overline{\operatorname{pol}}(4n,a,\delta(4n))$, let 
	\begin{equation*}
		\aligned
		\overline{\Gamma}_{1,r}(4n,a):=&\{(b_1,\dots,b_\ell)_{4n}\in X^{4n}\mid b_1=(0,1).  \text{ If } 3\mid n, \text{ then } b_2=1. \\
		&\text{ if } \delta(4n)< s\le r-1, \text{ then } b_s=1\text{ or } \frac{p_s+1}{2}\le b_s\le p_s-2.\\
		&2\le b_r\le \frac{p_r-1}{2}.\, \text{ For } r<s\le\ell, \text{ if } s\in\operatorname{pol}(4n,a), \\
		& \text{ then } 1\le b_s\le p_s-2 ;\text{ if } s\notin\operatorname{pol}(4n,a),\text{ then } b_s= a_s\},
		\endaligned
	\end{equation*}
	\begin{equation*}
		\aligned
		\widehat{\Gamma}_{1,r}(4n,a):=&\{v(4n,b)\in \overline{\Gamma}_{1,r}(4n,a)\mid \text{ for } \delta(4n)< s\le r-1, \\		
		&\text{ if } s\in\operatorname{pol}(4n,a), \text{ then } b_s=1\text{ or } \frac{p_s+1}{2}\le b_s\le p_s-2;\\
		&\text{ if } s\notin\operatorname{pol}(4n,a), \text{ then } b_s=1\},
		\endaligned
	\end{equation*}
and $$\Gamma_{1,r}(4n,a):=\overline{\Gamma}_{1,r}(4n,a)\backslash\widehat{\Gamma}_{1,r}(4n,a),$$

Let	
	$$\widehat{\Gamma}_1(4n,a):=\bigcup\limits_{r\in\overline{\operatorname{pol}}(4n,a,\delta(4n))} \widehat{\Gamma}_{1,r}(4n,a),$$
	and
	$$ \Gamma_1(4n,a):=\bigcup\limits_{r\in\overline{\operatorname{pol}}(4n,a,\delta(4n))} \Gamma_{1,r}(4n,a).$$
	
	Let 	
	\begin{equation*}
		\aligned
		\overline{\Gamma}_{2,r}(4n,a):=&\{(b_1,\dots,b_\ell)_{4n}\in X^{4n}\mid b_1=(0,1).  \text{ If } 3\mid n, \text{ then } b_2=1. \\
		&\text{ If } \delta(4n)< s\le r-1, \text{ then } b_s=1\text{ or } \frac{p_s+1}{2}\le b_s\le p_s-2. \\
		&\frac{p_r+1}{2}\le b_r\le p_r-2.\text{ For } r<s\le\ell, \text{ if }  a_s=1,\\
		& \text{ then } 1\le b_s\le p_s-2;\text{ if }  a_s\ne 1,\text{ then } b_s=p_s- a_s\}.
		\endaligned
	\end{equation*}
and
	\begin{equation*}
		\aligned
		\widehat{\Gamma}_{2,r}(4n,a):=&\{(b_1,\dots,b_\ell)_{4n}\in \overline{\Gamma}_{2,r}(4n,a)\mid \text{ for } r<s<\tau(a), \text{ if } a_s=1, \\
		&\text{ then } b_s=1\text{ or } \frac{p_s+1}{2}\le b_s\le p_s-2; \text{ if } a_s=p_s-1,\\
		& \text{ then } b_s=1\}.
		\endaligned
	\end{equation*}

	Here, if $r=\tau-1$, then we define $\widehat{\Gamma}_{2,r}(4n,a):=\overline{\Gamma}_{2,r}(4n,a)$. Let
	 $$\Gamma_{2,r}(4n,a):=\overline{\Gamma}_{2,r}(4n,a)\backslash\widehat{\Gamma}_{2,r}(4n,a).$$

	Let
	$$\widehat{\Gamma}_2(4n,a):=\bigcup\limits_{r\in\overline{\operatorname{pol}}(4n,a,\delta(4n))} \widehat{\Gamma}_{2,r}(4n,a),$$
	and
	$$\Gamma_2(4n,a):=\bigcup\limits_{r\in\overline{\operatorname{pol}}(4n,a,\delta(4n))} \Gamma_{2,r}(4n,a).$$

	Let
	
	\begin{equation*}
		\aligned
		\overline{\Gamma}_3(4n,a):=&\{(b_1,\dots,b_\ell)_{4n}\in X^{4n}\mid b_1=(0,1).  \text{ If } 3\mid n, \text{ then } b_2=1. \\
		&\text{ for } \delta(4n)< s< \tau(a), \frac{p_s+1}{2}\le b_s\le p_s-2 \text{ or } b_s=1.\\
		&
		 \text{ For } \tau(a)\le s\le \ell,\text{ if } s\in\operatorname{pol}(4n,a), \text{ then } 1\le b_s\le p_s-2; \\
		 & \text{ if } s\notin\operatorname{pol}(4n,a), \text{ then } b_s= a_s \},
		\endaligned
	\end{equation*}
	and
	\begin{equation*}
		\aligned
		\widehat{\Gamma}_3(4n,a):=&\{(b_1,\dots,b_\ell)_{4n}\in \overline{\Gamma}_3(4n,a)\mid  \text{ for } \delta(4n)< s< \tau(a),  \\
		&\text{ if } s\in\operatorname{pol}(4n,a),\text{ then } b_s=1\text{ or } \frac{p_s+1}{2}\le b_s\le p_s-2;\\
		&\text{ if } s\notin\operatorname{pol}(4n,a), \text{ then } b_s=1 \}.
		\endaligned
	\end{equation*}

	Let
	
	 $$\Gamma_3(4n,a):=\overline{\Gamma}_3(4n,a)\backslash\widehat{\Gamma}_3(4n,a).$$

	\begin{equation*}
	\aligned
	\overline{\Gamma}_4(4n,a):=&\{(b_1,\dots,b_\ell)_{4n}\in X^{4n}\mid b_1=(0,1).  \text{ If } 3\mid n, \text{ then } b_2=1. \\
	&\text{ if } \delta(4n)< s<\tau(a), \text{ then } b_s=1\text{ or } \frac{p_s+1}{2}\le b_s\le p_s-2. \\
	 &\text{ For } \tau(a)\le s\le\ell, \text{ if }  a_s=1, \text{ then } 1\le b_s\le p_s-2;\\
	&\text{ if }  a_s\ne 1,\text{ then } b_s=p_s- a_s\}.
	\endaligned
\end{equation*}
	
	Let
	
	 $$\Gamma_4(4n,a):=\overline{\Gamma}_4(4n,a)\backslash\widehat{\Gamma}_2(4n,a).$$
	
Let	

	\begin{align*}
		K_1(4n,a)&:=\prod\limits_{v\in \Gamma_1(4n,a)} v^{(-1)^{e(a)+1}},\quad\quad K_2(4n,a):=\prod\limits_{v\in \Gamma_2(4n,a)} v^{(-1)^{f(a)+1}},\\
		K_3(4n,a)&:=\prod\limits_{v\in \Gamma_3(4n,a)} v^{(-1)^{e(a)+1}},\quad\quad K_4(4n,a):=\prod\limits_{v\in \Gamma_4(4n,a)} v^{(-1)^{f(a)}}.
	\end{align*}

\end{emp}

\begin{lemma}\label{le.em2-2}
	Keep the assumptions in \ref{ss.mile}. Then the following is true.
	\begin{enumerate}
		\item $\Gamma_i(4n,a)\subset B_{4n}$ for each $1\le i\le 4$.
		\item $\Gamma_i(4n,a)\cap \Gamma_j(4n,a)=\emptyset$ for all $1\le i<j\le 4$.
	\end{enumerate}
\end{lemma}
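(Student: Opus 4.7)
My plan is to mirror the strategy of Lemma \ref{le.em2}, which handled the three-set analogue: statement (i) is a direct inspection, while statement (ii) is proved by exhibiting, for each pair $(i,j)$, a single coordinate at which the constraints defining $\Gamma_i(4n,a)$ and $\Gamma_j(4n,a)$ force incompatible values.

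For part (i), I would read off from the definitions in \ref{ss.mile} that every $(b_1,\dots,b_\ell)_{4n}$ lying in any of the four sets has $b_1=(0,1)$, has $b_2=1$ when $3\mid n$, and has each remaining $b_s$ either in $[1,p_s-2]$ or (more restrictively) in $\{1\}\cup[(p_s+1)/2,p_s-2]$. Comparing this to the description of $B_{4n}$ in Theorem \ref{th.ba1}(v), these are precisely the coordinate constraints required for membership in $B_{4n}$.

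For part (ii), I would separate the six pairs into two groups. The first group consists of $(\Gamma_1,\Gamma_2)$, $(\Gamma_1,\Gamma_4)$, $(\Gamma_3,\Gamma_2)$, $(\Gamma_3,\Gamma_4)$; these are separated by the $\tau(a)$-th coordinate. By construction $\Gamma_1$ and $\Gamma_3$ both force $b_{\tau(a)}=a_{\tau(a)}\in[2,(p_{\tau(a)}-1)/2]$, whereas $\Gamma_2$ and $\Gamma_4$ force $b_{\tau(a)}=p_{\tau(a)}-a_{\tau(a)}\in[(p_{\tau(a)}+1)/2,p_{\tau(a)}-2]$, so the two halves of $[2,p_{\tau(a)}-2]$ cannot be simultaneously met. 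For $(\Gamma_1,\Gamma_3)$, an element of $\Gamma_{1,r}$ has $2\le b_r\le(p_r-1)/2$ at some $r\in\overline{\operatorname{pol}}(4n,a,\delta(4n))$, while every element of $\Gamma_3$ has each $b_s$ for $\delta(4n)<s<\tau(a)$ in $\{1\}\cup[(p_s+1)/2,p_s-2]$, so coordinate $r$ distinguishes them.

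The last pair $(\Gamma_2,\Gamma_4)$ is the real obstacle, since both sets use the same upper-half value at $\tau(a)$ and nearly the same palette below it; here I must exploit the removal of $\widehat{\Gamma}_2$ from $\overline{\Gamma}_4$. Suppose $v\in\Gamma_{2,r}\cap\Gamma_4$ for some $r\in\overline{\operatorname{pol}}(4n,a,\delta(4n))$. Since $v\in\overline{\Gamma}_{2,r}\setminus\widehat{\Gamma}_{2,r}$ and since every $s\in(r,\tau(a))$ satisfies $a_s\in\{1,p_s-1\}$ (because $\tau(a)-1\le\operatorname{len}(4n,a)$), inspection of the two cases shows that the case $a_s=p_s-1$ is automatic in $\widehat{\Gamma}_{2,r}$, so the failure must come from some $s$ with $a_s=1$ and $b_s\in[2,(p_s-1)/2]$. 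But $v\in\overline{\Gamma}_4$ forces $b_s\in\{1\}\cup[(p_s+1)/2,p_s-2]$ for that same $s$, a contradiction. This gives $\Gamma_2\cap\Gamma_4=\emptyset$ and completes the proof.
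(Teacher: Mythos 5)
Your proposal is correct and follows essentially the same route as the paper's own proof: part (i) by inspection against Theorem \ref{th.ba1}, and part (ii) by first separating $\Gamma_1\cup\Gamma_3$ from $\Gamma_2\cup\Gamma_4$ via the $\tau(a)$-coordinate, then separating $\Gamma_1$ from $\Gamma_3$ via the pole coordinate $r$, and finally $\Gamma_2$ from $\Gamma_4$ by extracting a coordinate $t$ with $a_t=1$ and $b_t\in[2,(p_t-1)/2]$ forced by the removal of $\widehat{\Gamma}_{2,r}$. One small imprecision in your phrasing: for $\Gamma_2\cap\Gamma_4=\emptyset$ you say you ``exploit the removal of $\widehat{\Gamma}_2$ from $\overline{\Gamma}_4$,'' but what you actually use is the removal of $\widehat{\Gamma}_{2,r}$ from $\overline{\Gamma}_{2,r}$ to produce the bad coordinate, combined with the weaker containment $v\in\overline{\Gamma}_4$; the argument itself is right.
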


\begin{proof}

	\begin{enumerate}
		\item This is clear.
		
		\item Let $v(4n,c)\in \Gamma_{1,r}(4n,a)$ for some $r\in\overline{\operatorname{pol}}(4n,a,\delta(4n))$, $v(4n,d)\in \Gamma_{2,u}(4n,a)$ for some $u\in\overline{\operatorname{pol}}(4n,a,\delta(4n))$, $v(4n,f)\in \Gamma_3(4n,a)$ and $v(4n,g)\in\Gamma_4(4n,a)$.
		
		$(\Gamma_1(4n,a)\cup\Gamma_3(4n,a))\cap(\Gamma_2(4n,a)\cup\Gamma_4(4n,a))=\emptyset$. 
		
		Observe that $c_{\tau(a)}= f_{\tau(a)}=p_{\tau(a)}- d_{\tau(a)}=p_{\tau(a)}- g_{\tau(a)}$. So $v(4n,d)\ne v(4n,c)$ and $v(4n,d)\ne v(4n,e)$ in $\widehat{X^{4n}}$.
		
		$\Gamma_1(4n,a)\cap\Gamma_3(4n,a)=\emptyset$. 
		
		Because $v(4n,c)\in\Gamma_{1,r}(4n,a)$, we know that $2\le c_r\le \frac{p_r-1}{2}$. Notice that $f_r=1$ or $\frac{p_r+1}{2}\le f_r\le p_r-2$. Hence $v(4n,c)\ne v(4n,f)$ in $\widehat{X^{4n}}$.
		
		$\Gamma_2(4n,a)\cap\Gamma_4(4n,a)=\emptyset$.

		Because $v(4n,d)\in\Gamma_2(4n,a)$, there exists $r<t<\tau(a)$ such that $2\le  d_t\le\frac{p_t-1}{2}$. Notice that $g_t=1$ or $\frac{p_t+1}{2}\le g_t\le p_t-2$. So $v(4n,d)\ne v(4n,g)$ in $\widehat{X^{4n}}$.  
		
	\end{enumerate}
\end{proof}

\begin{proposition}\label{prop.barep1-2}
	Keep the assumptions in \ref{ss.mile}. Then
	\begin{equation*}
		v(4n,a)=K_1(4n,a) K_2(4n,a) K_3(4n,a) K_4(4n,a),
	\end{equation*}
	in $\widehat{X^{4n}}$, where both sides are viewed as elements in $\widehat{X^{4n}}$ under the quotient map.
\end{proposition}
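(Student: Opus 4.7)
The strategy mirrors that of Proposition \ref{prop.barep1}, with one additional layer of expansion to handle the new hypothesis $2\le a_{\tau(a)}\le\frac{p_{\tau(a)}-1}{2}$. First, I apply Lemma \ref{lem.unr} to $v(4n,a)$ with $t:=\operatorname{max}\{\delta(4n)<s\le\operatorname{len}(4n,a)\mid a_s=p_s-1\}$, obtaining
$$v(4n,a)=w_t(4n,a)^{(-1)^{\operatorname{ord}(t)}}L_1(4n,a)L_2(4n,a)L_3(4n,a)$$
in $\widehat{X^{4n}}$, exactly as in the opening of the proof of Proposition \ref{prop.barep1}.

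The proof of Proposition \ref{prop.barep1} next invoked Lemma \ref{lemma.basic0} to rewrite $w_t(4n,a)$ in basis form, because under its hypothesis on $a_{\tau(a)}$ the coordinate at position $\tau(a)$ was already in basis shape. Under the present hypothesis this fails: $w_t(4n,a)$ has the coordinate $a_{\tau(a)}$ in the lower half $[2,\frac{p_{\tau(a)}-1}{2}]$, and so do the elements of $\Delta_1(4n,a)$ and $\Delta_3(4n,a)$ at that position, while $\Delta_2(4n,a)$ has $p_{\tau(a)}-a_{\tau(a)}$, in the upper half. My plan is therefore to apply Lemma \ref{lemma.basic} to $w_t(4n,a)$, and analogously to each $v(4n,c)$ in $\Delta_1(4n,a)\cup\Delta_2(4n,a)\cup\Delta_3(4n,a)$, at the coordinate $\tau(a)$. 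Each such application replaces one non-basis element by two collections of basis elements (playing the roles of $K_1$ and $K_2$ in Lemma \ref{lemma.basic}), one with $b_{\tau(a)}$ in the lower half and one with $b_{\tau(a)}$ in the upper half.

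The third step is set-theoretic bookkeeping. I expect to identify the lower-half contributions obtained from the expansions of $\Delta_1(4n,a)$, $\Delta_3(4n,a)$, and of $w_t(4n,a)$ with $\Gamma_1(4n,a)\cup\Gamma_3(4n,a)$, and the upper-half contributions obtained from the expansion of $\Delta_2(4n,a)$ together with the upper-half part of the expansion of $w_t(4n,a)$ with $\Gamma_2(4n,a)\cup\Gamma_4(4n,a)$. The auxiliary sets $\widehat{\Gamma}_1(4n,a), \widehat{\Gamma}_2(4n,a), \widehat{\Gamma}_3(4n,a)$ introduced in \ref{ss.mile} are engineered precisely to record basis elements that arise in two different contributions with opposite signs and thus cancel, yielding the subtractions $\Gamma_1=\overline{\Gamma}_1\setminus\widehat{\Gamma}_1$, $\Gamma_3=\overline{\Gamma}_3\setminus\widehat{\Gamma}_3$, $\Gamma_4=\overline{\Gamma}_4\setminus\widehat{\Gamma}_2$. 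Once the set identities are in place, checking that the resulting exponent of each surviving basis element matches the prescribed $(-1)^{e(a)+1}$, $(-1)^{f(a)+1}$, $(-1)^{e(a)+1}$, or $(-1)^{f(a)}$ is a parity computation driven by $\operatorname{ord}(t)$, $e(a)$ and $f(a)$, exactly as at the end of the proof of Proposition \ref{prop.barep1}.

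The main obstacle is Step 3. Relative to Proposition \ref{prop.barep1}, each basis element now carries an extra binary label (lower or upper half at position $\tau(a)$), so the disjointness and covering claims roughly double in number; one must verify not only that $\widehat{\Gamma}_1(4n,a)$ captures the full intersection of the two $L_1$-type and $L_3$-type contributions (as in the analysis (i)–(iv) in the proof of Proposition \ref{prop.barep1}), but also that $\widehat{\Gamma}_2(4n,a)$ and $\widehat{\Gamma}_3(4n,a)$ capture the new overlaps arising between the two layers of expansion. This is a routine but lengthy case analysis, structurally parallel to the proof of Proposition \ref{prop.barep1}, and Lemma \ref{le.em2-2} already provides the basic disjointness of the $\Gamma_j(4n,a)$ needed to conclude once the covering is established.
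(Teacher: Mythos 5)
You correctly identify that $w_t(4n,a)$ is not a basis element (its leading non-trivial coordinate $a_{\tau(a)}$ lies in the lower half $[2,\tfrac{p_{\tau(a)}-1}{2}]$) and that Lemma \ref{lemma.basic} rather than Lemma \ref{lemma.basic0} must be used to rewrite it. But your plan to apply Lemma \ref{lemma.basic} \emph{analogously to each $v(4n,c)$ in $\Delta_1(4n,a)\cup\Delta_2(4n,a)\cup\Delta_3(4n,a)$} rests on a false premise: those elements are already in $B_{4n}$, so no further expansion is possible or needed. For $\Delta_{1,r}$ and $\Delta_{2,r}$ the first non-$1$ coordinate occurs strictly before $r$ (removing $\widehat{\Delta}_{1,r}$ enforces exactly this) and is in the upper half; for $\Delta_{3,r}$ the first non-$1$ coordinate is $b_r$ itself, in the upper half; in all three families the coordinate $b_{\tau(a)}$, whether lower or upper half of $[1,p_{\tau(a)}-2]$, is harmless because it is never the leading coordinate. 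Moreover the hypotheses of Lemma \ref{lemma.basic} (that $c_s=1$ for $\delta(4n)<s\le\operatorname{len}(4n,c)$) are not satisfied by a generic element of these sets, so the proposed application is not even well-defined.

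This wrong step propagates into your bookkeeping: you list $\Delta_3(4n,a)$ among the sources of $\Gamma_1\cup\Gamma_3$, but $\Delta_3(4n,a)$ contributes nothing to the final answer --- it is entirely consumed by cancellation. Concretely $\Delta_3(4n,a)=\widehat{\Gamma}_1(4n,a)\sqcup\widehat{\Gamma}_3(4n,a)$, with the $\widehat{\Gamma}_1$ part cancelling against $L_1(4n,a)$ and the $\widehat{\Gamma}_3$ part cancelling against the expansion $W$ of $w_t(4n,a)$. The correct identifications are direct, with no re-expansion of the $\Delta$ sets: $\Delta_1(4n,a)=\overline{\Gamma}_1(4n,a)$, $\Delta_2(4n,a)=\overline{\Gamma}_2(4n,a)$, while the single application of Lemma \ref{lemma.basic} to $w_t(4n,a)=v(4n,b)$ supplies $\Gamma_2(4n,b)=\overline{\Gamma}_3(4n,a)$ and $\Gamma_1(4n,b)=\overline{\Gamma}_4(4n,a)$. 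What remains is to track the overlaps $\widehat{\Gamma}_1=\Delta_1\cap\Delta_3$, $\widehat{\Gamma}_2=\Delta_2\cap\Gamma_2(4n,b)$, $\widehat{\Gamma}_3=\Delta_3\cap\Gamma_1(4n,b)$, show $\Delta_3=\widehat{\Gamma}_1\sqcup\widehat{\Gamma}_3$, and carry out the sign check using $e(b)=e(a)-\operatorname{ord}(t)$, $f(b)=f(a)+\operatorname{ord}(t)$.
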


\begin{proof}
	The proof is similar to the proof of Proposition \ref{prop.barep1}. By applying Lemma \ref{lem.unr} to $v(4n,a)$ with $t:=\operatorname{max}\{\delta(4n)< s\le\operatorname{len}(4n, a)\mid a_s=p_s-1\}$, we get that

	\begin{align*}
		v(4n,a)=&w_t(4n,a)^{(-1)^{\operatorname{ord}(t)}}L_1(4n,a) L_2(4n,a) L_3(4n,a)\\
		=&w_t(4n,a)^{(-1)^{\operatorname{ord}(t)}}\prod\limits_{v\in \Delta_1(4n,a)}v^{(-1)^{e(a)+1}} \prod\limits_{v\in \Delta_2(4n,a)}v^{(-1)^{f(a)+1}} \prod\limits_{v\in \Delta_3(4n,a)}v^{(-1)^{e(a)}}.
	\end{align*}

	Assume that $w_t(4n,a)=v(4n,b)$ for some $b$ such that $\epsilon_b =1$. Let $\Gamma_1(4n,b)$ and $\Gamma_2(4n,b)$ be as introduced in \ref{em.fl}. By Lemma \ref{lemma.basic}, we obtain that \[W:=w_t(4n,a)^{(-1)^{\operatorname{ord}(t)}}=\prod\limits_{v\in \Gamma_1(4n,b)} v^{(-1)^{e(a)+1}}\prod\limits_{v\in \Gamma_2(4n,b)} v^{(-1)^{f(a)}}.\] Here we used the relations  $e(b)=e(a)-\operatorname{ord}(t)$ and $f(b)=f(a)+\operatorname{ord}(t)$.

	It is clear that the following equalities hold:  $\overline{\Gamma}_1(4n,a)=\Delta_1(4n,a)$,
	$\overline{\Gamma}_2(4n,a)=\Delta_2(4n,a)$, $\overline{\Gamma}_3(4n,a)=\Gamma_2(4n,b)$, and $\overline{\Gamma}_4(4n,a)=\Gamma_1(4n,b)$. Furthermore, we have the following relations.

	\begin{enumerate}

		\item $\Delta_1(4n,a)\cap \Delta_3(4n,a)=\widehat{\Gamma}_1(4n,a)$.
		
		$\Delta_1(4n,a)\cap \Delta_3(4n,a)\subset\widehat{\Gamma}_1(4n,a)$. 
		
		Let $v(4n,c)\in\Delta_{1,r}\cap\Delta_3(4n,a)$ for some $r\in\overline{\operatorname{pol}}(4n,a,\delta(4n))$. Because $v(4n,d)\in\Delta_{1,r}(4n,a)$, it follows that for each $\delta(4n)<s<r$, we have that  $c_s=1$ or $\frac{p_s+1}{2}\le c_s\le p_s-2$. Furthermore, since $v(4n,c)\in\Delta_3(4n,a)$, we have that  $c_s=1$ for each $\delta(4n)<s<r$ with $s\notin\operatorname{pol}(4n,a)$. Hence $v(4n,c)\in\widehat{\Gamma}_1(4n,a)$.
		
		$\widehat{\Gamma}_1(4n,a)\subset\Delta_1(4n,a)\cap \Delta_3(4n,a)$. Let $v(4n,c)\in\widehat{\Gamma}_{1,r}(4n,a)$ for some $r\in\overline{\operatorname{pol}}(4n,a,\delta(4n))$. Then $v(4n,c)\in\Delta_{1,r}(4n,a)$. Let $t:=\operatorname{min}\{\delta(4n)<s<r\mid c_s\ne 1\}$. Then $v(4n,c)\in\Delta{3,s}$.
		
		\item $\Delta_2(4n,a)\cap \Gamma_2(4n,b)=\widehat{\Gamma}_2(4n,a)$.
		
		$\Delta_2(4n,a)\cap \Gamma_2(4n,b)\subset\widehat{\Gamma}_2(4n,a)$. Let $v(4n,c)\in\Delta_{2,r}(4n,a)\cap \Gamma_2(4n,b)$ for some $r\in\overline{\operatorname{pol}}(4n,a,\delta(4n))$. Then if $\delta(4n)< s\le r-1, $ then $  c_s=1$ or $\frac{p_s+1}{2}\le c_s\le p_s-2$. $\frac{p_r+1}{2}\le c_r\le p_r-2$.  
		For $ r<s\le\ell, $ if $ a_s=1, $ then $ 1\le c_s\le p_s-2$. If $  a_s\ne 1,$ then $ c_s=p_s- a_s$. Since $v(4n,c)\in\Gamma_2(4n,b)$, so for each $r<s<\tau(a)$ such that $a_s=1$, we additional have $\frac{p_s+1}{2}\le c_s\le p_s-2$ or $c_s=1$. Thus $v(4n,c)\in\widehat{\Gamma}_2(4n,a)$.

		$\widehat{\Gamma}_2(4n,a)\subset\Delta_2(4n,a)\cap \Gamma_2(4n,b)$. Let $v(4n,c)\in\widehat{\Gamma}_{2,r}(4n,a)$ for some $r\in\overline{\operatorname{pol}}(4n,a,\delta(4n))$. Then $v(4n,c)\in\Delta_{2,r}(4n,a)$. It is clear that $v(4n,c)\in\Gamma_2(4n,b)$.

		\item $\Delta_3(4n,a)\cap\Gamma_1(4n,b)=\widehat{\Gamma}_3(4n,a)$. 
		
		$\Delta_3(4n,a)\cap\Gamma_1(4n,b)\subset\widehat{\Gamma}_3(4n,a)$. Let $v(4n,c)\in\Delta_{3,r}(4n,a)\cap\Gamma_1(4n,b)$ for some $r\in\overline{\operatorname{pol}}(4n,a,\delta(4n))$. Because $v(4n,c)\in\Delta_{3,r}(4n,a)$, if $\delta(4n)<s<r$, then $c_s=1$. For $r<s<\ell$, if $s\in\operatorname{pol}(4n,a)$, then $1\le c_s\le p_s-2$. If $s\notin\operatorname{pol}(4n,a)$, then $c_s=1$. Furthermore, because $v(4n,c)\in\Gamma_1(4n,b)$, if $r<s<\tau(a)$, we have that  $\frac{p_r+1}{2}\le c_s\le p_s-2$ or $c_s=1$. So $v(4n,c)\in\widehat{\Gamma}_3(4n,a)$.
		
		$\widehat{\Gamma}_3(4n,a)\subset\Delta_3(4n,a)\cap\Gamma_1(4n,b)$. Let $v(4n,c)\in\widehat{\Gamma}_3(4n,a)$. It is clear that $v(4n,c)\in\Gamma_1(4n,b)$. Let $r:=\operatorname{min}\{\delta(4n)<s<\tau(a)\mid c_s\ne 1\}$. Then $v(4n,c)\in\Delta_{3,r}(4n,a)$.

		\item $\widehat{\Gamma}_1(4n,a)\cap \widehat{\Gamma}_3(4n,a)=\emptyset$.
		
		Let $v(n,c)\in\widehat{\Gamma}_{1,r}(4n,a)$ for some $r\in\overline{\operatorname{pol}}(4n,a,\delta(4n))$ and $v(n,d)\in\widehat{\Gamma}_3(4n,a)$. Then $2\le c_r\le\frac{p_r-1}{2}$ and $ d_r=1$ or $\frac{p_r+1}{2}\le d_r\le p_r-2$. So $v(4n,c)\ne v(4n,d)$ in $\widehat{X^{4n}}$.

		\item $\widehat{\Gamma}_1(4n,a)\cup \widehat{\Gamma}_3(4n,a)=\Delta_3(4n,a)$.

		$\widehat{\Gamma}_1(4n,a)\cup \widehat{\Gamma}_3(4n,a)\subset\Delta_3(4n,a)$. Let $v(4n,c)\in\widehat{\Gamma}_{1,r}(4n,a)$ for some $r\in\overline{\operatorname{pol}}(4n,a,\delta(4n))$. Let $u:=\operatorname{min}\{\delta(4n)<s<r\mid c_s\ne 1\}$. Then $v(4n,a)\in\Delta_{3,u}(4n,a)$. Let $v(4n,f)\in\widehat{\Gamma}_3(4n,a)$. Let $q:=\operatorname{min}\{\delta(4n)<s<\tau(a)\mid c_s\ne 1\}$. Then $v(4n,f)\in\Delta_{3,q}(4n,a)$.
		
		$\Delta_3(4n,a)\subset\widehat{\Gamma}_1(4n,a)\cup \widehat{\Gamma}_3(4n,a)$. Let $v(4n,c)\in\Delta_{3,r}(4n,a)$ for some $r\in\overline{\operatorname{pol}}(4n,a,\delta(4n))$. Let $D:=\{r<s<\tau(a)\mid 2\le c_s\le\frac{p_s-1}{2}\}$. If $D=\emptyset$, then $v(4n,c)\in\widehat{\Gamma}_3(4n,a)$. If $D\ne\emptyset$, let $u:=\operatorname{min}(D)$. Then $v(4n,c)\in\widehat{\Gamma}_{1,u}(4n,a)$.

	\end{enumerate}

	Moreover, if $v\in\widehat{\Gamma}_1(4n,a)$, then $\operatorname{multi}_v(L_1(4n,a))=-\operatorname{multi}_v(L_3(4n,a))$. If $v\in\widehat{\Gamma}_2(4n,a)$, then $\operatorname{multi}_v(L_2(4n,a))=-\operatorname{multi}_v(W)$. If $v\in\widehat{\Gamma}_3(4n,a)$, then $\operatorname{multi}_v(L_3(4n,a))=-\operatorname{multi}_v(W)$.
	
	To sum up, we get that $v(4n,a)=K_1(4n,a) K_2(4n,a) K_3(4n,a) K_4(4n,a)$.
\end{proof}

\begin{emp}{\bf $\operatorname{len}(4n,a)=\ell$ and $\ell$ is even.}\label{ss.fuev}
	Keep the assumptions about $n$ in \ref{ss.sf}. Let $a\in\mathbb{Z}$ be such that $\operatorname{gcd}(4n,a)=1$ and $\epsilon_a=1$. Assume that $\operatorname{pol}(4n,a,\delta(4n))\ne\emptyset$. If $\operatorname{pol}(4n,a,\delta(4n))=\emptyset$, then $v(n,a)\in B_{4n}$.
	Assume that $\operatorname{len}(4n,a)=\ell$ and $\ell$ is even. We introduce the following notations. For $r\in\operatorname{pol}(4n,a,\delta(4n))$, let 
	\begin{equation*}
		\aligned
		\overline{\Gamma}_{1,r}(4n,a):=&\{(b_1\dots,b_\ell)_{4n}\in X^{4n}\mid b_1=(0,1).  \text{ If } 3\mid n, \text{ then } b_2=1. \\
		&\text{ If } \delta(4n)<s<r, \text{ then } b_s=1 \text{ or } \frac{p_s+1}{2}\le b_s\le p_s-2.\\
		& 2\le b_r\le \frac{p_r-1}{2}.\text{ For } s>r,\text{ if } s\in\operatorname{pol}(4n,a),\\
		&\text{ then } 1\le b_s\le p_s-2; \text{ if } s\notin\operatorname{pol}(4n,a), \text{ then } b_s=1\},\\
		\endaligned
	\end{equation*}
	\begin{equation*}
		\aligned
		\widehat{\Gamma}_{1,r}(4n,a):=&\{(b_1\dots,b_\ell)_{4n}\in \overline{\Gamma}_{1,r}(4n,a)\mid  \text{ for } \delta(4n)<s<r, \\
		&\text{ if } s\in\operatorname{pol}(4n,a),  \text{ then } b_s=1\text{ or } \frac{p_s+1}{2}\le b_s\le p_s-2;\\
		&\text{ if } s\notin\operatorname{pol}(4n,a), \text{ then } b_s=1\},
		\endaligned
	\end{equation*}
and $$\Gamma_{1,r}(4n,a):=\overline{\Gamma}_{1,r}(4n,a)\backslash\widehat{\Gamma}_{1,r}(4n,a).$$
	
	Let
	$$\widehat{\Gamma}_1(4n,a):=\bigcup\limits_{r\in\operatorname{pol}(4n,a,\delta(4n))} \widehat{\Gamma}_{1,r}(4n,a),$$
	and
	$$ \Gamma_1(4n,a):=\bigcup\limits_{r\in\operatorname{pol}(4n,a,\delta(4n))} \Gamma_{1,r}(4n,a).$$
	
	Let 
	
	\begin{equation*}
		\aligned
		\overline{\Gamma}_{2,r}(4n,a):=&\{(b_1,\dots,b_\ell)_{4n}\in X^{4n}\mid b_1=(0,1).  \text{ If } 3\mid n, \text{ then } b_2=1. \\
		&\text{ If } \delta(4n)< s<r, \text{ then } b_s=1 \text{ or } \frac{p_s+1}{2}\le b_s\le p_s-2.\\
		& \frac{p_r+1}{2}\le b_r\le p_r-2. \text{ For } s>r, \text{ if } s\in\operatorname{pol}(4n,a),\\
		& \text{ then } b_s=1;\text{ if } s\notin\operatorname{pol}(4n,a), \text{ then } 1\le b_s\le p_s-2\}.
		\endaligned
	\end{equation*}

	Let 
	\begin{equation*}
		\aligned
		\widehat{\Gamma}_2(4n,a):=&\{(b_1,\dots,b_\ell)_{4n}\in X^{4n}\mid b_1=(0,1).  \text{ If } 3\mid n, \text{ then } b_2=1. \\
		&\text{ for } \delta(4n)<s\le\ell\text{ if } s\in\operatorname{pol}(4n,a), \text{ then } b_s=1 \text{ or }\\
		& \frac{p_s+1}{2}\le b_s\le p_s-2; \text{ if } s\notin\operatorname{pol}(4n,a),\\
		&\text{ then } b_s=1\}\backslash \{((0,1),1,\dots,1)_{4n}\},
		\endaligned
	\end{equation*}
 $$\overline{\Gamma}_2(4n,a):=\bigcup\limits_{r\in\operatorname{pol}(4n,a,\delta(4n))} \Gamma_{2,r}(4n,a),$$
	and
	$$\Gamma_2(4n,a):=\overline{\Gamma}_2(4n,a)\backslash \widehat{\Gamma}_2(4n,a).$$

Let $$K_1(4n,a):=\prod\limits_{v\in \Gamma_1(4n,a)}v^{(-1)^{e(a)+1}},\quad\quad K_2(4n,a):=\prod\limits_{v\in \Gamma_2(4n,a)}v^{(-1)^{e(a)+1}},$$ and $$K_3(4n,a):=((0,1),1,\dots,1)_{4n}^{(-1)^{e(a)}}.$$
	
\end{emp}

\begin{lemma}\label{le.I12}
	Keep the assumptions in \ref{ss.fuev}. The following is true.
	\begin{enumerate}
		\item $\Gamma_1(4n,a), \Gamma_2(4n,a)\subset B_{4n}$ and $((0,1),1,\dots,1)_{4n}\in B_{4n}$.
		\item $((0,1),1,\dots,1)_{4n}\notin \Gamma_1(4n,a)\cup \Gamma_2(4n,a)$ and $\Gamma_1(4n,a)\cap \Gamma_2(4n,a)=\emptyset$. 
	\end{enumerate}
\end{lemma}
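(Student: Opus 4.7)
The plan is to verify both parts of the lemma by direct inspection of the defining conditions. For (i), given any $v = (b_1, \ldots, b_\ell)_{4n}$ in $\Gamma_i(4n,a)$, I will locate the least index $k > \delta(4n)$ with $b_k \ne 1$ and then check that $v$ fits the standard form $C_{4n,k}$ in Conrad's basis from Theorem \ref{th.ba1}(v). For (ii), I will exploit the distinguished index $r$ in each union, which forces incompatible ranges at position $r$.

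For $\Gamma_1(4n,a) \subset B_{4n}$: let $v \in \Gamma_{1,r}(4n,a) = \overline{\Gamma}_{1,r} \setminus \widehat{\Gamma}_{1,r}$. Since $v \notin \widehat{\Gamma}_{1,r}$, there exists $\delta(4n) < s < r$ with $s \notin \operatorname{pol}(4n,a)$ and $b_s \ne 1$; the least index $k \in (\delta(4n), \ell]$ with $b_k \ne 1$ thus satisfies $k < r$, and by the $\overline{\Gamma}_{1,r}$ constraint $b_k \in [\frac{p_k+1}{2}, p_k-2]$. One checks that $p_k \ne 3$ (the only position with $p_s = 3$ is $s = 2$ when $3 \mid n$, but then $b_2 = 1$ and $\delta(4n) = 2$ force $k \ge 3$). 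For $j > k$, the $\overline{\Gamma}_{1,r}$ constraints give $b_j \in [1, p_j - 2]$, where at $j = r$ one uses $b_r \in [2, \frac{p_r - 1}{2}] \subset [1, p_r - 2]$ (valid since $p_r \ge 5$). Hence $v \in C_{4n,k} \subset B_{4n}$. The argument for $\Gamma_2(4n,a) \subset B_{4n}$ is entirely parallel, with $k$ now possibly equal to $r$ (in which case $v \in C_{4n,r}$ directly). That $((0,1), 1, \ldots, 1)_{4n} \in B_{4n}$ is the explicit addendum in the $\ell$-even branch of Theorem \ref{th.ba1}(v).

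For (ii): the element $((0,1), 1, \ldots, 1)_{4n}$ cannot belong to any $\Gamma_{1,r}$ or $\Gamma_{2,r}$, because these force $b_r \in [2, \frac{p_r-1}{2}]$ or $b_r \in [\frac{p_r+1}{2}, p_r - 2]$ respectively, both excluding $b_r = 1$. Now suppose $v \in \Gamma_{1,r} \cap \Gamma_{2,r'}$ and consider the value $b_r$. If $r = r'$, the two ranges at position $r$ are disjoint. If $r < r'$, then $\Gamma_{2,r'}$ imposes $b_r \in \{1\} \cup [\frac{p_r+1}{2}, p_r - 2]$, disjoint from $[2, \frac{p_r-1}{2}]$. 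If $r > r'$, then $r \in \operatorname{pol}(4n, a)$ (since $r$ indexes $\Gamma_{1,r}$), and the $\Gamma_{2,r'}$ constraint at $s = r > r'$ forces $b_r = 1$, contradicting $b_r \ge 2$.

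The content is essentially a careful case analysis; no substantial obstacle appears beyond the bookkeeping of tracking whether indices lie in $\operatorname{pol}(4n,a)$ or not. The ``sign-flip'' between $\Gamma_1$ and $\Gamma_2$ in the ranges at the distinguished position $r$, and in the roles of $\operatorname{pol}$ and its complement on either side of $r$, is exactly what guarantees the emptiness of the intersection.
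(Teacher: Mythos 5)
Your argument is correct and follows the same route as the paper: part (ii) is precisely the paper's three-way case split on the pole indices, comparing the forced value at the distinguished coordinate. For part (i) the paper simply asserts the claim, whereas you carry out the verification against Conrad's $C_{4n,k}$ form explicitly; this is a faithful unwinding of what ``clear'' means, not a different method.
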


\begin{proof}

	\begin{enumerate}
		\item This is clear.
		
		\item  The first claim is clear. We show the second relation.
		
		Assume that $r<k$ where $r,k\in\operatorname{pol}(4n,a,\delta(4n))$.

		Let $v(4n,c)\in \Gamma_{1,k}(4n,c)$ and $v(4n,d)\in \Gamma_{2,r}(4n,a)$. Then $2\le c_k\le \frac{p_k-1}{2}$ and $ d_k=1$, so $v(4n,c)\ne v(4n,d)$ in $\widehat{X^{4n}}$. 
		
		Let $v(4n,c)\in \Gamma_{1,r}(4n,c)$ and $v(4n,d)\in \Gamma_{2,k}(4n,a)$. Then 
		$2\le c_r\le \frac{p_r-1}{2}$ and $\frac{p_r+1}{2} \le  d_r\le p_r-2$ or $ d_r=1$. So $v(4n,c)\ne v(4n,d)$ in $\widehat{X^{4n}}$. 
		
		Let $v(4n,c)\in \Gamma_{1,r}(4n,c)$ and $v(4n,d)\in \Gamma_{2,r}(4n,a)$. Then
		$2\le c_r\le \frac{p_r-1}{2}$ and $\frac{p_r+1}{2} \le  d_r\le p_r-2$. So $v(4n,c)\ne v(4n,d)$ in $\widehat{X^{4n}}$. 
		
		To summarize, we get that $\Gamma_1(4n,a)\cap \Gamma_2(4n,a)=\emptyset$.
		
	\end{enumerate}
\end{proof}

\begin{proposition}\label{prop.sfe}
	Keep the assumptions in \ref{ss.fuev}. Then
	
	\begin{equation*}
		v(4n,a)=K_1(4n,a) K_2(4n,a) K_3(4n,a),
	\end{equation*}
	in $\widehat{X^{4n}}$.
\end{proposition}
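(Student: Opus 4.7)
The plan is to mirror the strategy used in Propositions \ref{prop.barep1} and \ref{prop.barep1-2}: start from Lemma \ref{lem.unr}, expand the factor $w_t(4n,a)^{(-1)^{\operatorname{ord}(t)}}$ using Lemma \ref{lemma.basic0}, and then identify how the resulting collections of basis elements match, or cancel against, the sets $\Gamma_1(4n,a)$, $\Gamma_2(4n,a)$, and $\{((0,1),1,\dots,1)_{4n}\}$ defined in \ref{ss.fuev}. Concretely, I would let $t:=\max\operatorname{pol}(4n,a,\delta(4n))$ and apply Lemma \ref{lem.unr} to obtain
\[
v(4n,a)=w_t(4n,a)^{(-1)^{\operatorname{ord}(t)}}\prod_{v\in\Delta_1(4n,a)}v^{(-1)^{e(a)+1}}\prod_{v\in\Delta_2(4n,a)}v^{(-1)^{f(a)+1}}\prod_{v\in\Delta_3(4n,a)}v^{(-1)^{e(a)}},
\]
where the $\Delta_i(4n,a)$ are as in \ref{em.rep2}. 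Because $\operatorname{len}(4n,a)=\ell$ and $\epsilon_a=1$, we have $E(4n,a)=\operatorname{pol}(4n,a)$ and the cluster extends to the last coordinate, so all indices $r$ entering the union in \ref{em.rep2} lie in $\operatorname{pol}(4n,a,\delta(4n))$.

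Next, I would apply Lemma \ref{lemma.basic0} to $w_t(4n,a)$. Since $w_t(4n,a)=(b_1,\dots,b_\ell)_{4n}$ has $b_s=1$ for $\delta(4n)<s\le t$ and $b_s=a_s$ otherwise, and since $\operatorname{len}(4n,a)=\ell$ means all remaining entries past $t$ are either $1$ or $p_s-1$, the hypotheses of Lemma \ref{lemma.basic0} are satisfied (up to the appropriate $\operatorname{ord}(t)$ sign). This yields an expression of $w_t(4n,a)^{(-1)^{\operatorname{ord}(t)}}$ as a product over a set $\Gamma(4n,w_t)\subset B_{4n}$ consisting exactly of those tuples whose entries at non-pole indices are $1$ and whose entries at pole indices range over $\{1,\dots,p_s-2\}$. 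The crucial observation is that $\ell$ being even forces the single ``all-ones'' tuple $((0,1),1,\dots,1)_{4n}$ to appear among the basis elements exactly once with an uncancelled exponent, which will become the factor $K_3(4n,a)$.

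The heart of the proof is then a set-theoretic bookkeeping analogous to items (i)--(v) in the proof of Proposition \ref{prop.barep1-2}. I would establish the following identifications in $B_{4n}$: first, $\Delta_1(4n,a)\cap\Delta_3(4n,a)=\widehat{\Gamma}_1(4n,a)$, so that the $(-1)^{e(a)+1}$ and $(-1)^{e(a)}$ contributions on these common elements cancel; second, $(\Delta_2(4n,a)\cup\operatorname{supp}(w_t(4n,a)))$ matches $\overline{\Gamma}_2(4n,a)\cup\{((0,1),1,\dots,1)_{4n}\}$, with the overlapping piece being exactly $\widehat{\Gamma}_2(4n,a)$, whose two contributions cancel and leave $K_2(4n,a)$ together with the lone factor $K_3(4n,a)$; and third, $(\Delta_1(4n,a)\setminus\Delta_3(4n,a))$ is precisely $\Gamma_1(4n,a)$, contributing $K_1(4n,a)$. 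Each inclusion is verified by examining coordinates relative to the defining index $r$ just as in Proposition \ref{prop.barep1-2}.

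The main obstacle I expect is the combinatorial matching around the distinguished tuple $((0,1),1,\dots,1)_{4n}$: it must appear with total exponent $(-1)^{e(a)}$ after all cancellations, which depends delicately on $\ell$ being even (so that $(-1)^\ell=1$ balances the parities of $e(a)$ and $\operatorname{ord}(t)$). Verifying that every other element outside $\Gamma_1(4n,a)\cup\Gamma_2(4n,a)\cup\{((0,1),1,\dots,1)_{4n}\}$ receives exponent zero requires checking each index $r\in\operatorname{pol}(4n,a,\delta(4n))$ contributes equal and opposite multiplicities to $\operatorname{supp}(w_t)$, $\Delta_2$, and the ``tail'' part of $\Delta_3$, using the parity identities $e(a)+f(a)=\ell-\delta(4n)$ and the reflection $v(4n,b)\mapsto v(4n,-b)$. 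Once these parities are tracked carefully, the desired equality $v(4n,a)=K_1(4n,a)K_2(4n,a)K_3(4n,a)$ follows.
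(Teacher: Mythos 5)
Your overall plan---apply Lemma \ref{lem.unr} with $t=\max\operatorname{pol}(4n,a,\delta(4n))$, expand, and then match the resulting basis elements against $\Gamma_1(4n,a)$, $\Gamma_2(4n,a)$, and $((0,1),1,\dots,1)_{4n}$---is the paper's strategy. But the bookkeeping you describe diverges from the actual cancellation pattern, and there are two concrete gaps.

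First, invoking Lemma \ref{lemma.basic0} to ``expand'' $w_t(4n,a)^{(-1)^{\operatorname{ord}(t)}}$ is a detour that doesn't quite make sense here. With $t=\max\operatorname{pol}(4n,a,\delta(4n))$ and $\operatorname{len}(4n,a)=\ell$, every coordinate of $w_t(4n,a)$ past $\delta(4n)$ equals $1$, so $w_t(4n,a)$ has no poles. Lemma \ref{lemma.basic0} applied to a pole-free element produces a singleton---the element itself. There is no set ``$\Gamma(4n,w_t)$ whose entries at pole indices range over $\{1,\dots,p_s-2\}$''; you simply have $w_t(4n,a)^{(-1)^{\operatorname{ord}(t)}}=v(4n,1)^{\pm 1}$, which is directly $K_3(4n,a)$. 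The paper records exactly this identity and moves on.

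Second, and more seriously, your description of how $\widehat{\Gamma}_2(4n,a)$ cancels is wrong. You attribute it to an ``overlap'' between $\Delta_2(4n,a)\cup\operatorname{supp}(w_t(4n,a))$ and $\overline{\Gamma}_2(4n,a)\cup\{((0,1),1,\dots,1)_{4n}\}$. Since $\Delta_2(4n,a)=\overline{\Gamma}_2(4n,a)$ and $\operatorname{supp}(w_t(4n,a))=\{v(4n,1)\}$, that union is a tautology, and $\Delta_2(4n,a)\cap\operatorname{supp}(w_t(4n,a))=\emptyset$ because $v(4n,1)$ has all-$1$ coordinates past $\delta(4n)$ whereas every element of $\Delta_2(4n,a)$ has a coordinate in the upper half at some index $r\in\operatorname{pol}(4n,a,\delta(4n))$. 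There is no overlap to cancel. The paper's cancellation of $L_2$ against $L_3$ comes instead from the identity $\Delta_2(4n,a)\cap\Delta_3(4n,a)=\widehat{\Gamma}_2(4n,a)$. You never involve $\Delta_3(4n,a)$ in your second identification, so the $\widehat{\Gamma}_2$-part of $L_2$ is left unaccounted for. Relatedly, you omit the crucial decomposition $\Delta_3(4n,a)=\widehat{\Gamma}_1(4n,a)\cup\widehat{\Gamma}_2(4n,a)$ with $\widehat{\Gamma}_1(4n,a)\cap\widehat{\Gamma}_2(4n,a)=\emptyset$, which is what guarantees that the entire factor $L_3(4n,a)$ is absorbed and nothing spurious survives. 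Without that, you cannot conclude that the product telescopes to $K_1K_2K_3$. The parity count $e(a)+f(a)=\ell-\delta(4n)$ you cite is also off when $3\mid n$ (it should be $\ell-1$ in all cases under $\operatorname{len}(4n,a)=\ell$), though this is a minor point compared to the structural gap above.
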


\begin{proof}
	
	By applying Lemma \ref{lem.unr} to $v(4n,a)$ with $t:=\operatorname{max}\{\delta(4n)< s\le\operatorname{len}(4n, a)\mid  a_s=p_s-1\}$, we get that

	\begin{align*}
	v(4n,a)=&w_t(4n,a)^{(-1)^{\operatorname{ord}(t)}}L_1(4n,a) L_2(4n,a) L_3(4n,a)\\
	=&w_t(4n,a)^{(-1)^{\operatorname{ord}(t)}}\prod\limits_{v\in \Delta_1(4n,a)}v^{(-1)^{e(a)+1}} \prod\limits_{v\in \Delta_2(4n,a)}v^{(-1)^{f(a)+1}} \prod\limits_{v\in \Delta_3(4n,a)}v^{(-1)^{e(a)}}.
	\end{align*}

	Note that 
	
	\[w_t(4n,a)^{(-1)^{\operatorname{ord}(t)}}=v(4n,1)^{e(a)}=K_3(4n,a).\]

	It is clear that the following equalities hold:  $\overline{\Gamma}_1(4n,a)=\Delta_1(4n,a)$, and
	$\overline{\Gamma}_2(4n,a)=\Delta_2(4n,a)$.	Furthermore, we have the following relations.

	\begin{enumerate}[label=(\roman*)]

		\item\label{it.sfe2} $\Delta_1(4n,a)\cap \Delta_3(4n,a)=\widehat{\Gamma}_1(4n,a)$.
		
		$\widehat{\Gamma}_1(4n,a)\subset \Delta_1(4n,a)\cap \Delta_3(4n,a)$. Let $v(4n,c)\in\widehat{\Gamma}_{1,r}(4n,a)$ for some $r\in\operatorname{pol}(4n,a,\delta(4n))$, then $v(4n,c)\in \Delta_{1,r}(4n,a)$. Let $u:=\operatorname{min}\{\delta(4n)<s<r\mid c_s=p_s-1\}$. Then $v(4n,c)\in \Delta_{3,u}(4n,a)$.
			
		$\Delta_1(4n,a)\cap \Delta_3(4n,a)\subset\widehat{\Gamma}_1(4n,a)$.  Let $v(4n,c)\in \Delta_1(4n,a)\cap \Delta_3(4n,a)$. Let $(c_1,\dots,c_\ell)_{4n}$ be the residue form of $v(4n,c)$. Assume that $v(4n,c)\in \Delta_{1,r}(4n,a)$ for some $r\in\operatorname{pol}(4n,a,\delta(4n))$. Then $2\le c_s\le \frac{p_s+1}{2}$ and for each $j<i$, we have that  $\frac{p_s-1}{2}\le c_j\le p_j-2$ or $c_j=1$.	Since $v(4n,c)\in \Delta_3(4n,a)$, so $c_s=1$ if $a_s=1$. So $v(4n,c)\in \widehat{\Gamma}_1(4n,a)$.

		\item\label{it.sfe3}	$\Delta_2(4n,a)\cap \Delta_3(4n,a)=\widehat{\Gamma}_2(4n,a)$.
			
			$\widehat{\Gamma}_2(4n,a)\subset \Delta_2(4n,a)\cap \Delta_3(4n,a)$. Let $v(4n,c)\in\widehat{\Gamma}_2(4n,a)$. Let $r:=\operatorname{min}\{\delta(4n)< s\le\ell\mid c_s\ne 1\}$. Then $v(4n,c)\in \Delta_{3,r}(4n,a)$. Let $u:=\operatorname{max}\{1\le s\le\ell\mid c_s\ne 1\}$. Then $v(4n,c)\in \Delta_{3,u}(4n,a)$.

			$\Delta_2(4n,a)\cap \Delta_3(4n,a)\subset\widehat{\Gamma}_2(4n,a)$. Assume that $v(4n,c)\in \Delta_2(4n,a)\cap \Delta_3(4n,a)$. Assume that $v(4n,c)\in \Delta_{3,r}(4n,a)$ for some $r\in\operatorname{pol}(4n,a,\delta(4n))$.  Then if $\delta(4n)<s<r$, then $c_s=1$. $\frac{p_r+1}{2}\le c_r\le p_r-2$. For $r<s\le \ell$, if $s\in\operatorname{pol}(4n,a)$, then $1\le c_s\le p_s-2$. If $s\notin\operatorname{pol}(4n,a)$, then $c_s=1$. Since $v(4n,c)\in \Delta_2(4n,a)$, then for each $s$ such that $ a_s=p_s-1$, we have that  $\frac{p_s+1}{2}\le c_s\le p_s-2$ or $c_s=1$. So $v(4n,c)\in \widehat{\Gamma}_2(4n,a)$.

		\item\label{it.sfe5} $\Delta_3(4n,a)=\widehat{\Gamma}_1(4n,a)\cup \widehat{\Gamma}_2(4n,a)$. 
		
		$\widehat{\Gamma}_1(4n,a)\cup \widehat{\Gamma}_2(4n,a)\subset \Delta_3(4n,a)$. This is clear.

		$\Delta_3(4n,a)\subset\widehat{\Gamma}_1(4n,a)\cup \widehat{\Gamma}_2(4n,a)$.	Let $v(4n,c)\in \Delta_{3,r}(4n,a)$ for some $r\in\operatorname{pol}(4n,a,\delta(4n))$. So if $2<s<r$, then $c_s=1$. For $r\le s<\ell$, if $s\in\operatorname{pol}(4n,a)$, then $1\le c_s\le p_s-2$. If $s\notin\operatorname{pol}(4n,a)$, then $c_s=1$. Let $D:=\{\delta(4n)< s\le\ell\mid 2\le c_j\le \frac{p_s-1}{2}\}$. If $D=\emptyset$, then $v(4n,c)\in\widehat{\Gamma}_2(4n,a)$. If $D\ne\emptyset$, let $u:=\operatorname{min}(D)$. Then $v(4n,c)\in \widehat{\Gamma}_{1,u}(4n,a)$.

		\item $\widehat{\Gamma}_1(4n,a)\cap \widehat{\Gamma}_2(4n,a)=\emptyset$. 
		
		Let $v(4n,c)\in\widehat{\Gamma}_{1,r}(4n,a)$ for some $r\in\operatorname{pol}(4n,a,\delta(4n))$ and let $v(4n,d)\in\widehat{\Gamma}_2(4n,a)$. Then $2\le c_r\le\frac{p_r-1}{2}$ and $d_r=1$ or $\frac{p_s+1}{2}\le d_r\le p_r-2$. So $v(4n,c)\ne v(4n,d)$ in $\widehat{X^{4n}}$. 
		
	\end{enumerate}

	Moreover, if $v\in \widehat{\Gamma}_1(4n,a)$, then $\operatorname{multi}_v(L_1(4n,a))=-\operatorname{multi}_v(L_3(4n,a))$. If $v\in \widehat{\Gamma}_2(4n,a)$, then $\operatorname{multi}_v(L_2(4n,a))=-\operatorname{multi}_v(L_3(4n,a))$. 
	
	To sum up, we obtain that $v(4n,a)=K_1(4n,a) K_2(4n,a) K_3(4n,a)$.
\end{proof}

Our next goal is to address the case $\operatorname{len}(4n,a)=\ell$ and $\ell$ is odd. To do so, we need the following lemma. Let
\begin{equation*}
	\aligned
	\Gamma(4n):=&\{(b_1,\dots,b_\ell)_{4n}\in X^{4n}\mid \text{ for } 1\le s\le \ell, b_s=1 \text{ or }\\
	& \frac{p_s+1}{2}\le b_s\le p_s-2\}\backslash \{((0,1),1,\dots,1)_{4n}\}.
	\endaligned
\end{equation*}

Notice that $\Gamma(4n)\subset B_{4n}$.

\begin{lemma}\label{le.odd1}
	Let $n$ be as in \ref{ss.sf}. Assume that $\ell$ is odd. Let $\Gamma(4n)$ be as above. Then $$v(4n,1)=\prod\limits_{v\in \Gamma(4n)}v^{-1}$$
	in $\widehat{X^{4n}}$.
\end{lemma}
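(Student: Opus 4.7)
The plan is to express $v(4n, 1)$ in the basis $B_{4n}$ of $\widehat{X^{4n}}$ by an iterative reduction, using Lemma \ref{le.nor1} together with the symmetry \eqref{eq.sym}, and then match the result against $\prod_{v \in \Gamma(4n)} v^{-1}$. Since $v(4n,1)$ has residue form $((0,1),1,\ldots,1)_{4n}$, it satisfies $\operatorname{pol}(4n,1)=\emptyset$ and $\operatorname{len}(4n,1)=\ell$, so none of Propositions \ref{prop.barep1}, \ref{prop.barep1-2}, or \ref{prop.sfe} applies directly; this is precisely the gap Lemma \ref{le.odd1} is meant to fill.

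The first step is to apply Lemma \ref{le.nor1} with $r=\ell$ to $v(4n,1)$, which yields
\[
v(4n, 1) \;=\; \prod_{b_\ell = 2}^{p_\ell - 1} \bigl((0, 1), 1, \ldots, 1, b_\ell\bigr)_{4n}^{-1}
\]
in $\widehat{X^{4n}}$. The factors with $b_\ell \in \{\frac{p_\ell+1}{2}, \ldots, p_\ell - 2\}$ already lie in $B_{4n}$ (they are elements of $C_{4n,\ell}$) and account for the portion of $\Gamma(4n)$ where $b_s=1$ for $2\le s\le\ell-1$. For the remaining factors, namely those with $b_\ell \in \{2, \ldots, \frac{p_\ell-1}{2}\}\cup\{p_\ell-1\}$, I would invoke the symmetry $v(4n, c) = v(4n, -c)$ in $X^{4n}$ from \eqref{eq.sym} and then apply Lemma \ref{le.nor1} at $r=1$ to rewrite each such factor as
\[
\bigl((0,1),\, p_2 - 1,\, \ldots,\, p_{\ell-1} - 1,\, p_\ell - b_\ell\bigr)_{4n}^{-1}
\]
in $\widehat{X^{4n}}$.

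Next, I would reduce these reshaped elements, which now carry $\ell-2$ pole coordinates at positions $2,\ldots,\ell-1$, by invoking the basis representation machinery of Section~\ref{sec.bassf}: specifically Lemma \ref{lemma.basic0} combined with iterative applications of Lemma \ref{le.nor1} at $r=2,3,\ldots,\ell-1$. Each reduction expands a pole coordinate $b_s=p_s-1$ into a sum over $b_s\in\{1\}\cup\{\frac{p_s+1}{2},\ldots,p_s-2\}$ with an appropriate sign. After all reductions, every contribution lands in $B_{4n}$ and is indexed by a choice of $(b_2,\ldots,b_\ell)$ from the very ranges defining $\Gamma(4n)$.

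The main obstacle is the combinatorial bookkeeping in the final accounting: the multi-level reductions generate overlapping contributions at many basis elements, and the signs, controlled by the $(-1)^{e(a)}$ factors of \ref{em.basic0}, must be tracked via inclusion--exclusion in the same spirit as the proofs of Propositions \ref{prop.barep1}, \ref{prop.barep1-2}, and \ref{prop.sfe}. The hypothesis that $\ell$ is odd enters precisely in these parity computations: it fixes the overall sign so that $v(4n,1)$ appears in the basis expansion with multiplicity $-1$ at each $v\in\Gamma(4n)$ and multiplicity $0$ at every other basis element, which is exactly the content of the lemma. Verifying this clean cancellation, rather than the reduction procedure itself, is where the real work lies.
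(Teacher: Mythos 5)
Your overall direction — reduce $v(4n,1)$ to basis elements by iterating Lemma \ref{le.nor1} and the symmetry \eqref{eq.sym} — is sound in principle, but the proof is not actually finished: you explicitly defer the inclusion--exclusion and sign bookkeeping, which is precisely the hard content of the lemma. Outlining the reduction procedure and then saying ``verifying this clean cancellation is where the real work lies'' does not discharge the claim. As you note, the multi-level reductions generate heavily overlapping families of basis elements, and establishing that the multiplicities collapse to exactly $-1$ on $\Gamma(4n)$ and $0$ elsewhere requires a careful disjointness/containment analysis that your proposal does not supply.

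Moreover, you miss the key structural shortcut the paper uses, which makes this lemma nearly a corollary of work already done. The paper applies the already-established Lemma \ref{lem.unr} to $v(4n,-1)$ with $t=\ell$, rather than to $v(4n,1)$. Since $v(4n,-1)=v(4n,1)$ as classes in $\widehat{X^{4n}}$ (by \eqref{eq.sym}), this gives
\[
v(4n,1)=w_\ell(4n,-1)^{(-1)^{\operatorname{ord}(\ell)}}\,L_1(4n,-1)\,L_2(4n,-1)\,L_3(4n,-1),
\]
and the crucial payoff is that $w_\ell(4n,-1)=((1,1),1,\ldots,1)_{4n}=v(4n,1)^{-1}$, while $(-1)^{\operatorname{ord}(\ell)}=+1$ because $\ell$ is odd. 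So $v(4n,1)^{-1}$ appears on the right-hand side and one only needs to identify $\Delta_2(4n,-1)=\Gamma(4n)$, $\Delta_1(4n,-1)\subset\Delta_3(4n,-1)$, and $\Delta_3(4n,-1)\setminus\Delta_1(4n,-1)=\Gamma(4n)$ to conclude $v(4n,1)^2=\prod_{v\in\Gamma(4n)}v^{-2}$, whence the result by halving in the free abelian group $\widehat{X^{4n}}$. Carrying out your direct expansion from $v(4n,1)$ would, in effect, re-derive the content of Lemma \ref{lem.unr} from scratch rather than exploiting it; the paper's choice of $-1$ as the starting point is exactly what makes the computation short and exhibits where the hypothesis that $\ell$ is odd actually enters (the sign $(-1)^{\operatorname{ord}(\ell)}$ and the parities of $e(-1)$, $f(-1)$). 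Incidentally, a small side error: $v(4n,1)$ has $\overline{1}_1=0$, so $\operatorname{len}(4n,1)=0$, not $\ell$ as you write; this does not affect your strategy, but it indicates the notation here is more delicate than it may appear.
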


\begin{proof}

	By applying Lemma \ref{lem.unr} to $v(4n,-1)$ with $t:=\ell$, we get that

	\begin{equation*}
		\aligned
		v(4n,n-1)=&w_t(4n,-1)^{(-1)^{\operatorname{ord}(t)}}L_1(4n,-1) L_2(4n,-1) L_3(4n,-1)\\
		=&v(4n,1)^{-1}\prod\limits_{v\in \Delta_1(4n,-1)}v\prod\limits_{v\in \Delta_2(4n,-1)}v^{-1}\prod\limits_{v\in \Delta_3(4n,-1)}v^{-1}.\\
		\endaligned
	\end{equation*}
	
	Furthermore, we have the following relations.
	
	\begin{enumerate}[label=(\roman*)]
		
		\item\label{it.sf1-1} $\Delta_2(4n,-1)=\Gamma(4n)$.
		
		It is clear that $\Delta_2(4n,-1)\subset \Gamma(4n)$. If $v(4n,c)\in \Gamma(4n)$, let $r:=\operatorname{max}\{\delta(4n)<s\le\ell\mid c_s\ne 1\}$. Then $v(4n,c)\in \Delta_{2,r}(4n,-1)$. So $\Gamma(4n)\subset\Delta_2(4n,-1)$.

		\item $\Delta_1(4n,-1)\subset\Delta_3(4n,-1)$.
		
		Assume that $v(4n,c)\in \Delta_{1,r}(4n,-1)$ for some $\delta(4n)+1<r\le \ell$. Let $u:=\operatorname{min}\{\delta(4n)<s<r\mid c_s\ne 1\}$. Then $v(4n,c)\in \Delta_{3,u}(4n,-1)$. 
		
		\item $\Delta_3(4n,-1)\backslash\Delta_1(4n,-1)=\Gamma(4n)$.

		$\Gamma(4n)\subset \Delta_3(4n,-1)$. Let $v(4n,c)\in \Gamma(4n)$. Let $r:=\operatorname{min}\{\delta(4n)<s\le \ell\mid c_s\ne 1\}$, it is clear that $v(4n,c)\in \Delta_{3,r}(4n,-1)$.

		$\Delta_1(4n,-1)\cap \Gamma(4n)=\emptyset$. Let $v(4n,c)\in \Delta_{1,r}(4n,-1)$ and $v(4n,d)\in \Delta_3(4n,-1)$, then $2\le c_r\le \frac{p_r-1}{2}$ and $\frac{p_r+1}{2}\le  d_r\le p_r-2$ or $ d_r=1$. So $\Delta_1(4n,-1)\cap \Gamma(4n)=\emptyset$. So $\Gamma(4n)\subset\Delta_3(4n,-1)\backslash\Delta_1(4n,-1)$.

		$\Delta_3(4n,-1)\backslash\Delta_1(4n,-1)\subset\Gamma(4n)$. This is equivalent to 	$\Delta_3(4n,-1)\subset\Delta_1(4n,-1)\cup \Gamma(4n)$. Assume that $v(4n,c)\in \Delta_3(4n,-1)$. Let $D:=\{\delta(4n)<s\le\ell\mid 2\le c_s\le \frac{p_s-1}{2} \}$. If $D=\emptyset$, then $v(4n,c)\in \Gamma(4n)$. If $D\ne\emptyset$, let $r:=\operatorname{min}(D)$. Then $v(4n,c)\in \Delta_{1,r}(4n,-1)$.

	\end{enumerate}

	Moreover, if $v\in\Delta_1(4n,a)$, then $\operatorname{multi}_v(L_1(4n,-1))=-\operatorname{multi}_v(L_3(4n,-1))$. Combining the above, we obtain that $v(4n,1)^2=\prod\limits_{v\in \Gamma(4n)}v^{-2}$. Therefore, $v(4n,1)=\prod\limits_{v\in \Gamma(4n)}v^{-1}$.
\end{proof}

\begin{emp}\label{em.sfoa}{\bf $\operatorname{len}(4n,a)=\ell$ and $\ell$ is odd.}
	Keep the assumptions about $n$ in \ref{ss.sf}. Let $a\in\mathbb{Z}$ be such that $\operatorname{gcd}(4n,a)=1$ and $\epsilon_a=1$. Assume that $\operatorname{pol}(4n,a,\delta(4n))\ne\emptyset$. See Lemma \ref{le.odd1} for $\operatorname{pol}(4n,a,\delta(4n))=\emptyset$. 
	Assume that $2\le \operatorname{len}(4n,a)=\ell$ and $\ell$ is odd. We introduce the following notations. For each $r\in\operatorname{pol}(4n,a,\delta(4n))$, let 
	\begin{equation*}
		\aligned
		\overline{\Gamma}_{1,r}(4n,a):=&\{(b_1\dots,b_\ell)_{4n}\in X^{4n}\mid b_1=(0,1).  \text{ If } 3\mid n, \text{ then } b_2=1. \\
		& \text{ If } \delta(4n)<s<r, \text{ then } b_s=1 \text{ or } \frac{p_s+1}{2}\le b_s\le p_s-2.\\
		& 2\le b_r\le \frac{p_r-1}{2}.\text{ For } s>r,\text{ if } s\in\operatorname{pol}(4n,a),\\
		&\text{ then } 1\le b_s\le p_s-2; \text{ if } s\notin\operatorname{pol}(4n,a), \text{ then } b_s=1\},
		\endaligned
	\end{equation*}
	\begin{equation*}
		\aligned
		\widehat{\Gamma}_{1,r}(4n,a):=&\{(b_1\dots,b_\ell)_{4n}\in \overline{\Gamma}_{1,r}(4n,a)\mid  \text{ for } \delta(4n)<s<r, \\
		&\text{ if } s\in\operatorname{pol}(4n,a),  \text{ then } b_s=1\text{ or }\frac{p_s+1}{2}\le b_s\le p_s-2;\\
		&  \text{ if } s\notin\operatorname{pol}(4n,a), \text{ then } b_s=1\},
		\endaligned
	\end{equation*}
and $$\Gamma_{1,r}(4n,a):=\overline{\Gamma}_{1,r}(4n,a)\backslash\widehat{\Gamma}_{1,r}(4n,a).$$
	
	Let
	$$\widehat{\Gamma}_1(4n,a):=\bigcup\limits_{r\in\operatorname{pol}(4n,a,\delta(4n))} \widehat{\Gamma}_{1,r}(4n,a),$$
	and
	$$\Gamma_1(4n,a):=\bigcup\limits_{r\in\operatorname{pol}(4n,a,\delta(4n))} \Gamma_{1,r}(4n,a).$$
	
	Let 
	\begin{equation*}
		\aligned
		\overline{\Gamma}_{2,r}(4n,a):=&\{(b_1,\dots,b_\ell)_{4n}\in X^{4n}\mid b_1=(0,1). \text{ If } 3\mid n, \text{ then } b_2=1. \\
		& \text{ If } \delta(4n)<s<r, \text{ then } b_s=1 \text{ or } \frac{p_s+1}{2}\le b_s\le p_s-2.\\
		& \frac{p_r+1}{2}\le b_r\le p_r-2.\text{ For } s>r, \text{ if } s\in\operatorname{pol}(4n,a)\\
		&\text{ then } b_s=1; \text{ if } s\notin\operatorname{pol}(4n,a),\text{ then } 1\le b_s\le p_s-2\},
		\endaligned
	\end{equation*}
	and
	\begin{equation*}
		\aligned
		\widehat{\Gamma}_{2,r}(4n,a):=&\{(b_1,\dots,b_\ell)_{4n}\in \overline{\Gamma}_{2,r}(4n,a)\mid \text{ for } s>r,\\
		& \text{ if } s\in\operatorname{pol}(4n,a), \text{ then } b_s=1; \text{ if } s\notin\operatorname{pol}(4n,a),\\
		&\text{ then } b_s=1\text{ or } \frac{p_s+1}{2}\le b_s\le p_s-2\}.
		\endaligned
	\end{equation*}

	Here, if $r=\ell$, then we define $\widehat{\Gamma}_{2,r}(4n,a):=\overline{\Gamma}_{2,r}(4n,a)$. Let
	 $$\Gamma_{2,r}(4n,a):=\overline{\Gamma}_{2,r}(4n,a)\backslash\widehat{\Gamma}_{2,r}(4n,a).$$
	
	Let
	$$\widehat{\Gamma}_2(4n,a):=\bigcup\limits_{r\in\operatorname{pol}(4n,a,\delta(4n))} \widehat{\Gamma}_{2,r}(4n,a),$$
	and
	$$\Gamma_2(4n,a):=\bigcup\limits_{r\in\operatorname{pol}(4n,a,\delta(4n))} \Gamma_{2,r}(4n,a).$$

	Let
	\begin{equation*}
		\aligned
		\Gamma_3(4n,a):=&\{(b_1,\dots,b_\ell)_{4n}\in X^{4n}\mid b_1=(0,1). \text{ If } 3\mid n, \\
		& \text{ then } b_2=1.  \text{ For } \delta(4n)< s\le \ell, \text{ if } s\in\operatorname{pol}(4n,a),\\
		&  \text{ then } b_s=1 \text{ or } \frac{p_s+1}{2}\le b_s\le p_s-2;\\
		& \text{ if } s\notin\operatorname{pol}(4n,a),\text{ then } b_s=1\}\backslash \{((0,1),1,\dots,1)_{4n}\},
		\endaligned
	\end{equation*}
	and 
	\begin{equation*}
		\aligned
		\Gamma_4(4n,a):=&\{(b_1,\dots,b_\ell)_{4n}\in X^{4n}\mid b_1=(0,1). \text{ If } 3\mid n, \\
		&\text{ then } b_2=1.   \text{ For } \delta(4n)< s\le \ell, \text{ if } s\notin\operatorname{pol}(4n,a),\\
		& \text{ then } b_s=1 \text{ or } \frac{p_s+1}{2}\le b_s\le p_s-2; \\
		&\text{ if } s\in\operatorname{pol}(4n,a),\text{ then } b_s=1\}\backslash \{((0,1),1,\dots,1)_{4n}\}.
		\endaligned
	\end{equation*}

	Let

	\begin{align*}
		K_1(4n,a)&:=\prod\limits_{v\in \Gamma_1(4n,a)}v^{(-1)^{e(a)+1}}, &&K_2(4n,a):=\prod\limits_{v\in \Gamma_2(4n,a)}v^{(-1)^{e(a)}},\\
		K_3(4n,a)&:=\prod\limits_{v\in \Gamma_3(4n,a)}v^{(-1)^{e(a)}}, &&K_4(4n,a):=\prod\limits_{v\in \Gamma_4(4n,a)}v^{(-1)^{e(a)+1}}.
	\end{align*}

\end{emp}

\begin{lemma}
	Keep the assumptions in \ref{em.sfoa}. Then the following is true.
	\begin{enumerate}
		\item $\bigcup\limits_{1\le i\le 4}\Gamma_i(4n,a)\subset B_{4n}$.
		\item $\Gamma_i(4n,a)\cap \Gamma_j(4n,a)=\emptyset$ for all $1\le i<j\le 4$.
	\end{enumerate}
\end{lemma}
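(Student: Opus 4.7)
The plan is to follow the pattern established in the analogous statements earlier in the section, in particular Lemma \ref{le.em2-2} and Lemma \ref{le.I12}, which treat very similar setups. Part (i) should be verified by matching the residue-form constraints defining each $\Gamma_i(4n,a)$ against the description of $B_{4n}$ provided by Theorem \ref{th.ba1}(v), using that we are in the case $n=4m$ with $m$ odd, square-free, non-prime. For each element $(b_1,\dots,b_\ell)_{4n} \in \Gamma_i(4n,a)$, I will identify the smallest index $k>\delta(4n)$ with $b_k \ne 1$ and check that $b_k$ lies in the upper half $[(p_k+1)/2, p_k-2]$, that $b_s=1$ for $\delta(4n)<s<k$, and that $1 \le b_j \le p_j-2$ for $k<j\le \ell$. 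The removal of $\widehat{\Gamma}_{1,r}$ and $\widehat{\Gamma}_{2,r}$ from the overbar sets is what forces this first non-$1$ coordinate to live in the upper half rather than the lower half; the exclusions of $((0,1),1,\dots,1)_{4n}$ from $\Gamma_3$ and $\Gamma_4$ similarly ensure each element is a nontrivial basis vector.

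For part (ii), the plan is to verify each of the six pairwise intersections by exhibiting a coordinate where the two defining conditions are mutually exclusive. The key distinguishing features are: an element of $\Gamma_1(4n,a)$ has $b_r \in [2,(p_r-1)/2]$ at some $r \in \operatorname{pol}(4n,a,\delta(4n))$; an element of $\Gamma_2(4n,a)$ has $b_r$ in the upper half at some $r \in \operatorname{pol}$, with $b_s=1$ for $s>r$, $s\in\operatorname{pol}$, and at least one $s>r$, $s\notin\operatorname{pol}$ with $b_s$ in the lower half; an element of $\Gamma_3(4n,a)$ has $b_s=1$ at every $s\notin\operatorname{pol}$; and an element of $\Gamma_4(4n,a)$ has $b_s=1$ at every $s\in\operatorname{pol}$. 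Concretely, $\Gamma_1\cap\Gamma_3$ and $\Gamma_1\cap\Gamma_4$ fail at the index $r \in \operatorname{pol}$ (lower half vs.\ $\{1\}\cup$ upper half, resp.\ $\{1\}$); $\Gamma_2\cap\Gamma_4$ fails at $r\in\operatorname{pol}$ (upper half vs.\ $\{1\}$); $\Gamma_2\cap\Gamma_3$ fails at the lower-half index $s\notin\operatorname{pol}$ forced by removing $\widehat{\Gamma}_2$; and $\Gamma_3\cap\Gamma_4$ would force all coordinates past $\delta(4n)$ to equal $1$, but the element $((0,1),1,\dots,1)_{4n}$ has been explicitly excluded from both $\Gamma_3$ and $\Gamma_4$. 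For $\Gamma_1\cap\Gamma_2$, if the index $r \in \operatorname{pol}$ where $\Gamma_1$ sees a lower-half value coincides with or precedes the index of $\Gamma_2$'s upper-half break, the overbar condition of $\Gamma_{2,*}$ immediately contradicts the range; if it comes strictly later, the $\Gamma_2$ condition forces $b_r=1$ at $r\in\operatorname{pol}$, again a contradiction.

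I expect the main obstacle not to be the conceptual content but the bookkeeping: the definitions in \ref{em.sfoa} involve four parameters (membership of $s$ in $\operatorname{pol}(4n,a)$ versus $\delta(4n)$, the position of $r$, upper/lower/unit ranges, and the base versus overbar/hat constraints), and the argument for each pair must be stated carefully so that the relevant index $r$ is located in the correct regime (either within the prescribed $r$-range of the overbar set, or via the residual structure after removing $\widehat{\Gamma}$). I will present the six pairwise checks uniformly by, in each case, naming a representative $r$ arising from the $\Gamma_i$ side with the stricter constraint at that coordinate and then comparing the allowed range at $b_r$ imposed by $\Gamma_j$; the case $\Gamma_3\cap\Gamma_4$ is the only one where one must also invoke the excluded element, and the case $\Gamma_1\cap\Gamma_2$ is the only one that requires splitting on the relative order of the two indices involved.
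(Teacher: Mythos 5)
Your proposal is correct and follows essentially the same route as the paper: part (i) is a direct check of the residue forms against Conrad's basis description in Theorem \ref{th.ba1}, and part (ii) is settled by pairwise coordinate comparisons — at the pole index carrying the lower-half entry for the pairs involving $\Gamma_1$, at the non-pole lower-half index created by deleting $\widehat{\Gamma}_2$ for $\Gamma_2\cap\Gamma_3$, via the excluded element $((0,1),1,\dots,1)_{4n}$ for $\Gamma_3\cap\Gamma_4$, and by the case split on the two indices for $\Gamma_1\cap\Gamma_2$, just as in the paper (which defers that pair to Lemma \ref{le.I12}). The only, harmless, deviations are that for $\Gamma_2\cap\Gamma_4$ you compare at the pole index $r$ where the $\Gamma_{2,r}$-entry lies in the upper half while the paper uses a non-pole index $s>r$ with a lower-half entry, and that for membership of $\Gamma_2$ in $B_{4n}$ the removal of $\widehat{\Gamma}_{2,r}$ is not actually needed (already $\overline{\Gamma}_{2,r}\subset B_{4n}$), whereas it is needed for $\Gamma_1$.
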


\begin{proof}
	
	\begin{enumerate}
		\item This is clear.
		
		\item $\Gamma_1(4n,a)\cap \Gamma_2(4n,a)=\emptyset$.
		This is similar to the argument in Lemma \ref{le.I12}.
		
		$\Gamma_1(4n,a)\cap \Gamma_3(4n,a)=\Gamma_1(4n,a)\cap \Gamma_4(4n,a)=\emptyset$. 
		
		Let $v(4n,c)\in \Gamma_{1,r}(4n,a)$ for some $r\in\operatorname{pol}(4n,a,\delta(4n))$, $v(4n,d)\in \Gamma_3(4n,a)$ and $v(4n,e)\in \Gamma_4(4n,a)$ then $2\le c_r\le \frac{p_r-1}{2}$ and $\frac{p_r+1}{2}\le  d_r\le p_r-2$ or $ d_r=1$, $\frac{p_r+1}{2}\le  e_r\le p_r-2$ or $ d_r=1$ So $v(4n,c)\ne v(4n,d)$ and $v(4n,c)\ne v(4n,e)$ in $\widehat{X^{4n}}$.

		$\Gamma_3(4n,a)\cap \Gamma_4(4n,a)=\emptyset$.
		
		Since $v(4n,d)\ne ((0,1),1,\dots,1)_{4n}$, there exists $r\in\operatorname{pol}(4n,a)$ such that $\frac{p_r+1}{2}\le  d_r\le p_r-2$. Since $ e_r=1$, we get that $v(4n,d)=v(4n,e)$.

		$\Gamma_2(4n,a)\cap \Gamma_3(4n,a)=\emptyset$ and $\Gamma_2(4n,a)\cap \Gamma_4(4n,a)=\emptyset$.
		
		Let $v(4n,c)\in \Gamma_{2,r}(4n,a)$ and $v(4n,d)\in \Gamma_3(4n,a)$ and $v(4n,e)\in \Gamma_4(4n,a)$, then for some $k\notin\operatorname{pol}(4n,a)$ and $r< k\le \ell$, we have that  $2\le c_r\le \frac{p_r-1}{2}$, $\frac{p_r+1}{2}\le  d_r\le p_r-2$ or $ d_r=1$, $\frac{p_r+1}{2}\le  e_r\le p_r-2$ or $ e_r=1$. So $v(4n,c)\ne v(4n,d)$ and $v(4n,c)\ne v(4n,e)$ in $\widehat{X^{4n}}$.

	\end{enumerate}
\end{proof}

\begin{proposition}
	Keep the assumptions in \ref{em.sfoa}. Then
	
	\begin{equation*}
		v(4n,a)=K_1(4n,a) K_2(4n,a) K_3(4n,a) K_4(4n,a),
	\end{equation*}
	in $\widehat{X^{4n}}$, where both sides are viewed as elements in $\widehat{X^{4n}}$ under the quotient map.
\end{proposition}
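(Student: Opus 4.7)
The plan is to mirror the structure of Proposition~\ref{prop.sfe} (the even-$\ell$ case), with the key modification that Lemma~\ref{le.odd1} must be invoked to expand the residual term $v(4n,1)$ into a product over $\Gamma(4n)$. This expansion is precisely what produces the two additional factors $K_3(4n,a)$ and $K_4(4n,a)$ beyond the two factors present when $\ell$ is even.

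First I would apply Lemma~\ref{lem.unr} to $v(4n,a)$ with $t := \max\{\delta(4n)<s\le \ell \mid a_s = p_s-1\}$, yielding
\[
v(4n,a) = w_t(4n,a)^{(-1)^{\operatorname{ord}(t)}}\,L_1(4n,a)\,L_2(4n,a)\,L_3(4n,a).
\]
Since $\operatorname{len}(4n,a)=\ell$ and $t$ is the largest index with $a_s=p_s-1$, for all $s>t$ we have $a_s=1$. Consequently $w_t(4n,a)$ is supported in residue form only at positions $\le\delta(4n)$, and equals $v(4n,1)$ in $\widehat{X^{4n}}$, with $\operatorname{ord}(t)=e(a)$. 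Because $\ell$ is odd, Lemma~\ref{le.odd1} now gives
\[
w_t(4n,a)^{(-1)^{\operatorname{ord}(t)}} \;=\; v(4n,1)^{(-1)^{e(a)}} \;=\; \prod_{v \in \Gamma(4n)} v^{(-1)^{e(a)+1}}.
\]

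Next I would establish the set-theoretic identifications relating $\Gamma(4n), \Delta_1(4n,a), \Delta_2(4n,a), \Delta_3(4n,a)$ to the four $\Gamma_i(4n,a)$ of~\ref{em.sfoa}. The straightforward identifications $\overline{\Gamma}_1(4n,a)=\Delta_1(4n,a)$ and $\overline{\Gamma}_2(4n,a)=\Delta_2(4n,a)$ carry over from Proposition~\ref{prop.sfe}. Analogues of the intersection computations for even $\ell$ yield $\Delta_1\cap\Delta_3 = \widehat{\Gamma}_1(4n,a)$ and $\Delta_2\cap\Delta_3 = \widehat{\Gamma}_2(4n,a)$, producing the sign-cancellations that pass from $\overline{\Gamma}_i$ to $\Gamma_i$ for $i=1,2$. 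The new work is to analyze how $\Gamma(4n)$ meets $\Delta_3$ and the $\widehat{\Gamma}_i$: elements of $\Gamma(4n)$ nontrivial only at pol-indices (with the exception $((0,1),1,\ldots,1)_{4n}$ removed) feed into $\Gamma_3(4n,a)$, those nontrivial only at non-pol indices feed into $\Gamma_4(4n,a)$, and elements with mixed support should cancel against the residual overlaps with $\Delta_3$.

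Finally, a multiplicity check at each basis element of $\widehat{X^{4n}}$, tracking the parities $e(a)$, $f(a)$, and $\operatorname{ord}(t)$ in the exponents of $L_1, L_2, L_3$ and of the expansion of $v(4n,1)$, will confirm that the four contributions combine to give exactly $K_1(4n,a)K_2(4n,a)K_3(4n,a)K_4(4n,a)$. The main obstacle is the last combinatorial step: correctly partitioning those elements of $\Gamma(4n)$ whose support meets both $\operatorname{pol}(4n,a)$ and its complement, and matching them term-by-term against the overlaps $\widehat{\Gamma}_1$, $\widehat{\Gamma}_2$ together with the new overlaps that characterize $\Gamma_3$ and $\Gamma_4$ via the exceptional element $((0,1),1,\ldots,1)_{4n}$. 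This requires bookkeeping but follows the template already used in Propositions~\ref{prop.barep1}, \ref{prop.barep1-2}, and \ref{prop.sfe}, where the exponent $(-1)^{f(a)+1}$ in $L_2$ is designed to cancel the $(-1)^{e(a)+1}$ contributed by $\Gamma(4n)$ on the overlap.
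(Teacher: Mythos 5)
Your overall strategy matches the paper's: apply Lemma~\ref{lem.unr} to produce $v(4n,a)=w_t(4n,a)^{(-1)^{\operatorname{ord}(t)}}L_1L_2L_3$, then use Lemma~\ref{le.odd1} to expand $W:=w_t(4n,a)^{(-1)^{\operatorname{ord}(t)}}=v(4n,1)^{(-1)^{e(a)}}=\prod_{v\in\Gamma(4n)}v^{(-1)^{e(a)+1}}$, and then track set-theoretic overlaps and parities to recover $K_1K_2K_3K_4$. The paper does exactly this.

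However, your plan contains a genuine misidentification that would derail the bookkeeping. You claim that, in analogy with the even case, $\Delta_2\cap\Delta_3 = \widehat{\Gamma}_2(4n,a)$ and that mixed-support elements of $\Gamma(4n)$ ``cancel against the residual overlaps with $\Delta_3$.'' Neither is right in the odd-$\ell$ setting. With the definition of $\widehat{\Gamma}_{2,r}$ given in \ref{em.sfoa} (for $s>r$ one requires $b_s=1$ at pole positions and $b_s\in\{1\}\cup[\frac{p_s+1}{2},p_s-2]$ at non-pole positions, on top of the $\overline{\Gamma}_{2,r}$ constraints), the correct identity is $\Delta_2(4n,a)\cap\Gamma(4n)=\widehat{\Gamma}_2(4n,a)$, and the cancellation on $\widehat{\Gamma}_2$ takes place between $L_2$ and $W$ (i.e., between the $\Delta_2$-indexed product and the $\Gamma(4n)$-indexed product), not between $L_2$ and $L_3$. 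In contrast, $\Delta_2\cap\Delta_3$ is a much smaller set: intersecting $\Delta_{2,r}$ with $\Delta_{3,r}$ forces $b_s=1$ both for $s<r$ (from $\Delta_3$) and for all $s>r$ (from the two clauses combined), leaving essentially the single-position support at $r$, which is strictly contained in $\widehat{\Gamma}_{2,r}$. Likewise the final partitioning is not ``mixed-support elements of $\Gamma(4n)$ cancel against $\Delta_3$''; the precise statements are $\Delta_3(4n,a)\setminus\widehat{\Gamma}_1(4n,a)=\Gamma_3(4n,a)$ and $\Gamma(4n)\setminus\widehat{\Gamma}_2(4n,a)=\Gamma_4(4n,a)$: $\Gamma_3$ is extracted from $\Delta_3$ after removing its overlap with $\Delta_1$, and $\Gamma_4$ is extracted from $\Gamma(4n)$ after removing its overlap with $\Delta_2$. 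If you carry out the intersections as you state them, the multiplicities will not match $K_1K_2K_3K_4$. The underlying idea (invoke Lemma~\ref{le.odd1} to produce the two extra factors) is correct, but the set identifications need to be those just described for the argument to close.
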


\begin{proof}
	
	By applying Lemma \ref{lem.unr} to $v(4n,a)$ with $t:=\operatorname{max}\{\delta(4n)< s\le\ell\mid  a_s=p_s-1\}$, we get that

	\begin{align*}
		v(4n,a)=&w_t(4n,a)^{(-1)^{\operatorname{ord}(t)}}L_1(4n,a) L_2(4n,a) L_3(4n,a)\\
		=&w_t(4n,a)^{(-1)^{\operatorname{ord}(t)}}\prod\limits_{v\in \Delta_1(4n,a)}v^{(-1)^{e(a)+1}} \prod\limits_{v\in \Delta_2(4n,a)}v^{(-1)^{f(a)+1}} \prod\limits_{v\in \Delta_3(4n,a)}v^{(-1)^{e(a)}}.
	\end{align*}

	By Lemma \ref{le.odd1}, we derive that $$W:=w_t(4n,a)^{(-1)^{\operatorname{ord}(t)}}=v(4n,1)^{(-1)^{e(a)}}=\prod\limits_{v\in \Gamma(4n)}v^{(-1)^{e(a)+1}}.$$

	It is clear that the following equalities hold:  $\overline{\Gamma}_1(4n,a)=\Delta_1(4n,a)$, and
	$\overline{\Gamma}_2(4n,a)=\Delta_2(4n,a)$. Furthermore, we have the following relations.

	\begin{enumerate}[label=(\roman*)]

		\item\label{it.sfo2} $\Delta_1(4n,a)\cap \Delta_3(4n,a)=\widehat{\Gamma}_1(4n,a)$.
		
		The proof is similar to the proof of \ref{it.sfe2} in Proposition \ref{prop.sfe}.

		\item\label{it.sfo3} $\Delta_2(4n,a)\cap \Gamma(4n)=\widehat{\Gamma}_2(4n,a)$
		
		$\widehat{\Gamma}_2(4n,a)\subset \Delta_2(4n,a)\cap \Gamma(4n)$. Let $v(4n,c)\in\widehat{\Gamma}_{2,r}(4n,a)$, then $v(4n,c)\in \Delta_{2,r}(4n,a)$. It is clear that $v(4n,c)\in \Gamma(4n)$.
		
		$\Delta_2(4n,a)\cap \Gamma(4n)\subset\widehat{\Gamma}_2(4n,a)$. Let $v(4n,c)\in \Delta_{2,r}(4n,a)\cap \Gamma(4n)$ for some $r\in\operatorname{pol}(4n,a,\delta(4n))$. As $v(4n,c)\in \Delta_{2,r}$, we get that $\frac{p_s-1}{2}\le c_s\le p_s-2$ or $c_s=1$ if $1\le s<r$ and $\frac{p_r+1}{2}\le c_r\le p_r-2$. For $r<s\le\ell$, if $s\in\operatorname{pol}(4n,a)$, then $c_s=1$. If $s\notin\operatorname{pol}(4n,a)$, then $1\le c_s\le p_s-2$. Moreover, because $v(n,c)\in\Gamma(4n)$, we know that if $r<s\le\ell$ and $s\notin\operatorname{pol}(4n,a)$, we get that $\frac{p_s-1}{2}\le c_s\le p_s-2$ or $c_s=1$. So $v(4n,c)\in\widehat{\Gamma}_{2,r}(4n,a)$.
		
		\item $\Delta_3(4n,a)\backslash\widehat{\Gamma}_1(4n,a)=\Gamma_3(4n,a).$

		$\Gamma_3(4n,a)\subset\Delta_3(4n,a)\backslash\widehat{\Gamma}_1(4n,a).$		
		
		Let $v(4n,c)\in\Gamma_3(4n,a)$. Let $r:=\operatorname{min}\{\delta(4n)<s\le\ell\mid c_s\ne 1\}$. Then $v(4n,a)\in\Delta_{3,r}(4n,a)$. Let $v(4n,d)\in\widehat{\Gamma}_{1,t}(4n,a)$ for some $t\in\operatorname{pol}(4n,a,\delta(4n))$. Then $2\le  d_t\le\frac{p_t-1}{2}$, and $c_t=1$ or $\frac{p_t+1}{2}\le c_t\le p_t-2$. So $v(4n,c)\ne v(4n,d)$ in $\widehat{X^{4n}}$ and hence $v(4n,c)\notin\widehat{\Gamma}_1(4n,a)$. Thus $v(4n,c)\in\Delta_3(4n,a)\backslash\widehat{\Gamma}_1(4n,a)$.

		$\Delta_3(4n,a)\backslash\widehat{\Gamma}_1(4n,a)\subset\Gamma_3(4n,a).$
		
		Let $v(4n,c)\in\Delta_3(4n,a)\backslash\widehat{\Gamma}_1(4n,a)$. Because $v(4n,c)\in\Delta_3(4n,a)$, we know that if $s\in\operatorname{pol}(4n,a,\delta(4n))$, then $1\le c_s\le p_s-2$. If $s\notin\operatorname{pol}(4n,a,\delta(4n))$, then $c_s=1$. Let $D:=\{s\mid 2\le c_s\le\frac{p_s-1}{2}\}$. If $D=\emptyset$, then $v(4n,c)\in\Gamma_3(4n,a)$ and we are done. Assume $D\ne\emptyset$. Let $r:=\operatorname{min}(D)$. Then $v(4n,c)\in\widehat{\Gamma}_{1,r}(4n,a)$, a contradiction. 
		
		\item $\Gamma(4n)\backslash\widehat{\Gamma}_2(4n,a)=\Gamma_4(4n,a).$

		$\Gamma_4(4n,a)\subset\Gamma(4n)\backslash\widehat{\Gamma}_2(4n,a).$
		
		Let $v(4n,c)\in\Gamma_4(4n,a).$ It is clear that $v(4n,c)\in\Gamma(4n)$. Let $v(4n,d)\in\widehat{\Gamma}_{2,r}(4n,a)$ for some $r\in\operatorname{pol}(4n,a,\delta(4n))$. Then $c_r=1$ and $\frac{p_r+1}{2}\le  d_r\le p_r-2$. It follows that $v(4n,c)\ne v(4n,d)$ in $\widehat{X^{4n}}$. Hence $v(4n,c)\in\Gamma(4n)\backslash\widehat{\Gamma}_2(4n,a)$.
		
		$\Gamma(4n)\backslash\widehat{\Gamma}_2(4n,a)\subset\Gamma_4(4n,a).$
		
		Let $v(4n,c)\in\Gamma(4n)\backslash\widehat{\Gamma}_2(4n,a)$. Because $v(4n,c)\in\Gamma(4n)$, we know that $c_s=1$ or $\frac{p_s+1}{2}\le c_s\le p_s-2$ for each $\delta(4n)<s\le\ell$. Let $D:=\{s>\delta(4n)\mid s\in\operatorname{pol}(4n,a)\text{ and } c_s\ne 1\}.$ If $D=\emptyset$, then $v(4n,c)\in\Gamma_4(4n,a)$ and we are done. Assume that $D\ne\emptyset$. Let $r:=\operatorname{max}(D)$. Then $v(4n,c)\in\widehat{\Gamma}_{2,r}(4n,a)$, a contradiction.

	\end{enumerate}

	Moreover, if $v\in\widehat{\Gamma}_1(4n,a)$, then $\operatorname{multi}_v(L_1(4n,a))=-\operatorname{multi}_v(L_3(4n,a))$. If $v\in\widehat{\Gamma}_2(4n,a)$, then $\operatorname{multi}_v(L_2(4n,a))=-\operatorname{multi}_v(W)$. 
	
	To summarize, we get that $v(4n,a)=K_1(4n,a) K_2(4n,a) K_3(4n,a) K_4(4n,a)$.	
\end{proof}

\begin{remark}
	Let $n\in\mathbb{N}_{> 3}$ be odd, square-free and not a prime. If $3\mid n$, let $\delta(n):=1$ and if $3\nmid n$, let $\delta(n):=0$. Each statement of this section still holds if one replace $4n$ in $v(4n,a)$ by $n$. Only minor modifications are needed accordingly.
\end{remark}

\section{The Four times Squarefree case }\label{sec.sf1}

	Suppose that $n$ is odd, square-free and not a prime. The primary focus of this section is studying the solutions in $G$ to Equation \eqref{eq.ne4} while imposing the condition that the denominator of the solutions is $4n$. Our goal is to prove the following theorem.

\begin{theorem}\label{th.4msf}
	Let $n\in\mathbb{N}_{\ge 3}$ be odd, square-free and not a prime. Assume that each prime factor of $n$ is  greater than $11$. Assume that $(x_0,x_1,x_2,x_3,x_4)$ is a solution to \eqref{eq.ne4} in $G$ with $0<x_i<\frac{\pi}{2}$ and $\operatorname{den}(x_i)=4n$ for each $0\le i\le 4$. Then $(x_0,x_1,x_2,x_3,x_4)\in\Phi_{1,1}$.
\end{theorem}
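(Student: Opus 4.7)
The plan is to transport Equation \eqref{eq.ne4} into the free abelian group $\widehat{X^{4n}}$ and compare coefficients in Conrad's basis $B_{4n}$. Write $x_i = a_i\pi/(4n)$ with $0 < a_i < 2n$ and $\gcd(4n, a_i) = 1$ for $0 \le i \le 4$. Since $4 \mid 4n$, Corollary \ref{cor.tarq}(iii) gives $\tan x_i = v(4n, a_i)^2$ in $\widehat{X^{4n}}$, so Equation \eqref{eq.ne4} becomes $v(4n, a_0)^4 = \prod_{i=1}^4 v(4n, a_i)^2$. Because $\widehat{X^{4n}}$ is free abelian of finite rank, we may divide exponents by $2$ to obtain
\[
v(4n, a_0)^2 \;=\; v(4n, a_1)\, v(4n, a_2)\, v(4n, a_3)\, v(4n, a_4) \quad \text{in } \widehat{X^{4n}}.
\]

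Next, I expand each $v(4n, a_i)$ in $B_{4n}$ using the representation from Section \ref{sec.bassf} appropriate to the triple $(\operatorname{len}(4n, a_i),\, \operatorname{pol}(4n, a_i),\, a_{i,\tau(a_i)})$: Lemma \ref{lemma.basic0} or \ref{lemma.basic} when $\overline{\operatorname{pol}}(4n, a_i, \delta(4n)) = \emptyset$, Propositions \ref{prop.barep1} and \ref{prop.barep1-2} when $\delta(4n) < \operatorname{len}(4n, a_i) < \ell$ with non-trivial interior poles, and Proposition \ref{prop.sfe} together with its odd-$\ell$ analogue in \ref{em.sfoa} when $\operatorname{len}(4n, a_i) = \ell$. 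In every case the expansion is a signed product of distinct elements of $B_{4n}$, so $\operatorname{multi}_v(v(4n, a_i)) \in \{-1, 0, +1\}$ for every $v \in B_{4n}$, while $\operatorname{multi}_v(v(4n, a_0)^2) \in \{-2, 0, 2\}$. Comparing multiplicities at each basis element yields the constraint
\[
\sum_{i=1}^4 \operatorname{multi}_v(v(4n, a_i)) \;=\; 2\,\operatorname{multi}_v(v(4n, a_0)) \quad \text{for every } v \in B_{4n},
\]
a severe parity-and-cancellation condition on the tuple $(a_1, \dots, a_4)$.

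The heart of the argument is to pick basis elements $v \in B_{4n}$ of maximal cluster length in the support of $v(4n, a_0)$. On the left, such a $v$ contributes multiplicity $\pm 2$, while each summand on the right lies in $\{-1, 0, +1\}$, so at least two distinct $v(4n, a_i)$'s must have $v$ in the support of their basis expansion. The structural conditions defining the $\Gamma$- and $\widehat{\Gamma}$-sets of Section \ref{sec.bassf} then force the residue forms of these $a_i$'s to agree with that of $a_0$ at almost all coordinates, and a uniqueness-of-residue-form argument in the spirit of Lemma \ref{le.cong4p} (using Lemma \ref{le.rf}) pins each such $a_i$ down to either $a_0$ or $2n - a_0$. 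Carefully tracking the sign factors $(-1)^{e(a)}$ and $(-1)^{f(a)}$ and propagating the matching through the cluster-length strata of the remaining $a_j$'s shows that, up to reordering, $a_1 = a_2 = a_0$ and $a_3 + a_4 = 2n$, so $x_1 = x_2 = x_0$ and $x_3 + x_4 = \pi/2$, placing the tuple in $\Phi_{1,1}$. The main obstacle is ruling out alternative configurations in which several $v(4n, a_i)$'s with distinct but overlapping supports conspire to match $v(4n, a_0)^2$ through accidental cancellations; the hypothesis that every prime factor of $n$ exceeds $11$ enters precisely here, since for each prime $p_s \mid n$ it guarantees at least $(p_s - 3)/2 \ge 5$ distinct middle entries in the corresponding residue-form slot, providing enough room to separate the competing configurations and exclude all but the diagonal matching above.
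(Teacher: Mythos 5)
Your setup is exactly the paper's: pass to $\widehat{X^{4n}}$, use Corollary~\ref{cor.tarq}, expand each $v(4n,a_i)$ in Conrad's basis through the representations of Section~\ref{sec.bassf}, and observe that every multiplicity $\operatorname{multi}_v(v(4n,a_i))$ lies in $\{-1,0,1\}$. The division-by-two step from $v(4n,a_0)^4=\prod_i v(4n,a_i)^2$ to $v(4n,a_0)^2=\prod_i v(4n,a_i)$ is legitimate since $\widehat{X^{4n}}$ is torsion-free, and your explanation of why the lower bound $>11$ on the prime factors matters (room to choose a basis element avoiding the residues forced by the other $a_k$'s) is the correct reason.

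The gap is in the paragraph you call the heart of the argument. You propose to pick $v\in B_{4n}$ of maximal cluster length inside $\operatorname{supp}(v(4n,a_0))$ and to claim that the two $a_i$ contributing at $v$ must then agree with $a_0$ at almost all coordinates, pinned down ``in the spirit of Lemma~\ref{le.cong4p}.'' This does not transfer: Lemma~\ref{le.cong4p} is a prime-level fact where $\operatorname{supp}(v(4n,a))$ is a single basis element, whereas for non-prime squarefree $n$ each $\operatorname{supp}(v(4n,a_i))$ contains exponentially many (in $\ell$) basis elements, and a single $v$ can lie in the supports of many distinct $a_i$'s without forcing agreement of residue forms. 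In particular, when $\operatorname{len}(4n,a_0)<\ell$ but some $\operatorname{len}(4n,a_i)=\ell$, the large supports from \ref{ss.fuev} and \ref{em.sfoa} can contain the basis elements you want to use, so the multiplicity comparison at such $v$ gives nothing. The paper's proof avoids this by first stratifying $a_0,\dots,a_4$ by the invariants $\operatorname{len}$, $\operatorname{pmin}$, $|\overline{\operatorname{pol}}|$, $|\widehat{\operatorname{pol}}|$ (Lemmas~\ref{le.sfo}, \ref{le.sfe}, \ref{le.sfin}), choosing a \emph{maximal} index $i\ne 0$, and constructing a $v\in\operatorname{supp}(v(4n,a_i))$ that is provably \emph{not} in $\operatorname{supp}(v(4n,a_k))$ for $k$ in lower strata — in particular $\operatorname{multi}_v(\tan x_0)=0$ — so that cancellation is forced on the right-hand side and Lemma~\ref{le.congsf4dn} yields $a_i+a_j=2n$. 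Only after that reduction, once $a_0$ is known to be extremal, does the argument shift to basis elements in $\operatorname{supp}(v(4n,a_0))$ to conclude $a_1=a_2=a_0$. Your sketch omits the stratification and the exclusion of lower strata entirely, so the step ``this pins down each $a_i$'' is asserted rather than proved and would fail exactly where the paper's case analysis is doing its work.
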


We postpone the proof of Theorem \ref{th.4msf} to the end of this section.

\begin{lemma}\label{le.congsf4dn}
	Let $n\in\mathbb{N}$ satisfying $4\mid n$. Let $n=p_1^{e_1}\cdots p_\ell^{e_\ell}$ be the prime factorization of $n$. Let $a,b\in\mathbb{N}$ be such that $0< a, b< \frac{n}{2}$ and $\operatorname{gcd}(n,ab)=1$. Let $(\epsilon_a a)_s$ and $(\epsilon_a a)_s$ be the elements associated to $\epsilon_a a$ and $\epsilon_b b$ with respect to $4n$ as defined in \ref{ss.rf}. Then

	\begin{enumerate}
		\item $(\epsilon_a a)_s=(\epsilon_b b)_s$ for each $1\le s\le\ell$ if and only if $a=b$.
		\item $(\epsilon_a a)_s=(\epsilon_b b)_s$ for each $2\le s\le\ell$, $\widehat{(\epsilon_a a)}_1=\widehat{(\epsilon_b b)}_1$ and $\overline{(\epsilon_a a)}_1\ne \overline{(\epsilon_b b)}_1$ if and only if $a+b=\frac{n}{2}$.
		
	\end{enumerate}
\end{lemma}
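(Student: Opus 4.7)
The plan is to translate the componentwise conditions on residue forms into congruences modulo $n$ via the Chinese Remainder Theorem, and then use the size restriction $0<a,b<n/2$ to pin down the possible relations. Write $n=2^{e_1}p_2^{e_2}\cdots p_\ell^{e_\ell}$ with $e_1\ge 2$. Since $\operatorname{gcd}(n,ab)=1$, both $a$ and $b$ are odd, so $\widehat{(\epsilon_a a)}_1=\widehat{(\epsilon_b b)}_1=1$ holds automatically.

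For part (i), I would observe that $(\epsilon_a a)_s=(\epsilon_b b)_s$ for every $s$ is equivalent, via CRT, to $\epsilon_a a\equiv \epsilon_b b\pmod n$. A case split on the signs then finishes. If $\epsilon_a=\epsilon_b$ then $a\equiv b\pmod n$, and the bounds $0<a,b<n/2$ force $a=b$. If $\epsilon_a=-\epsilon_b$ then $a+b\equiv 0\pmod n$, which is impossible since $0<a+b<n$. The reverse direction is immediate.

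For the forward direction of part (ii), the hypothesis package translates, again by CRT, to $\epsilon_a a\equiv \epsilon_b b\pmod{p_s^{e_s}}$ for $s\ge 2$, $\epsilon_a a\equiv \epsilon_b b\pmod{2^{e_1-1}}$, and $\epsilon_a a\not\equiv \epsilon_b b\pmod{2^{e_1}}$. The last two combine into $\epsilon_a a-\epsilon_b b\equiv 2^{e_1-1}\pmod{2^{e_1}}$, and a short verification that $n/2\equiv 2^{e_1-1}\pmod{2^{e_1}}$ and $n/2\equiv 0\pmod{n/2^{e_1}}$ then yields $\epsilon_a a-\epsilon_b b\equiv n/2\pmod n$. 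Splitting on signs once more: if $\epsilon_a=\epsilon_b$ then $|a-b|\equiv n/2\pmod n$ is impossible because $|a-b|<n/2$; if $\epsilon_a=-\epsilon_b$ then $a+b\equiv \pm n/2\pmod n$, and $0<a+b<n$ forces $a+b=n/2$.

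For the converse of part (ii), I would assume $a+b=n/2$. Reducing modulo $p_s$ for $s\ge 2$ gives $a_s+b_s=p_s$, and a case analysis at the distinguished index $s_0$ corresponding to the smallest prime factor of $n$ that is at least $5$, using the four possible locations of $a_{s_0}$ in the definition of $\epsilon_a$ together with the symmetry $b_{s_0}=p_{s_0}-a_{s_0}$, shows that $\epsilon_a=-\epsilon_b$. From here the equality $(\epsilon_a a)_s=(\epsilon_b b)_s$ for $s\ge 2$ is immediate, and the $2$-adic conditions follow from $a+b=n/2\equiv 2^{e_1-1}\pmod{2^{e_1}}$. The main bookkeeping obstacle is this sign analysis, ensuring $\epsilon_a$ and $\epsilon_b$ always take opposite values when $a+b=n/2$; beyond that, the argument is a straightforward CRT computation.
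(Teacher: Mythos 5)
Your proposal is correct and takes essentially the same route as the paper: both reduce the residue-form conditions to a single congruence modulo $n$ via the Chinese Remainder Theorem (the paper packages the $2$-adic part by noting that adding $\tfrac{n}{2}$ to $\epsilon_b b$ fixes all residues except flipping $\overline{(\,\cdot\,)}_1$) and then use the bounds $0<a,b<\tfrac{n}{2}$, with your write-up merely making explicit the converse of (ii), including the sign check $\epsilon_a=-\epsilon_b$, which the paper leaves implicit. One harmless inaccuracy: your opening remark that $\widehat{(\epsilon_a a)}_1=\widehat{(\epsilon_b b)}_1=1$ automatically holds is only valid when $e_1=2$, not when $8\mid n$; since your argument never uses it, nothing breaks.
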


\begin{proof}
	
	We only show $(ii)$. $(i)$ is similar. Note that $(\epsilon_b b)_s\equiv(\epsilon_b b+\frac{n}{2})_s$ for each $2\le s\le\ell$ and
	
	$\widehat{(\epsilon_b b)}_1=\widehat{(\epsilon_b b+\frac{n}{2})}_1$ and $\overline{(\epsilon_b b)}_1\ne \overline{(\epsilon_b b+\frac{n}{2})}_1$. So $(\epsilon_b b+\frac{n}{2})_s=(\epsilon_a a)_s$ for each $1\le s\le\ell$. By the Chinese Remainder Theorem, we get that $\epsilon_b b+\frac{n}{2}\equiv \epsilon_a a\operatorname{mod} n$. Hence $a\equiv b+\frac{n}{2}\operatorname{mod} n$ or $a\equiv -b+\frac{n}{2}\operatorname{mod} n$. Because $0< a, b< \frac{n}{2}$, it follows that $a+b=\frac{n}{2}$. 	
\end{proof}

\begin{emp}\label{em.sfproof4n}
	Let $n\in\mathbb{N}_{\ge 3}$ be odd, square-free and not a prime. Assume that every prime factor of $n$ is greater than $11$. Keep the notation convention in \ref{ss.sf}. Let $(x_0,x_1,x_2,x_3,x_4)$ be a solution to \eqref{eq.ne4} in $G$ with $0<x_i<\frac{\pi}{2}$ and the denominator of $x_i$ equals $4n$ for $0\le i\le 4$. Let $x_i'$ be the numerator of $x_i$ for each $0\le i\le 4$. In the following, if $v=v(n,a)$, we also denote the relative support of $v$ by $\operatorname{supp}(n,a)$. 
	
\end{emp}

\begin{lemma}\label{le.sfo}
		Let $n\in\mathbb{N}_{> 3}$ be odd, square-free and not a prime. Assume that $(x_0,x_1,x_2,x_3,x_4)$ is a solution to \eqref{eq.ne4} in $G$ with $0<x_k<\frac{\pi}{2}$ and $\operatorname{den}(x_k)=4n$ for each $0\le k\le 4$. Keep the notation in \ref{em.sfproof4n}. Assume that every prime factor of $n$ is greater than $11$ and $\ell$ is odd. Let $1\le i\le 4$. Assume that $x_0$ and $x_i$ satisfy one of the following conditions.
	
	\begin{enumerate}
		\item $\operatorname{len}(4n, x_0')<\operatorname{len}(4n, x_i')=\ell$.
		\item $\operatorname{len}(4n, x_0')=\operatorname{len}(4n, x_i')=\ell$ and  $\operatorname{max}(|\operatorname{pol}(4n,x_0,1)|,\ell-1-|\operatorname{pol}(4n,x_0,1))|<\\
		\operatorname{max}(|\operatorname{pol}(4n,x_i,1)|,\ell-1-|\operatorname{pol}(4n,x_i,1)|)$.
	\end{enumerate}

 	Then there exists $1\le j\le 4$ such that $j\ne i$ and $x_i+x_j=\frac{\pi}{2}$.
\end{lemma}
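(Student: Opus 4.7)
The strategy is to pass to the quotient group $\widehat{X^{4n}}$, express both sides of Equation \eqref{eq.ne4} in Conrad's basis using the expansions developed in Section \ref{sec.bassf}, and compare multiplicities at a carefully chosen basis element. By Corollary \ref{cor.tarq}(iii) we have $\tan x_k = v(4n, x_k')^2$ in $\widehat{X^{4n}}$, so the equation becomes
\[
v(4n, x_0')^4 = \prod_{k=1}^4 v(4n, x_k')^2 \quad \text{in } \widehat{X^{4n}}.
\]
Since $\widehat{X^{4n}}$ is torsion-free (being a quotient of free abelian groups by a pure subgroup, cf.\ the preceding discussion), this forces $v(4n, x_0')^2 = \prod_{k=1}^4 v(4n, x_k')$ in $\widehat{X^{4n}}$.

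The key idea is to locate a basis element $v_\ast \in B_{4n}$ that lies in $\operatorname{supp}(v(4n, x_i'))$ but with a multiplicity larger than what $v(4n, x_0')^2$ can contribute, forcing cancellation from some other $v(4n, x_j')$. In Case~(1), since $\operatorname{len}(4n, x_i') = \ell$ (with $\ell$ odd), the expansion from Section \ref{em.sfoa} produces basis elements in $\Gamma_3(4n, x_i')$ or $\Gamma_4(4n, x_i')$ whose residue form is clustered across all $\ell$ coordinates (each coordinate $s$ is either $1$ or in $[\tfrac{p_s+1}{2}, p_s - 2]$). Because $\operatorname{len}(4n, x_0') < \ell$, the expansion of $v(4n, x_0')$ given by Proposition \ref{prop.barep1} or \ref{prop.barep1-2} contains no basis element with this full-cluster shape: at index $\tau(x_0') \le \ell$ the residue is pinned to $\overline{(x_0')}_{\tau(x_0')} \notin \{1, p_{\tau(x_0')}-1\}$ in every supporting element. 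Pick $v_\ast$ appearing in $\operatorname{supp}(v(4n, x_i'))$ with this full-cluster profile; then $\operatorname{multi}_{v_\ast}(v(4n, x_0')^2) = 0$ while $\operatorname{multi}_{v_\ast}(v(4n, x_i')) = \pm 1$. In Case~(2), both $v(4n, x_0')$ and $v(4n, x_i')$ have full-cluster expansions, but the hypothesis $\max(|\operatorname{pol}(4n, x_0, 1)|, \ell-1-|\operatorname{pol}(4n, x_0, 1)|) < \max(|\operatorname{pol}(4n, x_i, 1)|, \ell-1-|\operatorname{pol}(4n, x_i, 1)|)$ guarantees the existence of a basis element $v_\ast$ in one of $\Gamma_3(4n, x_i'), \Gamma_4(4n, x_i')$ whose pole/non-pole pattern at coordinates $2, \ldots, \ell$ cannot be realized by any supporting basis element of $v(4n, x_0')$.

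Once $v_\ast$ is fixed, the basis equality $\operatorname{multi}_{v_\ast}(v(4n, x_0')^2) = \sum_{k=1}^4 \operatorname{multi}_{v_\ast}(v(4n, x_k'))$ forces one of the $v(4n, x_j')$ with $j \ne i$ to carry $v_\ast$ with multiplicity opposite to that of $v(4n, x_i')$. Tracking the coordinate-wise residue forms from the explicit $\Gamma$-sets, this coincidence at $v_\ast$ together with the hypothesis that every prime factor of $n$ exceeds $11$ (used to avoid degenerate small-prime coincidences among the basis-supported residues) actually propagates to show $v(4n, x_j') = v(4n, x_i')^{-1}$ as elements of $\widehat{X^{4n}}$. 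By Lemma \ref{le.congsf4dn}(ii) (applied with the modulus $4n$ in the role of $n$), the identity $v(4n, x_j') v(4n, x_i') = 1$ together with $0 < x_i', x_j' < 2n$ forces $x_i' + x_j' = 2n$, i.e.\ $x_i + x_j = \tfrac{\pi}{2}$.

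The main technical obstacle will be the bookkeeping in Case~(2): verifying that, regardless of which of the four $\Gamma$-sets in \ref{em.sfoa} contains the distinguished basis element $v_\ast$, the sign structure $(-1)^{e(x_i')+\varepsilon}$ appearing in the exponent forces an unambiguous cancellation partner among $\{x_1, \ldots, x_4\} \setminus \{x_i\}$. This requires handling the parity of $\operatorname{ord}(t)$ and the interaction between the ``hat'' subsets $\widehat{\Gamma}_j(4n, a)$ and the genuine supports $\Gamma_j(4n, a)\setminus\widehat{\Gamma}_j(4n, a)$, but follows the same template used in the proofs of Propositions \ref{prop.barep1}, \ref{prop.barep1-2}, and their analogues in Section \ref{em.sfoa}.
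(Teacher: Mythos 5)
Your overall plan is in the right family: pass to $\widehat{X^{4n}}$, expand both sides in Conrad's basis via the Section~\ref{sec.bassf} decompositions, locate a basis element that $v(4n,x_0')$ cannot support, and deduce a cancellation partner. That is indeed the skeleton of the paper's proof. The opening reduction (using torsion-freeness to divide by $2$) is fine, and the intuition that the case $\operatorname{len}(4n,x_i')=\ell$ with $\ell$ odd uses the full-cluster basis elements of Section~\ref{em.sfoa} is correct. The role of the restriction on prime factors is, however, only vaguely gestured at: the condition ``greater than $11$'' exists precisely to guarantee that a suitable \emph{family} of basis elements is nonempty even after excluding residues forced by the other $x_k$; it is not about ``avoiding degenerate small-prime coincidences among the basis-supported residues.''

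The genuine gap is at the step where you claim that the coincidence at a single basis element $v_\ast$, ``together with the hypothesis that every prime factor of $n$ exceeds $11$, actually propagates to show $v(4n,x_j')=v(4n,x_i')^{-1}$.'' No argument is given, and no such propagation holds in general: two elements of $X^{4n}$ can share one basis element of their supports without being inverse to one another. You also never control the contributions of the other $x_k$ ($k\neq 0,i$) at $v_\ast$, so the single equation $\operatorname{multi}_{v_\ast}(v(4n,x_0')^2)=\sum_k\operatorname{multi}_{v_\ast}(v(4n,x_k'))$ only says that the multiplicities over $k\neq i$ sum to $\mp 1$; it does not single out a $j$ with $x_i+x_j=\tfrac{\pi}{2}$. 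What actually makes the argument work in the paper is a two-part structure you omit: (a) one partitions indices by $\operatorname{len}$ and by pole set ($\lambda_1,\lambda_2,\lambda_3$), constructs not a single $v_\ast$ but a whole set $V$ of basis elements (nonempty precisely because primes exceed $11$) such that every $k\notin\lambda_3$ has $\operatorname{multi}_v(\operatorname{tan}x_k)=0$ for every $v\in V$; and (b) one observes that, since $\operatorname{len}(4n,x_k')=\ell$, an index $k\in\lambda_3$ is determined up to the binary choice at the $2$-adic coordinate, so a parity count of $\operatorname{multi}_v$ over $\lambda_3$ forces the existence of $j\in\lambda_3$ with $(\epsilon_{x_j'}x_j')_1\ne(\epsilon_{x_i'}x_i')_1$, whence Lemma~\ref{le.congsf4dn}(ii) gives $x_i+x_j=\tfrac{\pi}{2}$. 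Without step (a) to neutralize the interference of the remaining $x_k$, and without the structural observation in (b) that full-cluster residue forms are pinned down by pole set plus the $s=1$ coordinate, a multiplicity match at one $v_\ast$ is insufficient. You would also need to address the implicit maximality assumption on $i$ handled in Remark~\ref{rk.assum}, which your sketch does not touch.
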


\begin{proof}

Recall that \[\operatorname{tan}x_k=v(4n,x_k')^2\]
in $\widehat{X^{4n}}$ for each $0\le k\le 4$. See Corollary \ref{cor.tarq}. Let 

$$\lambda_1:=\{0\le k\le 4\mid \operatorname{len}(4n,x_k')<\ell\}.$$
	
	Let $$\lambda_2:=\{0\le k\le 4\mid \operatorname{len}(4n,x_k')=\ell\}.$$

	By the assumption, we know that $i\in\lambda_2$. We assume that $\operatorname{max}(|\operatorname{pol}(4n,x_k,1)|,\ell-1-|\operatorname{pol}(4n,x_k,1)|)\le
	\operatorname{max}(|\operatorname{pol}(4n,x_i,1)|,\ell-1-|\operatorname{pol}(4n,x_i,1)|)$ for each $k\in\lambda_2$. See Remark \ref{rk.assum}.	We further assume that $|\operatorname{pol}(4n,x_i,1)|\ge\ell-1-|\operatorname{pol}(4n,x_i,1)|$. The case $|\operatorname{pol}(4n,x_i,1)|<\ell-1-|\operatorname{pol}(4n,x_i,1)|$ works similarly.
	
	Let

	$$\lambda_3:=\{k\in\lambda_2\mid \operatorname{pol}(4n,x_k',1)=\operatorname{pol}(4n,x_i',1)\}.$$

	By the assumption, we know that $0\notin\lambda_4$.

	\begin{claim}\label{cl.larset}
		$|\lambda_3|\ge 2$ and there exists $j\in\lambda_3$ such that $(\epsilon_{x_j'}x_j')_1\ne (\epsilon_{x_i'}x_i')_1$.
	\end{claim}

	Let $V$ be the set which consists of elements $(a_1,\dots,a_\ell)_{4n}\in \operatorname{supp}(4n,x_i')$ satisfy the following two conditions.	
	
	\begin{enumerate}
		\item $a_1=(0,1).$
		\item For $2\le s\le\ell$, if $s\in\operatorname{pol}(4n,x_i')$, then $\frac{p_s+1}{2}\le a_s\le p_s-2$; if $s\notin\operatorname{pol}(4n,x_i')$, then $a_s=1$. Moreover, if $s=\tau(x_k')$ for some $k\in\lambda_1$, then $a_s\ne (\epsilon_{x_k'} x_k')_s$ and $a_s\ne p_s- (\epsilon_{x_k'} x_k')_s$.
	\end{enumerate}

	Since we assumed that every prime factor of $n$ is greater than $11$, it follows that $V\ne\emptyset$.

	Let $v:=v(4n,c)\in V$.

	\begin{enumerate}[label=(\roman*)]
		
		\item Assume that $k\in\lambda_1$. Let $v(4n,d)\in\operatorname{supp}(4n,x_k')$. Let $s:=\tau(x_k')$. If $s\notin\operatorname{pol}(4n,x_i',1)$, then $c_s=1\ne  d_s$. If $s\in\operatorname{pol}(4n,x_i',1)$, then $c_s\ne  d_s$ by the choice of $V$.

		\item Assume that $k\in\lambda_2\backslash\lambda_3$. Let $v(4n,d)\in\operatorname{supp}(4n,x_k')$. If $v(4n,d)\in \Gamma_1(4n,x_k')\cup \Gamma_2(4n,x_k')$, then there exists $2\le s\le\ell$ such that $2\le  d_s\le\frac{p_s-1}{2}$. Since $ \frac{p_t+1}{2}\le c_t\le p_t-2$ or $c_t=1$ for each $t$, $v(4n,c)\ne v(4n,d)$ in $\widehat{X^{4n}}$. 
		
		If $v(4n,d)\in \Gamma_3(4n,x_k')\cup \Gamma_4(4n,x_k')$, then exists $s\in\operatorname{pol}(4n,x_i',1)$ such that $\frac{p_s+1}{2}\le c_s\le p_s-2$ and $ d_s=1$ by the choice of $x_i$. So $v(4n,c)\ne v(4n,d)$ in $\widehat{X^{4n}}$.

	\end{enumerate}

	Hence, if $k\notin\lambda_3$, then  $\operatorname{multi}_v(\operatorname{tan}x_k)=0$. If $|\lambda_3|=1$, or $|\lambda_3|>1$ and for each $k\in\lambda_3$, we have that  $(\epsilon_{x_k'}x_k')_1= (\epsilon_{x_i'}x_i')_1$, then $\operatorname{multi}_v(\prod\limits_{m=1}^4\operatorname{tan}x_m)\ne 0=\operatorname{multi}_v(\operatorname{tan}x_0)$. This is a contradiction. So the claim is true. By Lemma \ref{le.congsf4dn}, $x_i+x_j=\frac{\pi}{2}$.
\end{proof}

\begin{remark}\label{rk.assum}
	In the above proof, we assumed that $\operatorname{max}(|\operatorname{pol}(4n,x_k,1)|,\ell-1-|\operatorname{pol}(4n,x_k,1)|)\le
	\operatorname{max}(|\operatorname{pol}(4n,x_i,1)|,\ell-1-|\operatorname{pol}(4n,x_i,1)|)$ for each $k\in\lambda_2$. If the given $i$ fails to satisfy the condition, we continue the argument using an index $f\in\lambda_2$ such that $\operatorname{max}(|\operatorname{pol}(4n,x_k,1)|,\ell-1-|\operatorname{pol}(4n,x_k,1)|)\le
	\operatorname{max}(|\operatorname{pol}(4n,x_f,1)|,\ell-1-|\operatorname{pol}(4n,x_f,1)|)$ for each $k\in\lambda_2$. At the final step, we can conclude that there exists $g\in\lambda_3$ such that $x_f+x_g=\frac{\pi}{2}$. After cancellation, we are left with an equation of the form $	(\operatorname{tan}x_0)^2=(\operatorname{tan}x_1)(\operatorname{tan}x_2)$. Iterate the argument with this new equation. We eventually obtain $(\operatorname{tan}x_0)^2=1$ and hence $x_0=\frac{\pi}{4}$. This contradicts the given condition $\operatorname{den}(x_0)=4n$ with $n\ge 3$.  Similar assumptions will be made in the proof of Lemma \ref{le.sfe} and Lemma \ref{le.sfin}. They do not impose additional restrictions for the same reason.
\end{remark}

\begin{lemma}\label{le.sfe}

		Let $n\in\mathbb{N}_{> 3}$ be odd, square-free and not a prime. Assume that $(x_0,x_1,x_2,x_3,x_4)$ is a solution to \eqref{eq.ne4} in $G$ with $0<x_k<\frac{\pi}{2}$ and $\operatorname{den}(x_k)=4n$ for each $0\le k\le 4$. Keep the notation in \ref{em.sfproof4n}. Assume that every prime factor of $n$ is greater than $11$ and $\ell$ is even. Let $1\le i\le 4$. Assume that $x_0$ and $x_i$ satisfy one of the following conditions.
	
	\begin{enumerate}
		\item $\operatorname{len}(4n, x_0')<\operatorname{len}(4n, x_i')=\ell$.
		\item $\operatorname{len}(4n, x_0')=\operatorname{len}(4n, x_i')=\ell$ and $\operatorname{pmin}(4n,x_i')<\operatorname{pmin}(4n,x_0')$.
		\item $\operatorname{len}(4n, x_0')=\operatorname{len}(4n, x_i')=\ell$, $\operatorname{pmin}(4n,x_i')=\operatorname{pmin}(4n,x_0')$ and $|\operatorname{pol}(4n,x_0')|<|\operatorname{pol}(4n,x_i')|$.
	\end{enumerate}

	Then there exists $1\le j\le 4$ such that $j\ne i$ and $x_i+x_j=\frac{\pi}{2}$.
\end{lemma}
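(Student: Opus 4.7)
The proof will mirror that of Lemma \ref{le.sfo}, but use the basis representation for even $\ell$ provided by Proposition \ref{prop.sfe}, namely $v(4n,a) = K_1(4n,a) K_2(4n,a) K_3(4n,a)$. Recall $\operatorname{tan} x_k = v(4n,x_k')^2$ in $\widehat{X^{4n}}$ for each $0\le k\le 4$ (Corollary \ref{cor.tarq}). Define $\lambda_1 := \{0\le k\le 4 \mid \operatorname{len}(4n,x_k')<\ell\}$ and $\lambda_2 := \{0\le k\le 4 \mid \operatorname{len}(4n,x_k')=\ell\}$. By hypothesis $i\in \lambda_2$; in case (i) we have $0\in\lambda_1$, while in cases (ii) and (iii), $0\in\lambda_2$. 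As in Remark \ref{rk.assum}, we may replace $i$ if necessary by an index whose pair $(\operatorname{pmin}(4n,x_k'),|\operatorname{pol}(4n,x_k')|)$ is lexicographically largest in the natural (pmin-minimizing, then pole-maximizing) order on $\lambda_2$, without loss of generality. Introduce $\lambda_3 \subseteq \lambda_2$ consisting of those $k$ whose $x_k'$ shares both $\operatorname{pmin}$ and the pole set $\operatorname{pol}(4n,x_k',\delta(4n))$ with $x_i'$. In all three cases the hypothesis forces $0\notin\lambda_3$.

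Write $r := \operatorname{pmin}(4n,x_i')$. We next construct a witness set $V$ of basis elements inside the relative support of $v(4n,x_i')$. Concretely, $V$ consists of $v = (a_1,\dots,a_\ell)_{4n} \in \Gamma_{1,r}(4n,x_i')$ with $a_1 = (0,1)$; $a_s = 1$ for $\delta(4n) < s < r$; $2\le a_r \le (p_r-1)/2$; and for $s > r$, $a_s = 1$ if $s\notin\operatorname{pol}(4n,x_i')$ while $(p_s+1)/2 \le a_s \le p_s-2$ if $s\in\operatorname{pol}(4n,x_i')$; in addition, at any position $s = \tau(x_k')$ for $k\in\lambda_1$ we impose $a_s \ne (\epsilon_{x_k'}x_k')_s$ and $a_s \ne p_s - (\epsilon_{x_k'}x_k')_s$. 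Since every prime factor of $n$ exceeds $11$, there is enough freedom for $V$ to be nonempty. Fix $v \in V$.

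For $k \notin \lambda_3$, we verify $\operatorname{multi}_v(\operatorname{tan} x_k) = 0$. If $k\in\lambda_1$, the coordinate at $\tau(x_k')$ in every support element of $v(4n,x_k')$ disagrees with $a_{\tau(x_k')}$ by construction. If $k\in\lambda_2$ with $\operatorname{pmin}(4n,x_k') > r$, then by Proposition \ref{prop.sfe} every support element of $v(4n,x_k')$ has its $r$-th coordinate equal to $1$ or in $[(p_r+1)/2, p_r-2]$, never in $[2,(p_r-1)/2]$; thus it cannot equal $v$. If $k\in\lambda_2$ shares $\operatorname{pmin}$ with $x_i'$ but has a different pole set (the delicate scenario implicit in case (iii)), there is some position $s > r$ where $x_i'$ has a pole and $x_k'$ does not, or vice versa, and the prescribed $a_s$ then differs from any coordinate that a support element of $v(4n,x_k')$ can take at $s$. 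Consequently $\operatorname{multi}_v(\prod_{m=1}^4 \operatorname{tan} x_m) = \sum_{k\in \lambda_3,\ k\ne i,0} \operatorname{multi}_v(\operatorname{tan} x_k) + \operatorname{multi}_v(\operatorname{tan} x_i)$, whereas $\operatorname{multi}_v(\operatorname{tan} x_0) = 0$ since $0 \notin \lambda_3$. Since $\operatorname{multi}_v(\operatorname{tan} x_i)\ne 0$, the equation \eqref{eq.ne4} forces $|\lambda_3|\ge 2$ together with an index $j\in\lambda_3\setminus\{i\}$ for which $(\epsilon_{x_j'}x_j')_1 \ne (\epsilon_{x_i'}x_i')_1$, just as in Claim \ref{cl.larset}. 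Applying Lemma \ref{le.congsf4dn}(ii) yields $x_i + x_j = \pi/2$.

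The principal obstacle is case (iii), in which $x_0$ and $x_i$ share cluster length $\ell$ and share $\operatorname{pmin}$, but differ in pole count. There one must carefully exploit a pole position $s > r$ of $x_i'$ that is not a pole of $x_0'$ (guaranteed by $|\operatorname{pol}(4n,x_0')| < |\operatorname{pol}(4n,x_i')|$ together with the pmin-minimizing choice of $i$) in order to ensure $\operatorname{multi}_v(\operatorname{tan} x_0) = 0$. A secondary subtlety is to avoid the special basis element $((0,1),1,\dots,1)_{4n}$ produced by $K_3$, which is automatic because any $v \in \Gamma_{1,r}(4n,x_i')$ satisfies $a_r \ne 1$. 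Apart from these bookkeeping points, the argument is a direct even-$\ell$ translation of the odd-$\ell$ reasoning in Lemma \ref{le.sfo}.
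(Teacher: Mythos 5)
Your overall strategy (witness basis element in the support of $v(4n,x_i')$, kill the multiplicity of every $\operatorname{tan}x_k$ with $k\notin\lambda_3$, conclude $|\lambda_3|\ge 2$ and finish with Lemma \ref{le.congsf4dn}) is the paper's strategy, but your witness set $V$ is chosen incorrectly, and this breaks the central step. You put $a_s=1$ at every position $\delta(4n)<s<r$ and $2\le a_r\le\frac{p_r-1}{2}$ at $r=\operatorname{pmin}(4n,x_i')$. By the definitions in \ref{ss.fuev}, an element of $\overline{\Gamma}_{1,r}(4n,x_i')$ whose coordinates at all non-pole positions below $r$ equal $1$ lies in $\widehat{\Gamma}_{1,r}(4n,x_i')$, and since $r$ is the \emph{minimal} pole beyond $\delta(4n)$, every position $\delta(4n)<s<r$ is a non-pole; hence your elements lie in $\widehat{\Gamma}_{1,r}$, which is removed when forming $\Gamma_{1,r}=\overline{\Gamma}_{1,r}\setminus\widehat{\Gamma}_{1,r}$. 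They also cannot lie in any $\Gamma_{1,r'}$ with $r'>r$ (wrong half at $r'$), nor in $\Gamma_2(4n,x_i')$ (the coordinate at $r$ is in the lower half, while $\Gamma_{2,r'}$ forces $1$ or the upper half at positions below $r'$), nor equal $((0,1),1,\dots,1)_{4n}$. So by Proposition \ref{prop.sfe} your $v$ has multiplicity $0$ in $\operatorname{tan}x_i=v(4n,x_i')^2$, the identity $\operatorname{multi}_v(\prod_m\operatorname{tan}x_m)=\operatorname{multi}_v(\operatorname{tan}x_0)=0$ yields no contradiction, and the claim $|\lambda_3|\ge 2$ does not follow. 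The paper avoids exactly this trap by taking \emph{upper-half} values $\frac{p_s+1}{2}\le a_s\le p_s-2$ at all positions $2\le s<\operatorname{pmin}$ (so the element escapes $\widehat{\Gamma}_{1,r}$) and \emph{lower-half} values at the pole positions $\ge\operatorname{pmin}$.

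Two further points. First, your choice of upper-half values at poles $s>r$ also damages the disjointness analysis for indices $k$ sharing $\operatorname{pmin}$ with $x_i$ but having a different pole set: support elements of $v(4n,x_k')$ coming from $\Gamma_{1,r''}(4n,x_k')$ with $r''>s$ are allowed to take upper-half values at the position $s$, so your prescribed $a_s$ need not differ from them; the paper's lower-half choice at such $s$ is what makes that comparison work. Second, you never treat the case $\operatorname{pmin}(4n,x_i')=\ell+1$ (no poles beyond $\delta(4n)$), where your construction has no position $r$ to use; the paper handles this separately with the single witness $V=\{v(4n,1)\}$ and the same counting argument. As written, the proposal therefore has a genuine gap at its key step, even though the architecture matches the intended proof.
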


\begin{proof}

	Let $$\lambda_1:=\{0\le k\le 4\mid \operatorname{len}(4n,x_k')<\ell\}.$$

	Let $$\lambda_2:=\{0\le k\le 4\mid \operatorname{len}(4n,x_k')=\ell\}.$$

	By the assumption, we have that $i\in\lambda_2$. We assume that $\operatorname{pmin}(4n,x_i')\le \operatorname{pmin}(4n,x_k')$ for each $k\in\lambda_2$. See remark \ref{rk.assum}. Let

	$$\lambda_3:=\{k\in\lambda_2\mid \operatorname{pmin}(4n,x_k')=\operatorname{pmin}(4n,x_i')\}.$$ 
	
	Assume that $\operatorname{pmin}(4n,x_i')<\ell+1$. We assume that $|\operatorname{pol}(4n,x_k',1)|\le|\operatorname{pol}(4n,x_i',1)|$ for each $k\in\lambda_3$. Let

	$$\lambda_4:=\{k\in\lambda_3\mid\operatorname{pol}(4n,x_k',1)= \operatorname{pol}(4n,x_i',1)\}.$$

	Observe that $0\notin\lambda_4$ by the assumption.

\begin{claim}
	$|\lambda_4|\ge 2$ and there exists $j\in\lambda_4$ such that $(\epsilon_{x_j'}x_j')_1\ne (\epsilon_{x_i'}x_i')_1$.
\end{claim}

Let $V$ be the set which consists of elements $(a_1,\dots,a_\ell)_{4n}\in\operatorname{supp}(4n,x_i')$ satisfy the following four conditions.

	\begin{enumerate}
	\item $a_1=(0,1)$.
	\item If $2\le s<\operatorname{pmin}(4n,x_i')$, then $\frac{p_s+1}{2}\le a_s\le p_s-2$. In addition, if $s=\tau(x_k')\text{ for some } k\in\lambda_1$, then $a_s\ne (\epsilon_{x_k'}x_k')_s$ and $a_s\ne p_s-(\epsilon_{x_k'}x_k')_s$.
	\item If $\operatorname{pmin}(4n,x_i')\le s\le \ell$ and $s\in\operatorname{pol}(4n,x_i',1)$, then $2\le a_s\le \frac{p_s-1}{2}$. In addition, if $s=\tau(x_k')\text{ for some } k\in\lambda_1$, then $a_s\ne (\epsilon_{x_k'}x_k')_s$ and $a_s\ne p_s-(\epsilon_{x_k'}x_k')_s$.
	\item If $\operatorname{pmin}(4n,x_i')\le s\le\ell$ and $s\notin\operatorname{pol}(4n,x_i',1)$, then $a_s=1$.
\end{enumerate}

	Since we assumed that every prime factor of $n$ is greater than $11$, it follows that $V\ne\emptyset$. Note that $V\subset \Gamma_1(4n,x_i')$.

Let $v:=v(4n,c)\in V$. Let $r=\operatorname{pmin}(4n,x_i')$.

\begin{enumerate}[label=(\roman*)]
	\item Assume that $k\in\lambda_1$. Let $s:=\tau(x_i')$. Let $v(4n,d)\in\operatorname{supp}(4n,x_k')$. If $s\notin\operatorname{pol}(4n,x_i',1)$, then $c_s=1\ne  d_s$. If $s\in\operatorname{pol}(4n,x_i',1)$, then $c_s\ne  d_s$ by the choice of $V$.

	\item Assume that $k\in\lambda_2\backslash\lambda_3$. Let $v(4n,d)\in\operatorname{supp}(4n,x_k')$. Then $2\le c_r\le \frac{p_r-1}{2}$ and $\frac{p_r+1}{2}\le  d_r\le p_r-2$ or $ d_r=1$. So $c_r\ne  d_r$.
	
	\item Assume that $k\in\lambda_3\backslash\lambda_4$. Let $v(4n,d)\in\operatorname{supp}(4n,x_k')$. If $v(4n,d)\in \Gamma_{1,s}(4n,x_k')$ with $s>r$ or $v(4n,d)\in \Gamma_2(4n,x_k')$, then $2\le c_r\le \frac{p_r-1}{2}$ and $\frac{p_r+1}{2}\le  d_r\le p_r-2$ or $ d_r=1$. So $c_r\ne  d_r$. 
	
	Assume that $v(4n,d)\in \Gamma_{1,r}(4n,x_k)$. Then there exists $r< s\le\ell$ such that $2\le c_s\le \frac{p_s-1}{2}$ and $ d_s=1$ by the choice of $x_i$. So $c_s\ne  d_s$.

\end{enumerate}

	So in each of the above case, $v(4n,c)\ne v(4n,d)$ in $\widehat{X^{4n}}$. Hence $\operatorname{multi}_v(\operatorname{tan}x_i)=0$ if $0\le i\le 4$ and $i\notin\lambda_5$.

If $|\lambda_4|=1$, or $|\lambda_4|>1$ and for each $k\in\lambda_4$, we have that  $(\epsilon_{x_k'}x_k')_1= (\epsilon_{x_i'}x_i')_1$, then $\operatorname{multi}_v(\prod\limits_{m=1}^4\operatorname{tan}x_m)\ne 0=\operatorname{multi}_v(\operatorname{tan}x_0)$. This is a contradiction. So the claim is true. By Lemma \ref{le.congsf4dn}, we have that  $x_i+x_j=\frac{\pi}{2}$.

Next, we address the case $\operatorname{pmin}(4n,x_i')=\ell+1$. We claim that $|\lambda_3|\ge 2$ and there exists $j\in\lambda_4$ such that $(\epsilon_{x_j'}x_j')_1\ne (\epsilon_{x_i'}x_i')_1$. Define $V$ as the set consisting of the single element $v(4n,1)$. By applying the same argument as above, the claim follows. Based the claim and the Lemma \ref{le.congsf4dn},  we can deduce that $x_i+x_j=\frac{\pi}{2}$. 
\end{proof}

\begin{lemma}\label{le.sfin}
	
	Let $n\in\mathbb{N}_{> 3}$ be odd, square-free and not a prime. Assume that $(x_0,x_1,x_2,x_3,x_4)$ is a solution to \eqref{eq.ne4} in $G$ satisfying $0<x_k<\frac{\pi}{2}$, $\operatorname{den}(x_k)=4n$ and $\operatorname{len}(4n,x_k')<\ell$ for each $0\le k\le 4$. Keep the notation in \ref{em.sfproof4n}. Assume that every prime factor of $n$ is greater than $11$. Let $1\le i\le 4$. Assume that $x_0$ and $x_i$ satisfy one of the following conditions.

	\begin{enumerate}
		\item $\operatorname{len}(4n, x_0')<\operatorname{len}(4n, x_i')$.
		\item  $1<\operatorname{len}(4n,x_0')=\operatorname{len}(4n,x_i')$ and $\operatorname{pmin}(4n,x_i')<\operatorname{pmin}(4n,x_0')$.
		\item  $1<\operatorname{len}(4n,x_0')=\operatorname{len}(4n,x_i')$, $\operatorname{pmin}(4n,x_0')=\operatorname{pmin}(4n,x_i')$ and $|\overline{\operatorname{pol}}(4n,x_0',1)|<|\overline{\operatorname{pol}}(4n,x_i',1)|$.
		\item $1<\operatorname{len}(4n,x_0')=\operatorname{len}(4n,x_i')$, $\operatorname{pmin}(4n,x_0')=\operatorname{pmin}(4n,x_i')$, $\overline{\operatorname{pol}}(4n,x_0',1)=\overline{\operatorname{pol}}(4n,x_i',1)$ and $|\widehat{\operatorname{pol}}(4n,x_0')|<|\widehat{\operatorname{pol}}(4n,x_i')|$.

	\end{enumerate}

	Then there exists $1\le j\le 4$ such that $j\ne i$ and $x_i+x_j=\frac{\pi}{2}$.
\end{lemma}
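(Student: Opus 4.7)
The plan is to extend the layered discrimination argument from Lemmas \ref{le.sfo} and \ref{le.sfe} by adding a fourth refinement level based on $\widehat{\operatorname{pol}}$. Namely, I will partition the indices $\{0,1,2,3,4\}$ into a nested chain of classes reflecting, in order, the invariants $\operatorname{len}(4n,\cdot)$, $\operatorname{pmin}(4n,\cdot)$, $\overline{\operatorname{pol}}(4n,\cdot,1)$, and $\widehat{\operatorname{pol}}(4n,\cdot)$, and then invoke the basis representations of Propositions \ref{prop.barep1} and \ref{prop.barep1-2} to produce a basis element of $\operatorname{supp}(4n,x_i')$ which vanishes on $\operatorname{tan} x_k$ for every $k$ outside the innermost class.

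Concretely, define
\[
\lambda_1 := \{k : \operatorname{len}(4n,x_k') = \operatorname{len}(4n,x_i')\}, \quad \lambda_2 := \{k \in \lambda_1 : \operatorname{pmin}(4n,x_k') = \operatorname{pmin}(4n,x_i')\},
\]
\[
\lambda_3 := \{k \in \lambda_2 : \overline{\operatorname{pol}}(4n,x_k',1) = \overline{\operatorname{pol}}(4n,x_i',1)\}, \quad \lambda_4 := \{k \in \lambda_3 : \widehat{\operatorname{pol}}(4n,x_k') = \widehat{\operatorname{pol}}(4n,x_i')\}.
\]
By successive applications of Remark \ref{rk.assum}, one may assume that $i$ maximizes $\operatorname{len}$, then minimizes $\operatorname{pmin}$ among indices in $\lambda_1$, then maximizes $|\overline{\operatorname{pol}}(4n,\cdot,1)|$ among indices in $\lambda_2$, and finally maximizes $|\widehat{\operatorname{pol}}(4n,\cdot)|$ among indices in $\lambda_3$. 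Each of conditions (i)--(iv) then forces $0 \notin \lambda_4$, while $i \in \lambda_4$.

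Next I would construct a nonempty discriminating set $V \subset \operatorname{supp}(4n,x_i')$ using Proposition \ref{prop.barep1} when $(\epsilon_{x_i'}x_i')_{\tau(x_i')} \ge \frac{p_{\tau(x_i')}+1}{2}$ and Proposition \ref{prop.barep1-2} otherwise. A typical element $v = (c_1,\ldots,c_\ell)_{4n} \in V$ is chosen so that: $c_1 = (0,1)$, and $c_2 = 1$ if $3 \mid n$; for $\delta(4n) < s < \operatorname{pmin}(4n,x_i')$, one has $c_s = 1$ or $\frac{p_s+1}{2} \le c_s \le p_s-2$, picked to avoid $(\epsilon_{x_k'}x_k')_s$ and $p_s - (\epsilon_{x_k'}x_k')_s$ at every $s = \tau(x_k')$ with $k \notin \lambda_1$; at $s = \operatorname{pmin}(4n,x_i')$, one has $2 \le c_s \le \frac{p_s-1}{2}$; at positions beyond $\operatorname{pmin}$, $c_s$ follows the prescription of the appropriate $\Gamma_j(4n,x_i')$ while the coordinates at indices of $\widehat{\operatorname{pol}}(4n,x_i')$ are chosen to separate indices in $\lambda_3 \setminus \lambda_4$. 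A four-case analysis, parallel to that in Lemma \ref{le.sfe}, then shows $\operatorname{multi}_v(\operatorname{tan} x_k) = 0$ for all $k \notin \lambda_4$: indices $k \notin \lambda_1$ are killed at position $\tau(x_k')$; indices $k \in \lambda_1 \setminus \lambda_2$ at position $\operatorname{pmin}(4n,x_i')$; indices $k \in \lambda_2 \setminus \lambda_3$ at a pole position inside $\overline{\operatorname{pol}}$ where the two sets disagree; indices $k \in \lambda_3 \setminus \lambda_4$ at a pole position beyond $\operatorname{len}$ where $\widehat{\operatorname{pol}}$ disagrees. Since $0 \notin \lambda_4$, equation \eqref{eq.ne4} then yields $\sum_{k \in \lambda_4 \cap \{1,\ldots,4\}} \operatorname{multi}_v(\operatorname{tan} x_k) = 0$; as $i \in \lambda_4$ contributes nontrivially, there exists $j \in \lambda_4 \cap \{1,\ldots,4\}$ with $j \ne i$ contributing oppositely. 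The defining equalities for $\lambda_4$ together with this cancellation force $(\epsilon_{x_j'}x_j')_s = (\epsilon_{x_i'}x_i')_s$ for every $s \ge 2$ with $\widehat{(\epsilon_{x_j'}x_j')}_1 = \widehat{(\epsilon_{x_i'}x_i')}_1$ but $\overline{(\epsilon_{x_j'}x_j')}_1 \ne \overline{(\epsilon_{x_i'}x_i')}_1$, so Lemma \ref{le.congsf4dn}(ii) delivers $x_i' + x_j' = 2n$, i.e., $x_i + x_j = \frac{\pi}{2}$.

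The main obstacle will be verifying that $V$ is nonempty and that the chosen $v$ discriminates \emph{simultaneously} against every family $\Gamma_j(4n,x_k')$ appearing in the basis decomposition of each $v(4n,x_k')$ for $k \notin \lambda_4$; there are many such families and the avoidance constraints they impose at individual coordinates must all be compatible. The hypothesis that every prime factor of $n$ exceeds $11$ is crucial here to guarantee that the admissible ranges at each coordinate remain nonempty after excluding the forbidden values. A secondary technical point is the boundary case $\operatorname{len}(4n,x_i') \le 1$, which can only arise under condition (i) (since (ii)--(iv) require $\operatorname{len}(4n,x_0') > 1$); this reduces to a direct argument with $V$ consisting of a single basis element at the unique non-trivial cluster coordinate, analogous to the end of the proof of Lemma \ref{le.sfo}.
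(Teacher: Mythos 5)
Your proposal reproduces the paper's overall strategy faithfully: the nested $\lambda$-classes, the reduction (via Remark \ref{rk.assum}) to the index $i$ that is extremal in the four ordered invariants, the construction of a discriminating set $V\subset \operatorname{supp}(4n,x_i')$ out of Propositions \ref{prop.barep1} and \ref{prop.barep1-2}, and the concluding appeal to Lemma \ref{le.congsf4dn}(ii). But there is a genuine gap in your innermost class, and your boundary case is misidentified.

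Your finest class $\lambda_4$ requires only $\widehat{\operatorname{pol}}(4n,x_k')=\widehat{\operatorname{pol}}(4n,x_i')$. The paper's innermost class (its $\lambda_5$) additionally imposes $(\epsilon_{x_k'}x_k')_s=(\epsilon_{x_i'}x_i')_s$ for all $s\notin\operatorname{pol}(4n,x_i',1)$, and this extra condition is not redundant. Two elements with identical $\operatorname{len}$, $\operatorname{pmin}$, $\overline{\operatorname{pol}}$ and $\widehat{\operatorname{pol}}$ need not agree at the non-pole coordinates $s>\operatorname{len}(4n,x_i')$. Your cancellation step therefore does not deliver what you assert: a cancelling $j$ has $v\in\operatorname{supp}(4n,x_j')$, but at a non-pole $s>\operatorname{len}$ this only constrains $c_s\in\{(\epsilon_{x_j'}x_j')_s,\; p_s-(\epsilon_{x_j'}x_j')_s\}$, since the pieces $\Gamma_2$ and $\Gamma_4$ of the basis decomposition carry $b_s=p_s-a_s$ there. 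Thus from the cancellation alone you cannot conclude $(\epsilon_{x_j'}x_j')_s=(\epsilon_{x_i'}x_i')_s$, and Lemma \ref{le.congsf4dn}(ii) is not yet applicable. You need either to refine the class (as the paper does) and show that the $V$-construction discriminates against $\lambda_4\setminus\lambda_5$ as well, or to control which $\Gamma$-piece of $\operatorname{supp}(4n,x_j')$ the chosen $v$ can live in.

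Separately, the degenerate case the paper treats apart is not $\operatorname{len}(4n,x_i')\le 1$ but rather $\operatorname{pmin}(4n,x_i')=\operatorname{len}(4n,x_i')+1$, i.e.\ $\overline{\operatorname{pol}}(4n,x_i',\delta(4n))=\emptyset$. In that case there is no admissible coordinate at position $\operatorname{pmin}$, so the generic $V$ is empty; the paper replaces it with a different set whose in-cluster coordinates lie in the upper range and splits into two sub-cases depending on whether $(\epsilon_{x_i'}x_i')_{\tau(x_i')}$ exceeds $\frac{p_{\tau(x_i')}}{2}$. This degeneracy can occur under both (i) and (iv) even with $\operatorname{len}(4n,x_i')>1$, so your framing of the boundary as the small-$\operatorname{len}$ case does not capture it.
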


\begin{proof}

	Let $$\lambda_1:=\{0\le k\le 4\mid \operatorname{len}(4n,x_k')<\operatorname{len}(4n,x_i')\}.$$
	
	Assume that $\operatorname{len}(4n, x_i')\ge \operatorname{len}(4n, x_k')$ for all $1\le k\le 4$. See remark \ref{rk.assum}. Let
	
	$$\lambda_2:=\{0\le k\le 4\mid \operatorname{len}(4n,x_k')=\operatorname{len}(4n,x_i')\}.$$
	
	Assume that $\operatorname{pmin}(4n, x_i')\le \operatorname{pmin}(4n, x_k')$ for all $k\in\lambda_2$. Let
	
	$$\lambda_3:=\{k\in\lambda_2\mid \operatorname{pmin}(4n,x_k')=\operatorname{pmin}(4n,x_i')\}.$$

	Assume that $\operatorname{pmin}(4n,x_i')<\operatorname{len}(4n,x_i')+1$. Assume that $|\overline{\operatorname{pol}}(4n,x_k',1)|\le|\overline{\operatorname{pol}}(4n,x_i',1)|$ for all $k\in\lambda_3$. Let
	
	$$\lambda_4:=\{k\in\lambda_3\mid \overline{\operatorname{pol}}(4n,x_k',1)=\overline{\operatorname{pol}}(4n,x_i',1)\}.$$
	
	Assume that $ |\widehat{\operatorname{pol}}(4n,x_k')|\le|\widehat{\operatorname{pol}}(4n,x_i')|$ for all $k\in\lambda_4$. Let
	
	$$\lambda_5:=\{k\in\lambda_4\mid \widehat{\operatorname{pol}}(4n,x_k')= \widehat{\operatorname{pol}}(4n,x_i'),\text{ and } (\epsilon_{x_k'} x_k')_s=(\epsilon_{x_i'} x_i')_s\text{ for all } s\notin\operatorname{pol}(4n,x_i',1)\}.$$

	Notice that $0\notin\lambda_4$ because $x_0$ and $x_i$ satisfy one of the four given conditions.

	\begin{claim}
	$|\lambda_5|\ge 2$ and there exists $j\in\lambda_5$ such that $(\epsilon_{x_j'}x_j')_1\ne (\epsilon_{x_i'}x_i')_1$.
	\end{claim}

	For each $k\in\lambda_4$, if $\widehat{\operatorname{pol}}(4n,x_i')\ne  \widehat{\operatorname{pol}}(4n,x_k')$, choose $\operatorname{len}(4n,x_i')\le t_k\le \ell$ such that $t_k\notin\widehat{\operatorname{pol}}(4n,x_k')$ and $t_k\in\widehat{\operatorname{pol}}(4n,x_i')$.

	Let $V$ be the set which consists of elements $(a_1,\dots,a_\ell)_{4n}\in \operatorname{supp}(4n,x_i')$ satisfy the following six conditions.

	\begin{enumerate}
		\item $a_1=(0,1)$.
		\item If $2\le s<\operatorname{pmin}(4n, x_i')$, then $\frac{p_s+1}{2}\le a_s\le p_s-2$. In addition, if $s=\tau(x_k')\text{ for some } k\in\lambda_1$, then $a_s\ne (\epsilon_{x_k'}x_k')_s$ and $a_s\ne p_s-(\epsilon_{x_k'}x_k')_s$.
		\item If $\operatorname{pmin}(4n, x_i')\le s\le \operatorname{len}(4n,x_i')$ and $s\in\overline{\operatorname{pol}}(4n,x_i',1)$ then $2\le a_s\le \frac{p_s-1}{2}$. In addition, if $s=\tau(x_k')\text{ for some } k\in\lambda_1$, then $a_s\ne (\epsilon_{x_k'}x_k')_s$ and $a_s\ne p_s-(\epsilon_{x_k'}x_k')_s$.
		\item If $\operatorname{pmin}(4n, x_i')\le s\le\operatorname{len}(4n,x_i')$ and $s\notin\overline{\operatorname{pol}}(4n,x_i',1)$, then $a_s=1$.
		\item If $\operatorname{len}(4n,x_i')< s\le\ell$ and $s\in\widehat{\operatorname{pol}}(4n,x_i')$, then $1\le a_s\le p_s-2$. In addition, if $s=t_k\text{ for some } k\in\lambda_4$, then $a_s\ne (\epsilon_{x_k'} x_k')_s$ and $a_s\ne p_s-(\epsilon_{x_k'} x_k')_s$.
		\item If $\operatorname{len}(4n,x_i')< s\le\ell$ and $s\notin\widehat{\operatorname{pol}}(4n,x_i')$, then $a_s=(\epsilon_{x_i'} x_i')_s$.
	\end{enumerate}

Since we assumed that every prime factor of $n$ is greater than $11$ and $|\lambda_1|, |\lambda_3|$ are less than $4$, it follows that $ V\ne\emptyset$.

	Let $v:=v(4n,c)\in V$.

	\begin{enumerate}[label=(\roman*)]
		\item Assume that $k\in\lambda_1$. Let $s:=\tau(x_k')$. Let $v(4n,d)\in\operatorname{supp}(4n,x_k')$. If $s\notin\overline{\operatorname{pol}}(4n,x_i',1)$, then $c_s=1\ne  d_s$. If $s\in\overline{\operatorname{pol}}(4n,x_i',1)$, then $c_s\ne  d_s$ by the choice of $V$.

		\item Assume that $k\in\lambda_2\backslash\lambda_3$. Let $v(4n,d)\in\operatorname{supp}(4n,x_k')$. Let $s=\operatorname{pmin}(4n,x_i')$. Then $2\le c_s\le \frac{p_s-1}{2}$ and, $\frac{p_s+1}{2}\le  d_s\le p_s-2$ or $ d_s=1$ by the choice of $x_i$. 
		
		\item Assume that $k\in\lambda_3\backslash\lambda_4$. Let $v(4n,d)\in\operatorname{supp}(4n,x_k')$. Then there exists $\operatorname{pmin}(4n,x_i')< s\le\operatorname{len}(4n,x_i')$ and $s\in\overline{\operatorname{pol}}(4n,x_i',1)$, such that $2\le c_s\le \frac{p_s-1}{2}$ and $ d_s=1$. 
		
		\item Assume that $x\in\lambda_4\backslash\lambda_5$. Let $v(4n,d)\in\operatorname{supp}(4n,x_k')$. Then there exists $\operatorname{len}(4n,x_i')< s\le\ell$ such that $c_s\ne  d_s$ by the choice of $V$.

	\end{enumerate}
	
	So in each of the above case, $v(4n,c)\ne v(4n,d)$ in $\widehat{X^{4n}}$. Hence $\operatorname{multi}_v(\operatorname{tan}x_k)=0$ if $0\le k\le 4$ and $k\notin\lambda_5$.
	
	If $|\lambda_5|=1$, or $|\lambda_5|>1$ and for each $k\in\lambda_5$, we have that  $(\epsilon_{x_k'}x_k')_1= (\epsilon_{x_i'}x_i')_1$, then $\operatorname{multi}_v(\prod\limits_{m=1}^4\operatorname{tan}x_m)\ne 0=\operatorname{multi}_v(\operatorname{tan}x_0)$. This is a contradiction. So the claim is true. By Lemma \ref{le.congsf4dn}, $x_i+x_j=\frac{\pi}{2}$.

To complete the proof, we need to address the case $\operatorname{pmin}(4n,x_i')=\operatorname{len}(4n,x_i')+1$. Let

	$$\lambda_4':=\{k\in\lambda_3\mid \widehat{\operatorname{pol}}(4n,x_k')= \widehat{\operatorname{pol}}(4n,x_i'),\text{ and } (\epsilon_{x_k'} x_k')_s=(\epsilon_{x_i'} x_i')_s\text{ for all } s\notin\operatorname{pol}(4n,x_i',1)\}.$$

\begin{claim}
	$|\lambda_4'|\ge 2$ and there exists $j\in\lambda_5$ such that $(\epsilon_{x_j'}x_j')_1\ne (\epsilon_{x_i'}x_i')_1$.
\end{claim}

	For each $k\in\lambda_3$, if $\widehat{\operatorname{pol}}(4n,x_i')\ne  \widehat{\operatorname{pol}}(4n,x_k')$, choose $\operatorname{len}(4n,x_i')\le t_k\le \ell$ such that $t_k\notin\widehat{\operatorname{pol}}(4n,x_k')$ and $t_k\in\widehat{\operatorname{pol}}(4n,x_i')$.

First we assume that for each $k\in\lambda_3$, we have that $\frac{p_\tau+1}{2}\le (\epsilon_{x_k'} x_k')_\tau \le p_\tau-2$ where $\tau=\tau(x_k')$. Let $V$ be the set which consists of elements $(a_1,\dots,a_\ell)_{4n}\in \operatorname{supp}(4n,x_i')$ satisfy the following conditions.

\begin{enumerate}
	\item $a_1=(0,1)$.
	\item If $2\le s\le\ell$ and $s\in\operatorname{pol}(4n,x_i')$, then $1\le a_s\le p_s-2$. In addition, if $s=t_k\text{ for some } k\in\lambda_4$, then $a_s\ne (\epsilon_{x_k'} x_k')_s$ and $a_s\ne p_s-(\epsilon_{x_k'} x_k')_s$.
	\item If $2\le s\le\ell$ and $s\notin\operatorname{pol}(4n,x_i')$, then $a_s=(\epsilon_{x_i'} x_i')_s$.

\end{enumerate}

Next we assume that $i$ satisfies the condition that $2\le  (\epsilon_{x_i'} x_i')_\tau\le\frac{p_\tau-1}{2}$ where $\tau=\tau(x_i')$. Let $V$ be the set which consists of elements $(a_1,\dots,a_\ell)_{4n}\in \operatorname{supp}(4n,x_i')$ satisfy the following conditions.

\begin{enumerate}
	\item $a_1=(0,1)$.
	\item If $2\le s\le\operatorname{len}(4n,x_i')$, then $\frac{p_s+1}{2}\le a_s\le p_s-2$. In addition, if $s=\tau(x_k')\text{ for some } k\in\lambda_1$, then $a_s\ne (\epsilon_{x_k'}x_k')_s$ and $a_s\ne p_s-(\epsilon_{x_k'}x_k')_s$. 
	\item If $\operatorname{len}(4n,x_i')< s\le\ell$ and $s\in\operatorname{pol}(4n,x_i')$, then $1\le a_s\le p_s-2$. In addition, if $s=t_k\text{ for some } k\in\lambda_3$, then $a_s\ne (\epsilon_{x_k'} x_k')_s$ and $a_s\ne p_s-(\epsilon_{x_k'} x_k')_s$.
	\item If $\operatorname{len}(4n,x_i')< s\le\ell$ and $s\notin\operatorname{pol}(4n,x_i')$, then $a_s=(\epsilon_{x_i'} x_i')_s$.

\end{enumerate}

By the same argument as above, the claim follows. Applying the claim and the Lemma \ref{le.congsf4dn},  we obtain that $x_i+x_j=\frac{\pi}{2}$. 
\end{proof}

\begin{proof} [Proof of Theorem \ref{th.4msf}]

	Assume that $\operatorname{len}(4n,x_0')=\ell$ and $\ell$ is odd. By Lemma \ref{le.sfo}, we can assume that for each $1\le k\le 4$ such that $\operatorname{len}(4n,x_k')=\ell$, we have that  $\operatorname{max}(|\operatorname{pol}(4n,x_0,1)|,\ell-1-|\operatorname{pol}(4n,x_0,1))|\ge
	\operatorname{max}(|\operatorname{pol}(4n,x_k,1)|,\ell-1-|\operatorname{pol}(4n,x_k,1)|)$.
	
	By the same argument as in the proof of Lemma \ref{le.sfo}, we can conclude that there exist $1\le i< j\le 4$ such that $x_i=x_j=x_0$. By relabeling, assume that $i=1$ and $j=2$. After cancellation, we get that $\operatorname{tan}x_3\operatorname{tan}x_4=1$. So $x_3+x_4=\frac{\pi}{2}$. Hence $(x_0,x_1,x_2,x_3,x_4)\in\Phi_{1,1}$.
	
	The case $\operatorname{len}(4n,x_0')=\ell$ with $\ell$ even and the case $\operatorname{len}(4n,x_0')<\ell$ follows similarly from Lemma \ref{le.sfo}, Lemma \ref{le.sfe} and Lemma \ref{le.sfin}.
\end{proof}

\section{The square-free case}\label{sec.sf2}

	In this section, we analyze the solution of Equation \eqref{eq.ne4} in $G$ under the condition that the denominators of the solution are either $n$ or $2n$, where $n$ is an odd, square-free number that is not a prime. We prove the following theorem.

\begin{theorem}\label{th.sfn2n}
	Let $n\in\mathbb{N}_{> 3}$ be odd, square-free and not a prime. Assume that each prime factor of $n$ is  greater than $7$. Assume that $(x_0,x_1,x_2,x_3,x_4)$ is a solution to \eqref{eq.ne4} in $G$ with $0<x_i<\frac{\pi}{2}$ and $\operatorname{den}(x_i)\in\{n,2n\}$ for each $0\le i\le 4$. Then $(x_0,x_1,x_2,x_3,x_4)\in\Phi_{1,1}$.
\end{theorem}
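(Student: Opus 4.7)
The plan is to mimic the strategy used in Section \ref{sec.sf1} for Theorem \ref{th.4msf}, but working inside $\widehat{X^n}$ (rather than $\widehat{X^{4n}}$) with Conrad's basis of \ref{sec.bassf} (after the minor reindexing mentioned in the remark at the end of that section, replacing $4n$ by $n$ and setting $\delta(n)=1$ if $3\mid n$ and $0$ otherwise). Write each $x_i$ as $\tfrac{x_i'}{m_i}\pi$ with $m_i\in\{n,2n\}$ and $\gcd(x_i',n)=1$. By Corollary \ref{cor.tarq}, in $\widehat{X^n}$ we have
\[
\operatorname{tan}x_i=
\begin{cases}
v(n,x_i')^{2}\,v(n,2x_i')^{-1}, & m_i=n,\\[2pt]
v(n,x_i')\,v(n,2^{-1}x_i')^{-2}, & m_i=2n.
\end{cases}
\]
Thus both sides of \eqref{eq.ne4} lie in $\widehat{X^n}$, and the main task is to expand each factor in Conrad's basis and compare multiplicities.

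The key combinatorial invariants attached to a representative $a\in\mathbb{Z}$ with $\gcd(n,a)=1$ are, as in \ref{ss.sf}, the cluster length $\operatorname{len}(n,a)$, the pole sets $\overline{\operatorname{pol}}$, $\widehat{\operatorname{pol}}$, and the first interior pole $\operatorname{pmin}(n,a)$. For $m_i=n$ the two representatives $x_i'$ and $2x_i'$ (reduced modulo $n$) typically have different residue forms, while for $m_i=2n$ the two relevant representatives are $x_i'$ and $2^{-1}x_i'$; in either case, one of the two contributes with multiplicity of absolute value $2$ and the other with absolute value $1$. I would introduce a total ordering on the $5$-tuples $(\operatorname{len}(n,y),\operatorname{pmin}(n,y),|\overline{\operatorname{pol}}|,|\widehat{\operatorname{pol}}|)$ indexed over the representatives appearing on both sides of the equation, pick the one which is largest, and—exactly as in the proofs of Lemmas \ref{le.sfo}, \ref{le.sfe}, \ref{le.sfin}—construct a carefully chosen element $v\in B_n$ in its basis support that (i) avoids the supports of all representatives lower in the ordering, and (ii) is distinguished by being incompatible with $\tau$-coordinates of the other representatives. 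The hypothesis that every prime factor of $n$ is greater than $7$ guarantees that the set of admissible coordinates at each step is non-empty.

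Once such a $v$ is isolated, the multiplicity equation $\operatorname{multi}_v(\operatorname{tan}x_0)^2=\operatorname{multi}_v\!\bigl(\prod_{i=1}^{4}\operatorname{tan}x_i\bigr)$ forces two of the indices $j_1,j_2\in\{1,2,3,4\}$ to yield congruent representatives to that of $x_0$. The congruence lemma analogous to Lemma \ref{le.congsf4dn}, but for level $n$ and $2n$, then gives either $x_{j_1}=x_{j_2}=x_0$ (so $\operatorname{tan}x_{j_3}\operatorname{tan}x_{j_4}=1$ and hence $x_{j_3}+x_{j_4}=\pi/2$, giving the desired membership in $\Phi_{1,1}$) or $x_{j_1}+x_{j_2}=\pi/2$; in the latter case cancelation reduces \eqref{eq.ne4} to $(\operatorname{tan}x_0)^2=\operatorname{tan}x_{j_3}\operatorname{tan}x_{j_4}$, and we iterate. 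After at most two iterations we either conclude or reach $(\operatorname{tan}x_0)^2=1$, forcing $x_0=\pi/4$, which contradicts $\operatorname{den}(x_0)\in\{n,2n\}$ with $n\ge 3$ square-free non-prime.

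The main obstacle I anticipate is book-keeping the mixed denominators: when some $x_i$ have denominator $n$ and others $2n$, each $\operatorname{tan}x_i$ contributes two distinct ``representatives'' to the comparison in $\widehat{X^n}$, one with multiplicity $\pm 2$ and one with multiplicity $\mp 1$ (or $\pm 1$ and $\mp 2$). Consequently the case analysis branches on whether the distinguished basis element $v$ is seen by the ``squared'' representative or the ``single'' representative of each $x_i$, and one must verify that the auxiliary basis element $v$ chosen from the support of $x_0$'s squared representative cannot coincide, via a $2$ or $2^{-1}$ shift, with the single representative of some $x_i$ sitting lower in the ordering. This is exactly what the assumption ``every prime factor of $n$ greater than $7$'' (instead of $11$ as in Theorem \ref{th.4msf}) is tailored to handle: the map $a\mapsto 2a$ on $(\mathbb{Z}/p)^\times$ has order dividing $p-1$, and for $p>7$ enough distinct residues remain after excluding the at most four doubled/halved representatives coming from the other $x_i$'s and the two from $x_0$.
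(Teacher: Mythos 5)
You have the right high-level plan — work in $\widehat{X^n}$ via Conrad's basis, expand each $\operatorname{tan}x_i$ using Corollary \ref{cor.tarq}, pick a distinguished basis element $v$ via an ordering on residue-form invariants, and use a congruence lemma to collapse pairs and iterate. You also correctly observe that in the $n/2n$ case each $\operatorname{tan}x_i$ contributes a ``$\pm 1$'' part from $v(n,x_i')$ and a ``$\mp 2$'' part from $v(n,2x_i')$ or $v(n,2^{-1}x_i')$, which is the new complication versus Theorem \ref{th.4msf}. However, there are two genuine gaps against the paper's argument (Lemma \ref{le.sfred} plus Lemma \ref{le.sfred220}).

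First, you claim the multiplicity comparison ``forces two of the indices $j_1,j_2\in\{1,2,3,4\}$ to yield congruent representatives to that of $x_0$,'' and you build your total ordering over \emph{all} representatives including $x_0$'s. That is not what the paper establishes, and it cannot be established so simply: if some $x_i$ ($i\geq 1$) is extremal in the ordering rather than $x_0$, the pairing produced is between two elements of $\{1,\dots,4\}$ and need not involve $x_0$ at all. The paper deliberately restricts $\lambda_1,\lambda_2$ to indices $1\leq k\leq 4$, takes the extremal $x_i$ from that restricted set, and then uses a parity argument that you do not mention: for $k\notin\lambda_3$ and $k\neq 0$, $\operatorname{multi}_v(\operatorname{tan}x_k)$ is even (the odd contribution can only come from $v\in\operatorname{supp}(v(n,x_k'))$), while $\operatorname{multi}_v((\operatorname{tan}x_0)^2)$ is \emph{always} even regardless of $x_0$'s structure — so $|\lambda_3|$ cannot be $1$. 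This parity argument, not the ``$a\mapsto 2a$'' observation you offer, is also the real reason the prime bound drops to $>7$: since $0\notin\lambda_1$, we have $|\lambda_1|\leq 4$ rather than $\leq 5$, so fewer coordinates need to be excluded when constructing the nonempty set $V$.

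Second, your reduction case analysis is incomplete. Lemma \ref{le.sfred} only says each of the two pairs from $x_1,\dots,x_4$ is either equal or sums to $\pi/2$; it does not promise they equal $x_0$. You then need to rule out the reduced equations $(\operatorname{tan}x_0)^2=(\operatorname{tan}x_1)^2(\operatorname{tan}x_2)^2$, $(\operatorname{tan}x_0)^2=(\operatorname{tan}x_1)^4$, and $(\operatorname{tan}x_0)^2=1$, which is the content of Lemma \ref{le.sfred220}; the only surviving case $(\operatorname{tan}x_0)^2=(\operatorname{tan}x_1)^2$ then gives $x_0=x_1$ and hence $\Phi_{1,1}$. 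Your dichotomy ``$x_{j_1}=x_{j_2}=x_0$ or $x_{j_1}+x_{j_2}=\pi/2$'' silently drops the case $x_1=x_2$, $x_3=x_4$ with neither equal to $x_0$, so your iteration would terminate prematurely without verifying there is no solution in those branches.
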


We postpone the proof of Theorem \ref{th.sfn2n} to the end of this section.

\begin{lemma}\label{le.congsfn}
	Let $n\in\mathbb{N}_{\ge 3}$ be odd. Let $a,b\in\mathbb{Z}$ be such that $0< a, b< n$ and $\operatorname{gcd}(n,ab)=1$. Let $(\epsilon_a a)_s$ and $(\epsilon_a a)_s$ be the elements associated to $\epsilon_a a$ and $\epsilon_b b$ with respect to $n$ as defined in \ref{ss.rf}. Assume that $(\epsilon_a a)_s=(\epsilon_b b)_s$ for each $1\le s\le\ell$. Then the folowings are true.

	\begin{enumerate}
		\item $\epsilon_a=\epsilon_{b}$ if and only if $a=b$.
		\item $\epsilon_a\ne\epsilon_{b}$ if and only if $a+b=n$.
	\end{enumerate}
\end{lemma}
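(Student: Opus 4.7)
The plan is to reduce everything to a single congruence modulo $n$ via the Chinese Remainder Theorem, and then use the sharp bounds $0<a,b<n$ to turn that congruence into an exact equality. Since $n$ is odd and square-free, the prime factorization $n=p_1\cdots p_\ell$ has all exponents equal to $1$, so by the construction in \ref{ss.rf} the residue form $(c_1,\dots,c_\ell)_n$ of $v(n,c)$ simply records the residue of $c$ modulo each $p_i$. In particular, the hypothesis $(\epsilon_a a)_s=(\epsilon_b b)_s$ for all $1\le s\le\ell$ says precisely that $\epsilon_a a\equiv \epsilon_b b\pmod{p_s}$ for every $s$, hence by CRT
\[
\epsilon_a a\equiv \epsilon_b b\pmod{n}.
\]
This single congruence is the workhorse for both parts; establishing it cleanly (and observing that $\epsilon_a,\epsilon_b\in\{\pm 1\}$ so the product makes sense as an integer congruence) is the first step.

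For part (i), the forward direction: if $\epsilon_a=\epsilon_b$, the displayed congruence collapses to $a\equiv b\pmod n$; combined with $0<a,b<n$ this forces $a=b$. The backward direction is immediate, since $a=b$ makes the two sides of the congruence identical and forces $\epsilon_a=\epsilon_b$ by the definition of $\epsilon$ from the residue form (the sign is determined by the residue). For part (ii), the forward direction: if $\epsilon_a\ne \epsilon_b$, then one is $+1$ and the other is $-1$, so the congruence reads $a\equiv -b\pmod n$, i.e.\ $a+b\equiv 0\pmod n$. Because $0<a+b<2n$, the only possibility is $a+b=n$. The backward direction follows by substituting $b=n-a$ into the residue form and checking that each coordinate $(\epsilon_b b)_s$ matches $(\epsilon_a a)_s$, which then forces the signs to be opposite (otherwise part (i) would give $a=b=n-a$, contradicting $n$ odd).

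I do not expect a real obstacle here; the only mild subtlety is to make sure the definition of $\epsilon_c$ from \ref{ss.sf} is being applied in the square-free, non-$4n$ setting, so that $\epsilon_c$ is uniquely determined by the residue form together with the integer $c$. Once that is checked, both halves are one-line consequences of CRT plus the range constraint $0<a,b<n$.
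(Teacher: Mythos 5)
Your proposal is correct and follows the same strategy as the paper's proof: reduce the coordinatewise hypothesis to a single congruence $\epsilon_a a \equiv \epsilon_b b \pmod{n}$ via the Chinese Remainder Theorem, then use the range $0<a,b<n$ to pin down equality. One small remark: you invoke square-freeness of $n$ to interpret the residue form, but the lemma as stated only assumes $n$ odd; this costs nothing because the paper's Lemma~\ref{le.rf} already shows that the full residue form (with its $(\overline{a}_i,\widehat{a}_i)$ pairs in the non-square-free case) determines the residue class modulo $n$, so the CRT step goes through verbatim without the square-free hypothesis.
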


\begin{proof}
	
	Note that $(\epsilon_a a)_s=(\epsilon_b b)_s$ for each $1\le s\le\ell$ if and only if $\epsilon_a a\equiv \epsilon_b b\operatorname{mod} n$ by the Chinese Remainder Theorem. Since $0< a, b< n$, the claims follow.
\end{proof}

\begin{emp}\label{em.sfproof}
	Let $n\in\mathbb{N}_{\ge 3}$ be odd, square-free and not a prime. Assume that every prime factor of $n$ is greater than $7$. Let $n=p_1\cdots p_\ell$ be the prime factorization of $n$ with $p_1<\cdots<p_\ell$. Let $(x_0,x_1,x_2,x_3,x_4)$ be a solution to \eqref{eq.ne4} in $G$ with $0<x_i<\frac{\pi}{2}$ and the denominator of $x_i$ equals $n$ or $2n$ for each $0\le i\le 4$. Write $x_i=\frac{x_i'}{2n}$ for each $0\le i\le 4$. 
	
\end{emp}

\begin{lemma}\label{le.sfred}
	Let $n\in\mathbb{N}_{> 3}$ be odd, square-free and not a prime. Assume that each prime factor of $n$ is  greater than $7$. Assume that $(x_0,x_1,x_2,x_3,x_4)$ is a solution in $G$ to Equation \eqref{eq.ne4} with $0<x_k<\frac{\pi}{2}$ and $\operatorname{den}(x_k)\in\{n,2n\}$ for $1\le k\le 4$. Then, up to reordering, the following is true:
	\begin{enumerate}
		\item $x_1=x_2$ or $x_1+x_2=\frac{\pi}{2}$.
		\item $x_3=x_4$ or $x_3+x_4=\frac{\pi}{2}$.
	\end{enumerate}

\end{lemma}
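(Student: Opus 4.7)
The plan is to work in $\widehat{X^n}$, which is a free abelian group by Theorem \ref{th.ba1}(iv) since $n$ is odd and square-free with $\ell \ge 2$, and to expand each $\tan x_k$ using Corollary \ref{cor.tarq}. For each $k$ with $\operatorname{den}(x_k) = n$, writing $x_k = (a_k/n)\pi$ with $0 < a_k < n/2$ after adjusting the sign by $\epsilon_{a_k}$, one has $\tan x_k = v(n, a_k)^2 v(n, 2a_k)^{-1}$ in $\widehat{X^n}$. For each $k$ with $\operatorname{den}(x_k) = 2n$, one has $\tan x_k = v(n, a_k) v(n, 2^{-1}a_k)^{-2}$. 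In either case, exactly one cyclotomic factor appears with exponent $\pm 2$; call this the \emph{key element} $w_k$ of $\tan x_k$. I would then further expand $w_k$ (and the corresponding unit-exponent factor) in Conrad's basis using the representation formulas of Section \ref{sec.bassf}, so that the squaring/pairing structure is transparent at the basis level.

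Next, I would reorder $x_1, x_2, x_3, x_4$ so that $x_1$ is \emph{extremal} in the sense used in Lemmas \ref{le.sfo}, \ref{le.sfe}, \ref{le.sfin}, namely: its key element $w_1$ has the largest cluster length $\operatorname{len}(n, \cdot)$, breaking ties by $\operatorname{pmin}$, $|\overline{\operatorname{pol}}|$, $|\widehat{\operatorname{pol}}|$, in the sense of \ref{ss.sf}. I would then select a specific basis vector $v \in B_n$ lying in the support of the basis expansion of $\tan x_1$ that is \emph{not} shared by any $\tan x_k$ with $k \ne 1$ unless $(\epsilon_{a_k}a_k)$ agrees with $(\epsilon_{a_1}a_1)$ at every residue coordinate. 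The hypothesis that every prime factor of $n$ exceeds $7$ guarantees that each Conrad basis set has enough entries to produce such a $v$ (with three forbidden residues to avoid, one needs $p_s - 2 \ge 4$, i.e.\ $p_s \ge 7$, with a little slack). Reading off $\operatorname{multi}_v$ on both sides of the equation in the ambient group $X^m$ (with $m = \operatorname{lcm}(\operatorname{den}(x_0), 2n)$), and using that $(\tan x_0)^2$ is a square, one gets a parity-forced equality of multiplicities that exhibits some $j \in \{2,3,4\}$ with $(\epsilon_{a_j}a_j)_s = (\epsilon_{a_1}a_1)_s$ for every $s$. By Lemma \ref{le.congsfn}, this yields $x_1 = x_j$ or $x_1 + x_j = \pi/2$.

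Having extracted this first pair, I would cancel it: if $x_1 = x_j$, rewrite the equation as $(\tan x_0/\tan x_1)^2 = \tan x_k \tan x_l$ for the remaining indices $\{k,l\}$; if $x_1 + x_j = \pi/2$, rewrite it as $(\tan x_0)^2 = \tan x_k \tan x_l$. In each case $\tan x_k \tan x_l$ is a square in $\widehat{X^n}$ (up to torsion), and a second, shorter application of the same support argument gives $x_k = x_l$ or $x_k + x_l = \pi/2$. This establishes both pairings stated in the lemma.

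The main obstacle will be the bookkeeping in the support analysis across the mixed-denominator situation: with $\operatorname{den}(x_k) \in \{n, 2n\}$, the basis expansion of $\tan x_k$ via Section \ref{sec.bassf} depends sensitively on the cluster length, on the positions of the poles $\operatorname{pol}(n, a_k)$, and on whether the key factor is $v(n, a_k)$ or $v(n, 2^{-1}a_k)$. A clean separation into the cases $\operatorname{len} < \ell$, $\operatorname{len} = \ell$ with $\ell$ even, and $\operatorname{len} = \ell$ with $\ell$ odd (mirroring \ref{le.sfin}, \ref{le.sfe}, \ref{le.sfo}) will be needed, each requiring an appropriate choice of the test basis element $v$ and a careful verification that $v$ is unshared outside the putative pair. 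The absence of an a priori constraint on $\operatorname{den}(x_0)$ does not obstruct the argument because we only use that $(\tan x_0)^2$ has even multiplicity at every basis element.
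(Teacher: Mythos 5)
Your proposal is correct and takes essentially the same approach as the paper: expand $\tan x_k$ in $\widehat{X^n}$ via Corollary \ref{cor.tarq}, pick an extremal index using the $\operatorname{len}/\operatorname{pmin}/\operatorname{pol}$ invariants from the $4n$ square-free lemmas, isolate an unshared basis element in its support, and use the parity of $\operatorname{multi}_v((\tan x_0)^2)$ to force a match via Lemma \ref{le.congsfn}, then cancel and iterate. Your observation that excluding the index $0$ from the avoidance count is what permits weakening the hypothesis from $p_s>11$ to $p_s>7$ is exactly the remark made in the paper's proof.
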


\begin{proof}
	Keep the notations in \ref{em.sfproof}. We need to deal with the following three cases separately.
	
	\begin{enumerate}[label=(\alph*)]
		\item $\ell$ is odd and there exists $1\le i\le 4$ such that $\operatorname{len}(n,x_i')=\ell$.
		\item $\ell$ is even and there exists $1\le i\le 4$ such that $\operatorname{len}(n,x_i')=\ell$.
		\item For each $1\le k\le 4$, we have that  $\operatorname{len}(n,x_k')<\ell$.
	\end{enumerate}

	The proof for the three cases can be directly adapted from the proofs of Lemma \ref{le.sfo}, Lemma \ref{le.sfe}, and Lemma \ref{le.sfin}, respectively. In the following, we point out the required modifications to the proof of Lemma \ref{le.sfo} in order to establish Lemma \ref{le.sfred} in case (a). The other two cases can be approached similarly.

	By Corollary \ref{cor.tarq}, if $x_i'$ is even, then $$\operatorname{tan}x_i=v(n,x_i')^{-1}v(n,2^{-1}x_i')^2$$ 

in $\widehat{X^n}$.
If $x_i'$ is odd, then $$\operatorname{tan}x_i=v(n,x_i')v(n,2^{-1}x_i')^{-2}$$

in $\widehat{X^n}$. 

We define the sets $\lambda_1$ and $\lambda_2$ as in the proof of Lemma \ref{le.sfo}. Let $$\lambda_1:=\{1\le k\le 4\mid \operatorname{len}(n,x_k')<\ell\},$$

and $$\lambda_2:=\{1\le k\le 4\mid \operatorname{len}(n,x_k')=\ell\}.$$

 Let $i\in\lambda_2$ be such that

	$$\operatorname{max}(|\operatorname{pol}(n,x_k')|,\ell-|\operatorname{pol}(n,x_k')|)\le
	\operatorname{max}(|\operatorname{pol}(n,x_i')|,\ell-|\operatorname{pol}(n,x_i')|)$$ 
	
	for each $k\in\lambda_2$. We further assume that $|\operatorname{pol}(n,x_i')|\ge\ell-|\operatorname{pol}(n,x_i')|$ as in the proof of Lemma \ref{le.sfo}. Define  
	
	$$\lambda_3:=\{k\in\lambda_2\mid \operatorname{pol}(n,x_k')=\operatorname{pol}(n,x_i')\}.$$

	We claim that $|\lambda_3|\ge 2$. This claim is proved similarly as the claim \ref{cl.larset}. Note that since $0\notin\lambda_1$, the set $V$ is nonempty under the condition that every prime factor of $n$ is greater than $7$. The concluding paragraph of the proof of the claim \ref{cl.larset} should be adjusted to the following.
	
	If $k\notin\lambda_3$ and $k\ne 0$, then  $\operatorname{multi}_v(\operatorname{tan}x_k)$ is even. If $|\lambda_3|=1$, then $\operatorname{multi}_v(\prod\limits_{i=1}^4\operatorname{tan}x_i)$ is odd. Note that $\operatorname{multi}_v((\operatorname{tan}x_0)^2)$ is even. So  $\operatorname{multi}_v(\prod\limits_{i=1}^4\operatorname{tan}x_i)\ne\operatorname{multi}_v((\operatorname{tan}x_0)^2)$. This is a contradiction. Hence the claim holds. Then by Lemma \ref{le.congsfn}, we obtain that there exists $j\in\lambda_3$ such that $x_i=x_j$ or $x_i+x_j=\frac{\pi}{2}$. 
	
	Repeat the argument, we arrive at the following: up to reordering, we have that (i) $x_1=x_2$ or $x_1+x_2=\frac{\pi}{2}$ and (ii) $x_3=x_4$ or $x_3+x_4=\frac{\pi}{2}$. This finishes the proof of the case $\ell$ is odd and there exists $1\le i\le 4$ such that $\operatorname{len}(n,x_i')=\ell$.
\end{proof}

Consider the following four equations.

\begin{equation}\label{eq.red20}
	(\operatorname{tan}x_0)^2=1.
\end{equation}

\begin{equation}\label{eq.red22}
	(\operatorname{tan}x_0)^2=(\operatorname{tan}x_1)^2.
\end{equation}

\begin{equation}\label{eq.red3}
	(\operatorname{tan}x_0)^2=(\operatorname{tan}x_1)^4.
\end{equation}

\begin{equation}\label{eq.red2}
	(\operatorname{tan}x_0)^2=(\operatorname{tan}x_1)^2(\operatorname{tan}x_2)^2.
\end{equation}

\begin{lemma}\label{le.sfred220}
	Let $n\in\mathbb{N}_{> 3}$ be odd, square-free and not a prime. Assume that each prime factor of $n$ is  greater than $7$. Then the following is true.
	
	\begin{enumerate}
		\item There is no solution in $G$ to Equation \eqref{eq.red20} of the form $(x_0,x_1)$ with $0<x_i<\frac{\pi}{2}$ and $\operatorname{den}(x_i)\in\{n,2n\}$ for $0\le i\le 1$.
		\item Assume that $(x_0,x_1)$ is a solution in $G$ to Equation \eqref{eq.red22} with $0<x_i<\frac{\pi}{2}$ and $\operatorname{den}(x_i)\in\{n,2n\}$ for $1\le i\le 4$. Then $x_0=x_1$.
		\item There is no solution in $G$ to Equation \eqref{eq.red3} of the form $(x_0,x_1)$ with $0<x_i<\frac{\pi}{2}$ and $\operatorname{den}(x_i)\in\{n,2n\}$ for $0\le i\le 1$.
		\item There is no solution in $G$ to Equation \eqref{eq.red2} of the form $(x_0,x_1,x_2)$ with $0<x_i<\frac{\pi}{2}$ and $\operatorname{den}(x_i)\in\{n,2n\}$ for $0\le i\le 1$.
	\end{enumerate}

\end{lemma}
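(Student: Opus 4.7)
Parts (i) and (ii) are immediate. For (i), $(\tan x_0)^2 = 1$ and $0 < x_0 < \pi/2$ force $x_0 = \pi/4$, so $\operatorname{den}(x_0) = 4$. Under the hypothesis that every prime factor of $n$ exceeds $7$, $n$ is a product of at least two primes $\ge 11$, hence $n \ge 143$ and $4 \notin \{n,2n\}$. For (ii), positivity and strict monotonicity of $\tan$ on $(0,\pi/2)$ yield $\tan x_0 = \tan x_1$ and hence $x_0 = x_1$.

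For (iii) and (iv), I would pass to $\widehat{X^n}$ using Conrad's basis $B_n$ from Theorem~\ref{th.ba1}(iv). Writing $x_i = \tfrac{x_i'}{2n}\pi$ with $\gcd(x_i',n)=1$, Corollary~\ref{cor.tarq} gives $\tan x_i = v(n,x_i')^{\sigma_i} \cdot S_i^2$ in $\widehat{X^n}$, where $\sigma_i = +1$ if $x_i'$ is odd (so $\operatorname{den}(x_i)=2n$) and $\sigma_i = -1$ if $x_i'$ is even (so $\operatorname{den}(x_i)=n$), and $S_i \in \widehat{X^n}$. Moreover, since $\gcd(n,x_i')=1$, the normalization formula (Lemma~\ref{le.nor1}) together with the analogues of Lemma~\ref{lemma.basic0} and Propositions~\ref{prop.barep1}, \ref{prop.barep1-2} adapted to level $n$ expresses $v(n,x_i')$ as a product of elements of $B_n$ raised to exponents $\pm 1$ over a nonempty support, so it has an odd coefficient at some basis element. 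For (iii), positivity yields $\tan x_0 = (\tan x_1)^2$ in $\widehat{X^n}$, and since $S_0^2$ and $v(n,x_1')^{2\sigma_1}S_1^4$ are both squares, so is $v(n,x_0')^{\sigma_0}$. This contradicts the odd coefficient in the basis expansion, proving (iii).

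For (iv), positivity reduces the equation to $\tan x_0 = \tan x_1 \tan x_2$ in $\widehat{X^n}$, so $v(n,x_0')^{\sigma_0}\,v(n,x_1')^{-\sigma_1}\,v(n,x_2')^{-\sigma_2}$ must be a square in $\widehat{X^n}$. I would adapt the maximality/refinement argument from the proofs of Lemmas~\ref{le.sfo}, \ref{le.sfe}, \ref{le.sfin} to the three variables $x_0,x_1,x_2$: select an index $i$ with maximal $\operatorname{len}(n,x_i')$, refine successively by $\operatorname{pmin}$, $|\overline{\operatorname{pol}}|$, and $|\widehat{\operatorname{pol}}|$, and invoke the hypothesis that every prime factor of $n$ exceeds $7$ to construct a test basis element $v \in B_n$ at which the total coefficient has odd parity unless some pair among $\{x_0,x_1,x_2\}$ satisfies $x_p = x_q$ or $x_p + x_q = \pi/2$, via Lemma~\ref{le.congsfn}. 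A short case analysis closes (iv): if $x_1 = x_2$, the equation collapses to (iii); if $x_1+x_2 = \pi/2$, then $\tan x_0 = 1$ forces $x_0 = \pi/4$, contradicting $\operatorname{den}(x_0) \in \{n,2n\}$; if $x_0 = x_j$ for $j\in\{1,2\}$, then $\tan x_{3-j}=1$ gives the same contradiction; and if $x_0 + x_1 = \pi/2$ (the case $x_0+x_2=\pi/2$ is symmetric), substituting $\tan x_0 = \cot x_1$ yields $\tan x_2 = \tan^2(\pi/2 - x_1)$. A direct check shows $\operatorname{den}(\pi/2 - x_1) \in \{n,2n\}$, so part (iii) applied to the pair $(x_2,\,\pi/2 - x_1)$ gives the final contradiction.

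The main obstacle will be the adaptation of the maximality argument to the three-variable setting of (iv): Lemmas~\ref{le.sfo}, \ref{le.sfe}, \ref{le.sfin} treat $x_0$ asymmetrically from $x_1,\ldots,x_4$ and are tailored to the five-tuple form of Equation~\eqref{eq.ne4}. The reworking must place $x_0,x_1,x_2$ on equal footing in the square-detection argument, and the assumption on the prime factors of $n$ being at least $11$ (and in particular providing enough residue choices modulo each $p_s$) must be carefully leveraged to guarantee the existence of a test basis element whose contribution isolates a single $v(n,x_i')$ modulo squares.
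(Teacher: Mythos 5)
Your elementary arguments for (i) and (ii) are correct and cleaner than what the paper implicitly suggests by "the others work similarly": $(\tan x_0)^2 = 1$ forces $x_0=\pi/4$, whose denominator $4$ is excluded since $n$ is a product of at least two primes $\ge 11$; and $(\tan x_0)^2=(\tan x_1)^2$ forces $\tan x_0=\tan x_1$ and hence $x_0 = x_1$ by strict monotonicity. Your (iii) is also a clean route: writing $\tan x_i = v(n,x_i')^{\sigma_i} S_i^2$ via Corollary \ref{cor.tarq}, the relation $\tan x_0 = (\tan x_1)^2$ exhibits $v(n,x_0')^{\sigma_0}$ as a square in $\widehat{X^n}$, contradicting the fact (via the explicit representations of Section 7 adapted to level $n$) that $v(n,x_0')$ has nonempty support with all coefficients $\pm 1$.

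For (iv) your route diverges from the paper's. The paper feeds the five-tuple $(x_0,x_1,x_1,x_2,x_2)$ into the argument of Lemma \ref{le.sfred}, so the pairing only involves positions $1$ through $4$, giving $x_1=x_2$ or $x_1+x_2=\pi/2$ and reducing to (iii) or (i) respectively. You instead take the positive square root $\tan x_0 = \tan x_1 \tan x_2$ and propose a symmetric three-variable maximality argument, which forces you to handle the extra cases $x_0=x_j$ and $x_0+x_j=\pi/2$; your handling of these is correct, in particular the reduction of $x_0+x_1=\pi/2$ to part (iii) applied to $(x_2,\,\pi/2-x_1)$ after verifying $\operatorname{den}(\pi/2-x_1)\in\{n,2n\}$. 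The genuine gap is the one you flag: the squareness of $v(n,x_0')^{\sigma_0}v(n,x_1')^{-\sigma_1}v(n,x_2')^{-\sigma_2}$ only says the symmetric difference of three supports vanishes in parity, and in the abstract this can happen without any two $v(n,x_i')$ coinciding in $\widehat{X^n}$. Ruling that out requires the hierarchical structure of the supports (cluster length, $\operatorname{pmin}$, poles) that the paper exploits in Lemmas \ref{le.sfo}, \ref{le.sfe}, \ref{le.sfin}, all of which are keyed to the asymmetric five-tuple shape; reworking them symmetrically in three variables is a real (if feasible) task, not a cosmetic adjustment. The paper's own treatment of (iv) is terse enough that it leaves a comparable amount of detail to the reader, so your proposal is essentially on par, trading the paper's one-line appeal to Lemma \ref{le.sfred} for elementary shortcuts in (i)--(iii) at the cost of a larger case split in (iv).
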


\begin{proof}
	We only discuss the proof of last claim. The others work similarly. Assume that $(x_0,x_1,x_2)$ is a solution to Equation \eqref{eq.red2} with $0<x_i<\frac{\pi}{2}$ and $\operatorname{den}(x_i)\in\{n,2n\}$ for $0\le i\le 1$. Then 
	$\operatorname{tan}x_0=\operatorname{tan}x_1\operatorname{tan}x_2.$ Similar to the argument in the proof of Lemma \ref{le.sfred}, one can assume that $x_1=x_2$ or $x_1+x_2=\frac{\pi}{2}$. In both cases, apply the argument of the proof of Lemma \ref{le.sfred} one more time. We get a contradiction. So the claim follows.
\end{proof}

\begin{proof} [Proof of Theorem \ref{th.sfn2n}]
	The conclusion follows from Lemma \ref{le.sfred} and Lemma \ref{le.sfred220}.
\end{proof}

\section{The non square-free case}\label{sec.nonsf}

In this section, we investigate the solution to Equation \eqref{eq.ne4} under the condition that the denominator of the solution is a non-square-free integer. Our main results are Theorem \ref{th.4mnsf}, Theorem \ref{th.mnsf} and Theorem \ref{th.8nsf}.

\begin{theorem}\label{th.4mnsf}
	Let $n\in\mathbb{N}_{\ge 9 }$ be odd and non square-free.  Assume that every prime factor of $n$ is greater than $5$. Assume that $(x_0,x_1,x_2,x_3,x_4)$ is a solution to \eqref{eq.ne4} in $G$ with $0<x_i<\frac{\pi}{2}$ and $\operatorname{den}(x_i)=4n$ for each $0\le i\le 4$. Then $(x_0,x_1,x_2,x_3,x_4)\in\Phi_{1,1}$.
\end{theorem}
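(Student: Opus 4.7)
The plan is to mirror the square-free strategy of Theorem \ref{th.4msf} and Theorem \ref{th.sfn2n}, but using Conrad's basis in the form given by item (6) of Theorem \ref{th.ba1}, which is tailored to the non-square-free modulus. Write $x_i=\frac{x_i'}{4n}\pi$ with $\operatorname{gcd}(4n,x_i')=1$ and $0<x_i'<2n$. Since $4\mid 4n$, Corollary \ref{cor.tarq}(iii) gives $\operatorname{tan}x_i=v(4n,x_i')^2$ in $\widehat{X^{4n}}$, so Equation \eqref{eq.ne4} becomes
\[
v(4n,x_0')^4=\prod_{i=1}^{4}v(4n,x_i')^2
\]
in the torsion-free group $\widehat{X^{4n}}$, which in turn forces $v(4n,x_0')^2=\prod_{i=1}^{4}v(4n,x_i')$. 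All of the analysis will be carried out by expanding both sides in Conrad's basis $B_{4n}$ of case (6) and comparing multiplicities.

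First I would set up the analogues of the invariants in \ref{ss.sf} adapted to the non-square-free case. Write $4n=p_1^{e_1}\cdots p_\ell^{e_\ell}$ with $p_1=2,\,e_1=2$, and let $(a_1,\ldots,a_\ell)_{4n}$ with $a_i=(\overline{a}_i,\widehat{a}_i)$ (or $a_i=\overline{a}_i$ when $e_i=1$) be the residue form of $v(4n,a)$. Define $\epsilon_a$, a two-layer cluster length $\operatorname{len}(4n,a)$ (requiring $\overline{a}_s\in\{1,p_s-1\}$ and $\widehat{a}_s$ in the favored half of the $\widehat{b}$-range of $B_{4n}$), and the associated pole and $\tau$ data. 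Establish a congruence lemma analogous to Lemma \ref{le.cong4p} and Lemma \ref{le.congsf4dn}: for $0<a,b<2n$ with $\operatorname{gcd}(4n,ab)=1$, one has $v(4n,a)=v(4n,b)$ in $\widehat{X^{4n}}$ iff $a=b$, and $v(4n,a)=v(4n,b)^{-1}$ iff $a+b=2n$. This follows from the explicit $\overline{b}_i$/$\widehat{b}_i$-ranges defining $B_{4n}$ together with Equation \eqref{eq.nor} applied to the smallest prime power $p_\mu^{e_\mu-1}$ appearing in the basis constraint.

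Next I would run the maximal-element argument. Order the indices $1\le i\le 4$ by the lexicographic data $(\operatorname{len}(4n,x_i'),\operatorname{pmin}(4n,x_i'),|\overline{\operatorname{pol}}(4n,x_i',1)|,|\widehat{\operatorname{pol}}(4n,x_i')|)$, select an $i$ maximizing this tuple, and suppose it strictly dominates the corresponding data for $x_0$. Construct a set $V\subset B_{4n}$ of candidate basis vectors supported in the expansion of $v(4n,x_i')$ but manifestly missing from the support of every $v(4n,x_k')$ with $k$ strictly smaller in the ordering; the freedom to choose the $\overline{b}_s,\widehat{b}_s$ away from finitely many forbidden values at each coordinate $s\ge 2$ is what uses the hypothesis that every prime factor of $n$ exceeds $5$ (we need at least $(p_s-3)\cdot(p_s^{e_s-1}-2)/2$ residues to avoid at most four forbidden residues, which is guaranteed when $p_s\ge 7$, or when $p_s=5,7$ via a direct count since $e_s\ge 2$ gives the factor $p_s^{e_s-1}-2\ge 3$). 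Choose $v\in V$; then $\operatorname{multi}_v(v(4n,x_0')^2)=0$ and $\operatorname{multi}_v(v(4n,x_k'))=0$ for every lower-ranked $k$, so the only indices contributing to $\operatorname{multi}_v(\prod_{k=1}^{4}v(4n,x_k'))$ are those tied with $i$ in the ordering; nonzeroness of this multiplicity forces at least two such $k$, and then the congruence lemma gives $x_i=x_j$ or $x_i+x_j=\pi/2$ for some $j\ne i$.

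Finally I would iterate: after one cancellation the equation reduces to one of
\[
(\operatorname{tan}x_0)^2=1,\quad (\operatorname{tan}x_0)^2=(\operatorname{tan}x_1)^2,\quad (\operatorname{tan}x_0)^2=(\operatorname{tan}x_1)^4,\quad (\operatorname{tan}x_0)^2=(\operatorname{tan}x_1)^2(\operatorname{tan}x_2)^2,
\]
and exactly the same maximal-element machinery (this is the non-square-free analogue of Lemma \ref{le.sfred220}) shows the only admissible outcomes are $x_1=x_2=x_0$ together with $x_3+x_4=\pi/2$ (or a permutation thereof), landing the tuple in $\Phi_{1,1}$. The main obstacle is the combinatorial bookkeeping in step two: Conrad's basis in the non-square-free case imposes coupled constraints on the pairs $(\overline{b}_i,\widehat{b}_i)$, so the proof of the congruence lemma and the cardinality estimate $V\ne\emptyset$ must be carried out simultaneously at the $p_s$-level and at the $p_s^{e_s-1}$-level, which is why the hypothesis $p\ge 7$ (rather than $p\ge 11$ as in Theorem \ref{th.4msf}) already suffices—the extra factor $p_s^{e_s-1}$ coming from non-square-freeness provides enough room for $V$.
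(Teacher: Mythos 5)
Your initial reduction to $v(4n,x_0')^2=\prod_{i=1}^4 v(4n,x_i')$ in the torsion-free group $\widehat{X^{4n}}$ is correct, and the broad maximal-element strategy is the right genre of argument. However, your plan to ``mirror the square-free strategy'' by defining a two-layer cluster length, a $\tau$, a $\operatorname{pmin}$, and separate $\overline{\operatorname{pol}}/\widehat{\operatorname{pol}}$ invariants misses the decisive structural difference between cases (4)--(5) and case (6) of Theorem~\ref{th.ba1}, and as a result the proposal imports complications that do not apply here and whose transfer is not justified.

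The entire square-free apparatus of Section~\ref{sec.bassf} --- the cluster length, the $\widehat{\Gamma}$-corrections, and the recursive decomposition $K_1K_2K_3K_4$ of Propositions~\ref{prop.barep1} and~\ref{prop.barep1-2} --- exists because in the square-free case Conrad's basis imposes the ``prefix'' constraint that the first $k-1$ coordinates of a basis element equal $1$ while the pivot coordinate lies in the upper half-range $[\tfrac{p_k+1}{2},p_k-2]$. Reducing a general $v(4n,a)$ into that basis requires the multi-step unrolling of Lemma~\ref{lem.unr}, hence the elaborate bookkeeping. In case~(6), by contrast, the basis $B_{4n}$ is defined by purely \emph{per-coordinate} range constraints ($1\le b_i\le p_i-2$ for $e_i=1$, $0\le\overline b_i\le p_i-2$ and $1\le\widehat b_i<p_i^{e_i-1}$ for $e_i\ge 2$) plus a single global half-range constraint on $\widehat b_\mu$, which is absorbed by the sign $\epsilon_a$ exactly as in~\ref{def.nonsfepsilon}. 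Because of this, Lemma~\ref{le.nor1} already yields the entire basis representation in one shot: $v(4n,a)=\prod_{v\in\Gamma(4n,a)}v^{(-1)^{e(a)}}$, with $\Gamma(4n,a)$ obtained by one application of the norm relation~\eqref{eq.nor} at each pole coordinate. This is the content of Lemma~\ref{le.ba2}. There is no recursion, hence no analogue of $\operatorname{len}$, $\tau$, or $\operatorname{pmin}$ is needed or even meaningfully defined. The paper's proof (Lemma~\ref{le.ns1}) then orders the $x_k$ simply by the single invariant $|\operatorname{pol}(4n,x_k',1)|$ and constructs the separating set $V$ inside $\Gamma(4n,x_i')$; the larger per-coordinate freedom (roughly $p_s-1$ choices rather than $\approx(p_s-3)/2$) and the absence of a second exclusion family ($\lambda_3$ in Lemma~\ref{le.sfin}) are precisely why $p_s>5$ suffices here versus $p_s>11$ in Theorem~\ref{th.4msf} --- not because of an extra $p_s^{e_s-1}$ factor as you suggest (the $\widehat a_s$ are in fact \emph{fixed} in the definition of $V$, so no counting room comes from them). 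Your proposal, as stated, never supplies the analogue of Lemma~\ref{le.ba2} that would legitimize the multiplicity comparison, and the invariants it orders by have no established behavior in the case~(6) basis; to make it go through you would in effect have to discover the one-step representation, at which point the extra invariants become vacuous and the argument collapses to the paper's much shorter one.
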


Before proving Theorem \ref{th.4mnsf}, we need some preparations.

\begin{emp}\label{ss.brnf}

	Let $n\in\mathbb{N}_{\ge 9 }$ be odd and non square-free. Let $4n=p_1^{e_1}\cdots p_\ell^{e_\ell}$ be the prime factorization of $4n$ with $p_1<\dots<p_\ell$, and $e_i>0$ for $1\le i\le \ell$. Notice that $p_1=2$ and $e_1=2$. Recall that $\mu:=\operatorname{min}\{2\le s\le\ell\mid e_s\ge 2\}$.

	Let $a\in\mathbb{Z}$ be such that $\operatorname{gcd}(4n,a)=1$.  Let $a_s, \overline{a}_s$, $\widehat{a}_s$ and $\widetilde{a}_s$ be the elements associated to $a$ with respect to $4n$ defined in \ref{ss.rf}. In particular, the residue form of $v(4n,a)$ is $(a_1,\dots,a_\ell)_{4n}$. Similarly, let $(-a)_s, \overline{(-a)}_s$, $\widehat{(-a)}_s$ and $\widetilde{(-a)}_s$ be the elements associated to $-a$ with respect to $4n$. In particular, the residue form of $v(4n,-a)$ is $((-a)_1,\dots,(-a)_\ell)_{4n}$.
	
\end{emp}

\begin{lemma}\label{le.min}
Keep the assumptions in \ref{ss.brnf}. Then the following is true.	
	\begin{enumerate}
		\item If $e_s=1$ for some $2\le s\le\ell$, then $(-a)_s=p_s-a_s$.
		\item If $e_s\ge 2$ for some $1\le s\le\ell$, then $\overline{(-a)}_s=p_s-\overline{a}_s-1$ and $\widehat{(-a)}_s=p_s^{e_s-1}-\widehat{a}_s$.
	\end{enumerate}
\end{lemma}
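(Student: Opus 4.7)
The plan is to prove both statements by unpacking the definitions from \ref{ss.rf} and exploiting the fact that $\gcd(4n,a)=1$ forces $\gcd(p_s,a)=1$ for each prime factor $p_s$ of $4n$. This coprimality guarantees that the natural candidate representatives for $-a$ avoid the boundary value $0$ and therefore land in the correct half-open intervals prescribed by the definitions.

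For part (i), I would argue as follows. Since $e_s=1$, by definition $a_s=\overline{a}_s$ is the unique integer in $[0,p_s-1]$ congruent to $a \pmod{p_s}$, and $\gcd(p_s,a)=1$ forces $1 \le a_s \le p_s-1$. Consequently $p_s-a_s$ lies in the same interval $[1,p_s-1]$ and satisfies $p_s-a_s \equiv -a \pmod{p_s}$, so by uniqueness $(-a)_s=p_s-a_s$.

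For part (ii), I would handle $\widehat{(-a)}_s$ and $\widetilde{(-a)}_s$ separately and then combine them. Since $p_s\nmid a$, both $\widehat{a}_s$ and $\widetilde{a}_s$ are nonzero, so $p_s^{e_s-1}-\widehat{a}_s$ lies in $[1,p_s^{e_s-1}-1]$ and $p_s^{e_s}-\widetilde{a}_s$ lies in $[1,p_s^{e_s}-1]$, and each is congruent to $-a$ modulo the appropriate prime power. Uniqueness then gives $\widehat{(-a)}_s=p_s^{e_s-1}-\widehat{a}_s$ and $\widetilde{(-a)}_s=p_s^{e_s}-\widetilde{a}_s$. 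Substituting into the definition
\[
\overline{(-a)}_s=\frac{\widetilde{(-a)}_s-\widehat{(-a)}_s}{p_s^{e_s-1}}
=\frac{(p_s^{e_s}-\widetilde{a}_s)-(p_s^{e_s-1}-\widehat{a}_s)}{p_s^{e_s-1}}=(p_s-1)-\overline{a}_s
\]
yields the desired formula $\overline{(-a)}_s=p_s-\overline{a}_s-1$.

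There is no genuine obstacle here; the only thing to watch is that one must invoke $\gcd(p_s,a)=1$ at each step to rule out the degenerate case where a residue equals $0$ and the ``obvious" candidate $p_s^{e_s-1}$ (or $p_s^{e_s}$) would lie outside the prescribed interval. Everything else is direct arithmetic.
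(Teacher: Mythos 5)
Your proposal is correct and follows essentially the same route as the paper: unwind the definitions from \ref{ss.rf} and use $\gcd(p_s,a)=1$ to rule out the boundary value $0$, so that the candidate representatives $p_s-a_s$, $p_s^{e_s-1}-\widehat{a}_s$, and $p_s^{e_s}-\widetilde{a}_s$ land in the prescribed intervals. The only cosmetic difference is that you identify $\widehat{(-a)}_s$ and $\widetilde{(-a)}_s$ directly by uniqueness and then derive $\overline{(-a)}_s$, whereas the paper writes out the decomposition $p_s^{e_s}-\widetilde{a}_s=(p_s-\overline{a}_s-1)p_s^{e_s-1}+(p_s^{e_s-1}-\widehat{a}_s)$ and reads both components off at once; the content is the same.
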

\begin{proof}
	\begin{enumerate}
		\item $e_s=1$. Note that $p_s-a_s\equiv -a\operatorname{mod} p_s$ and $1\le p_s-a_s\le p_s-1$. By definition, $(-a)_s=p_s-a_s$.
		\item $e_s\ge 2$. In this case, $\widetilde{(-a)}_s={p_s}^{e_s}-\widetilde{a}_s=(p_s-\overline{a}_s-1)p_s^{e_s-1}+(p_s^{e_s-1}-\widehat{a}_s)$ where $0\le p_s-\overline{a}_s-1\le p_s-1$ and $1	\le (p_s^{e_s-1}-\widehat{a}_s)\le p_s^{e_s-1}-1$. By definition, $\overline{(-a)}_s=p_s-\overline{a}_s-1$ and $\widehat{(-a)}_s=p_s^{e_s-1}-\widehat{a}_s$.
	\end{enumerate}
It is clear that the statement also holds true when $n$ is even.
\end{proof}

\begin{lemma}\label{le.excl}
	Keep the assumptions in \ref{ss.brnf}. If $e_s\ge 2$ for some $1\le s\le\ell$, then exactly one of the two inequalities, $\widehat{a}_s< \frac{p_s^{e_s-1}}{2}$ and $\widehat{(-a)}_s< \frac{p_s^{e_s-1}}{2}$, holds.   
\end{lemma}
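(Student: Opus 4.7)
The plan is to reduce the dichotomy to a simple statement about an odd integer being split into two unequal halves by a non-integer midpoint. First I would invoke Lemma~\ref{le.min}(ii) to record the key identity
\[\widehat{(-a)}_s = p_s^{e_s-1} - \widehat{a}_s,\]
so the two quantities $\widehat{a}_s$ and $\widehat{(-a)}_s$ sum to $p_s^{e_s-1}$.

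Next I would verify the boundary cases are excluded. Because $\operatorname{gcd}(4n,a)=1$, we have $p_s \nmid a$, and since $\widehat{a}_s \equiv a \pmod{p_s^{e_s-1}}$ with $0 \le \widehat{a}_s \le p_s^{e_s-1}-1$, this forces $1 \le \widehat{a}_s \le p_s^{e_s-1}-1$ (and the same bound for $\widehat{(-a)}_s$ via the above identity). In particular neither is zero.

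The final step uses parity: in the setup of \ref{ss.brnf} the only index with $p_s$ even is $s=1$, and the relevant instances of the lemma occur for $s$ with $p_s$ odd (indeed, this is exactly how the lemma enters the basis condition indexed by $\mu \ge 2$ in Theorem~\ref{th.ba1}(vi)). For such $s$, the integer $p_s^{e_s-1}$ is odd, hence $p_s^{e_s-1}/2 \notin \mathbb{Z}$, so $\widehat{a}_s \ne p_s^{e_s-1}/2$. Since the pair $\{\widehat{a}_s,\,p_s^{e_s-1}-\widehat{a}_s\}$ consists of two positive integers summing to the odd number $p_s^{e_s-1}$ and avoiding the midpoint, exactly one of them is strictly less than $p_s^{e_s-1}/2$. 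Rewriting the larger one as $\widehat{(-a)}_s$ via the identity above yields the exclusive-or claim.

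There is essentially no hard step here: the whole lemma is a bookkeeping consequence of Lemma~\ref{le.min}(ii) combined with the coprimality hypothesis and the parity of $p_s^{e_s-1}$. The one point worth flagging is the coprimality-to-nonzero reduction: without $p_s\nmid a$ one would have to allow $\widehat{a}_s=0$, and then both $\widehat{a}_s < p_s^{e_s-1}/2$ and $\widehat{(-a)}_s = p_s^{e_s-1} < p_s^{e_s-1}/2$ would fail simultaneously, breaking the dichotomy.
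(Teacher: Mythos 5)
Your argument is correct and is essentially the paper's own proof: both rest on the identity $\widehat{a}_s+\widehat{(-a)}_s=p_s^{e_s-1}$ from Lemma \ref{le.min} together with ruling out the midpoint $\frac{p_s^{e_s-1}}{2}$, and your coprimality observation ($p_s\nmid a$, hence $\widehat{a}_s\neq 0$) is the same implicit input the paper uses. The only divergence is the treatment of $p_s=2$. You set it aside by noting that the lemma is only ever invoked at $s=\mu\ge 2$ (see \ref{def.nonsfepsilon}), where $p_s$ is odd and parity excludes the midpoint; the paper instead tries to cover $p_s=2$ as well, asserting that $\widehat{a}_s=\frac{p_s^{e_s-1}}{2}$ would force $e_s\ge 3$ and hence $2\mid a$. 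That assertion is not available in the setting of \ref{ss.brnf}, where $e_1=2$: there one has $\widehat{a}_1=\widehat{(-a)}_1=1=\frac{p_1^{e_1-1}}{2}$ for every odd $a$, so neither inequality holds and the dichotomy as literally stated fails at $s=1$. So your restriction to odd $p_s$ is not a gap; it is in effect the hypothesis under which the statement is true (and under which it is applied), whereas the paper's argument for the even prime only goes through when $8$ divides the modulus, as in the remark following Lemma \ref{le.ba2}.
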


\begin{proof}
	We prove the statement without requiring $n$ to be odd. First we show $\widehat{a}_s\ne \frac{p_s^{e_s-1}}{2}$. 
	Recall that $\widetilde{a}_s=\overline{a}_s p_s^{e_s-1}+\widehat{a}_s$ where $\widetilde{a}_s$ is the integer satisfying $\widetilde{a}_s\equiv a \operatorname{mod} p_s^{e_s}$ and $1\le \widetilde{a}_s\le p_s^{e_s}-1$. If $\widehat{a}_s= \frac{p_s^{e_s-1}}{2}$, then $p_s=2$, $s=1$ and $e_s\ge 3$. Therefore $2\mid \widetilde{a}_s$ and so $2\mid a$. Hence $\operatorname{gcd}(4n,a)\ne 1$. A contradiction. So $\widehat{a}_s\ne \frac{p_s^{e_s-1}}{2}$. By Lemma \ref{le.min}, $\widehat{a}_s+\widehat{(-a)}_s=p_s^{e_s-1}$. The conclusion follows.
\end{proof}

\begin{emp}\label{def.nonsfepsilon}
	Keep the assumptions in \ref{ss.brnf}. If $\widehat{a}_\mu< \frac{p_\mu^{e_\mu-1}}{2}$, let $\epsilon_a=1$; if $\widehat{(-a)}_\mu< \frac{p_\mu^{e_\mu-1}}{2}$, let $\epsilon_a=-1$. Then $\widehat{(\epsilon_a a)}_\mu< \frac{p_\mu^{e_\mu-1}}{2}$. By Lemma \ref{le.excl}, the number $\epsilon_a$ is well defined.

		 Let $1\le s\le\ell$. The index $s$ is called a \textit{pole} of $v(4n,a)\in X^{4n}$ if $\overline{(\epsilon_a a)}_s=p_s-1$. Let

	$$\operatorname{pol}(4n,a):=\{1\le s\le \ell\mid \overline{(\epsilon_a a)}_s=p_s-1 \},$$ 
	
	and for $1\le r<\ell$, let
	
	$$\operatorname{pol}(4n,a,r):=\{r< s\le \ell\mid \overline{(\epsilon_a a)}_s=p_s-1 \}.$$ 
	
	Let 
	
	$$e(a):=|\operatorname{pol}(4n,a)|.$$

	Let 
	\begin{equation*}
		\aligned
		\Gamma(4n,a):=&\{(b_1,\dots,b_\ell)_{4n}\in X^n\mid b_1=(0,1). \text{ For }2\le s\le \ell, \\
		&\text{ if } s\in\operatorname{pol}(4n,a,1)\text{ and } e_s=1, \text{ then }  1\le b_s\le p_s-2;\\
		&\text{ if } s\in\operatorname{pol}(4n,a,1)\text{ and } e_s\ge 2,\text{ then }b_s=(\overline{r}_s,\widehat{ r}_s)\in\mathbb{Z}^2 \\
		& \text{ with }  0\le \overline{r}_s\le p_s-2 \text{ and }\widehat{ r}_s=\widehat{(\epsilon_a a)}_s;\text{ if } s\notin\operatorname{pol}(4n,a,1), \\
		&  \text{ then } b_s=(\epsilon_a a)_s\}.
		\endaligned
	\end{equation*}

	Here $(b_1,\dots,b_\ell)_{4n}$ refers to the corresponding element $v(4n,b)$ as discussed in Remark \ref{re.rf}.
	
\end{emp}

\begin{remark}
	If $e(a)=0$, then $\Gamma(4n,a)=\{v(4n,a)\}$. This is the case when $v(4n,a)$ has no poles.
\end{remark}

\begin{lemma}\label{le.ba2}
		Keep the assumptions in \ref{ss.brnf}. Then the following is true.
	\begin{enumerate}
		\item $\Gamma(4n,a)\subseteq B_{4n}$, where $B_{4n}$ the set that induces a basis of $\widehat{X^{4n}}$ under the quotient map given in Theorem \ref{th.ba1}.
		\item Let $\Gamma(4n,a)$ be as in \ref{def.nonsfepsilon}. Then \[v(4n,a)=\prod\limits_{v\in \Gamma(4n,a)}v^{(-1)^{e(a)}},\]
		in $\widehat{X^{4n}}$, where both sides are viewed as elements in $\widehat{X^{4n}}$ under the quotient map.
	\end{enumerate}
\end{lemma}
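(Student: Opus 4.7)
The plan is to handle part (i) by direct verification and part (ii) by iterated application of Lemma \ref{le.nor1}, organized as an induction on $e(a) = |\operatorname{pol}(4n,a)|$.

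For part (i), I would check the conditions of Theorem \ref{th.ba1}(6) coordinate by coordinate. The entry $b_1 = (0,1)$ satisfies $0 \le \overline{b}_1 \le p_1 - 2 = 0$ and $1 \le \widehat{b}_1 = 1 < 2 = p_1^{e_1-1}$. For $s \ge 2$ with $s \in \operatorname{pol}(4n,a,1)$, the ranges prescribed by the definition of $\Gamma(4n,a)$ match those of $B_{4n}$ by construction. For $s \ge 2$ with $s \notin \operatorname{pol}(4n,a,1)$, the equality $b_s = (\epsilon_a a)_s$ lies in the correct range, because the non-pole condition gives $\overline{(\epsilon_a a)}_s \ne p_s - 1$, while $\operatorname{gcd}(4n, \epsilon_a a) = 1$ forces $\widehat{(\epsilon_a a)}_s \ge 1$ when $e_s \ge 2$ and $(\epsilon_a a)_s \ge 1$ when $e_s = 1$. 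The critical constraint $\widehat{b}_\mu < p_\mu^{e_\mu-1}/2$ holds by the very definition of $\epsilon_a$ in \ref{def.nonsfepsilon}, since $\widehat{b}_\mu = \widehat{(\epsilon_a a)}_\mu$ for every $v(4n,b) \in \Gamma(4n,a)$.

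For part (ii), I would induct on $e(a)$. The base case $e(a) = 0$ is immediate: $\Gamma(4n,a) = \{v(4n, \epsilon_a a)\}$ is a singleton (all coordinates are forced), $v(4n,a) = v(4n,\epsilon_a a)$ in $\widehat{X^{4n}}$ by \eqref{eq.sym}, and the exponent $(-1)^0 = 1$. For the inductive step, pick any pole $r \in \operatorname{pol}(4n,a)$ and apply Lemma \ref{le.nor1} at index $r$ to $v(4n,\epsilon_a a)$ to obtain
\[v(4n,\epsilon_a a) = \prod_{v(4n,c) \in \Gamma_r} v(4n,c)^{-1}.\]
Since the norm relation alters only the coordinate at position $r$ and preserves $\widehat{c}_r$ when $e_r \ge 2$, every $c$ arising here satisfies $\widehat{c}_\mu = \widehat{(\epsilon_a a)}_\mu < p_\mu^{e_\mu-1}/2$, so $\epsilon_c = 1$; moreover $\operatorname{pol}(4n,c) = \operatorname{pol}(4n,a) \setminus \{r\}$, and hence $e(c) = e(a) - 1$. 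Applying the inductive hypothesis to each $v(4n,c)$ and combining signs $(-1)\cdot(-1)^{e(a)-1} = (-1)^{e(a)}$ yields the desired exponent.

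The main obstacle is the combinatorial bookkeeping showing that $\bigsqcup_{v(4n,c) \in \Gamma_r} \Gamma(4n,c) = \Gamma(4n,a)$. When $r > 1$, as $c$ varies through $\Gamma_r$ the coordinate $w_r$ (for $w \in \Gamma(4n,c)$) ranges over all admissible non-pole values at position $r$, matching the range specified in $\Gamma(4n,a)$; the remaining coordinates of $w$ behave as prescribed by $\Gamma(4n,c)$, whose pole pattern agrees with that of $\Gamma(4n,a)$ away from $r$. When $r = 1$, the set $\Gamma_1$ contains only the single element with first coordinate $(0,1)$ (since $p_1 = 2$, $e_1 = 2$), and $\operatorname{pol}(4n,c,1) = \operatorname{pol}(4n,a,1)$, so $\Gamma(4n,c)$ coincides with $\Gamma(4n,a)$ directly. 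Disjointness in either case is automatic because distinct $c$'s yield distinct values of $w_r$.
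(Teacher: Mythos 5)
Your proposal is correct and takes essentially the same route as the paper: part (i) by checking the membership conditions of Theorem \ref{th.ba1}(6), noting in particular that $\widehat{b}_\mu=\widehat{(\epsilon_a a)}_\mu<p_\mu^{e_\mu-1}/2$ by the choice of $\epsilon_a$, and part (ii) by iterated application of Lemma \ref{le.nor1}. The paper's proof of part (ii) is a one-line citation of Lemma \ref{le.nor1}; your induction on $e(a)$, together with the verification that each $c\in\Gamma_r$ has $\epsilon_c=1$ and $e(c)=e(a)-1$ and that $\bigsqcup_{c\in\Gamma_r}\Gamma(4n,c)=\Gamma(4n,a)$, is precisely the expansion of that citation, including the correct handling of $r=1$ (where $\Gamma_1$ is a singleton and the product collapses to $\Gamma(4n,a)$ with an extra sign).
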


\begin{proof}
	Let $(b_1,\dots,b_\ell)_{4n}\in \Gamma(4n,a)$. By definition, $1\le \widehat{ b}_k< \frac{p_k^{e_k-1}}{2}.$  Since for all $ 1\le i\le \ell$, $\overline{ b}_i\ne p_i-1$, we have that $(b_1,\dots,b_\ell)_{4n}\in B_{4n}$. The second part follows from Lemma \ref{le.nor1}.
\end{proof}

\begin{remark}

	If the number $4n$ in Lemma \ref{le.ba2} is replaced by a number $m$ satisfying either of the following two conditions, the Lemma still holds after minor adjustments.
	
	\begin{enumerate}
		\item $m \in \mathbb{N}{\ge 9}$, which is odd and non-square-free.
		\item $m \in \mathbb{N}{\ge 8}$ such that $8 \mid m$.
	\end{enumerate}
	
	This is due to the similarity in the basis representation in these cases, as shown in the Theorem \ref{th.ba1}.

\end{remark}

\begin{emp}\label{em.nonsfproof}
	Let $n$ be as in \ref{ss.brnf}. Let $(x_0,x_1,x_2,x_3,x_4)$ be a solution to \eqref{eq.ne4} in $G$ with $0<x_i<\frac{\pi}{2}$ and the denominator of $x_i$ equals $4n$ for each $0\le i\le 4$. Let $x_i'$ be the numerator of $x_i$.
	
\end{emp}

\begin{lemma}\label{le.ns1}
	
	Let $n\in\mathbb{N}_{\ge 3}$ be is odd and not square-free. Assume that every prime factor of $m$ is greater than $5$. Assume that $(x_0,x_1,x_2,x_3,x_4)$ is a solution to \eqref{eq.ne4} in $G$ with $0<x_k<\frac{\pi}{2}$ and $\operatorname{den}(x_k)=4n$ for each $0\le k\le 4$. Keep the notation in \ref{em.nonsfproof}. Assume that $|\operatorname{pol}(4n, x_0',1)|<|\operatorname{pol}(4n, x_i',1)|$ for some $1\le i\le 4$. Then there exists $1\le j\le 4$ such that $j\ne i$ and $x_i+x_j=\frac{\pi}{2}$.

\end{lemma}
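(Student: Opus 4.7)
The plan is to mimic the multiplicity-comparison argument already used in Lemma \ref{le.sfo}, Lemma \ref{le.sfe}, and Lemma \ref{le.sfin}, but working with the basis expansion given by Lemma \ref{le.ba2} for the non--square-free case. Since $4\mid 4n$, Corollary \ref{cor.tarq} gives $\operatorname{tan} x_k = v(4n,x_k')^2$ in $\widehat{X^{4n}}$ for each $0\le k\le 4$, and Lemma \ref{le.ba2} expresses each $v(4n,x_k')$ as $\prod_{v\in\Gamma(4n,x_k')} v^{(-1)^{e(x_k')}}$. Hence the relation $(\operatorname{tan} x_0)^2 = \prod_{k=1}^4 \operatorname{tan} x_k$ becomes an equality of formal products of basis elements of $\widehat{X^{4n}}$, and we will compare multiplicities at a carefully chosen basis element.

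First, after reordering, I would assume that $|\operatorname{pol}(4n,x_i',1)|$ is maximal among the values $|\operatorname{pol}(4n,x_k',1)|$ for $k\in\{1,2,3,4\}$; by the hypothesis this maximum is strictly greater than $|\operatorname{pol}(4n,x_0',1)|$. Define
\[
\lambda:=\{\,1\le k\le 4\mid \operatorname{pol}(4n,x_k',1)=\operatorname{pol}(4n,x_i',1)\,\},
\]
so $i\in\lambda$ and $0\notin\lambda$. For each $k\notin\lambda$, pick a witness index $t_k$ that distinguishes the pole patterns: either a coordinate $t_k\in\operatorname{pol}(4n,x_i',1)\setminus\operatorname{pol}(4n,x_k',1)$, or, if the pole sets agree outside but the inner coordinates differ, some index where the residue forms disagree. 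Then form the subset $V\subseteq \Gamma(4n,x_i')$ consisting of those $(b_1,\dots,b_\ell)_{4n}\in\Gamma(4n,x_i')$ for which $b_{t_k}$ avoids both $(\epsilon_{x_k'}x_k')_{t_k}$ and $p_{t_k}-(\epsilon_{x_k'}x_k')_{t_k}$ for every $k\notin\lambda$. At each such coordinate $t_k$, we have at least $p_{t_k}-4\ge 3$ admissible values in $\Gamma(4n,x_i')$, so since every prime factor of $n$ exceeds $5$ and there are at most $4$ forbidden indices, the set $V$ is nonempty.

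Next I would pick any $v\in V$ and compute $\operatorname{multi}_v(\operatorname{tan} x_k)$ for each $k$. By the construction of $V$, $v\notin \Gamma(4n,x_k')$ whenever $k\notin\lambda\cup\{0\}$, so $\operatorname{multi}_v(\operatorname{tan} x_k)=0$ for those $k$. Also $\operatorname{multi}_v(\operatorname{tan} x_0)=0$ because $0\notin\lambda$ (the pole pattern of $x_0'$ is strictly smaller, so $v\notin \Gamma(4n,x_0')$). On the other hand $\operatorname{multi}_v(\operatorname{tan} x_i)=\pm 2\ne 0$. Equating multiplicities on both sides of the generalized L'Huilier equation forces $\sum_{k\in\lambda\setminus\{i\}}\operatorname{multi}_v(\operatorname{tan} x_k)\ne 0$, hence some $j\in\lambda\setminus\{i\}$ satisfies $v\in\Gamma(4n,x_j')$. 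To obtain the congruence $x_i+x_j=\pi/2$ rather than $x_i=x_j$, I would refine the choice of $v$ so that $(\epsilon_{x_j'}x_j')_1\ne(\epsilon_{x_i'}x_i')_1$: this is possible because varying $b_1$ in $\Gamma(4n,x_i')$ (the first coordinate has residue $(0,1)$ in the basis but changing $\overline{b}_\mu$ at the unique non--square-free slot $\mu$ flips $\epsilon$), we can choose the $\epsilon$--class of the representative at will. Then an analogue of Lemma \ref{le.congsf4dn}, now formulated for the non--square-free modulus $4n$ using the Chinese Remainder Theorem and the defining sign $\epsilon$ of \ref{def.nonsfepsilon}, yields $x_i'+x_j'=2n$, i.e.\ $x_i+x_j=\pi/2$.

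The main obstacle I expect is the last step: making precise the congruence dictionary for $v(4n,a)=v(4n,b)^{\pm 1}$ in $\widehat{X^{4n}}$ in the presence of the non--square-free prime power $p_\mu^{e_\mu}$, because the relation \eqref{eq.nor} introduces many additional identifications, and because the $\epsilon_a$-normalization built into $\Gamma(4n,a)$ interacts with the distinguished slot $\mu$ in a nontrivial way. Once that congruence lemma is isolated, the multiplicity argument above and the nonemptiness of $V$ (which relies crucially on each prime factor of $n$ being greater than $5$, to ensure enough admissible values at each coordinate) deliver the conclusion.
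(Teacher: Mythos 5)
Your overall strategy — choosing a basis element $v$ in the relative support of $\operatorname{tan} x_i$ that misses the supports of the terms whose pole pattern differs, then comparing multiplicities — is the same as the paper's, and the construction of $V$ inside $\Gamma(4n,x_i')$ together with the nonemptiness count from the hypothesis ``each prime factor $>5$'' is exactly the paper's mechanism. However, the final step of your proposal contains a genuine gap. You write that to obtain $x_i+x_j=\frac{\pi}{2}$ rather than $x_i=x_j$ you would \emph{refine the choice of $v$} so that $(\epsilon_{x_j'}x_j')_1\neq(\epsilon_{x_i'}x_i')_1$. This cannot work: $\epsilon_{x_j'}$ and the residue form of $\epsilon_{x_j'}x_j'$ are intrinsic to the integer $x_j'$ and do not depend on which basis element $v$ you inspect. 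Changing $v$ within $\Gamma(4n,x_i')$ changes nothing about $x_j'$.

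The correct way to close the argument is through cancellation of signed multiplicities. Since $0$ does not lie in your set $\lambda$ (the pole patterns differ), $\operatorname{multi}_v(\operatorname{tan}^2 x_0)=0$. Each $\operatorname{multi}_v(\operatorname{tan} x_k)$ is either $0$ or $\pm 2$; in the latter case its sign is $(-1)^{e(x_k')}$ by Lemma \ref{le.ba2}. For indices $k$ whose residue form agrees with $x_i'$ at every slot $s\ge 2$, the pole set $\operatorname{pol}(4n,x_k',1)$ equals $\operatorname{pol}(4n,x_i',1)$, so the parity of $e(x_k')$ differs from that of $e(x_i')$ precisely when $\overline{(\epsilon_{x_k'}x_k')}_1\neq\overline{(\epsilon_{x_i'}x_i')}_1$ (because the index $1$ is, or is not, an additional pole). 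The total multiplicity being zero then forces at least one $j$ with the opposite sign, hence with $\overline{(\epsilon_{x_j'}x_j')}_1\neq\overline{(\epsilon_{x_i'}x_i')}_1$, and Lemma \ref{le.congsf4dn} part (ii) gives $x_i'+x_j'=2n$, i.e.\ $x_i+x_j=\frac{\pi}{2}$. Note also that no new ``analogue'' of Lemma \ref{le.congsf4dn} is required: that lemma is already stated for any $n$ with $4\mid n$, so it applies directly to the non--square-free modulus $4n$ used here. Finally, be careful that matching pole sets alone is not enough: you must further require, as the paper does in defining $\lambda_3$, that the full residue forms agree at all slots $s\ge 2$ (including the inner components $\widehat{(\cdot)}_s$ when $e_s\ge 2$), otherwise $v\in\Gamma(4n,x_i')$ need not lie in $\Gamma(4n,x_j')$.
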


\begin{proof}
	 Assume that $|\operatorname{pol}(4n, x_i')|\ge|\operatorname{pol}(4n, x_k')|$ and for each $0\le k\le 4$. See Remark
	
	Let 
	$$\lambda_1:=\{0\le k\le 4\mid \operatorname{pol}(4n, x_k',1)\ne \operatorname{pol}(4n, x_i',1)\}.$$

	Let $$\lambda_2:=\{0\le k\le 4\mid \operatorname{pol}(4n, x_k',1)=\operatorname{pol}(4n, x_i',1)\}$$
	
	and

	$$\lambda_3:=\{k\in\lambda_2\mid (\epsilon_{x_k'}x_k')_s=(\epsilon_{x_i'}x_i')_s\text{ for each } 2\le s\le\ell\}.$$
	
	By the assumption, we get that $0\notin\lambda_3$.

	\begin{claim}
		$|\lambda_3|\ge 2$ and there exists $j\in\lambda_3$ such that $\overline{(\epsilon_{x_j'}x_j')}_1\ne\overline{(\epsilon_{x_i'}x_i')}_1$.
	\end{claim}
	
	For each $k\in\lambda_1$, choose $2\le t_k\le\ell$ such that $t_k\in\operatorname{pol}(4n,x_i',1)$ and $t_k\notin\operatorname{pol}(4n,x_k',1)$.

	Let $V$ be the set which consists of elements $(a_1,\dots,a_\ell)_{4n}\in X^{4n}$ satisfy the following four conditions.

	\begin{enumerate}

		\item $a_1=(0,1)$.

		\item If $2\le s\le\ell$, $s\in\operatorname{pol}(4n,x_i')$ and $e_s=1$, then $1\le a_s\le p_s-2$. In addition, if $s=t_k$ for some $k\in\lambda_1,$ then $a_s\ne (\epsilon_{x_k'}x_k')_s$.
		
		\item If $2\le s\le\ell$, $s\in\operatorname{pol}(4n,x_i')$ and $e_s\ge 2$, then $a_s=(\overline{a}_s,\widehat{a}_s)$ where $0\le \overline{a}_s\le p_s-2,$ and $ \widehat{a}_s=\widehat{(\epsilon_{x_i'}x_i')}_s$. In addition, if $s=t_k$ for some $k\in\lambda_1,$ then $\overline{a}_s\ne \overline{(\epsilon_{x_k'}x_k')}_s$.

		\item $2\le s\le\ell$ and $s\notin\operatorname{pol}(4n,x_i')$. Then $a_s=(\epsilon_{x_i'}x_i')_s$.

	\end{enumerate}

	Note that $V\subseteq B_{4n}$. Since we assumed that every prime factor of $n$ is greater than $5$ and $|\lambda_1|$ is less than $5$, it follows that $ V\ne\emptyset$.

	Let $v(4n,c)\in V$.

	\begin{enumerate}[label=(\roman*)]
	\item Assume that $k\in\lambda_1$. Let $v(4n,d)\in\operatorname{supp}(4n,x_k')$. Then there exists $s\in\operatorname{pol}(4n,x_k',1)$ such that $ d_s\ne c_s$ by the choice of $x_i$ and $V$.

	\item Assume that $k\in\lambda_2\backslash\lambda_3$.  Let $v(4n,d)\in\operatorname{supp}(4n,x_k')$. Then there exists $s\notin\operatorname{pol}(4n,x_k',1)$ such that $s\ne1$ and $ d_s\ne c_s$.

\end{enumerate}

So in each of the above case, $v(4n,c)\ne v(4n,d)$ in $\widehat{X^{4n}}$. Hence $\operatorname{multi}_v(\operatorname{tan}x_k)=0$ if $0\le k\le 4$ and $k\notin\lambda_3$.

If $|\lambda_3|=1$, or $|\lambda_3|>1$ and for each $k\in\lambda_3$, we have that  $\overline{(\epsilon_{x_k'}x_k')}_1=\overline{(\epsilon_{x_i'}x_i')}_1$, then $\operatorname{multi}_v(\prod\limits_{m=1}^4\operatorname{tan}x_m)\ne 0=\operatorname{multi}_v(\operatorname{tan}x_0)$. This is a contradiction. So the claim is true. By Lemma \ref{le.congsf4dn}, $x_i+x_j=\frac{\pi}{2}$.
\end{proof}

\begin{proof} [Proof of Theorem \ref{th.4mnsf}]

By Lemma \ref{le.ns1}, we can assume that $|\operatorname{pol}(4n, x_0',1)|\ge|\operatorname{pol}(4n, x_k',1)|$ for each $1\le k\le 4$. By the same argument as in the proof of Lemma \ref{le.ns1}, we can conclude that there exist $1\le i< j\le 4$ such that $x_i=x_j=x_0$. By relabeling, assume that $i=1$ and $j=2$. After cancellation, we get that $\operatorname{tan}x_3\operatorname{tan}x_4=1$. So $x_3+x_4=\frac{\pi}{2}$. Hence $(x_0,x_1,x_2,x_3,x_4)\in\Phi_{1,1}$.
\end{proof}

\begin{theorem}\label{th.mnsf}
	Let $n\in\mathbb{N}_{\ge 9 }$ be odd and not square-free. Assume that every prime factor of $n$ is greater than $5$.	Assume that $(x_0,x_1,x_2,x_3,x_4)$ is a solution to \eqref{eq.ne4} in $G$ with $0<x_i<\frac{\pi}{2}$ and $\operatorname{den}(x_i)\in\{n,2n\}$ for each $0\le i\le 4$. Then $(x_0,x_1,x_2,x_3,x_4)\in\Phi_{1,1}$.

\end{theorem}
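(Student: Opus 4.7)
The plan is to adapt the proof of Theorem \ref{th.4mnsf} to the mixed-denominator setting, following the same strategy by which Theorem \ref{th.sfn2n} was derived from Theorem \ref{th.4msf} in Section \ref{sec.sf2}. The key difference is that when $\operatorname{den}(x_i)\in\{n,2n\}$, the tangent of $x_i$ now has mixed expressions in $\widehat{X^{n}}$: by Corollary \ref{cor.tarq}, if $x_i=\frac{x_i'}{2n}\pi$ with $x_i'$ even then $\operatorname{tan}(x_i)=v(n,2^{-1}x_i')^{2}\,v(n,x_i')^{-1}$, while if $x_i'$ is odd then $\operatorname{tan}(x_i)=v(n,x_i')\,v(n,2^{-1}x_i')^{-2}$. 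All computations can therefore be carried out in the free abelian group $\widehat{X^{n}}$ with the basis from Theorem \ref{th.ba1}, exactly as was done for $\widehat{X^{4n}}$ in the proof of Theorem \ref{th.4mnsf}, using the representation formula of Lemma \ref{le.ba2} (valid with minor modification when $4n$ is replaced by $n$ satisfying the hypotheses of \ref{ss.brnf}).

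First I would prove a reduction lemma analogous to Lemma \ref{le.sfred}: up to reordering of $x_1,x_2,x_3,x_4$, one has $x_1=x_2$ or $x_1+x_2=\frac{\pi}{2}$, and $x_3=x_4$ or $x_3+x_4=\frac{\pi}{2}$. To do this, I select an index $i$ for which $|\operatorname{pol}(n,x_i')|$ is maximal, and I build a test basis element $v=v(n,c)\in B_n$ by the same recipe as in the proof of Lemma \ref{le.ns1}: set $c_1$ (or the appropriate entry) so that the pole pattern of $v$ matches that of $v(n,x_i')$, and for each $k$ with $\operatorname{pol}(n,x_k',1)\ne \operatorname{pol}(n,x_i',1)$ pick a coordinate $t_k$ that separates them. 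The assumption that every prime factor of $n$ exceeds $5$ guarantees enough room to make all these separating choices simultaneously, since $|\lambda_1|\le 4$. Comparing multiplicities of $v$ on both sides of \eqref{eq.ne4}: each $\operatorname{tan}(x_k)$ contributes $\operatorname{multi}_v(\operatorname{tan}(x_k))\in\{0,\pm 1,\pm 2\}$, and an odd contribution arises only from the $v(n,x_k')$-factor. Then a parity argument identical to the one used in the proof of Lemma \ref{le.sfred} (rather than the exact-count argument of Theorem \ref{th.4mnsf}, because the $\pm 2$ contributions can now appear) forces $|\lambda_3|\ge 2$ and hence, via Lemma \ref{le.congsfn}, yields a partner $j$ with $x_i=x_j$ or $x_i+x_j=\frac{\pi}{2}$. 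Iterating on the remaining pair gives the claimed reduction.

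Second I would eliminate the degenerate outcomes that are not in $\Phi_{1,1}$. The four possible pairings lead, after cancellation, to one of the equations \eqref{eq.red20}, \eqref{eq.red22}, \eqref{eq.red3}, or \eqref{eq.red2} (with the roles of $n,2n$ replacing the square-free case). The analogue of Lemma \ref{le.sfred220} in the non square-free setting asserts: \eqref{eq.red20}, \eqref{eq.red3}, and \eqref{eq.red2} have no solutions with $\operatorname{den}(x_i)\in\{n,2n\}$, while \eqref{eq.red22} forces $x_0=x_1$. Each of these is proved by exactly the same multiplicity-comparison technique, applied to a one- or two-variable equation: pick the variable with maximal pole set, construct a separating basis element, and derive a parity or count contradiction. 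Combining the reduction lemma with this exclusion yields (up to reordering) $x_0=x_1=x_2$ and $x_3+x_4=\frac{\pi}{2}$, which is the definition of $\Phi_{1,1}$.

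The main obstacle I expect is the case \eqref{eq.red2}, namely $\operatorname{tan}(x_0)=\pm\operatorname{tan}(x_1)\operatorname{tan}(x_2)$, because with three variables of mixed parity the pole structure of each $v(n,x_k')$ and each $v(n,2^{-1}x_k')$ may interact in ways that blunt the simple parity argument. The remedy, as in the proof of Lemma \ref{le.sfred220}(iv), is to first apply the reduction lemma one more time to the three-variable equation, forcing either $x_1=x_2$ or $x_1+x_2=\frac{\pi}{2}$, and then rerun the test-element construction on the resulting two-variable equation, where the multiplicity bookkeeping becomes tractable under the hypothesis that every prime factor of $n$ exceeds $5$.
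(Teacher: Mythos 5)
Your proposal is correct and follows essentially the same route as the paper: the paper derives Theorem~\ref{th.mnsf} from Lemma~\ref{le.nsfred} and Lemma~\ref{le.nsfred220}, whose proofs are stated as combinations of the proofs of Lemma~\ref{le.sfred} (parity argument for mixed denominators) and Lemma~\ref{le.ns1} (pole-based test-element construction for the non-square-free case), which is precisely the combination you describe. One small inaccuracy: for mixed denominators $\operatorname{den}(x_k)\in\{n,2n\}$ the multiplicity $\operatorname{multi}_v(\operatorname{tan}x_k)$ lies in $\{0,\pm1,\pm2,\pm3\}$ rather than $\{0,\pm1,\pm2\}$, since the $v(n,x_k')$-factor and the $v(n,2^{-1}x_k')^{\pm2}$-factor can both contribute to the same basis element, but this does not affect the parity argument you invoke.
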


\begin{proof} [Proof of Theorem \ref{th.mnsf}]

	The conclusion follows from Lemma \ref{le.nsfred} and Lemma \ref{le.nsfred220}.
\end{proof}

\begin{lemma}\label{le.nsfred}
	Let $n\in\mathbb{N}_{\ge 9 }$ be odd and not square-free. Assume that every prime factor of $n$ is greater than $5$. 	 Assume that $(x_0,x_1,x_2,x_3,x_4)$ is a solution in $G$ to Equation \eqref{eq.ne4} with $0<x_i<\frac{\pi}{2}$ and $\operatorname{den}(x_i)\in\{n,2n\}$ for $1\le i\le 4$. Then, up to reordering, the following is true:
	\begin{enumerate}
		\item $x_1=x_2$ or $x_1+x_2=\frac{\pi}{2}$.
		\item $x_3=x_4$ or $x_3+x_4=\frac{\pi}{2}$.
	\end{enumerate}

\end{lemma}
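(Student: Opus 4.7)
The plan is to adapt the proof of Lemma \ref{le.ns1} to the level $n$ rather than $4n$. Writing each $x_k = \frac{x_k'}{2n}\pi$ with $0 < x_k' < n$, one has $\operatorname{den}(x_k)=2n$ iff $x_k'$ is odd, and $\operatorname{den}(x_k)=n$ iff $x_k'$ is even. By Corollary \ref{cor.tarq},
\[
\operatorname{tan} x_k =
\begin{cases}
v(n,x_k')\, v(n, 2^{-1}x_k')^{-2} & \text{if } x_k' \text{ is odd},\\
v(n, x_k'/2)^{2}\, v(n, x_k')^{-1} & \text{if } x_k' \text{ is even},
\end{cases}
\]
in $\widehat{X^{n}}$. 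Setting $a_k := 2^{-1} x_k' \bmod n$, the cyclotomic number $v(n,a_k)$ appears with exponent $\pm 2$ and $v(n,x_k')$ with exponent $\mp 1$, the signs governed by the parity of $x_k'$.

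Next, I will formulate the direct analog of Lemma \ref{le.ba2} at level $n$, using Theorem \ref{th.ba1}(vi) for non-square-free odd $n$, so that $v(n,a_k) = \prod_{v\in \Gamma(n,a_k)} v^{(-1)^{e(a_k)}}$ in $\widehat{X^{n}}$, where $\Gamma(n,a_k)$ and $\operatorname{pol}(n,a_k)$ are defined exactly as in \ref{def.nonsfepsilon} with $4n$ replaced by $n$. Following Lemma \ref{le.ns1}, I would choose $i\in\{0,1,2,3,4\}$ maximizing $|\operatorname{pol}(n,a_i)|$, then among such indices maximizing agreement of $(\epsilon_{a_k}a_k)_s$ at the non-pole positions. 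I then construct an element $v=v(n,c)\in B_n$ whose residue form matches the ``pole fingerprint'' of $v(n,a_i)$ but whose coordinates differ at one distinguished position from $(\epsilon_{a_k}a_k)_s$ for every $k$ outside the chosen cluster. Since every prime factor of $n$ exceeds $5$ and there are at most four forbidden coordinates to avoid at each place, such a $v$ exists.

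Comparing multiplicities of $v$ on both sides of \eqref{eq.ne4}, the left side contributes an even integer. On the right, by construction $\operatorname{multi}_v(\operatorname{tan} x_k)=0$ whenever $k$ lies outside the cluster of indices sharing pole/non-pole data with $i$. For each $k$ inside the cluster, $\operatorname{multi}_v(\operatorname{tan} x_k)=\pm 2$, except possibly for a controlled correction of magnitude $1$ coming from $v(n,x_k')$. A parity count then forces the cluster to contain at least two indices $i,j\in\{1,2,3,4\}$ that agree on every coordinate of $a_k$ except possibly at position $1$. Applying the analog of Lemma \ref{le.congsfn} adapted to non-square-free $n$ (the Chinese Remainder Theorem argument is identical) yields $x_i=x_j$ or $x_i+x_j=\frac{\pi}{2}$. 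Cancelling $\operatorname{tan} x_i\operatorname{tan} x_j$ reduces \eqref{eq.ne4} to an equation of the shape \eqref{eq.red22} or \eqref{eq.red20} in the variables $x_0$ and the remaining pair, to which the same argument applies, producing the second pairing.

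The main obstacle will be the sign and parity bookkeeping in the multiplicity count: because the sign of the doubled exponent at $v(n,a_k)$ flips with the parity of $x_k'$, the parity argument must be set up so that the contradiction arises independently of whether the surviving $x_k$'s have denominator $n$ or $2n$. A secondary difficulty is arranging the cascade of maximality hypotheses (pole set, coordinate agreement at non-pole places, disagreement at position $1$) so that a single basis element in $\Gamma(n,a_i)$ survives after all non-cluster indices are washed out, in parallel with the construction of the set $V$ in Lemmas \ref{le.sfo}--\ref{le.sfin} and \ref{le.ns1}; the degenerate reductions remaining at the end are then handled by the forthcoming Lemma \ref{le.nsfred220}.
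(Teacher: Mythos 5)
Your overall plan is the intended one — the paper's own proof is just the single line ``The claim follows by a combination of the proofs of Lemma \ref{le.sfred} and Lemma \ref{le.ns1}'' — so transporting Lemma \ref{le.ns1}'s pole fingerprint to level $n$ and supplementing it with Lemma \ref{le.sfred}'s parity trick is correct in spirit, and the reduction to Lemma \ref{le.nsfred220} at the end matches the paper.

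There is, however, one concrete slip that would break the argument as written: you build the fingerprint on $a_k := 2^{-1}x_k' \bmod n$, i.e.\ on the factor $v(n,a_k)$ that carries the \emph{even} exponent $\pm 2$, whereas the parity mechanism of Lemma \ref{le.sfred} hinges on the factor $v(n,x_k')$ with the \emph{odd} exponent $\mp 1$. Since $\operatorname{tan}x_k = v(n,x_k')^{\mp 1}\,v(n,a_k)^{\pm 2}$ in $\widehat{X^n}$, one has $\operatorname{multi}_v(\operatorname{tan}x_k) \equiv \operatorname{multi}_v\bigl(v(n,x_k')\bigr) \pmod 2$ regardless of what $v(n,a_k)$ does. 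If you choose $v$ to lie in $\operatorname{supp}\bigl(v(n,a_i)\bigr)$ and merely to avoid $\operatorname{supp}\bigl(v(n,a_k)\bigr)$ for non-cluster $k$, you have not controlled $\operatorname{multi}_v\bigl(v(n,x_k')\bigr)$, so the claim ``$\operatorname{multi}_v(\operatorname{tan}x_k)=0$ whenever $k$ lies outside the cluster'' is unjustified: the residual $\pm 1$ from $v(n,x_k')$ can hit $v$ for any index, flipping the parity. The paper's Lemma \ref{le.sfred} sidesteps this exactly by running the pole comparison on $x_k'$ itself (note its $\lambda_1,\lambda_2$ use $\operatorname{len}(n,x_k')$ and the maximality uses $\operatorname{pol}(n,x_k')$), so that $v\in\operatorname{supp}\bigl(v(n,x_i')\bigr)$ contributes odd parity, while $v\notin\operatorname{supp}\bigl(v(n,x_k')\bigr)$ for $k\notin\lambda_3$ makes those contributions even; the $v(n,2^{-1}x_k')^{\pm 2}$ factor then never disturbs parity. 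Replacing your $a_k$-based fingerprint by an $x_k'$-based one (with the $\Gamma$, $\operatorname{pol}$, and $V$ constructions of \ref{def.nonsfepsilon} and Lemma \ref{le.ns1} carried over to level $n$) repairs the parity count and yields the statement; the residual pairing and the reduction to Equations \eqref{eq.red20}--\eqref{eq.red2} then proceed as you describe.
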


\begin{proof}
	The claim follows by a combination of the proofs of Lemma \ref{le.sfred} and Lemma \ref{le.ns1}.
\end{proof}

\begin{lemma}\label{le.nsfred220}
	Let $n\in\mathbb{N}_{\ge 9 }$ be odd and not square-free. Assume that every prime factor of $n$ is greater than $5$. Then the following is true.
	
	\begin{enumerate}
		\item There is no solution in $G$ to Equation \eqref{eq.red20} of the form $(x_0,x_1)$ with $0<x_i<\frac{\pi}{2}$ and $\operatorname{den}(x_i)\in\{n,2n\}$ for $0\le i\le 1$.
		\item Assume that $(x_0,x_1)$ is a solution in $G$ to Equation \eqref{eq.red22} with $0<x_i<\frac{\pi}{2}$ and $\operatorname{den}(x_i)\in\{n,2n\}$ for $1\le i\le 4$. Then $x_0=x_1$.
		\item There is no solution in $G$ to Equation \eqref{eq.red3} of the form $(x_0,x_1)$ with $0<x_i<\frac{\pi}{2}$ and $\operatorname{den}(x_i)\in\{n,2n\}$ for $0\le i\le 1$.
		\item There is no solution in $G$ to Equation \eqref{eq.red2} of the form $(x_0,x_1,x_2)$ with $0<x_i<\frac{\pi}{2}$ and $\operatorname{den}(x_i)\in\{n,2n\}$ for $0\le i\le 1$.
	\end{enumerate}

\end{lemma}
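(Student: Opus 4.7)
The plan is to mirror the proof of Lemma \ref{le.sfred220}, adapting the multiplicity-counting technique from the square-free to the non-square-free setting via the basis description developed in Section \ref{sec.nonsf}.

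Claims (i) and (ii) are elementary. For (i), $(\tan x_0)^2 = 1$ combined with $0 < x_0 < \pi/2$ forces $x_0 = \pi/4$, whose denominator $4$ does not lie in $\{n, 2n\}$ because $n \geq 9$ is odd. For (ii), positivity of $\tan$ on $(0, \pi/2)$ gives $\tan x_0 = \tan x_1$, hence $x_0 = x_1$.

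For claims (iii) and (iv), I take positive square roots to obtain the shorter relations $\tan x_0 = (\tan x_1)^2$ and $\tan x_0 = \tan x_1 \tan x_2$, respectively, and in each case apply the counting strategy developed in Lemma \ref{le.ns1}. Using Corollary \ref{cor.tarq} I expand each $\tan x_k$ as a product of cyclotomic numbers $v(n, x_k')^{\pm 2}$ and $v(n, 2^{-1} x_k')^{\mp 1}$, and then expand every such $v(n, c)$ in Conrad's basis via Lemma \ref{le.ba2}. For (iv), I select $v \in B_{2n}$ in the relative support of $\tan x_0$ tied to its residue form; the multiplicity equation $\operatorname{multi}_v(\tan x_0) = \operatorname{multi}_v(\tan x_1) + \operatorname{multi}_v(\tan x_2)$ then forces, up to relabelling, either $x_0 = x_i$ for some $i$ (whence the remaining factor equals $1$, contradicting the denominator restriction as in (i)), or $x_1 + x_2 = \pi/2$ (whence $(\tan x_0)^2 = 1$, again a case of (i)), or $x_1 = x_2$ (reducing to (iii)). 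For (iii), I choose $v \in B_{2n}$ in the support of $v(n, 2^{-1} x_1')$ but outside the supports of $v(n, x_1')$, $v(n, x_0')$, and $v(n, 2^{-1} x_0')$; at this $v$ the right-hand side has multiplicity $\pm 4$ while the left-hand side has multiplicity $0$, the desired contradiction.

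The main obstacle lies in constructing the separating basis element $v$ for claim (iii). Each element of $B_{2n}$ carries at indices $s$ with $e_s \geq 2$ a pair $(\overline{b}_s, \widehat{b}_s)$, subject to the half-range constraint $\widehat{b}_\mu < p_\mu^{e_\mu - 1}/2$ at the distinguished index $\mu$ of \ref{def.nonsfepsilon}. The hypothesis that every prime factor of $n$ is greater than $5$ provides at least three admissible values of $\overline{b}_s$ at each square-free index, yielding enough combinatorial freedom to separate four distinct supports simultaneously; however, at $\mu$ the half-range constraint can force coincidence with one of the competing supports, and a careful case analysis paralleling the proof of Lemma \ref{le.ns1}, combined with the pole-index bookkeeping from \ref{def.nonsfepsilon}, is needed to verify that a separating $v$ always exists. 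This combinatorial verification is where most of the technical work resides.
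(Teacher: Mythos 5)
Your plan is essentially the same as the paper's (extremely terse) proof, which simply reduces claim (iv) to claims (i) and (iii) via a pairing argument as in Lemma \ref{le.sfred220}, with the non-square-free machinery of Lemma \ref{le.nsfred} and Lemma \ref{le.ns1} replacing the square-free machinery of Lemma \ref{le.sfred}. Two remarks. First, a notational slip: Corollary \ref{cor.tarq} gives $\operatorname{tan}x_k = v(n,x_k')^{\pm 1}\, v(n,2^{-1}x_k')^{\mp 2}$ (not $v(n,x_k')^{\pm 2}\,v(n,2^{-1}x_k')^{\mp 1}$ as you wrote), with the sign depending on whether $\operatorname{den}(x_k)$ is $2n$ or $n$. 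Second, and more substantively, the ``main obstacle'' you identify for claim (iii) does not actually arise. Since $n$ is odd and non-square-free with each prime factor $>5$, the distinguished index $\mu$ has $p_\mu^{e_\mu-1}\ge p_\mu\ge 7$, so $2\not\equiv\pm 1\pmod{p_\mu^{e_\mu-1}}$; hence $\widehat{(\epsilon_{x_1'}x_1')}_\mu\ne\widehat{(\epsilon_{2^{-1}x_1'}2^{-1}x_1')}_\mu$ and by Lemma \ref{le.ba2} the relative supports of $v(n,x_1')$ and $v(n,2^{-1}x_1')$ in $B_n$ are disjoint. Consequently, for \emph{any} $v$ in the support of $v(n,2^{-1}x_1')$ one has $\operatorname{multi}_v\bigl((\operatorname{tan}x_1)^2\bigr)=\pm 4$, while $\lvert\operatorname{multi}_v(\operatorname{tan}x_0)\rvert\le 3$ always (it is a sum of one $\pm 1$ and one $\mp 2$ term, each with multiplicity in $\{0,\pm 1\}$ by Lemma \ref{le.ba2}). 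So there is no need to avoid the supports of $v(n,x_0')$ and $v(n,2^{-1}x_0')$, and the half-range constraint at $\mu$ poses no difficulty: the contradiction for (iii) is immediate, and the combinatorial verification you were worried about is not needed.
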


\begin{proof}
	The claims follows by a combination of the proofs of Lemma \ref{le.sfred220} and Lemma \ref{le.nsfred}.
\end{proof}

\begin{theorem}\label{th.8nsf}
	Let $n\in\mathbb{N}_{\ge 8}$ satisfying $8\mid n$. Assume that each prime factor of $n$ is either $2$ or greater than $5$. Assume that $(x_0,x_1,x_2,x_3,x_4)$ is a solution to \eqref{eq.ne4} in $G$ with $0<x_i<\frac{\pi}{2}$ and $\operatorname{den}(x_i)=n$ for each $0\le i\le 4$. Then $(x_0,x_1,x_2,x_3,x_4)\in\Phi_{1,1}$.
\end{theorem}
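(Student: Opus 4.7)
The plan is to mirror the proof of Theorem \ref{th.4mnsf}, exploiting the fact that case (vi) of Theorem \ref{th.ba1} covers both settings (namely $\operatorname{den}(x_i)=4m$ with $m$ odd non square-free and $\operatorname{den}(x_i)=n$ with $8\mid n$) via the common choice $\mu=1$. First, by Corollary \ref{cor.tarq}(iii), since $4\mid n$ we have $\tan x_i=v(n,x_i')^2$ in $\widehat{X^n}$ for each $0\le i\le 4$, where $x_i=\frac{x_i'}{n}\pi$ with $0<x_i'<\frac{n}{2}$. Hence Equation \eqref{eq.ne4} becomes
\[
v(n,x_0')^4 \;=\; \prod_{i=1}^4 v(n,x_i')^2
\]
in $\widehat{X^n}$.

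Next, I would develop an analogue of Lemma \ref{le.ba2} for $8\mid n$. Writing $n=2^{e_1}p_2^{e_2}\cdots p_\ell^{e_\ell}$ with $e_1\ge 3$, define $\epsilon_a$ by selecting between $a$ and $-a$ using the condition $\widehat{(\epsilon_a a)}_1<\tfrac{p_1^{e_1-1}}{2}=2^{e_1-2}$ (Lemma \ref{le.excl} shows exactly one of the two choices works since $a$ is odd). Then define $\operatorname{pol}(n,a):=\{1\le s\le\ell\mid \overline{(\epsilon_a a)}_s=p_s-1\}$, $e(a):=|\operatorname{pol}(n,a)|$, and a set $\Gamma(n,a)\subseteq B_n$ exactly as in \ref{def.nonsfepsilon}. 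The same application of Lemma \ref{le.nor1} gives $v(n,a)=\prod_{v\in\Gamma(n,a)}v^{(-1)^{e(a)}}$ in $\widehat{X^n}$.

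The third step is to prove the direct analogue of Lemma \ref{le.ns1}: if $|\operatorname{pol}(n,x_0',1)|<|\operatorname{pol}(n,x_i',1)|$ for some $1\le i\le 4$, then there exists $j\ne i$ with $x_i+x_j=\tfrac{\pi}{2}$. The argument chooses $i$ maximizing $|\operatorname{pol}(n,x_k',1)|$, partitions $\{0,\dots,4\}$ into nested subsets $\lambda_1\supseteq\lambda_2\supseteq\lambda_3$ according as $\operatorname{pol}(n,x_k',1)$ differs from, equals, or has all non-pole residues matching $\operatorname{pol}(n,x_i',1)$, picks a test element $v(n,c)\in V\subseteq\operatorname{supp}(n,x_i')$ whose residue form is engineered to avoid the residues $(\epsilon_{x_k'}x_k')_{t_k}$ at chosen indices $t_k$ for $k\in\lambda_1$, and reads off multiplicities to force $|\lambda_3|\ge 2$. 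The condition that every odd prime factor exceeds $5$ (combined with $p_1=2$ giving plenty of freedom in the $e_1\ge 3$ coordinate) ensures $V\ne\emptyset$. A congruence lemma patterned on Lemma \ref{le.congsf4dn} (with the same Chinese Remainder argument, applied now for mod $n$ with $4\mid n$) then yields $x_i+x_j=\tfrac{\pi}{2}$ from $v(n,x_i')=v(n,x_j')^{-1}$.

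Fourth, with this reduction in hand one concludes as in Theorem \ref{th.4mnsf}: after possibly handling pairs that sum to $\tfrac{\pi}{2}$, we may assume $|\operatorname{pol}(n,x_0',1)|\ge|\operatorname{pol}(n,x_k',1)|$ for all $k$, and repeating the multiplicity analysis produces $1\le i<j\le 4$ with $x_i=x_j=x_0$; after reindexing, cancellation gives $\tan x_3\tan x_4=1$, hence $x_3+x_4=\tfrac{\pi}{2}$, so $(x_0,x_1,x_2,x_3,x_4)\in\Phi_{1,1}$. I expect the main obstacle to be the careful setup of $\epsilon_a$ in the coordinate $s=\mu=1$: unlike the case of odd $4n$, the prime $p_1=2$ here is the "small" coordinate, so one must verify that the choices in the construction of $V$ (in particular the exclusion of residues $(\epsilon_{x_k'}x_k')_{t_k}$) can be made consistently under the hypothesis $p_s\ne 3,5$ for $s\ge 2$, which is exactly what the bound "prime factor is $2$ or greater than $5$" supplies.
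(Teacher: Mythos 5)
Your proposal is correct and reconstructs exactly the argument the paper has in mind: the paper's own proof of this theorem consists of the single sentence ``The proof is the same as Theorem \ref{th.4mnsf},'' and your expansion faithfully tracks the common mechanism (case~(vi) of Theorem \ref{th.ba1} with $\mu=1$, the $\epsilon_a$ selection via Lemma \ref{le.excl}, the pole/basis decomposition as in Lemma \ref{le.ba2}, the $\lambda_1\supseteq\lambda_2\supseteq\lambda_3$ multiplicity argument of Lemma \ref{le.ns1}, and the concluding cancellation). The only small inaccuracy is the remark about the $2$-adic coordinate giving ``plenty of freedom'' in constructing $V$---since $p_1-2=0$ forces $\overline{b}_1=0$ in every basis element, that coordinate actually contributes no freedom, and nonemptiness of $V$ rests entirely on the odd prime factors exceeding~$5$---but this does not affect the argument.
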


\begin{proof}
	The proof is the same as Theorem \ref{th.4mnsf}. 
\end{proof}

\section{A case with small prime divisors}\label{sec.smal}

Up until now, the statements we have proved frequently assumed that the denominators exclude certain small divisors. This section investigates a special case when we allow some small divisors. To be more precise, let $n=2^r 5$ with $r$ a positive integer. In Theorem \ref{th.25}, we provide a description of the solutions to Equation \eqref{eq.ne4} $G$ when the denominators of the solutions are divisors of $n$.

\begin{theorem}\label{th.25}
	Let $n=2^r5$ with $r$ a positive integer. Assume that $(x_0,x_1,x_2,x_3,x_4)$ is a solution to \eqref{eq.ne4} in $G$ with $0<x_i<\frac{\pi}{2}$ for each $0\le i\le 4$. Assume that $\operatorname{max}\{\operatorname{den}(x_i)\mid 0\le i\le2\}=n$ and $\operatorname{den}(x_i)\mid n$ for each $0\le i\le 4$. Then either the tuple $(x_0,x_1,x_2,x_3,x_4)\in\Phi_{1,1}\cup\Phi_{1,2}$ or, up to reordering $x_1,x_2,x_3,x_4$, the tuple $(x_0,x_1,x_2,x_3,x_4)\in \{(\frac{1}{8},\frac{1}{40},\frac{7}{40},\frac{9}{40},\frac{17}{40})\pi,(\frac{3}{8},\frac{23}{40},\frac{31}{40},\frac{33}{40},\frac{39}{40})\pi\}$.
	
\end{theorem}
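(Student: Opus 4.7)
The plan is to follow the basis-representation paradigm of Sections~\ref{sec.prime}--\ref{sec.nonsf}: express each $\tan x_i$ in Conrad's basis of $X^n$ (Theorem~\ref{th.ba2}) via Corollary~\ref{cor.tarq}, then read off constraints by comparing multiplicities on the two sides of \eqref{eq.ne4}. Since $n = 2^r\cdot 5$ satisfies $4\mid n$ as soon as $r\ge 2$, for any index $i$ with $\operatorname{den}(x_i) = n$ we have $\tan x_i = v(n,x_i')^2$ in $\widehat{X^n}$, while for $\operatorname{den}(x_i) < n$ the tangent expands via Lemma~\ref{le.nor1} into products of basis elements coming from proper-divisor subgroups $X^d$, $d\mid n$, $d<n$. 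The hypothesis that some $x_i$ with $i \in \{0,1,2\}$ has full denominator ensures that the ``level-$n$ side'' of the equation is non-trivial, which prevents a degenerate collapse.

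I would split into three regimes according to $r$. If $r = 1$ (so $n = 10$), then $B_{10}$ is empty by Theorem~\ref{th.ba1}(iii), hence all relevant support lives at level $5$ or $2$, and the relation \eqref{eq.ne4} reduces to the corresponding relation in $X^5$, which is dispatched by the prime case Theorem~\ref{th.4pr2} with $n=5$. If $r \in \{2,3\}$, then $\operatorname{lcm} \le 40 < 300$, and the claim follows verbatim from Proposition~\ref{le.comp}, combined with the explicit intersections in Lemmas~\ref{le.int} and~\ref{le.int2} to identify the two sporadic tuples (which are precisely the $\operatorname{lcm} = 40$ representatives of $\overline{\Psi}$ in \ref{emp.spo}). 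The main work is the regime $r \ge 4$, where I would adapt the pole-counting arguments of Sections~\ref{sec.sf2} and~\ref{sec.nonsf} to $n = 2^r \cdot 5$.

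In the $r \ge 4$ regime, letting $S := \{i : \operatorname{den}(x_i) = n\}$, I would pick $i \in S$ maximizing first the $5$-pole profile (whether $b_2 = 4$) and then the $2$-adic cluster profile of $v(n, \epsilon_{x_i'} x_i')$ in Conrad's basis $B_n$, and attempt to produce a witness $v \in B_n$ appearing in $\operatorname{supp}(\tan x_i)$ but in none of $\operatorname{supp}(\tan x_k)$ for $k \ne i$. In the generic case this argument (mirroring Lemma~\ref{le.ns1} for the $5$-side and Lemma~\ref{le.sfin} for the $2$-side) forces $x_i + x_j = \pi/2$ or $x_i = x_j$ for some partner $j$, and iterating places $(x_0, \dots, x_4)$ in $\Phi_{1,1} \cup \Phi_{1,2}$ after the trivial matchings have been peeled off.

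The main obstacle is the sparsity of admissible $5$-residues in $B_n$: the prime $5$ yields only three ``non-pole'' values $b_2 \in \{1,2,3\}$, exactly the condition excluded by the hypothesis $p > 5$ in Theorems~\ref{th.mnsf}, \ref{th.4mnsf} and~\ref{th.8nsf}. When the would-be witness set collapses to empty, the competing indices must exhaust these three residues, forcing the $5$-coordinates of $\{x_1, \dots, x_4\}$ to match the multiset $\{\pm 1, \pm 2\} \bmod 5$ appearing in the $\operatorname{lcm} = 40$ sporadic tuples of $\overline{\Psi}$. I expect that for $r \ge 4$ the additional freedom in $\widehat b_1$-values (at least $2^{r-2} - 1 \ge 3$ non-forced choices) permits a secondary witness at the $2$-adic level, ruling out any genuinely new sporadic family and forcing the solution back into $\Phi_{1,1} \cup \Phi_{1,2}$; this step will be the most delicate part of the argument and may require a careful separation of the $2$-adic and $5$-adic supports analogous to Propositions~\ref{prop.barep1} and~\ref{prop.barep1-2}, together with a direct inspection to rule out $r \ge 4$ analogues of the two $\operatorname{lcm} = 40$ tuples.
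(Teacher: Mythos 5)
Your proposal correctly identifies the overall structure — basis comparison, case-split on $r$, reduction of $r\le 3$ to Proposition~\ref{le.comp}, and localization of the difficulty at $r\ge 4$ to the scarcity of level-$5$ residues — and it rightly flags the last point as ``the most delicate part.'' But that delicate part is exactly where the paper's proof does something specific, and your sketch does not supply it.

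The key quantitative claim in the paper is not that some witness of level $n$ survives, but rather that it \emph{cannot}: one shows that the multiset $\{(\epsilon_{x_k'}x_k')_2\mid k\in\lambda_2\setminus\{0\}\}$ must equal $\{1,2,3,4\}$ in its entirety (Claim~\ref{cl.1234} in the paper), for otherwise a witness at level $n$ forces a relation $x_i+x_j=\frac{\pi}{2}$ or $x_i=x_j$ which, after cancellation and an appeal to Lemma~\ref{le.shoeq}, puts the tuple in $\Phi_{1,1}\cup\Phi_{1,2}$. Once the four residues mod $5$ are exhausted, the contradiction does \emph{not} come from ``additional freedom in the $\widehat b_1$-values,'' as you suggest. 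It comes from an arithmetic rigidity: fixing the $2$-adic data $a_1$, the product $\prod_{i=1}^4 (a_1,i)_n$ collapses via the norm relation \eqref{eq.nor} to $v(2^r,b)\,v(2^r,5^{-1}b)^{-1}$ in $X^{2^r}$, and Lemma~\ref{le.nosq} checks that this element is nontrivial precisely because $5\not\equiv\pm1\pmod{2^r}$ and $5\not\equiv\pm1+b^{-1}2^{r-1}\pmod{2^r}$ whenever $r\ge4$. This yields a basis element of level $m=2^r$ with odd multiplicity in $\prod_{i=1}^4\tan x_i$, which no $(\tan x_0)^2$ can match (Lemma~\ref{le.hom} is used to control the $\overline{a}_1$-coordinates). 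Your ``at least $2^{r-2}-1\ge 3$ non-forced choices'' heuristic points in the wrong direction; the argument does not exploit freedom of choice but rather the specific multiplicative order of $5$ modulo $2^r$.

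There is also no need, for $r\ge4$, for a ``direct inspection to rule out $r\ge4$ analogues of the two $\operatorname{lcm}=40$ tuples'': those sporadic tuples have $\max\{\operatorname{den}(x_i)\mid 0\le i\le 2\}=40<n$ and therefore simply fail the hypothesis of the theorem once $r\ge4$. So the conclusion in that regime is $\Phi_{1,1}\cup\Phi_{1,2}$ alone, with no sporadic exceptions, and the parity contradiction described above is what closes the door. As written, then, your proposal leaves the decisive step unproved and mis-attributes the mechanism; the structural insight is right, but the heart of the argument is missing.
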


We need the following Lemmas to prove Theorem \ref{th.25}.

\begin{lemma}\label{le.hom}
	Let $n:=2^r5$. Fix $1\le \widehat{a}< 2^{r-2}$. For each $0\le i\le 4$, let $\overline{a}_i=0$ or $1$; let $a_i:=(\overline{a}_i,\widehat{a})$ and $w_i:=(a_i,i)_n\in X^n$. Assume that $\prod\limits_{i=1}^4w_i=1$ in $\widehat{X}^n$. Then $\overline{a}_1=\cdots=\overline{a}_4$.
\end{lemma}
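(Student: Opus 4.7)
The plan is to expand each $w_i$ in Conrad's basis of $\widehat{X^n}$ from Theorem \ref{th.ba1} and then read off the claim from the linear independence of distinct basis elements. The condition $1 \le \widehat a < 2^{r-2}$ is non-vacuous only when $r \ge 3$, in which case $8 \mid n$, so case (vi) of Theorem \ref{th.ba1} applies with $\ell = 2$, $p_1 = 2$ of exponent $r$, $p_2 = 5$ of exponent $1$, and $\mu = 1$. The basis elements of $\widehat{X^n}$ whose first-coordinate $\widehat b_1$ equals $\widehat a$ are then precisely the three elements $e_j := ((0,\widehat a), j)_n$ for $j \in \{1,2,3\}$; indeed the upper bound $\widehat a < 2^{r-2}$ is exactly the constraint $\widehat b_\mu < p_\mu^{e_\mu-1}/2$ in case (vi) that guarantees $e_j \in B_n$.

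First I would reduce each $w_i = ((\overline a_i, \widehat a), i)_n$ to a monomial in the $e_j$'s via Lemma \ref{le.nor1}. Applied with $r = 1$, it gives $((1, \widehat a), j)_n = ((0, \widehat a), j)_n^{-1}$ in $\widehat{X^n}$, since $\overline b_1$ ranges only over $\{0\}$ in the product set $\Gamma_1$. Applied with $r = 2$, it gives $((0,\widehat a), 4)_n = e_1^{-1} e_2^{-1} e_3^{-1}$, since $b_2$ ranges over $\{1,2,3\}$. Setting $\sigma_i := (-1)^{\overline a_i} \in \{\pm 1\}$, combining the two reductions yields $w_j = e_j^{\sigma_j}$ for $j \in \{1,2,3\}$ and $w_4 = (e_1 e_2 e_3)^{-\sigma_4}$.

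Substituting into the hypothesis $\prod_{i=1}^4 w_i = 1$ in $\widehat{X^n}$ then gives
\[
e_1^{\sigma_1 - \sigma_4}\, e_2^{\sigma_2 - \sigma_4}\, e_3^{\sigma_3 - \sigma_4} = 1.
\]
Since $\widehat{X^n}$ is free abelian and $e_1, e_2, e_3$ are three distinct elements of $B_n$, they are $\mathbb Z$-linearly independent, so each exponent must vanish. Thus $\sigma_1 = \sigma_2 = \sigma_3 = \sigma_4$, which is exactly $\overline a_1 = \overline a_2 = \overline a_3 = \overline a_4$. The only step requiring care is the sign bookkeeping when applying Lemma \ref{le.nor1} and verifying that after reduction no basis element outside $\{e_1, e_2, e_3\}$ appears; once the basis from case (vi) of Theorem \ref{th.ba1} is written out explicitly, this is immediate.
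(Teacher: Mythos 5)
Your proposal is correct and follows essentially the same approach as the paper: write each $w_i$ in Conrad's basis of $\widehat{X^n}$ from case (vi) of Theorem \ref{th.ba1} and read off the conclusion from the multiplicities at the three basis elements $e_j = ((0,\widehat{a}),j)_n$, $j\in\{1,2,3\}$. The paper presents the same computation compactly as a contradiction argument, observing only that if some $\overline{a}_j\neq\overline{a}_4$ then $\operatorname{multi}_{e_j}(\prod w_i)=\pm 2\neq 0$, whereas you write out the full expansion $\prod_i w_i = e_1^{\sigma_1-\sigma_4}e_2^{\sigma_2-\sigma_4}e_3^{\sigma_3-\sigma_4}$ and invoke linear independence directly; both rest on the same reductions via Lemma \ref{le.nor1}.
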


\begin{proof}
		Prove by contradiction. Assume that for some $1\le j\le 3$, $\overline{a}_j\ne \overline{a}_4$. Let $b:=(0,\widehat{a})$ and $v:=(b,j)_n$. Then $v$ is a basis element and $\operatorname{multi}_v(\prod\limits_{i=0}^4 w_i)=\pm 2$. This is a contradiction. 
\end{proof}

\begin{lemma}\label{le.nosq}
	Assume that $r\ge 4$. Let $n:=2^r5$ and $m:=2^r$. Fix $a_1:=(\overline{a}_1,\widehat{a}_1)\in\mathbb{Z}^2$ where $0\le \overline{a}_1\le 1$ and $1\le \widehat{a}_1< 2^{r-2}$. For each $1\le i\le 4$, let $w_i:=(a_1,i)_n\in X^n$. Then for some basis element $u$ of level $m$, we have that $\operatorname{multi}_{u}(\prod\limits_{i=1}^4 w_i)=\pm 1$.
\end{lemma}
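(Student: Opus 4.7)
The plan is to apply the norm formula \eqref{eq.nor} at the prime $5$ to express $\prod_{i=1}^{4} w_i$ in terms of two cyclotomic numbers of level $m=2^r$, and then to identify the corresponding basis elements of $B_m$ and show they are distinct.

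First I would treat the case $\overline{a}_1=0$. Setting $c:=\widehat{a}_1\in[1,2^{r-2}-1]$ (necessarily odd, for $w_i$ to be a level-$n$ element), the relation \eqref{eq.nor} gives
\[v(m,c)=\prod_{k=0}^{4}v(n,c+mk).\]
Since $\gcd(m,5)=1$, exactly one index $k_0\in\{0,1,2,3,4\}$ satisfies $5\mid(c+mk_0)$, and for that $k_0$ we have $v(n,c+mk_0)=v(m,c')$ where $c':=(c+mk_0)/5$. The remaining four factors have residue form $((0,\widehat{a}_1),j)_n$ with $j$ ranging over $\{1,2,3,4\}$, so $\{v(n,c+mk)\mid k\ne k_0\}=\{w_1,w_2,w_3,w_4\}$, and rearranging gives
\[\prod_{i=1}^{4} w_i=v(m,c)\cdot v(m,c')^{-1}.\]
For the case $\overline{a}_1=1$, a preliminary application of \eqref{eq.nor} at the prime $2$ expresses $w_i=v(n/2,c_i)\cdot v(n,c_i)^{-1}$ where $c_i$ has residue form $((0,\widehat{a}_1),i)_n$; since $\prod_i v(n/2,c_i)\in X^{n/2}$ contributes no level-$m$ basis element, the level-$m$ content of $\prod w_i$ comes from $(\prod_i v(n,c_i))^{-1}$, which by the previous paragraph equals $v(m,c)^{-1}\,v(m,c')$.

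In both cases the level-$m$ basis part of $\prod_{i=1}^4 w_i$ is $v(m,c)^{\pm 1}\,v(m,c')^{\mp 1}$. Since $c\in[1,m/4)$ is odd, $u_0:=v(m,c)$ is already an element of $B_m$. Applying the identities $v(m,a)=v(m,m-a)$ and $v(m/2,a)=v(m,a)\,v(m,a+m/2)$ iteratively to $v(m,c')$ produces a unique $\tilde{c}'\in[1,m/4)$ odd with $v(m,c')=v(m,\tilde{c}')^{\pm 1}\cdot \eta$ for some $\eta\in X^{m/2}$. Thus $u_1:=v(m,\tilde{c}')\in B_m$, and if $u_0\ne u_1$ then the multiplicity of $u_0$ in $\prod_{i=1}^4 w_i$ equals $\pm 1$, as required.

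The main obstacle is the final case analysis verifying $c\ne\tilde{c}'$. Depending on the value of $k_0$, which in turn determines which quarter of $[0,m]$ contains $c'$, one finds that $\tilde{c}'$ equals one of $c/5$, $(c+m)/5$, $(m/2-c)/5$, $(c+m/2)/5$, or $(m-c)/5$. Setting each in turn equal to $c$ yields $c\in\{0,\,m/4,\,m/12,\,m/8,\,m/6\}$; none of these values is attainable because $c\in[1,m/4)$ excludes $0$ and $m/4$, because $3\nmid m=2^r$ rules out $m/12$ and $m/6$, and because $m/8=2^{r-3}$ is even whenever $r\ge 4$, contradicting the constraint that $c$ is odd. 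Hence $u_0\ne u_1$, and the hypothesis $r\ge 4$ is used precisely at the last step to force $2^{r-3}$ to be even.
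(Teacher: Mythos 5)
Your proposal is correct and takes essentially the same route as the paper: both apply the norm relation \eqref{eq.nor} at the prime $5$ to write $\prod_{i=1}^{4}w_i$ as $v(m,b)\,v(m,5^{-1}b)^{-1}$ up to contributions from lower levels, and then show the two induced elements of $B_m$ are distinct, with $r\ge 4$ used exactly to kill the degenerate coincidences. Your five-case enumeration of $\tilde{c}'$ (excluding $c\in\{0,\,m/4,\,m/12,\,m/8,\,m/6\}$) is just a more explicit version of the paper's congruence check that $5^{-1}b\not\equiv \pm b,\ \pm b+2^{r-1}\pmod{2^r}$, and your separate reduction of the case $\overline{a}_1=1$ through level $n/2$ is harmless, though not needed, since the norm formula at $5$ applies directly for either value of $\overline{a}_1$.
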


\begin{proof}

	Assume that $v_1=v(m,b)$ and $v_2=v(m,c)$ where $c\equiv 5^{-1}b \operatorname{mod} m$ and $1\le b, c<m$. Note that $\prod\limits_{i=1}^4w_i=v_1v_2^{-1}$ by the Formula \ref{eq.nor}. Let $u_1:=(b_1)_m$ where  $b_1=(0,\widehat{ b}_1)\in\mathbb{Z}^2$
	and $u_2:=(c_1)_m$ where  $c_1=(0,\widehat{c}_1)\in\mathbb{Z}^2$.	Note that $u_1,u_2\in B_n$, $v_1=u_1^{\pm 1}$ and $v_2=u_2^{\pm 1}$.

	\begin{claim}
		$u_1\ne u_2$.
	\end{claim}
	
	If $v(m,5^{-1}b)\ne v(m,b)$ in $X^m$ and $v(m,5^{-1}b)\ne  v(m,b+2^{r-1})$ in $X^m$, then $\widehat{c}_1\ne \widehat{ b}_1$. We are done. So we can assume that $v(m,5^{-1}b)= v(m,b)$ or $v(m,5^{-1}b)=  v(m,b+2^{r-1})$.\\

		(i) The case $v(m,5^{-1}b)=v(m,b)$. 
			It follows that $5^{-1}b\equiv b\operatorname{mod} 2^r$ or $5^{-1}b\equiv -b\operatorname{mod} 2^r$, so $5\equiv 1 \operatorname{mod} 2^r$ or $5\equiv -1 \operatorname{mod} 2^r$. Since we assumed that $r\ge 4$, this is a contradiction.\\
			
		(ii) The case $v(m,5^{-1}b)= v(m,b+2^{r-1})$.
			It follows that, $5^{-1}b\equiv b+2^{r-1} \operatorname{mod} 2^r$ or $5^{-1}b\equiv -b-2^{r-1} \operatorname{mod} 2^r$.

		In the first case, $5^{-1}\equiv 1+b^{-1}2^{r-1} \operatorname{mod} 2^r$. So $(1+b^{-1}2^{r-1})5\equiv 1\operatorname{mod} 2^r.$ So $(5b^{-1}-1)2^{r-1}+2^{r-1}+4\equiv 0 \operatorname{mod} 2^r$. Note that both $5$ and $b^{-1}$ are odd. Thus $2^{r-1}+4\equiv 0 \operatorname{mod} 2^r$. If $r\ge 4$, then $0<2^{r-1}+4< 2^r$. This is a contradiction. 
		
		In the second case, $5^{-1}\equiv -1-b^{-1}2^{r-1} \operatorname{mod} 2^r$. So $(1+b^{-1}2^{r-1})5\equiv -1\operatorname{mod} 2^r.$ So $(5b^{-1}-1)2^{r-1}+2^{r-1}+6\equiv 0\operatorname{mod} 2^r.$ Thus $2^{r-1}+6\equiv 0 \operatorname{mod} 2^r$. 	If $r\ge 4$, then $0<2^{r-1}+6< 2^r$. This is a contradiction.

	From the claim, it follows that $\operatorname{multi}_{u_1}(\prod\limits_{i=1}^4 w_i)=\pm 1$ and $\operatorname{multi}_{u_2}(\prod\limits_{i=1}^4 w_i)=\pm 1$.
\end{proof}

Consider the equation

\begin{equation}\label{eq.eq2ter}
	(\operatorname{tan}x_0)^2=(\operatorname{tan}x_1)(\operatorname{tan}x_2).
\end{equation}

Let 

	\begin{equation*}
	\aligned
	&\Phi_1:=\{(s,s,s)\pi\mid s,t\in\mathbb{Q}\text{ and } 0<s< \frac{1}{2}\},\\
	&\Phi_2:=\{(\frac{1}{4},s,\frac{1}{2}-s)\pi\mid s,t\in\mathbb{Q}\text{ and } \;0<s< \frac{1}{2}\}.\\
	\endaligned
\end{equation*}

\begin{lemma}\label{le.shoeq}
	Let $n=2^r5$ with $r$ a positive integer. Assume that $(x_0,x_1,x_2)$ is a solution to \eqref{eq.eq2ter} in $G$ with $0<x_i<\frac{\pi}{2}$ and $\operatorname{den}(x_i)\mid n$ for each $0\le i\le 4$ Then the tuple $(x_0,x_1,x_2)$ is in the set $\Phi_1\cup\Phi_2.$
\end{lemma}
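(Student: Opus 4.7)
The plan is to run a multiplicity-comparison argument in Conrad's basis, specialized to the 3-variable equation $(\tan x_0)^2 = (\tan x_1)(\tan x_2)$ and to the factorization $n = 2^r \cdot 5$. Let $m := \operatorname{lcm}(\operatorname{den}(x_0), \operatorname{den}(x_1), \operatorname{den}(x_2))$, so $m \mid n$. Using Corollary~\ref{cor.tarq} I would view both sides of the equation as elements of $\widehat{X}^m$ and read off multiplicities at the basis elements of level exactly $m$.

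Assume first that $4 \mid m$, so $\tan x_i \equiv v(m, x_i')^2$ in $\widehat{X}^m$ when $\operatorname{den}(x_i) = m$. A basis element of level $m$ then receives multiplicity $\pm 2$ from each such $\tan x_i$ and $0$ from terms with $\operatorname{den}(x_i) < m$, whereas $(\tan x_0)^2$ contributes $\pm 4$ at a single basis element $u_0$ when $\operatorname{den}(x_0) = m$ and $0$ otherwise. Matching multiplicities case by case rules out every mixed configuration and leaves only: (i) all three $\operatorname{den}(x_i) = m$ with $v(m, x_0') = v(m, x_1') = v(m, x_2')$ in $\widehat{X}^m$, forcing $x_0 = x_1 = x_2 \in \Phi_1$; or (ii) $\operatorname{den}(x_0) < m$, $\operatorname{den}(x_1) = \operatorname{den}(x_2) = m$, with the cancellation $v(m, x_1') v(m, x_2') = 1$ in $\widehat{X}^m$. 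In case (ii) the relation $v(m, a) v(m, a + m/2) = v(m/2, a)$, trivial in $\widehat{X}^m$, forces $x_2' = m/2 - x_1'$, hence $x_1 + x_2 = \pi/2$; the original equation then reduces to $\tan x_0 = 1$, giving $x_0 = \pi/4$ and $(x_0, x_1, x_2) \in \Phi_2$.

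Lemmas~\ref{le.hom} and~\ref{le.nosq} enter when $m = n = 2^r \cdot 5$ and $r$ is large, where they police the interaction between the 2-part and the 5-part of the basis. Lemma~\ref{le.nosq} shows that the product of level-$n$ cyclotomic numbers over the 5-coordinate leaves a detectable level-$2^r$ trace, so mixed-denominator configurations in which some $\tan x_i$ lives strictly below level $n$ cannot cancel the level-$n$ contribution of the others; Lemma~\ref{le.hom} constrains how multiple level-$n$ elements with varying 2-coordinate can combine trivially in $\widehat{X}^n$. Together they guarantee that the multiplicity count above is not circumvented by cross-level cancellations.

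The main obstacle is the case $4 \nmid m$. When $m$ is odd (so $m \in \{1, 5\}$) or $m \equiv 2 \pmod 4$ (so $m \in \{2, 10\}$), Corollary~\ref{cor.tarq} expresses $\tan x_i$ with an additional factor $v(m', 2x_i')^{-1}$ (or $v(m, 2^{-1} x_i')^{-2}$), and $\widehat{X}^m$ itself is either trivial or has a different structure, so the clean $\pm 2$ vs.\ $\pm 4$ count no longer applies verbatim. For these small $m$ I would resort to direct enumeration, since each candidate $x_i$ ranges over at most two positive values strictly between $0$ and $\pi/2$. For intermediate $m = 2^s \cdot 5$ with $2 \leq s < r$, the same multiplicity framework applies with $m$ in place of $n$, and the divisor $5$ produces the only genuinely delicate sub-case, to be resolved by specializing the formula $\tan x_i = v(m, x_i') v(m, 2^{-1} x_i')^{-2}$ and tracking the odd-part basis contributions separately.
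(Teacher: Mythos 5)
Your proposal is correct and follows essentially the same route as the paper: compare multiplicities in Conrad's basis at the top level to force either all three variables to have maximal denominator and coincide (giving $\Phi_1$, via the Lemma \ref{le.ns1}/Theorem \ref{th.4mnsf}-style support comparison) or a complementary pair $x_1+x_2=\frac{\pi}{2}$ with $x_0=\frac{\pi}{4}$ (giving $\Phi_2$), and settle the remaining small denominators by direct checking. The differences are cosmetic: the paper uses the maximum denominator rather than the lcm (which can exceed every $\operatorname{den}(x_i)$, making your top-level count vacuous until you drop to the maximal level), $(\operatorname{tan}x_0)^2$ can have multiplicity $\pm 4$ at up to three basis elements when the $5$-coordinate is a pole (existence of one suffices), and Lemmas \ref{le.hom} and \ref{le.nosq} are not needed here, since cross-level cancellation is already excluded by Theorem \ref{th.ba2}; in the paper they serve only the five-variable argument of Theorem \ref{th.25}.
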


\begin{proof}
	Let $m:=\operatorname{max}\{\operatorname{den}(x_i)\mid 0\le i\le2\}$. Assume that $8\mid m$. Let 
	$$\lambda_1:=\{0\le i\le2\mid \operatorname{den}(x_i)=m \}.$$
	
	Assume that $(x_0,x_1,x_2)\notin\Phi_2$. We claim that $|\lambda_1|=3$. Assume that $|\lambda_1|\le 2$. If $0\in\lambda_1$, then for some basis element $v$ of level $m$, we have that  $\operatorname{multi}_v((\operatorname{tan}x_0)^2)=\pm 4$ and $|\operatorname{multi}_v((\operatorname{tan}x_1)(\operatorname{tan}x_2))|\le 2$, a contradiction. Assume that $0\notin\lambda_1$. By the same argument as in the proof of Lemma \ref{le.ns1}, we have that $x_1+x_2=\frac{\pi}{2}$. Then $x_0=\frac{\pi}{4}$. Hence $(x_0,x_1,x_2)\notin\Phi_2$, a contradiction. So the claim follows.

	By applying the same proof as for Theorem \ref{th.4mnsf}, we can conclude that $(x_0, x_1, x_2)\in\Phi_1$. If $8\nmid m$, there are only a few cases to check.
\end{proof}

\begin{proof}[Proof of Theorem \ref{th.25}]
	
	Assume that $r\ge 4$. When $r<4$ we can examine case by case. Assume that $(x_0,x_1,x_2,x_3,x_4)\notin\Phi_{1,1}\cup\Phi_{1,2}$. We derive a contradiction in the following. Let
	 $$\lambda_1:=\{0\le k\le 4\mid \operatorname{den}(x_k)=n \}.$$

	It is clear that for some $1\le i\le 4$, we have that  $i\in\lambda_1$. For each $k\in\lambda_1$, let $x_k'$ be the numerator of $x_k$. Let $$\lambda_2:=\{k\in\lambda_1\mid \widehat{(\epsilon_{x_k'}x_k')}_1=\widehat{(\epsilon_{x_i'}x_i')}_1 \},$$

	and 
	$$\xi_1:=\{(\epsilon_{x_k'}x_k')_2\mid k\in\lambda_2\backslash\{0\}\}.$$

	\begin{claim}\label{cl.1234}
		$|\xi_1|=4$, i.e. $\xi_1=\{1,2,3,4\}$.
	\end{claim}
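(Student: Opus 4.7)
The plan is to argue by contradiction: assuming some $j_0 \in \{1,2,3,4\}$ is absent from $\xi_1$, I would locate a basis element of $X^n$ at which the two sides of $(\tan x_0)^2 = \prod_{k=1}^4 \tan x_k$ must have matching multiplicities yet demonstrably do not.

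I would first reduce the equation to a multiplicative identity among cyclotomic numbers at level $n$. For any $k \in \lambda_1$, Corollary \ref{cor.tarq}(3) gives $\tan x_k = v(n, x_k')^2$ in $\widehat{X^n}$, while for $k \notin \lambda_1$ the element $\tan x_k$ lies in $X^d$ with $d \mid n$, $d < n$, so contributes nothing at a level-$n$ basis element. Lemma \ref{le.ba2} and the normalization \ref{le.nor1} then tell me that for $k \in \lambda_2$, the multiplicity of $v(n, x_k')$ at the basis element $u_j := ((0,\widehat{a}),j)_n$ with $j \in \{1,2,3\}$ is $\eta_k = \pm 1$ exactly when $(\epsilon_{x_k'} x_k')_2 \in \{j, 4\}$, the sign determined by $\overline{(\epsilon_{x_k'} x_k')}_1$. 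Writing $S_j := \sum_{k \in \lambda_2 \setminus \{0\},\, c_k = j} \eta_k$, this produces three relations $2\alpha_0^{(j)} = S_j + S_4$ where $\alpha_0^{(j)}$ captures the $k = 0$ contribution. Plugging in $S_{j_0} = 0$ (when $j_0 \in \{1,2,3\}$) or $S_4 = 0$ (when $j_0 = 4$), a short case analysis yields $S_j = 0$ for every $j$, forcing the $k$'s in $\lambda_2 \setminus \{0\}$ within each residue class to pair up with opposing signs $\eta_k$.

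Next I would pass to the level-$m$ basis element $u_m := ((0,\widehat{a}))_m$ for $m = 2^r$, where Lemma \ref{le.nosq} is the key input. Formula \eqref{eq.nor} lets me express the level-$m$ part of $v(n, x_k')$ as $\operatorname{multi}_{u_m}(v(m, x_k')) - \operatorname{multi}_{u_m}(v(m, 5^{-1} x_k'))$; the computation inside the proof of Lemma \ref{le.nosq} (valid since $r \geq 4$) shows that the two basis indices coming from $\widehat{a}$ and $\widehat{5^{-1} a} \bmod 2^{r-1}$ never coincide, so at most one of the two terms survives per $k$. Combined with the sign-cancellation rigidity from the previous step and the trivializing effect of Lemma \ref{le.hom} on the level-$n$ component when the residue $\{1,2,3,4\}$ coverage becomes complete via pairing, the resulting multiplicity equation at $u_m$ produces an unbalanced count: a nonzero level-$m$ contribution on the right-hand side which the left-hand side cannot match. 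This contradicts $j_0 \notin \xi_1$ and proves $\xi_1 = \{1,2,3,4\}$.

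The principal obstacle is the careful case separation based on whether $0 \in \lambda_2$, on the value of $c_0$, and on whether some $k \in \lambda_2 \setminus \{0\}$ has $c_k = 4$; in each subcase one must coherently combine the three level-$n$ relations at $u_1, u_2, u_3$ with the level-$m$ relation at $u_m$ so that the hypothesis $j_0 \notin \xi_1$ is ruled out. The interplay of Lemmas \ref{le.hom} and \ref{le.nosq} is essential: the former ensures that when the level-$n$ contributions cancel, the $\overline{a}_1$'s must collapse to a single value, and the latter then isolates a surviving nonzero level-$m$ multiplicity that produces the contradiction.
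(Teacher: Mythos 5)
There is a genuine gap: your argument never uses the standing hypothesis, in force throughout the proof of Theorem \ref{th.25}, that $(x_0,x_1,x_2,x_3,x_4)\notin\Phi_{1,1}\cup\Phi_{1,2}$, and without that hypothesis Claim \ref{cl.1234} is simply false. For example, take $(x_0,x_1,x_2,x_3,x_4)=(\frac14,s,\frac12-s,\frac14,\frac14)\pi\in\Phi_{1,2}$ with $\operatorname{den}(s\pi)=n=2^r5$: this satisfies all hypotheses of Theorem \ref{th.25}, here $\lambda_2\setminus\{0\}=\{1,2\}$ and $\xi_1$ is a single residue class, yet it is an honest solution of \eqref{eq.ne4}, so both sides have matching multiplicities at \emph{every} basis element of every level. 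Consequently no proof of the claim can proceed, as you propose, purely by exhibiting a basis element of level $n$ or level $m=2^r$ at which the multiplicities disagree. Concretely, the place where your outline breaks is the last step: the ``pairing with opposing signs $\eta_k$'' that you extract from $S_j=0$ means, via Lemma \ref{le.congsf4dn}, that the paired $x_k,x_{k'}$ satisfy $x_k+x_{k'}=\frac{\pi}{2}$ (or $x_k=x_{k'}$), so $\operatorname{tan}x_k\operatorname{tan}x_{k'}=1$ in $\mathbb{C}^*$ and the pair contributes zero at every basis element of every level; hence no ``unbalanced count'' at $u_m$ can appear, and Lemmas \ref{le.hom} and \ref{le.nosq} (which concern configurations in which all four residues $1,2,3,4$ occur) give you nothing in the missing-residue situation. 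What the pairing actually buys is a reduction of \eqref{eq.ne4} to the ternary equation \eqref{eq.eq2ter} (or to $(\operatorname{tan}x_0)^2=1$), and the contradiction must then come from Lemma \ref{le.shoeq} combined with the assumption that the tuple is not in $\Phi_{1,1}\cup\Phi_{1,2}$ --- this is exactly the route the paper takes, and it is the ingredient your plan omits.

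A secondary problem is the assertion that ``a short case analysis yields $S_j=0$ for every $j$.'' If $0\in\lambda_2$ the left-hand side contributes $\pm4$ at the relevant $u_j$: for instance if $4\notin\xi_1$ and $c_0\in\{1,2,3\}$, the relation at $u_{c_0}$ forces $S_{c_0}=\pm2\ne 0$; if $c_0=4$ one gets $S_j=\pm2$ for all three $j$. So even the level-$n$ bookkeeping does not collapse to $S_j\equiv 0$ without a separate treatment of the $k=0$ term, which again leads back to relations of the form $x_0=x_k$ or $x_0+x_k=\frac{\pi}{2}$ and the same reduction-plus-Lemma-\ref{le.shoeq} mechanism rather than to a multiplicity mismatch.
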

	
	Assume for the sake of contradiction that $|\xi_1|< 4$. We analyze the cases $4\notin\xi_1$ and $4\in\xi_1$ separately.\\
	
	(i) The case $4\notin\xi_1$. Consider the element $v:=(a_1,a_2)_n\in X^n$ where $a_1=(\overline{a}_1,\widehat{a}_1)\in\mathbb{Z}^2$ and $a_2\in\mathbb{Z}$ with $\overline{a}_1=0$,  $\widehat{a}_1=\widehat{(\epsilon_{x_i'}x_i')}_1$ and $a_2=(\epsilon_{x_i'}x_i')_2$. Note that $v\in B_n$. Then there exists $j\in \lambda_2\backslash\{0\}$ such that $j\ne i$, $(\epsilon_{x_j'}x_j')_2=(\epsilon_{x_i'}x_i')_2$ and $\overline{(\epsilon_{x_j'}x_j')}_1\ne \overline{(\epsilon_{x_i'}x_i')}_1$. Otherwise $\operatorname{multi}_v(\prod\limits_{k=1}^4\operatorname{tan}x_k)\equiv 2\not\equiv 0\equiv \operatorname{multi}_v((\operatorname{tan}x_0)^2)\operatorname{mod} 4$, a contradiction.	By Lemma \ref{le.congsf4dn}, we get that $x_i+x_j=\frac{\pi}{2}$. Applying Lemma \ref{le.shoeq}, we get that $(x_0,x_1,x_2,x_3,x_4)\in\Phi_{1,1}\cup\Phi_{1,2}$. This contradicts the assumption.\\

	(ii) The case $4\in\xi_1$. Let $j\in\lambda_2\backslash\{0\}$ be such that $(\epsilon_{x_j'}x_j')_2=4$. If there exists $k\in\lambda_2\backslash\{0\}$ such that $k\ne j$, $(\epsilon_{x_k'}x_k')_2=4$ and $\overline{(\epsilon_{x_k'}x_k')}_1\ne \overline{(\epsilon_{x_j'}x_j')}_1$, then $x_j+x_k=\frac{\pi}{2}$. By Lemma \ref{le.shoeq}, we get that $(x_0,x_1,x_2,x_3,x_4)\in\Phi_{1,1}\cup\Phi_{1,2}$. This contradicts the assumption.

		 So for each $k\in\lambda_2\backslash\{0\}$ with $(\epsilon_{x_k'}x_k')_2=4$, we have that $\overline{(\epsilon_{x_k'}x_k')}_1= \overline{(\epsilon_{x_i'}x_i')}_1$, thereby implying that $x_k=x_j$. Let $$\lambda_3:=\{k\in\lambda_2\backslash\{0\}\mid x_k=x_j \}.$$
		 
		We claim that $|\lambda_3|\ge 2$. If $|\lambda_3|=1$, let $a_2\in\{1,2,3,4\}\backslash\xi_1$. Let $a_1:=(0,\widehat{(\epsilon_{x_i'}x_i')}_1)$ and let $v:=(a_1,a_2)_n\in X^n$. It is clear that $v\in B_n$. Then $\operatorname{multi}_v(\prod\limits_{k=1}^4\operatorname{tan}x_k)\equiv 2\not\equiv 0\equiv \operatorname{multi}_v((\operatorname{tan}x_0)^2)\operatorname{mod} 4$, a contradiction. Hence $|\lambda_3|\ge 2$. The claim is true.
		
		Now we proceed to claim that $x_0=x_j$. Assume that $x_0\ne x_j$. Choose $1\le a_2\le 3$ such that it satisfies the following two conditions: (a) If $\operatorname{den}(x_0)=n$ and $(\epsilon_{x_0'}x_0')_2\ne 4$, then let $a_2\ne (\epsilon_{x_0'}x_0')_2$. (b) If $|\{1,2,3\}\backslash\xi_1|=1$, then let $a_2\notin\{1,2,3\}\backslash\xi_1$. Let $a_1:=(0,\widehat{(\epsilon_{x_i'}x_i')}_1)$ and let $v:=(a_1,a_2)_n\in X^n$. It is clear that $v\in B_n$.
		  
		  Assume that $\operatorname{den}(x_0)=n$, $(\epsilon_{x_0'}x_0')_2= 4$
		  and $\widehat{(\epsilon_{x_0'}x_0')}_1=\widehat{(\epsilon_{x_j'}x_j')}_1$. Because $x_0\ne x_j$, we get that $\overline{(\epsilon_{x_0'}x_0')}_1\ne  \overline{(\epsilon_{x_j'}x_j')}_1$. Then $|\operatorname{multi}_v((\operatorname{tan}x_0)^2)-\operatorname{multi}_v(\prod\limits_{k=1}^4\operatorname{tan}x_k)|>0$, a contradiction.
		  
		  Assume that $\operatorname{den}(x_0)< n$, $(\epsilon_{x_0'}x_0')_2\ne 4$
		  or $\widehat{(\epsilon_{x_0'}x_0')}_1\ne \widehat{(\epsilon_{x_j'}x_j')}_1$. Then $|\operatorname{multi}_v(\prod\limits_{k=1}^4\operatorname{tan}x_k)|\ge 2>0= \operatorname{multi}_v((\operatorname{tan}x_0)^2)$, a contradiction.
		Hence $x_0=x_j$. The claim holds.

		Next, we can simplify both sides of \eqref{eq.ne4} by canceling terms, resulting in the relation of the form $(\operatorname{tan}x_f)(\operatorname{tan}x_g)=1$. Consequently, we have that $x_f+x_g=\frac{\pi}{2}$. Therefore, $(x_0,x_1,x_2,x_3,x_4)\in\Phi_{1,1}$. This contradicts the assumption. Hence Claim \ref{cl.1234} is true.

	We need to deal with the cases $0\notin\lambda_2$ and $0\in\lambda_2$ separately.\\
	
	(i) The case $0\notin\lambda_2$. We claim that $0\notin\lambda_1$. Assume that $0\in\lambda_1$. If $(\epsilon_{x_0'}x_0')_2=4$, let $a_2=1$; otherwise, let $a_2:=(\epsilon_{x_0'}x_0')_2$. Let $a_1:=(0,\widehat{(\epsilon_{x_0'}x_0')}_1)$ and let $v:=(a_1,a_2)_n\in X^n$. It is clear that $v\in B_n$. It follows that $\operatorname{multi}_v((\operatorname{tan}x_0)^2)\ne 0=\operatorname{multi}_v(\prod\limits_{k=1}^4\operatorname{tan}x_k)$, a contradiction. Thus the claim is true. Let 
	$$\xi_2:=\{\overline{(\epsilon_{x_i'}x_i')}_1\mid i\in\lambda_2\}.$$ 
	
	By Lemma \ref{le.hom}, we know that $|\xi_2|=1$. Then to every basis element $v$ of level $m$, $\operatorname{multi}_v((\operatorname{tan}x_0)^2)$ is even. By Lemma \ref{le.nosq}, there exists a basis element $u$ of level $m$ such that $\operatorname{multi}_v(\prod\limits_{i=1}^4\operatorname{tan}x_i)=\pm 1$. This is a contradiction. This finishes the proof of the Theorem in the case $0\notin\lambda_2$.\\

	(ii) The case $0\in\lambda_2$. We claim that there exists $1\le i\le 4$ such that $x_i=x_0$. From Claim \ref{cl.1234}, there exists $i\in\lambda_2\backslash\{0\}$ such that $(\epsilon_{x_i'}x_i')_2=(\epsilon_{x_0'}x_0')_2$. If $\overline{(\epsilon_{x_0'}x_0')}_1=  \overline{(\epsilon_{x_j'}x_j')}_1$, then we are done. Assume that $\overline{(\epsilon_{x_0'}x_0')}_1\ne  \overline{(\epsilon_{x_j'}x_j')}_1$. If $1\le (\epsilon_{x_0'}x_0')_2\le 3$, let $a_2=(\epsilon_{x_0'}x_0')_2$. If $(\epsilon_{x_0'}x_0')_2=4$, let $a_2=1$. Let $a_1:=(0,\widehat{(\epsilon_{x_0'}x_0')}_1)$ and let $v:=(a_1,a_2)_n\in X^n$. Then $|\operatorname{multi}_v((\operatorname{tan}x_0)^2)-\operatorname{multi}_v(\prod\limits_{k=1}^4\operatorname{tan}x_k)|>0$, a contradiction. So the claim holds. By relabeling, assume that $i=4$. After simplifying \eqref{eq.ne4}, we get the following relation:
	 
	 \begin{equation*}
	 	(\operatorname{tan}(\frac{\pi}{2}-x_0))(\operatorname{tan}x_1)(\operatorname{tan}x_2)(\operatorname{tan}x_3)=1.
	 \end{equation*}
	 
	 By the same argument as in the case $0\notin\lambda_2$, we get a contradiction. This completes the proof.
\end{proof}

\section{A Further Generalization}\label{sec.gen}

In this section, we consider the Equation \eqref{eq.ne9} which generalizes Equation \eqref{eq.ne4}. We will show in Proposition \ref{pro.6var} that the solutions of the two Equations are related.

Consider the following equation:
\begin{equation}\label{eq.ne9}
	(\operatorname{tan}x_0)^2=(\operatorname{tan}x_1)(\operatorname{tan}x_2)(\operatorname{tan}x_3)(\operatorname{tan}x_4)(\operatorname{tan}x_5).
\end{equation}

\begin{definition}
	
	Let $n\in \mathbb{N}_{\ge 3}$. Assume that $v\in X^n$ and $v=\prod\limits_{i=1}^{k}v_i^{f_i}$ in $\widehat{X^n}$ with $v_i\in B_n$. We define the {\it degree of level $n$} of $v$, denoted by $\operatorname{deg}_n(v)$, to be $\sum\limits_{i=1}^{k}f_i$.

\end{definition}

\begin{proposition}\label{pro.6var}
	Let $n$ be an odd prime. If $(x_0,x_1,x_2,x_3,x_4,x_5)$ is a solution to \eqref{eq.ne9} in $G$ with $0<x_i<\frac{\pi}{2}$ and $\operatorname{den}(x_i)\in\{4,n,2n,4n\}$ for each $0\le i\le 5$, then $x_j=\frac{\pi}{4}$ for some $1\le j\le 5$.

\end{proposition}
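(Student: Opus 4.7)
The plan is to derive a contradiction assuming $x_j\ne\pi/4$ for every $1\le j\le 5$, using two complementary invariants coming from the basis of $X^{4n}$ (Theorem \ref{th.ba2}) and the quotient $\widehat{X^{4n}}$. Recall that for $n$ an odd prime the basis of $X^{4n}$ decomposes as $\{v(4,1)\}\cup B_n\cup B_{4n}$, and the divisors of $4n$ distinct from $4n$ itself are $\{1,2,4,n,2n\}$, so that $X^4,X^n,X^{2n}\subset Z^{4n}$.

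First I would compute, for each possible denominator of $x$, the total $B_n$-degree $D(x)$ of $\tan x$ in the $X^{4n}$-basis. Using Proposition \ref{pro.tar} and the fact (which follows from Formula \eqref{eq.sym}) that every $v(n,b)$ with $\gcd(n,b)=1$ is a basis element of $B_n$ to the first power in $X^n=\mathbb{C}^{*}/T$, a direct calculation gives $D(x)=0,1,-1,0$ when $\operatorname{den}(x)=4,n,2n,4n$ respectively. Applying $D$ to both sides of \eqref{eq.ne9} and reducing modulo $2$ forces $k_2:=\#\{1\le i\le 5:\operatorname{den}(x_i)\in\{n,2n\}\}$ to be even; combined with the assumption $\#\{i\ge 1:\operatorname{den}(x_i)=4\}=0$, this forces $k_3:=\#\{1\le i\le 5:\operatorname{den}(x_i)=4n\}=5-k_2\in\{1,3,5\}$, so $k_3$ is odd.

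Next I would reduce the equation modulo $Z^{4n}$. Since $X^4,X^n,X^{2n}$ all lie in $Z^{4n}$, the image of $\tan x$ in $\widehat{X^{4n}}$ is trivial whenever $\operatorname{den}(x)\in\{4,n,2n\}$, while Corollary \ref{cor.tarq}(iii) gives $\tan x=v(4n,x')^{2}$ when $\operatorname{den}(x)=4n$. Writing $S:=\{i:\operatorname{den}(x_i)=4n,\ 1\le i\le 5\}$ and using Lemma \ref{le.ba4p} to write each $v(4n,x_i')=w_i^{d_i}$ with $w_i\in B_{4n}$ and $d_i\in\{\pm 1\}$, the projected equation reads
\begin{equation*}
(\tan x_0)^{2}=\prod_{i\in S}w_i^{2d_i}\qquad\text{in }\widehat{X^{4n}}.
\end{equation*}

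For the final step I would compare multiplicities at each basis element $w\in B_{4n}$ using the linear independence provided by Theorem \ref{th.ba1}. If $\operatorname{den}(x_0)\in\{4,n,2n\}$ the LHS is trivial, so $\sum_{i\in S:\,w_i=w}2d_i=0$ for each $w$, whence the fibre size $\#\{i\in S:w_i=w\}$ is even and therefore $k_3=|S|$ is even, contradicting $k_3$ odd. If $\operatorname{den}(x_0)=4n$ with $v(4n,x_0')=w_0^{d_0}$, the same cancellation argument away from $w_0$ gives even fibres there, while at $w_0$ the equation reads $2(m_+-m_-)=4d_0$ (with $m_{\pm}:=\#\{i\in S:w_i=w_0,\,d_i=\pm 1\}$), forcing $m_++m_-$ to have the same parity as $m_+-m_-$, namely even. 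Either way $k_3$ comes out even, the desired contradiction. The main obstacle I expect is securing the parity statement of Step 1, specifically the cancellation that makes $D(x)=0$ rather than $\pm 2$ when $\operatorname{den}(x)=4n$, since it is this pairing that places denominator $4n$ on the same parity side as denominator $4$ and makes the two-step parity argument close up.
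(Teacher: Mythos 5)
Your route is genuinely different from the paper's, and it works once the step you flag is supplied. The paper separates the cases $x_0'$ odd and $x_0'$ even, reduces (via the argument of Theorem~\ref{th.4pr2}) to a shorter relation such as $\operatorname{tan}x_3\operatorname{tan}x_4\operatorname{tan}x_5=1$, and then branches on $c:=\#\{i:\operatorname{den}(x_i)=4n\}$, invoking $\operatorname{deg}_n$ and $\operatorname{deg}_{4n}$ in the various subcases. Your argument replaces this with two homomorphisms on $X^{4n}$ --- the $B_n$-degree $D$ and reduction modulo $Z^{4n}$ --- applied once each, giving a uniform two-step parity count with no case split; the price is that you need the precise value of $D$ on every $\operatorname{tan}x_i$, not merely a bound.

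The gap you flag at the end is real and it is the only one: you need $D(v(4n,a))=0$ for $\gcd(4n,a)=1$, but your write-up only accounts for the $B_n$-exponents contributed by $v(n,a)$ and $v(n,2^{-1}a)$, not by $v(4n,a)^2$. It can be closed as follows. Let $\mathfrak{p}$ be a prime of $\mathbb{Z}[\zeta_{4n}]$ above $n$; the $\mathfrak{p}$-adic valuation descends to a homomorphism $v_{\mathfrak{p}}\colon X^{4n}\to\mathbb{Z}$, since roots of unity are units. One has $v_{\mathfrak{p}}(v(4,1))=0$ because $1-i$ divides $2$ and $n$ is odd; $v_{\mathfrak{p}}(v(n,b))=1$ for $\gcd(n,b)=1$ because $1-\zeta_n^b$ is a uniformizer at the unique prime of $\mathbb{Z}[\zeta_n]$ above $n$, which is unramified in $\mathbb{Q}(\zeta_{4n})/\mathbb{Q}(\zeta_n)$; and $v_{\mathfrak{p}}(v(4n,b))=0$ for $\gcd(4n,b)=1$ because $\prod_{\gcd(b,4n)=1}(1-\zeta_{4n}^b)=\Phi_{4n}(1)=1$ (as $4n$ is not a prime power), so each such $1-\zeta_{4n}^b$ is a unit. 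Thus $v_{\mathfrak{p}}$ agrees with the coordinate functional $D$ on the whole basis $\{v(4,1)\}\cup B_n\cup B_{4n}$ from Theorem~\ref{th.ba2}, so $D=v_{\mathfrak{p}}$ as homomorphisms; in particular $D(v(4n,a))=0$, giving $D(\operatorname{tan}x)=0$ when $\operatorname{den}(x)=4n$. With that, your two parity steps close as stated.
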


\begin{proof}
	We can assume that $x_i=\frac{x_i'}{4n}\pi$ where  $x_i'\in \mathbb{Z}$ and $0<x_i'<2n$ for each $0\le i\le 5$. Assume that $x_0\ne \frac{\pi}{4}$. We consider the cases $x_0'$ is odd and even separately.

		 (i) The case $x_0'$ is odd. By the same argument as in Theorem \ref{th.4pr2}, we can assume that \[\operatorname{tan}x_3\operatorname{tan}x_4\operatorname{tan}x_5=1.\] 
		
		Let $C:=\{3\le i\le 5\mid \operatorname{den}(x_i)=4n\}$.
		Let $c:=|C|$. 
		
		Assume that $c=0$. If $x_i\ne \frac{\pi}{4}$ for each $3\le i\le 5$, then $\operatorname{deg}_n(\prod\limits_{i=3}^5\operatorname{tan}x_i)\ne 0$ as it is odd. This is a contradiction. So $x_i=\frac{\pi}{4}$ for some $3\le i\le 5$.

		Assume that $c=2$ and $x_3'$ and $x_4'$ are odd. Similar to the argument in Lemma \ref{le.02}, we can show $\operatorname{tan}x_3\operatorname{tan}x_4=1$. So $\operatorname{tan}x_5=1$. Hence $x_5=\frac{\pi}{4}$.
		
		Assume that $c=1$ or $3$. Then $\operatorname{deg}_{4n}(\prod\limits_{i=3}^5\operatorname{tan}x_i)\equiv 2\operatorname{mod}4$. This is a contradiction.

	(ii) The case $x_0'$ is even. Let $C:=\{0\le i\le 5\mid \operatorname{den}(x_i)=4n\}$.
		Let $c:=|C|$. Similar to the argument in Lemma \ref{le.02}, we know that $c$ is even and $\prod\limits_{\substack{i=1\\i\in C}}^5\operatorname{tan}x_i=1$ in $\mathbb{C}$. If $x_i\ne\frac{\pi}{4}$ for each $1\le i\le 5$, then $\operatorname{deg}_n(\prod\limits_{\substack{i=1\\i\notin C}}^5\operatorname{tan}x_i)$ is odd. This is a contradiction. So $x_i=\frac{\pi}{4}$ for some $1\le i\le 5$.

	Assume that $x_0=\frac{\pi}{4}$. Then 
	\begin{equation*}
		(\operatorname{tan}x_1)(\operatorname{tan}x_2)(\operatorname{tan}x_3)(\operatorname{tan}x_4)(\operatorname{tan}x_5)=1.
	\end{equation*}
	
	Similar to the argument in (i), we get that $x_i=\frac{\pi}{4}$ for some $1\le i\le 5$.
\end{proof}

\bibliographystyle{plain}

\bibliography{bibli.bib}

\end{document}